\newtheorem{theorem}{Theorem}[section]
\newtheorem{definition}[theorem]{Definition}
\newtheorem{remark}[theorem]{Remark}
\newtheorem{example}[theorem]{Example}
\newtheorem{proposition}[theorem]{Proposition}
\newtheorem{corollary}[theorem]{Corollary}
\newtheorem{lemma}[theorem]{Lemma}
\DeclareMathOperator{\Lie}{Lie}
\DeclareMathOperator{\End}{End}
\DeclareMathOperator{\Res}{Res} 
\DeclareMathOperator{\GL}{GL}
\DeclareMathOperator{\Spec}{Spec}
\DeclareMathOperator{\Hom}{Hom}
\DeclareMathOperator{\Ker}{Ker}
\DeclareMathOperator{\Image}{Im}
\DeclareMathOperator{\Isom}{\underline{Isom}}
\DeclareMathOperator{\Aut}{Aut}
\DeclareMathOperator{\Mat}{M}
\DeclareMathOperator{\dR}{dR}
\DeclareMathOperator{\Ad}{Ad}
\DeclareMathOperator{\naive}{naive}
\DeclareMathOperator{\loc}{loc}
\DeclareMathOperator{\AV}{AV}
\DeclareMathOperator{\GSp}{GSp}
\DeclareMathOperator{\Adm}{Adm}
\DeclareMathOperator{\rdt}{rdt}
\DeclareMathOperator{\Gal}{Gal}
\DeclareMathOperator{\ve}{\varepsilon}
\DeclareMathOperator{\calG}{\mathcal{G}}
\DeclareMathOperator{\calH}{\mathcal{H}}
\DeclareMathOperator{\calM}{\mathcal{M}}
\DeclareMathOperator{\calO}{\mathcal{O}}
\DeclareMathOperator{\calV}{\mathcal{V}}
\DeclareMathOperator{\bbA}{\mathbb{A}}
\DeclareMathOperator{\bbF}{\mathbb{F}}
\DeclareMathOperator{\bbG}{\mathbb{G}}
\DeclareMathOperator{\bbI}{\mathbb{I}}
\DeclareMathOperator{\bbM}{\mathbb{M}}
\DeclareMathOperator{\bbQ}{\mathbb{Q}}
\DeclareMathOperator{\bbZ}{\mathbb{Z}}
\DeclareMathOperator{\scrA}{\mathscr{A}}
\DeclareMathOperator{\scrF}{\mathscr{F}}
\DeclareMathOperator{\scrL}{\mathscr{L}}
\DeclareMathOperator{\Sht}{Sht}
\DeclareMathOperator{\pf}{pf}
\DeclareMathOperator{\rank}{rank}
\DeclareMathOperator{\ur}{ur}
\DeclareMathOperator{\Hdg}{Hdg}
\DeclareMathOperator{\Fil}{Fil}
\DeclareMathOperator{\PR}{PR}
\DeclareMathOperator{\Newt}{Newt}
\DeclareMathOperator{\Frob}{Frob}
\DeclareMathOperator{\id}{id}
\DeclareMathOperator{\Ver}{Ver}
\def\ra{\rightarrow}
\def\lra{\longrightarrow}
\def\ov{\overline}
\def\ul{\underline}
\def\wh{\widehat}
\def\wt{\widetilde}
\def\st{\stackrel}
\def\tr{\textrm}
\def\ext{\mathrm{ext}}
\def\Sat{\mathrm{Sat}}
\def\loc{\mathrm{loc}}
\def\spl{\mathrm{spl}}
\def\tor{\mathrm{tor}}
\def\Fl{\mathscr{F}\ell}
\def\min{\mathrm{min}}
\def\Rep{\mathrm{Rep}}
\def\can{\mathrm{can}}
\def\sub{\mathrm{sub}}
\def\V{\mathcal{V}}
\def\H{\mathcal{H}}
\def\Q{\mathbb{Q}}
\def\R{\mathbb{R}}
\def\Z{\mathbb{Z}}
\def\F{\mathbb{F}}
\def\G{\mathcal{G}}
\def\M{\mathsf{M}}
\newcommand{\Pair}[2]{\left\langle #1, #2 \right\rangle}
\newcommand{\Zip}[2]{{#1}\text{-}\mathrm{Zip}^{#2}} 
\newcommand{\Unram}[1]{#1^{\rm ur}}
\newcommand{\Spl}[1]{#1^\spl}
\begin{document}

\title[$F$-zips with additional structure on splitting models]{$F$-zips with additional structure on splitting models of Shimura varieties}
\author{Xu Shen}
\author{Yuqiang Zheng}
\date{}

\address{Morningside Center of Mathematics\\
		Academy of Mathematics and Systems Science\\
	Chinese Academy of Sciences\\
	No. 55, Zhongguancun East Road\\
	Beijing 100190, China}
\address{University of Chinese Academy of Sciences, Beijing 100149}
\email{shen@math.ac.cn}

\address{Academy of Mathematics and Systems Science, Chinese Academy of Sciences, No. 55, Zhongguancun East Road\\
	Beijing 100190, China}
\email{zhengyq@amss.ac.cn}

\renewcommand\thefootnote{}
\footnotetext{2020 Mathematics Subject Classification. Primary: 11G18; Secondary: 14G35.}

\renewcommand{\thefootnote}{\arabic{footnote}}

\begin{abstract}
	We construct universal $G$-zips on good reductions of the Pappas-Rapoport splitting models for PEL-type Shimura varieties. We study the induced Ekedahl-Oort stratification, which sheds new light on the mod $p$ geometry of splitting models. Building on the work of Lan on arithmetic compactifications of splitting models, we further extend these constructions to smooth toroidal compactifications.  Combined with the work of Goldring-Koskivirta on group theoretical Hasse invariants, we get an application to Galois representations associated to torsion classes in coherent cohomology in the ramified setting.
\end{abstract}


\maketitle
\setcounter{tocdepth}{1}
\tableofcontents

\section{Introduction}
This paper deals with the mod $p$ geometry and arithmetic of some Shimura varieties at \emph{ramified} places. More precisely, we study the reduction modulo $p$ of the splitting models for PEL-type Shimura varieties constructed by Pappas-Rapoport in \cite{PappasRapoport2005}. For \emph{smooth} splitting models, we explore the mod $p$ Hodge structures on their special fibers by constructing universal $F$-zips with additional structure of fixed type (determined by the Hodge cocharacter). Then we derive some consequences on the geometry and coherent cohomology. In particular, we reprove the main results of Bijakowski-Hernandez \cite{BijakowskiHernandez2022} by different methods. We also clarify and generalize the work of Reduzzi-Xiao \cite{ReduzziXiao2017} in the ramified Hilbert case. \\
 
 PEL moduli spaces are central objects to study in arithmetic geometry and Langlands program.
 If the associated reductive group is unramified at $p$, Kottwitz constructed smooth integral PEL moduli spaces in \cite{Kottwitz1992}. In the general setting, Rapoport and Zink in \cite{RapoportZink1996} introduced similar and generalized integral PEL moduli spaces $\scrA^{\naive}$. These are called naive integral models, as shown by Pappas in \cite{Pappas1} that in some ramified unitary case, the moduli scheme $\scrA^{\naive}$ fails to be flat over $\calO_E$, the ring of integers of the local reflex field $E$.
To study integral models of PEL-type Shimura varieties in ramified case, in \cite{PappasRapoport2005} Pappas and Rapoport introduced the so-called splitting integral models $\scrA^\spl$. Roughly speaking, in the setting of \cite{PappasRapoport2005}, the associated $p$-adic reductive group $G$ has the form of a Weil restriction and the ramification mainly comes from restriction of scalars. After fixing the data, the model $\scrA^\spl$ is proposed as a relative moduli space over the naive integral model $\scrA^{\naive}$, the latter constructed in \cite{RapoportZink1996}. For any scheme $S$ over $\calO_E$, $\scrA^{\naive}(S)$ classifies abelian schemes with PEL structure $\ul{A}=(A,\lambda, \iota,\alpha)$ over $S$. Fix a sufficiently large field extension $F|E$. For a scheme $S$ over $\calO_F$,  $\scrA^\spl(S)$ classifies $(\ul{A}, \underline{\mathscr{F}_\bullet})$ where $\ul{A}\in\scrA^{\naive}(S)$ and $\underline{\mathscr{F}_\bullet}$ is a filtration on the cotangent bundle $\omega_{A/S}$, satisfying certain conditions related to the ramification data. Pappas and Rapoport proved that the model $\scrA^\spl$ over $\calO_F$ admits nice properties. In particular, there is a scheme $\mathbb{M}^\spl$ (called the splitting local model) over $\calO_F$, so that $\scrA^\spl$ and $\mathbb{M}^\spl$ sit in a \emph{local model diagram}. Moreover, by construction $\mathbb{M}^\spl$ can be realized as a \emph{twisted product} of unramified local models. There is a natural morphism $\scrA^\spl\ra \scrA^{\naive}$, which is the composition of the forgetful morphism $\scrA^\spl\ra \scrA^{\naive}_{\calO_F}$ over $\calO_F$ and the base change projection map $\scrA^{\naive}_{\calO_F}\ra \scrA^{\naive}$.  Pappas and Rapoport defined the canonical model $\scrA$ as the scheme theoretic image of this morphism. Then $\scrA$ is a  flat integral model over $\calO_E$. In case that the group $G$ is tamely ramified and $p\nmid |\pi_1(G_{der})|$, then up to Hasse principle the scheme $\scrA$ should coincide with the Kisin-Pappas integral models \cite{KisinPappas2018}. 
We refer to \cite{PappasRapoport2005} for more information and details on the general theory of splitting models (so far only available in the PEL type case), and to \cite{Pappas2,Pappas3,PappasRapoport2021} for some recent progress on the canonical models.\\

The most well known splitting models come from the example of Hilbert modular varieties. In this case, $\scrA=\scrA^{\naive}$ and the special fiber of $\scrA$ was studied previously by Deligne and Pappas in \cite{DP}.
Let $L$ be a totally real field of degree $g>1$ and $p$ a prime number. Let $\kappa|\F_p$ be a large enough finite field and $\mathcal{M}^{\tr{DP}}=\scrA\otimes\kappa$ the Deligne-Pappas moduli space over $\kappa$, which parametrizes abelian schemes with real multiplication given by $\calO_L$ together with polarization and level structure. If $p$ is unramified in $L$, this is a smooth scheme (a special case of the Kottwitz models \cite{Kottwitz1992}). Here we are mainly concerned with the case $p$ ramifies in $L$. Then $\mathcal{M}^{\tr{DP}}$  is only a normal scheme which is singular (cf. \cite{DP}). By contrast, the special fiber of $\scrA^\spl$ over $\kappa$, in this case denoted by $\mathcal{M}^{\tr{PR}}$,  is \emph{smooth} and the natural morphism \[\mathcal{M}^{\tr{PR}}\ra \mathcal{M}^{\tr{DP}}\] is a \emph{resolution of singularities}. In \cite{ReduzziXiao2017} Reduzzi and Xiao constructed $g$ partial Hasse invariants on $\mathcal{M}^{\tr{PR}}$ by carefully exploring the structure of Pappas-Rapoport filtrations. On the other hand, in the ramified case the number of partial Hasse invariants on $\mathcal{M}^{\tr{DP}}$ is strictly less than $g$ (see the introduction of \cite{ReduzziXiao2017} and the references therein). Reduzzi and Xiao applied these $g$ partial Hasse invariants on $\mathcal{M}^{\tr{PR}}$ to construct Galois pseudo-representations attached to torsion Hecke eigenclasses in the coherent cohomology. This shows a big advantage to work with splitting models. The space $\mathcal{M}^{\tr{PR}}$ and the partial Hasse invariants on it have been serving as a basic tool in the study of geometry and arithmetic of Hilbert modular varieties, for example, see the recent works of Sasaki \cite{Sasaki} and Diamond-Kassaei \cite{DK20}.\\

In the more general PEL setting, recently Bijakowski and Hernandez in \cite{BijakowskiHernandez2022} studied some aspects of the mod $p$ geometry of splitting models. More precisely, they proved that $\scrA^\spl$ with maximal level at $p$ is smooth under some conditions on the PEL datum. Roughly, these conditions are to ensure that at a $p$-adic place $v_i$, the group has the form $\Res_{F_i|\bbQ_p}H_i$ where $H_i$ is unramified over $F_i$, and the level at $p$ is hyperspecial for these $H_i$. In particular, this excludes the ramified unitary groups (labeled as type (AR) in \cite{BijakowskiHernandez2022}) as local factors. Bijakowski and Hernandez also proved that the $\mu$-ordinary locus (defined as the maximal Newton stratum) is open and dense in the special fiber $\scrA^\spl_0$ of $\scrA^\spl$. For this, they introduced a so-called Hodge stratification and proved that the maximal Hodge stratum (which contains the $\mu$-ordinary locus) is open and dense.\\

In this paper, we essentially work in the same setting as that in \cite{BijakowskiHernandez2022}. Our first key observation is that one can modify the local model diagram between $\scrA^\spl$ and $\mathbb{M}^\spl$ constructed in \cite{PappasRapoport2005} to a local model diagram between $\scrA^\spl$ and $\prod_{i,j.l}\mathbb{M}^\loc(\G_{i,j}^l,\mu_{i,j}^l)$, a product of unramified local models related to the PEL datum, cf. Proposition \ref{prop loc mod Gspl}.
This is not quite surprising, as we already mentioned above that the splitting local model $\mathbb{M}^\spl=\mathbb{M}^\spl(\G,\mu)$ was constructed in \cite{PappasRapoport2005} as a twisted product of unramified local models $\mathbb{M}^\loc(\G_{i,j}^l,\mu_{i,j}^l)$. Let $G$ be the reductive group defined by the PEL datum, which we assume to be connected (thus we exclude the type D case), and $\{\mu\}$ the attached geometric conjugacy class of Hodge cocharacters of $G$. In the following we often fix a suitable choice of $\mu$ in this conjugacy class.
Recall that $E|\Q_p$ is the field of definition of $\{\mu\}$ and $F|E$ is a large enough extension (so that $G$ splits over $F$).
The group scheme related to the modified local model diagram is $\G^\spl$, a reductive group over $\calO_F$, which is the reductive model of the split group $G_F$. On the other hand, let $\G$ be the parahoric model of $G$ over $\Z_p$ associated to the integral PEL datum.
The local model diagram of \cite{PappasRapoport2005} corresponds to a morphism of algebraic stacks \[\scrA^\spl\ra [\mathbb{M}^\spl(\G,\mu)/\G_{\calO_F}],\] which is more suited to study the canonical model $\scrA$ and its related canonical local model.
On the other hand, the local model diagram here corresponds to a morphism \[\scrA^\spl\ra [\mathbb{M}^\loc(\G^\spl,\mu)/\G^\spl],\] where $ \mathbb{M}^\loc(\G^\spl,\mu)=\prod_{i,j,l}\mathbb{M}^\loc(\G_{i,j}^l,\mu_{i,j}^l).$
If there is an $i$ such that the finite extension $F_i|\Q_p$ is ramified, then in general \[\G_{\calO_F}\neq \G^\spl.\] 
From the modified diagram here,
we can immediately deduce the smoothness of $\scrA^\spl$ if the local factors $\G_i$ satisfy the same condition as that in \cite{BijakowskiHernandez2022} (compare \cite{BijakowskiHernandez2022} Theorem 2.30 and Remark 2.31). Under this condition, the parahoric subgroup $K_p=\G(\Z_p)$ is in fact \emph{very special} in the sense of \cite{Zhu}. 

In fact, the above observation on the modified local model diagram leads us to go much further. Recall that the theory of $F$-zips was introduced and studied by Moonen-Wedhorn in \cite{MW} as a candidate of mod $p$ Hodge structure. This notion has been promoted and enlarged by Fontaine-Jannsen \cite{FJ}, Drinfeld \cite{Dri20}, and Bhatt-Lurie \cite{BL}. Here we work with the generalization of $F$-zips in another direction: the notion of $G$-zips for a reductive group $G$ over a finite field, cf. \cite{PinkWedhornZiegler2011, PinkWedhornZiegler2015}. Back to splitting models of PEL-type Shimura varieties, we will work with \emph{smooth} splitting models $\scrA^\spl$  from now on (in particular, this excludes the ramified unitary group case).
Let $\kappa$ be the residue field of $\calO_F$ and $\scrA^\spl_0$ the special fiber of $\scrA^\spl$ over $\kappa$. 
\begin{theorem}[Theorem \ref{thm EO}]\label{thm intro 1}
	\begin{enumerate}
		\item There is
a natural $\G^\spl_0$-zip of type $\mu$ over $\scrA^\spl_0$, where $\G^\spl_0$ is a reductive group over $\bbF_p$ constructed from the PEL datum such that $\G^\spl_{0,\kappa}=\G^\spl\otimes_{\calO_F}\kappa$. 
\item The induced map to the moduli stack of $\G^\spl_0$-zips of type $\mu$ 
\[\zeta: \scrA^\spl_0\ra \G^\spl_0\tr{-}\mathrm{Zip}^\mu_{\kappa}\] is smooth and surjective.
	\end{enumerate}
\end{theorem}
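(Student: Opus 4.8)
The plan is to construct the $\G^\spl_0$-zip by hand from the de Rham realization of the universal object, and to deduce the smoothness of $\zeta$ from the modified local model diagram of Proposition~\ref{prop loc mod Gspl}; both steps hinge on the fact that this diagram presents $\scrA^\spl_0$, \'etale locally, in terms of the special fibre of $\mathbb{M}^\loc(\G^\spl,\mu)=\prod_{i,j,l}\mathbb{M}^\loc(\G_{i,j}^l,\mu_{i,j}^l)$ --- a product of \emph{unramified}, hence smooth and (in characteristic $p$) $\G^\spl_{0,\kappa}$-homogeneous, local models --- so that one works with the reductive group $\G^\spl_0$ rather than with the parahoric $\G_0$. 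For the construction I would take the universal object $(\ul A,\ul{\mathscr{F}_\bullet})$ over $\scrA^\spl_0$, with $\ul A=(A,\lambda,\iota,\alpha)$, and form $\mathcal{H}=H^1_{\dR}(A/\scrA^\spl_0)$; it carries the Hodge filtration $0\subset\omega_A\subset\mathcal{H}$, the conjugate filtration supplied by the Cartier isomorphism, the $\mathcal{O}_B\otimes\bbF_p$-action together with the perfect $\lambda$-pairing, and the Pappas--Rapoport refinement $\mathscr{F}_\bullet$ of $\omega_A$. Decomposing along the idempotents of $\mathcal{O}_B\otimes\bbF_p$ and the ramification labels $(i,j,l)$, on each factor $\mathscr{F}_\bullet$ cuts out a full flag whose relative position is, by the local model diagram, everywhere of type $\mu$ as a filtration for $\G^\spl_0$; together with the conjugate filtration and the $p$-linear maps induced by $\Frob$ and $\Ver$ this assembles into an $F$-zip with $(\mathcal{O}_B,\lambda)$-structure of type $\mu$. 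Encoding the $(\mathcal{O}_B,\lambda)$-compatible frames of $(\mathcal{H},\langle\cdot,\cdot\rangle)$ as a $\G^\spl_0$-torsor $\mathcal{I}$, the Hodge and conjugate filtrations as the two parabolic reductions $\mathcal{I}_+$ and $\mathcal{I}_-$, and the Frobenius isomorphism of the associated Levi-torsors as the last datum, one gets the required $\G^\spl_0$-zip of type $\mu$, and its functoriality produces $\zeta$.

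For part (2), use the Pink--Wedhorn--Ziegler presentation $\G^\spl_0\tr{-}\mathrm{Zip}^\mu_\kappa\cong[E\backslash\G^\spl_{0,\kappa}]$, where $E$ is the zip group: this stack is smooth of dimension $0$ and $\G^\spl_{0,\kappa}\to\G^\spl_0\tr{-}\mathrm{Zip}^\mu_\kappa$ is an $E$-torsor, in particular a smooth surjection. Pulling it back along $\zeta$ produces an $E$-torsor $\mathcal{T}\to\scrA^\spl_0$ equipped with an $E$-equivariant morphism $\psi\colon\mathcal{T}\to\G^\spl_{0,\kappa}$, and $\zeta$ is smooth (resp. surjective) precisely when $\psi$ is. Now $\mathcal{T}$ is a bundle of frames of $(\mathcal{H},\langle\cdot,\cdot\rangle,\mathscr{F}_\bullet)$ over $\scrA^\spl_0$, and via Proposition~\ref{prop loc mod Gspl} it is smooth over the analogous frame bundle attached to $\prod_{i,j,l}\mathbb{M}^\loc(\G_{i,j}^l,\mu_{i,j}^l)_\kappa$, for which the corresponding map to $\G^\spl_{0,\kappa}$ is smooth and surjective by the homogeneous description of the unramified local models; smoothness and surjectivity of $\psi$ follow. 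Alternatively, since $\scrA^\spl_0$ and $\G^\spl_0\tr{-}\mathrm{Zip}^\mu_\kappa$ are both smooth it suffices to check that $d\zeta$ is surjective at every closed point, and a Grothendieck--Messing deformation computation shows that the refined Kodaira--Spencer isomorphism for $(\ul A,\mathscr{F}_\bullet)$ identifies $T_x\scrA^\spl_0$ with the degree-$0$ part of the tangent complex of $\G^\spl_0\tr{-}\mathrm{Zip}^\mu_\kappa$ at $\zeta(x)$, compatibly with $d\zeta$. Smoothness then makes the image of $\zeta$ open, and combined with the density of the $\mu$-ordinary stratum (Bijakowski--Hernandez) and the non-emptiness of every Ekedahl--Oort stratum --- obtained from the closure order on $\G^\spl_0\tr{-}\mathrm{Zip}^\mu_\kappa$ together with the properness of the toroidal compactifications extended later in the paper --- this yields surjectivity.

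The torsor bookkeeping and the reduction to unramified local models are formal once Proposition~\ref{prop loc mod Gspl} is in hand, and the deformation-theoretic step is standard. The real difficulty is in the construction: one must check that the Pappas--Rapoport refinement of the Hodge filtration is genuinely compatible with the Frobenius/Verschiebung structure --- that the $p$-linear maps still induce isomorphisms on the relevant graded pieces after refining --- so that the output is an $F$-zip of the \emph{finer} type $\mu$ for the reductive group $\G^\spl_0$, not merely a coarser zip for the parahoric $\G_0$, and one must pin down that this type is the Hodge class $\{\mu\}$. This is exactly the point at which the hypotheses on the PEL datum (each local factor a Weil restriction of an unramified group, very special level at $p$) --- the same ones that make $\scrA^\spl$ smooth --- must be used, and it is the step I would treat most carefully.
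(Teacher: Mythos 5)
There is a genuine gap in part (1), and it sits exactly at the point you flag at the end as ``the step I would treat most carefully'' but do not actually carry out. You propose to build the zip on $\mathcal{H}=H^1_{\dR}(A/S)$ itself, taking the $(\mathcal{O}_B,\lambda)$-compatible frames of $(\mathcal{H},\langle\cdot,\cdot\rangle)$ as the torsor $\mathcal{I}$ and the PR-refined Hodge filtration together with the conjugate filtration as the two reductions. But $\mathcal{H}$ is locally isomorphic to $\Lambda_0\otimes\calO_S$ as a module over $\calO_B\otimes\calO_S\supset\kappa_i[\varepsilon_i]$, so its frame bundle is a torsor under the \emph{parahoric} special fibre $\G_0$ (which is non-reductive, with unipotent kernel $\G_0\to\G_0^{\rdt}$), not under $\G_0^\spl$; and the relative position of $\omega\subset\mathcal{H}$ is not constant of type $\mu$ --- it varies with the Kottwitz--Rapoport stratum, which is precisely why the EKOR theory on $\scrA_0$ produces $\G_0^{\rdt}$-zips of \emph{varying} type $J_w$. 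The paper's essential idea, absent from your proposal, is to replace $\mathcal{H}$ by the ``semi-simplification'' $\calM=\bigoplus_{i,j,l}\calM_{i,j}^l$ with $\calM_{i,j}^l=\varepsilon_i^{-1}\mathscr{F}_{i,j}^{l-1}/\mathscr{F}_{i,j}^{l-1}$, which \emph{is} locally isomorphic to $\Lambda_0^\spl\otimes\calO_S$, and to equip it with explicit maps $F^l,V^l$: for $l\geq 2$ these are $\calO_S$-linear (multiplication by $\varepsilon_i$ and the natural inclusion, not induced by Frobenius at all), and only for $l=1$ are they the $\sigma^{\pm1}$-linear maps coming from $\Frob$ and $\Ver$. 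One must then prove $\Image F^l=\Ker V^l$, $\Ker F^l=\Image V^l$ for $l\geq2$ and the existence of a canonical isomorphism carrying $\Ker V^1$ to $\Image F^1$ (Lemmas \ref{lem l=1} and \ref{lem l big}), plus a conjugate splitting local model diagram to see $I_-$ as a $P_-^{\phi}$-torsor. None of this is supplied or replaced by your frame-bundle bookkeeping on $\mathcal{H}$.

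Your smoothness argument for part (2) is in line with the paper's (pull back the $E$-torsor, check surjectivity on tangent spaces via the local model diagram), but the surjectivity argument is circular as written: you list ``the non-emptiness of every Ekedahl--Oort stratum'' as an ingredient of surjectivity, which is the statement to be proved, and neither the density of the $\mu$-ordinary locus (which concerns the maximal stratum) nor an appeal to compactifications proved later in the paper substitutes for it. The substantive content here is the non-emptiness of the \emph{minimal} EO stratum: the paper obtains it by matching minimal EO points of $\scrA^\spl_k$ with minimal EKOR points of $\scrA_k$ (Proposition \ref{prop:EO-EKOR}) and proving the latter non-empty \`a la He--Zhou, via the non-emptiness of the basic locus and an affine Deligne--Lusztig variety computation (Proposition \ref{prop non-empty EKOR}); only then does smoothness plus the closure order on ${}^JW$ force the image to be everything.
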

Let us first comment on the related reductive groups appearing here.
For $\scrA^\spl_0$, the group $\G^\spl_0$ replaces $\G_0^{\tr{rdt}}$, the maximal reductive quotient of the special fiber $\G_0$ of $\G$. The group $\G_0^{\tr{rdt}}$ is mainly related to the geometry of $\scrA_0$, the special fiber of the canonical model $\scrA$. Recall that there is a natural morphism $\scrA^\spl_0\ra \scrA_0$.
The difference between $\G^\spl_0$ and $\G_0^{\tr{rdt}}$ reflects the ramification data. Indeed,
at a $p$-adic place $v_i$, the local factors of the two groups admit the following description:
\[\G^\spl_{0,i}=\Res_{\kappa_i|\bbF_p}H_i^{e_i},\quad \G_{0,i}^{\tr{rdt}}= \Res_{\kappa_i|\bbF_p}H_i,\]
where $\kappa_i$ is the residue field of $F_i$, $e_i$ is the ramification index of the extension $F_i|\Q_p$, and $H_i$ is a reductive group over $\kappa_i$. Note that the Hodge cocharacter $\mu$ of $G$ over $F$ naturally admits a reduction to a cocharacter of $\G^\spl_0$ over $\kappa$.\\

Now we briefly explain the construction of the universal $\G^\spl_0$-zip of type $\mu$. In the unramified PEL case, we have $\scrA_0=\scrA^{\naive}_0=\scrA^{\spl}_0$ and the construction is direct, cf. \cite{MW, ViehmannWedhorn2013} and \cite{Zhang2018EO}. However, in the ramified case, the construction becomes rather indirect and complicated.
Since a general PEL datum is involved, we illustrate the ideas by working with the example of Hilbert-Siegel case. For the general case, see sections \ref{sec:splitting} and \ref{section Fzip}. Then the PEL datum $(B, \ast, V, \Pair{\cdot}{\cdot}, \calO_B,\Lambda)$ is of type C and $B=L$ is a totally real field which we assume $[L:\Q]>1$. We further assume that
there is only one finite place $v$ of $L$ over $p$ and the local field $F_1:=L_v$ is ramified over $\Q_p$, i.e. the ramification index $e=[F_1:F_1^{\tr{ur}}]>1$, where $F_1^{\tr{ur}}$ is the maximal unramified sub-extension of $F_1$ over $\Q_p$. Let $f=[F_1^{\tr{ur}}:\Q_p]=[\kappa_1:\mathbb{F}_p]$, where $\kappa_1$ is the residue field of $F_1$, so that $[F_1:\Q_p]=ef$. Let $K^p\subset G(\mathbb{A}_f^p)$ be a fixed sufficiently small open compact subgroup. Recall $F|F_1$ is a fixed large enough extension with residue field $\kappa$.
For any $\kappa$-scheme $S$, let $\underline{A}=(A, \lambda, \iota, \alpha)$ be an $S$-point of $\mathscr{A}^{\naive}=\mathscr{A}^{\naive}_{K^p}$. Then we have an exact sequence of locally free sheaves of $\calO_S$-modules \[0 \to \omega_{A/S} \to H_{\dR}^1(A/S) \to \Lie_{A^\vee/S} \to 0.\] In the unramified case, this exact sequence, together with the Frobenius and Verschiebung morphisms, defines the $F$-zip with additional structure of type $\mu$ over $\scrA^{\spl}_0$. However, in our ramified case here, it turns out a posteriori that the type of the Hodge filtrations constructed in this way will \emph{vary}, which reflects the singularities of $\scrA_0$ in some sense. 

Indeed, we have the Kottwitz-Rapoport stratification on $\scrA_0$ (see later), and on each stratum by \cite{ShenYuZhang2021}  we have a $\G_0^{\tr{rdt}}$-zip of certain type (dependent on this stratum). 
Recall the natural map $\scrA_0^\spl\ra \scrA_0$. We can pullback these $\G_0^{\tr{rdt}}$-zips to the corresponding preimages of KR strata in $\scrA_0^\spl$. But these do not give the correct object that we want.
Here, to construct the $\G^\spl_0$-zip of fixed type $\mu$ over $\scrA^\spl_0$, we have to take into account the splitting structures. It is here that our modified local model diagram (over $\kappa$) \[\scrA^\spl_0\ra [\mathbb{M}^\loc(\G^\spl,\mu)_0/\G^\spl_{\kappa}]\] plays the key role. Under our smoothness assumption, the underlying topological space of $[\mathbb{M}^\loc(\G^\spl,\mu)_0/\G^\spl_{\kappa}]$ has only one point. Then the construction is guided by the constructions in \cite{ShenYuZhang2021} subsections 3.3 and 3.4. On the other hand, we remark that it is very hard to work with the original local model diagram $\scrA^\spl_0\ra [\mathbb{M}^\spl(\G,\mu)_0/\G_{\kappa}]$ of \cite{PappasRapoport2005}. The quotient stack $[\mathbb{M}^\spl(\G,\mu)_0/\G_{\kappa}]$ is quite complicated, and in fact it is not known whether the set of $\calG_\kappa$-orbits on $\mathbb{M}^\spl(\G,\mu)_0$ is finite or not in general (cf. \cite{Berg} section 8 and \cite{Berg2}).\\

Let us discuss in more detail the construction of the $\G^\spl_0$-zip of fixed type $\mu$ over $\scrA^\spl_0$.
Let $(\underline{A}, \underline{\mathscr{F}_\bullet})$ be an $S$-valued point of ${\mathscr{A}_0}^\spl$. Then by definition
	$\underline{A}=(A, \lambda, \iota,\alpha)\in \mathscr{A}^{\naive}(S)$, $\underline{\mathscr{F}_\bullet}=\{\mathscr{F}_{j}^l\}$ is a Pappas-Rapoport filtration of $\calO_{F_1} \otimes_{\Z_p} \calO_S$-module $\omega_{A/S}$. To explain this term, 
	we write $\mathcal{H} =H_{\dR}^1(A/S)$, which is an $\calO_{F_1^{\tr{ur}}} \otimes_{\Z_p} \calO_S$-module,
	hence it has a decomposition \[\mathcal{H} =  \bigoplus_{j} \mathcal{H}_{j},\]
	where $\mathcal{H}_{j}$ is the locally free sub $\calO_S$-module of $\H$ where $\calO_{F_1^{\tr{ur}}}$ acts by the fixed embedding $\sigma_j: \calO_{F_1^{\tr{ur}}}\ra \calO_F$. For each $j$, there is a pairing on $\H_j$ induced from the pairing on $\H$ coming from the polarization $\lambda$.  Similarly, we have a decomposition $\omega_{A/S}=\bigoplus_j\omega_j$ with $\omega_j\subset \mathcal{H}_j$
	for each $1\leq j\leq f$. The Pappas-Rapoport filtration is by definition a filtration of locally direct $\calO_S$-factors of each $\omega_{j}$.
	\[0 = \mathscr{F}_{j}^0 \subset \mathscr{F}_{j}^1 \subset \cdots \subset \mathscr{F}_{j}^{e} = {\omega_{j}} \subset \mathcal{H}_{j}\]
	with $\calO_{F_1}/ (p) \simeq \kappa_1[T]/(T^{e}) = \kappa_1[\varepsilon_1]$ action, such that
		$\kappa_1$ acts on $\mathscr{F}_{j}^l$ by $\sigma_{j}: \mathcal{O}_{\Unram{F_1}} \to \kappa$;
		for all $1 \leq l \leq e$, $\mathscr{F}_{j}^l / \mathscr{F}_{j}^{l-1}$ is locally free of rank $d_{j}^l$ and $\varepsilon_1 \mathscr{F}_{j}^l \subset \mathscr{F}_{j}^{l-1}$.
		
The $\calO_{F_1^{\tr{ur}}}$-action on $\Lambda$ induces a decomposition $\Lambda=\bigoplus_{j}\Lambda_j$. For each embedding $\sigma^l_j: \calO_{F_1}\hookrightarrow \calO_F$ which extends $\sigma_j: \calO_{F_1^{\tr{ur}}}\hookrightarrow\calO_F$, we set $\Lambda_j^l=\Lambda_j\otimes_{\calO_{F_1},\sigma^l_j}\calO_F$ (note that the lattice $\Lambda_{j}^l$ is denoted by $\Xi_{j}^l$ in \cite{PappasRapoport2005}*{Proposition~5.2}) and $\Lambda^l=\bigoplus_j\Lambda_j^l$. Set $ \Lambda_{j,0}^l=\Lambda_j^l\otimes_{\calO_F}\kappa$ and $\Lambda^\spl_0=\bigoplus_l\bigoplus_j\Lambda_{j,0}^l$. One can show that there is a standard $F$-zip structure on $\Lambda^\spl_0$, whose Hodge filtration is given by the cocharacter $\mu$ over $\kappa$.

For $S={\mathscr{A}_0}^{\spl}$ and $(\underline{{A}}, \underline{\mathscr{F}_\bullet}) \in {\mathscr{A}_0}^{\spl}(S)$ the universal object,
one can associate a module
\[\mathcal{M}:= \bigoplus_{ l=1}^e \bigoplus_{ j=1}^f \mathcal{M}_{j}^l, \quad\text{ with }\quad\mathcal{M}_{j}^l := \varepsilon_1^{-1}\mathscr{F}_{j}^{l-1} / \mathscr{F}_{j}^{l-1}.\]
Then each $\mathcal{M}_{j}^l$ is a locally free $\calO_S$-module, and locally isomorphic to $\Lambda_{j}^l \otimes_{\calO_F} \calO_S$, cf. \cite{PappasRapoport2005}.
Now the idea is to transfer the Hodge and conjugate filtrations on $\mathcal{H}$ to filtrations on each $\mathcal{M}^l:=\bigoplus_j\mathcal{M}_{j}^l$, and to show that the $F$-zip structure on $\mathcal{H}$ induces an $F$-zip structure on $\mathcal{M}$. To this end, one can first complete the filtration of $\omega$ to a full filtration of $\mathcal{H}$, then apply the Frobenius and Verschiebung morphisms successively to the full filtration, and make use of the usual relation $\tr{Ker}\, F=\tr{Im}\, V, \tr{Ker}\, V=\tr{Im}\, F$ to succeed. In fact, we proceed by an alternative and equivalent approach. For any $1\leq j \leq f$, inspired by \cite{ReduzziXiao2017} we construct
 explicit linear maps $F_{j}^l: \mathcal{M}_{j}^{l-1} \to \mathcal{M}_{j}^{l},\; V_{j}^l: \mathcal{M}_{j}^l \to \mathcal{M}_{j}^{l-1}$ for each $2 \leq l \leq e$ (which are easy to construct),  and  $\sigma^{-1}$-linear ($\sigma$-linear) maps  
 $F_{j}^1: \mathcal{M}_{j-1}^{e} \to \mathcal{M}_{j}^1,\; V_{j}^1:\mathcal{M}_{j}^1 \to \mathcal{M}_{j-1}^{e}$ (which are induced by the Frobenius and Verschiebung, and the definition of $V_{j}^1$ is a little subtle).
Now for each $1\leq l\leq e$, 
$\mathcal{M}^l = \bigoplus_{j=1}^f \mathcal{M}_{j}^l$
is a locally free $\calO_S[\varepsilon_1]$-module with $\kappa_1$ action, and locally
$\calM^l \simeq \Lambda^l \otimes_{\calO_F} \calO_S.$
For each $l$, set $F^l=\bigoplus_jF_{j}^l$ and $V^l=\bigoplus_jV_{j}^l$.
We have the following linear morphisms:
\[\xymatrix{
	{\mathcal{M}^{1}} \ar@<1ex>[r]^{F^2} & {\mathcal{M}^{2}} \ar@<1ex>[l]^{V^2} \ar@<1ex>[r]^{F^3} & \cdots \ar@<1ex>[l]^{V^3} \ar@<1ex>[r]^{F^{e}} & {\mathcal{M}^{e}} \ar@<1ex>[l]^{V^{e}}.
}\]
and semi-linear morphisms ($F^{1}$ is $\sigma$-linear and $V^{1}$ is $\sigma^{-1}$-linear)
\[\xymatrix{
	{\mathcal{M}^{e}} \ar@<1ex>[r]^{F^1} & {\mathcal{M}^1} \ar@<1ex>[l]^{V^1}.
}\]
Then (see Lemmas \ref{lem l big} and \ref{lem l=1} respectively)
\begin{enumerate}
	\item For $2\leq l\leq e$, we have
	$\Image\,F^l = \Ker\,V^l, \, \Ker\,F^l = \Image\,V^l.$
	\item 
	There is a canonical isomorphism $g: \calM^1 \to \calM^1$ sending $\Ker\,V^1$ to $\Image\,F^1$.
\end{enumerate}
Therefore
we get an $F$-zip
\[(\calM = \bigoplus_l \calM^l, C = \bigoplus_l \Ker\,F^l, D = \bigoplus_l \Ker\,V^l, \varphi_\bullet)\]
over $\scrA_0^{\spl}$.  Moreover, there are natural compatible symplectic form and $\calO_L$-action on $\mathcal{M}$.
Comparing with the standard $F$-zip structure on $\Lambda^\spl_0$, we get the $\G^\spl_0$-zip of type $\mu$ over $\scrA_0^{\spl}$. Roughly, the process of transforming $\mathcal{H}$ to $\calM$ is a way of ``semi-simplification''. During this transformation the different types of $\G^{\rdt}_0$-zips become uniform as $\G^\spl_0$-zips, which reflects some key features of the resolution of singularities $\scrA_0^\spl\ra \scrA_0$. Note that even in the Hilbert case, the above construction reveals more complete geometric information than that in \cite{ReduzziXiao2017}.
For more details, see Proposition \ref{prop splGzip} and subsection \ref{subsection zip}.\\

Let $k=\ov{\kappa}$ be an algebraic closure of $\kappa$. By Theorem \ref{thm intro 1}, we get an induced stratification 
\[ \scrA^\spl_k=\coprod_{w\in {}^JW}\scrA^{\spl,w}_k\]
which we call the Ekedahl-Oort (EO) stratification. Here the index set ${}^JW$ is the subset of the (absolute) Weyl group of $\G^\spl_0$ defined by the cocharacter $\mu$ in the usual way, which is equipped with the partially order $\preceq$ as that in \cite{PinkWedhornZiegler2011,PinkWedhornZiegler2015}.
By Theorem \ref{thm intro 1}, each stratum $\scrA^{\spl,w}_k$ is non-empty, smooth, and we have the usual closure relation for the EO strata given by $\preceq$. In fact, the non-emptiness is more subtle, requiring additional efforts. We prove the non-emptiness by comparing the minimal EO stratum of $\scrA^\spl_k$ with the minimal EKOR stratum of $\scrA_k$ (see below) in Proposition \ref{prop:EO-EKOR}, and show the minimal EKOR stratum is non-empty in the Appendix (Proposition \ref{prop non-empty EKOR}) by adapting the method of He-Zhou \cite{HeZhou2020}.

We can compare the EO strata with some other naturally raised strata. 
Recall the natural morphism \[\pi:\scrA^\spl_k\ra \scrA_k,\] which is a resolution of singularities: as in the Hilbert case (\cite{DP})
the scheme $\scrA_k$ is usually singular in the ramified case.
On $\scrA_k$, we have the following stratifications:
\begin{itemize}
	\item Newton stratification, which can be constructed by studying the associated $F$-isocrystals with additional structure.
	\item Kottwitz-Rapoport (KR) stratification $\scrA_k=\coprod_{w\in \tr{Adm}(\mu)_{K_p}}\scrA^{w}_k$, which is induced from the local model diagram for $\scrA_k$ and the geometry of $\mathbb{M}^\loc_k$, cf. \cite{PappasRapoport2005, Levin2016, HainesRicharz2020}.
	\item Ekedahl-Kottwitz-Oort-Rapoport (EKOR) stratification $\scrA_k=\coprod_{x\in {}^{K_p}\tr{Adm}(\mu)}\scrA^{x}_k$, which
	can be constructed as in \cite{ShenYuZhang2021}, see the Appendix of the current paper; in particular, see Example \ref{example EKOR} for more information on EKOR strata in the Hilbert case.
\end{itemize}
While on $\scrA^\spl_k$ we have the following stratifications:
\begin{itemize}
	\item Newton stratification, which can be constructed by forgetting the splitting structures and considering the associated $F$-isocrystals with additional structure,
	\item Hodge stratification, which is constructed in \cite{BijakowskiHernandez2022},
	\item Ekedahl-Oort stratification, which is constructed in this paper.
\end{itemize}
The Newton stratification on $\scrA^\spl_k$ naturally factors through $\scrA_k$. In subsection \ref{subsection Hodge and KR} we will show that the Hodge stratification also factors through $\scrA_k$. Moreover, the resulted  stratification on $\scrA_k$ is coarser than the Kottwitz-Rapoport stratification. On the other hand, the Ekedahl-Oort stratification on $\scrA^\spl_k$ does not factor through $\scrA_k$ in general. Nevertheless, one can try to compare the EO strata of $\scrA^\spl_k$ with pullbacks of KR and EKOR strata of $\scrA_k$ under $\pi$.
On each KR stratum $\scrA_k^w$, we have a $\G_0^{\tr{rdt}}$-zip of type $J_w$, see \cite{ShenYuZhang2021} and our Appendix. Pulling back to  $\scrA^\spl_k$, we get a family of $\G_0^{\tr{rdt}}$-zips of different types. It is curious to study the relationship with our $\G^\spl_0$-zip of fixed type $\mu$. In doing so, we prove the following results.
\begin{theorem}[Propositions \ref{maxKR} and \ref{prop EO in max KR}]
\begin{enumerate}
	\item Let $\scrA_k^{w_0}$ be the maximal KR stratum of $\scrA_k$. Then $\pi$ induces an isomorphism \[\pi^{-1}(\scrA_k^{w_0})\st{\sim}{\ra}\scrA_k^{w_0};\] furthermore $\pi^{-1}(\scrA_k^{w_0})$ is also the maximal Hodge stratum (called the generalized Rapoport locus in \cite{BijakowskiHernandez2022}).
	\item $\pi^{-1}(\scrA_k^{w_0})$ is a disjoint union of some EO strata of $\scrA^\spl_k$, and moreover
	 these EO strata are exactly pullbacks of the EKOR strata contained in $\scrA_k^{w_0}$. Thus the isomorphism $\pi^{-1}(\scrA_k^{w_0})\st{\sim}{\ra}\scrA_k^{w_0}$ refines into isomorphisms between the corresponding EO and EKOR strata.
\end{enumerate}
\end{theorem}
In particular, using a result of He-Nie \cite{HeNie2017}, we can deduce that the maximal EO stratum coincides with the $\mu$-ordinary locus, cf. Corollary \ref{cor mu-ordinary}. Thus by Theorem \ref{thm intro 1} we reprove the open density of $\mu$-ordinary locus of $\scrA^\spl_k$, cf. \cite{BijakowskiHernandez2022}.\\

Once we have the morphism \[\zeta: \scrA^\spl_0\ra \G^\spl_0\tr{-Zip}^\mu_{\kappa},\] we get Hasse invariants on $ \scrA^\spl_0$ by pulling back the group theoretic Hasse invariants on the stack $\G^\spl_0\tr{-Zip}^\mu_{\kappa}$ constructed by Goldring-Koskivirta in \cite{GoldringKoskivirta2019}. Note that in \cite{BijakowskiHernandez2017} Bijakowski and Hernandez have constructed Hasse invariants for the $\mu$-ordinary locus (which is also our maximal EO stratum) by an explicit method. It would be interesting to compare their construction with our construction here. On the other hand, in the Hilbert case, in subsection \ref{subsection exa Hilbert} we do compare the EO strata and Hasse invariants here with those constructed in \cite{ReduzziXiao2017}.\\

To proceed as \cite{ReduzziXiao2017} and \cite{GoldringKoskivirta2019} to get applications to Galois representations, we need arithmetic compactifications of splitting models and we have to extend the above construction to the compactifications. Fortunately, the arithmetic (toroidal and minimal) compactifications for splitting models with good properties have already been established by Lan in \cite{Lan18}, based on his previous constructions in \cite{Lan13, Lan16, Lan17}. At this point, we have to slightly modify the integral model $\scrA^\spl=\scrA^\spl_{K^p}$ by considering the open closed subspace \[\M_K^\spl\subset \scrA^\spl_{K^p}\] studied in \cite{Lan18}. Here $K=K^pK_p$ and $K_p=\G(\Z_p)$.
All previous constructions and results also hold for $\M_K^\spl$ by simple modifications.
For a projective and smooth compatible collections of cone decompositions $\Sigma$, by \cite{Lan18} we have a toroidal compactification $\M_{K, \Sigma}^{\spl,\tor}$, together with a proper surjective morphism to the minimal compactification $\oint: \M_{K, \Sigma}^{\spl,\tor}\ra \M_{K}^{\spl,\min}$.
As we are working with smooth $\M_K^\spl$, the compactification $\M_{K, \Sigma}^{\spl,\tor}$ is also smooth by \cite{Lan18} Proposition 3.4.14.
We write their special fibers as $\M^{\spl}_{K,0}$ and $\M^{\spl,\tor}_{K,\Sigma,0}$.
\begin{theorem}[Theorem \ref{thm zip map ext}]\label{thm intro 2}
	\begin{enumerate}
		\item 
		The $\calG^\spl_0$-zip of type $\mu$ on $\M^{\spl}_{K,0}$ extends to a $G(\bbA_f^p)$-equivariant $\calG^\spl_0$-zip of type $\mu$ on $\M^{\spl,\tor}_{K,\Sigma,0}$.
		\item The induced map $\zeta^\tor: \M^{\spl,\tor}_{K,\Sigma,0}\ra \calG^\spl_0\tr{-}\mathrm{Zip}^\mu_\kappa$ is smooth.
	\end{enumerate}
\end{theorem}
The extension of $\calG^\spl_0$-zip is easy to construct, as the canonical extension of the universal de Rham bundle together with its Hodge filtration has already been given by Lan in \cite{Lan18}, so we just repeat the above construction. To prove the smoothness of $\zeta^\tor$, we adapt some ideas of Andreatta \cite{And22} in the unramified Hodge type case. We also prove that EO strata here are well-positioned in the sense of Lan-Stroh \cite{LS18}. By the well-positionedness in fact we get another proof of the smoothness of $\zeta^\tor$ (this is the approach taken in \cite{LS18} for the unramified PEL case).\\

Before talking about arithmetic applications, we note another advantage of the smooth splitting models $\M_K^\spl$. As the modified local model diagram for $\M_K^\spl$ is similar to that in the unramified case, we get naturally automorphic vector bundles on $\M_K^\spl$ as in the classical procedure. In contrast, automorphic vector bundles do not extend to the singular integral model $\M_K$, the corresponding open and closed subspace of $\scrA=\scrA_{K^p}$.
We also get canonical and subcanonical extensions of automorphic vector bundles to $\M_{K, \Sigma}^{\spl,\tor}$, as the local model diagram extends. Moreover, pull backs of canonical extensions of automorphic vector bundles satisfy similar properties as in the unramified case (cf. \cite{Lan16a} Proposition 5.6). In particular, we have the relative vanishing of higher direct images for the projection $\oint: \M_{K, \Sigma}^{\spl,\tor}\ra \M_{K}^{\spl,\min}$, cf. Proposition \ref{prop rel vanish}  and \cite{Lan18} Theorem 4.4.9.\\

Theorems \ref{thm intro 1} and \ref{thm intro 2} allow us to deduce some further consequences on the mod $p$ geometry and arithmetic related to PEL-type Shimura varieties with ramifications arising from Weil restrictions. As an illustration, we discuss congruences of mod $p$ automophic forms and Galois representations associated to torsion classes in coherent cohomology. For some other possible applications, see \cite{DK20} and \cite{WZ}.
From Theorem \ref{thm intro 2}, we get extensions of Hasse invariants to toroidal compactifications. 
Then we can apply Goldring-Koskivirta's machinery of Hasse-regular sequences introduced in \cite{GoldringKoskivirta2019}, which formalizes some key properties of the subschemes defined by Hasse invariants. Indeed, the key technical conditions 6.4.2 and 7.1.2 of \cite{GoldringKoskivirta2019} have been verified in our setting as above.
Let $S$ be the finite set of primes $\ell$ where $K_\ell$ is not hyperspecial. Let $P_\mu\subset G_F$ be the parabolic subgroup associated to the cocharacter $\mu: \mathbb{G}_{m,F}\ra G_F$ in the usual way (see the paragraph above Corollary \ref{coro ext vb}), $\mathbf{L}\subset P_\mu$ the Levi subgroup and $V_\eta\in \Rep_F\mathbf{L}$ an irreducible representation of highest weight $\eta$. Using our modified local model diagram, we get the associated integral automorphic vector bundle $\V_\eta$ on $\M_K^\spl$ and the subcanonical extension $\V_\eta^\sub$ of $\V_\eta$ to $\M_{K, \Sigma}^{\spl,\tor}$. For each integers $i\geq 0, n\geq 1$, 
the Hecke algebra $\H^S$ acts on the cohomology groups \[H^i(\M_{K, \Sigma,\calO_F/\varpi^n}^{\spl,\tor}, \V_\eta^\sub),\] where $\varpi$ is a uniformizer of $\calO_F$. Let $\H^{i,n}_\eta$ be its image in the endomorphism algebra.
Fix a representation $r: {}^LG\ra \mathrm{GL}_m$ of the Langlands dual group over $\mathbb{C}$ (and fix an isomorphism $\mathbb{C}\simeq \ov{\Q}_p$). With all the geometric ingredients at hand, one can prove the following theorem by the same arguments as \cite{GoldringKoskivirta2019}.
\begin{theorem}[Theorem \ref{thm Galois}]\label{thm 3}
	Suppose that for any regular $C$-algebraic cuspidal automorphic representation $\pi'$ of $G$ with $\pi_{\infty}'$ discrete series, the pair $(\pi',r)$ satisfies the condition $LC_p$ of subsection \ref{subsec Galois}, which roughly says the existence of $\mathrm{GL}_m$-valued $p$-adic Galois representation attached to $\pi'$ satisfying the local-global compatibility outside $S$.
	\begin{enumerate}
		\item 
		For any $i\geq 0, n\geq 1, \eta\in X^\ast(T)_{\mathbf{L}}^+$,
		there exists a continuous Galois pseudo-representation
		\[\rho: \mathrm{Gal}(\ov{\Q}/\Q)\lra \H^{i,n}_\eta, \]such that $\rho(\Frob_v^j)=T_v^{(j)}$ for all $v\notin S$, where $\Frob_v$ is the geometric Frobenius at $v$ and $T_v^{(j)}\in \H^{i,n}_\eta$ is the element defined in subsection \ref{subsec Galois}.
		
		\item Let $\pi$ be a $C$-algebraic cuspidal automorphic representation of $G$ such that $\pi_\infty$ is a ($C$-algebraic) non-degenerate  limit of discrete series and $\pi^{K_p}_p\neq 0$. Then $(\pi,r)$ also satisfies $LC_p$.
	\end{enumerate}
\end{theorem}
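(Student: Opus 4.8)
The plan is to follow the strategy of Goldring--Koskivirta \cite{GoldringKoskivirta2019} essentially verbatim, feeding in the geometric inputs of Theorems \ref{thm intro 1} and \ref{thm intro 2}. First I would pull back the group-theoretic Hasse invariants on $\calG^\spl_0\tr{-}\mathrm{Zip}^\mu_\kappa$ along the smooth morphism $\zeta^\tor:\M^{\spl,\tor}_{K,\Sigma,0}\ra \calG^\spl_0\tr{-}\mathrm{Zip}^\mu_\kappa$ of Theorem \ref{thm intro 2} to obtain Hasse invariants on $\M^{\spl,\tor}_{K,\Sigma,0}$ whose non-vanishing loci are exactly the (pullbacks of the) EO strata, and record that the smoothness and surjectivity in Theorems \ref{thm intro 1}(2) and \ref{thm intro 2}(2), together with the well-positionedness of the EO strata and the relative vanishing of higher direct images along $\oint$ in Proposition \ref{prop rel vanish}, are precisely what is needed to verify the technical hypotheses 6.4.2 and 7.1.2 of \cite{GoldringKoskivirta2019}. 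Granting these, the Hasse-regular sequence machinery of loc.\ cit.\ applies directly to the graded module $\bigoplus_\eta H^\bullet(\M^{\spl,\tor}_{K,\Sigma,0},\V_\eta^\sub)$.

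Next, given a torsion Hecke eigenclass in $H^i(\M^{\spl,\tor}_{K,\Sigma,\calO_F/\varpi^n},\V_\eta^\sub)$, I would multiply by a sufficiently large power of a Hasse-regular sequence to lift it, via the mod-$\varpi^n$ long exact sequences, to a class in $H^i(\M^{\spl,\tor}_{K,\Sigma,\calO_F},\V_{\eta'}^\sub)$ for a larger weight $\eta'$, hence, after inverting $p$, to a genuine system of Hecke eigenvalues occurring in the cuspidal coherent cohomology of a regular $C$-algebraic cuspidal automorphic representation $\pi'$ with $\pi'_\infty$ in the discrete series; the hypothesis $LC_p$ then furnishes a $\mathrm{GL}_m$-valued $p$-adic Galois representation $\rho_{\pi'}$ unramified outside $S$ whose Frobenius traces realize the $T_v^{(j)}$. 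Reducing these representations modulo $\varpi^n$, invoking the theory of Galois pseudo-representations (Chenevier determinants) to descend to the subalgebra $\H^{i,n}_\eta$, and checking compatibility of the resulting pseudo-representations as the Hasse twist and the parameters vary, produces the continuous pseudo-representation $\rho:\Gal(\ov\bbQ/\bbQ)\ra \H^{i,n}_\eta$ with $\rho(\Frob_v^j)=T_v^{(j)}$ for all $v\notin S$, which is part (1).

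For part (2), I would argue as in \cite{GoldringKoskivirta2019}: if $\pi$ is $C$-algebraic cuspidal with $\pi_\infty$ a non-degenerate limit of discrete series and $\pi_p^{K_p}\neq 0$, then by the computation of the coherent cohomology of non-degenerate limits of discrete series together with cuspidality, the Hecke eigensystem of $\pi$ occurs in $H^i(\M^{\spl,\tor}_{K,\Sigma},\V_\eta^\sub)$ in a single explicit degree $i$ for an explicit weight $\eta$, with $\pi^{K}\neq0$ because $\pi_p^{K_p}\neq0$. Multiplying by a power of a Hasse invariant moves this eigensystem into a weight where it is also carried by a regular $\pi'$ with $\pi'_\infty$ in the discrete series, to which $LC_p$ already applies; since the Hecke eigenvalues away from $S$ are unchanged by the twist, the resulting Galois representation is independent of it and serves as the representation attached to $\pi$, so $(\pi,r)$ satisfies $LC_p$.

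The main obstacle is the torsion-lifting step together with the bookkeeping it entails: one must ensure that high powers of the Hasse-regular sequence really do lift \emph{every} torsion eigenclass to characteristic zero, that the lifted classes are genuinely cuspidal and accounted for by regular representations with discrete-series component at infinity (so that the input hypothesis $LC_p$ is applicable), and that the Galois (pseudo-)representations produced for the various twists, weights $\eta'$, cohomological degrees $i$ and levels $n$ are mutually compatible and descend to $\H^{i,n}_\eta$. This is by now a standard but delicate argument; crucially, all of its geometric prerequisites --- smoothness and surjectivity of $\zeta$ and $\zeta^\tor$, the existence and genericity of Hasse invariants, the well-positionedness of the EO strata, and relative vanishing along $\oint$ --- are exactly what Theorems \ref{thm intro 1}, \ref{thm intro 2} and Proposition \ref{prop rel vanish} provide.
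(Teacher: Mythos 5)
Your overall route is the paper's: establish the geometric inputs (the smooth $\zeta$ and $\zeta^\tor$, extended and length Hasse invariants, well-positionedness of EO strata, relative vanishing along $\oint$), verify hypotheses 6.4.2 and 7.1.2 of Goldring--Koskivirta, and then run their machinery. The paper packages the key intermediate output as Theorem \ref{Thm Hecke alg}: for each $(i,n,\eta)$ the set $F(i,n,\eta)$ of weights $\eta'$ for which $\H^S\to\H^{i,n}_\eta$ factors through $\H^S\to\H^{0,n}_{\eta'}$ contains $\delta$-regular weights, and parts (1) and (2) are then deduced verbatim as in loc.\ cit.\ via pseudo-representations.

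However, your middle step is misstated in a way that would fail if taken literally. You propose to lift a torsion eigenclass in $H^i(\M^{\spl,\tor}_{K,\Sigma,\calO_F/\varpi^n},\V_\eta^\sub)$ to a class in $H^i(\M^{\spl,\tor}_{K,\Sigma,\calO_F},\V_{\eta'}^\sub)$ in the \emph{same} cohomological degree $i$ and then ``invert $p$''. A torsion class cannot survive inverting $p$; and even at the level of eigensystems, a characteristic-zero class in $H^i$ with $i>0$ is carried by a (possibly degenerate) limit of discrete series rather than a discrete series, so the hypothesis $LC_p$ (assumed only for regular $\pi'$ with $\pi'_\infty$ discrete series) would not apply to it. The actual mechanism is the factorization \emph{of the Hecke action} through degree zero: the exact sequences $0\to \V_\eta^\sub\otimes\omega_{\Hdg}^{s}\to\V_\eta^\sub\otimes\omega_{\Hdg}^{(a_{r-1}+s)}\to \V_\eta^\sub\otimes\omega_{\Hdg}^{(a_{r-1}+s)}|_{Z_{r}}\to 0$ attached to a Hasse-regular sequence, combined with the vanishing $H^{>0}(Z_r,\V_\eta^\sub\otimes\omega_{\Hdg}^{m})=0$ for $m\gg 0$, show that $\H^S\to\H^{i,n}_\eta$ factors through $\H^S\to\H^{0,n}_{\eta'}$ for suitable $\eta'$; only in degree $0$, and after a further ample twist making the weight $\delta$-regular, does one lift mod-$\varpi^n$ sections to characteristic zero and identify the eigensystems with cuspidal $\pi'$ having discrete series at infinity. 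With this correction — replacing ``lift the class in $H^i$'' by ``factor the Hecke action through $H^0$'' — your argument, including the pseudo-representation gluing for (1) and the weight-shifting argument for non-degenerate limits of discrete series in (2), coincides with the paper's.
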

As \cite{GoldringKoskivirta2019}, this theorem is a consequence of a result that the Hecke action on $H^i$ factorizes through $H^0$ with increased weight, which is in turn deduced by the machinery of Hasse invariants on toroidal compactifications, see Theorem \ref{Thm Hecke alg}.

For the unramified PEL type case, Theorem \ref{thm 3} is due to \cite{GoldringKoskivirta2019} (the key technical conditions 6.4.2 and 7.1.2 there hold for the unramified PEL type case), which recovers the main result of \cite{ERX} (for the unramified Hilbert case), and completes the work of Boxer \cite{Box} by a different approach. We refer to \cite{GoldringKoskivirta2019} subsection 10.1 for further discussions on the appearance of non-degenerate  limit of discrete series in higher coherent cohomology of Shimura varieties.
Recall that since $K_p\subset G(\Q_p)$ is a very special parahoric subgroup, irreducible smooth representations $\pi_p$ of $G(\Q_p)$ such that $\pi^{K_p}_p\neq 0$ can be classified by their spherical parameters, see \cite{Zhu} section 6. The significance of Theorem \ref{thm 3} lie in the flexibility it provides for the construction of automorphic Galois representations, in the sense that we allow the $p$-component of the automorphic representation $\pi$ to be ramified as $\pi^{K_p}_p\neq 0$. It reproves \cite{ReduzziXiao2017} Theorem 1.1 in the Hilbert case by a different method. We also mention that a similar result has been given in \cite{PS16} in the ramified setting, but there the integral structure is given by the strange integral models introduced by Scholze using perfectoid geometry.
As all these works demonstrate, Theorem \ref{thm 3} should be useful when studying modularity lifting problems, cf. \cite{CG}.\\

In this paper we restrict to the PEL case, since so far splitting models for Shimura varieties have only been constructed in this setting (\cite{PappasRapoport2005}). It would be interesting and useful to construct splitting models in more general Hodge and abelian type cases. This would certainly require quite different ideas and independent treatments. Once more general splitting models are available, we do expect that our constructions in this paper would naturally extend.\\

This paper is organized as follows. In section 2, we fix the PEL datum and discuss the corresponding Pappas-Rapoport splitting models. In particular, under a basic assumption on the datum we prove the smoothness of splitting models by a modified local model diagram. In section 3, we construct the universal $\G^\spl_0$-zip of type $\mu$ over the smooth special fiber $\scrA^\spl_0$ and study the induced Ekedahl-Oort stratification. As an application, we prove the open density of the $\mu$-ordinary locus. Along the way we also show that the Hodge stratification constructed in \cite{BijakowskiHernandez2022} descends to $\scrA_{0}$. In section 4, we discuss Hasse invariants on $\scrA^\spl_0$, by pulling back those on the zip stack constructed by  Goldring-Koskivirta. We also discuss some concrete examples, in particular we compare the Hasse invariants and EO strata here with those constructed by Reduzzi-Xiao in \cite{ReduzziXiao2017} in the Hilbert case. In section 5, we first review Lan's constructions of arithmetic compactifications for splitting models, then we extend the universal $\G^\spl_0$-zip  to the smooth toroidal compactifications $\M_{K, \Sigma,0}^{\spl,\tor}$  and prove the smoothness of the induced morphism $\zeta^\tor$. In section 6, we apply the method of Goldring-Koskivirta \cite{GoldringKoskivirta2019} to deduce similar consequences on Hecke algebras and Galois representations associated to torsion classes in coherent cohomology of smooth splitting models. Finally, in the Appendix we discuss related local model diagrams for general parahoric levels, and briefly review the construction of EKOR stratification on $\scrA_{0}$.\\
 \\
\textbf{Acknowledgments.} We thank Sian Nie for some helpful discussions. We would like to thank the referee for careful reading and detailed comments, which help to improve the paper.

\section{Splitting models of PEL-type Shimura varieties with good reduction}\label{sec:splitting}

In this section, we review the definition of Pappas-Rapoport splitting models for PEL-type Shimura varieties, following \cite{PappasRapoport2005} and \cite{Lan18}, for a maximal level at $p$. 
In the Appendix \ref{section loc mod} we will work with more general parahoric levels, and discuss the related local model diagrams and EKOR stratifications, which are needed in the main text for the maximal level case.
Our setting here is the same as that in \cite{BijakowskiHernandez2022}.
In fact, we will reprove the main results of loc. cit. by a different approach.

\subsection{Integral PEL datum}\label{subsec PEL data}
\subsubsection{PEL datum}
Fix a prime $p>2$.
Let $(B, *, V, \Pair{\cdot}{\cdot}, \calO_B, \Lambda, h)$ be an integral PEL datum at $p$.
This means (see \cites{Kottwitz1992, RapoportZink1996, Lan13})
\begin{itemize}
	\item $B$ is a finite dimensional semisimple algebra over $\mathbb{Q}$ with a positive involution $*$. 
		We further assume that \emph{$B_{\mathbb{Q}_p}$ is isomorphic to a product of matrix algebras over finite extensions of 
		$\mathbb{Q}_p$} (note that we allow the extensions to be \emph{ramified} over $\mathbb{Q}_p$).
	\item $V$ is a finitely generated faithful $B$-module.
	\item $\Pair{\cdot}{\cdot}: V \times V \to \mathbb{Q}$ is a non-degenerate symplectic form on $V$ such that 
		$\Pair{bv}{w}=\Pair{v}{b^*w}$ for all $v,w\in V$ and $b\in B$.
	\item $\calO_B$ is a $*$-invariant $\mathbb{Z}_{(p)}$-order of $B$ such that $\calO_B \otimes \mathbb{Z}_p$
		is a maximal $\mathbb{Z}_p$-order of $B_{\mathbb{Q}_p}$.
	\item $\Lambda$ is a $\calO_B\otimes\Z_p$-lattice in $V_{\mathbb{Q}_p}$, such that $\Pair{\cdot}{\cdot}$ induces a (not necessary perfect)
		pairing $\Lambda \times \Lambda \to \mathbb{Z}_p$.
	\item $G$ is the algebraic group over $\mathbb{Q}$ of (similitude) automorphisms of $(V, \Pair{\cdot}{\cdot})$, 
		i.e. for any $\mathbb{Q}$-algebra $R$, we have
		\[G(R) = \{g \in \GL_B(V_R) \mid \Pair{g z}{g z'} = c(g) \Pair{z}{z'} \text{ for some } c(g) \in R^\times, \, \forall\, z, z' \in V_R\}.\]
		We further require the group $G$ to be \emph{connected}.
	\item $h:\mathbb{S}=\Res_{\mathbb{C}|\mathbb{R}}(\mathbb{G}_m) \to G_\mathbb{R}$ is a group homomorphism which defines a
		Hodge structure of type $ \{(-1,0),(0,-1)\}$ on $V_\mathbb{R}$ and the form $\Pair{\cdot}{h(\sqrt{-1})\cdot}$ is
		symmetric and positive definite. (Note that $(\Lambda,\Pair{\cdot}{\cdot}, h)$ is a PEL-type $\calO_B$-lattice in the sense of \cite{Lan13} Definition 1.2.1.3.)
	\item $\mu:\mathbb{G}_{m,\mathbb{C}} \to G_\mathbb{C}$ is the Hodge cocharacter associated to $h$.
\end{itemize}

We make some explicit description of this datum, which will be fixed in the rest of this paper.
By assumption, there is a decomposition (as product of $*$-invariant simple factors)
\[B_{\mathbb{Q}_p} \simeq  \prod_{i=1}^r B_i = \prod_{i=1}^r \Mat_{m_i}(R_i), \]
where for each $1\leq i\leq r$, $R_i$ is a product of finite extensions (maybe \emph{ramified}) of $\mathbb{Q}_p$.
Let $I := \{1,\dots, r\}$ be the index set.
For each $i \in I$, 
let $F_i$ be the set of $*$-invariant elements in $R_i$, which is a field over $\bbQ_p$. Since we require the group $G$ to be connected,  
we can decompose the index set $I$ into four types: (C), (AL), (AU), (AR) (see also \cite{BijakowskiHernandez2022} subsection 2.2, but note that our $I$ is the quotient of their $\{1,\dots, r\}$ by the induced action of $\ast$). For each $i\in I$,
its type is defined by
\begin{enumerate}
	\item [(C):] if $*$ induces the identity on $R_i$, so we have $R_i = {F_i}$.
	\item [(AL):] if $R_i \simeq {F_i} \times {F_{i}}$ and $*$ exchanges the two factors.
	\item [(AU):] if $*$ is an automorphism of order 2 on $R_i$, and $R_i|F_i$ an unramified quadratic extension.
	\item [(AR):] if $*$ is an automorphism of order 2 on $R_i$, and $R_i|F_i$ a \emph{ramified} quadratic extension.
\end{enumerate}
As in this paper we are mainly interested in good reduction of splitting models, we will make the following assumption:
\[\text{For each index } i \in I, \text{ its type is \emph{not} (AR)}.\]
Thus our setting is the same as that in \cite{BijakowskiHernandez2022}. In the unramified case, see also \cite{ViehmannWedhorn2013} section 2.

For each $i\in I$,
let \[n_i = e_i f_i\] be the degree of $F_i$ over $\mathbb{Q}_p$,
where $e_i$ is the ramification index and $f_i$ the residue degree of $F_i |\mathbb{Q}_p$. Denote $F_i^{\ur}$ the maximal unramfied extension of $\Q_p$ inside $F_i$. Then $e_i=[F_i: F_i^{\ur}]$ and $f_i=[F_i^{\ur}: \Q_p]$.
The decomposition of $B_{\bbQ_p}$ induces a decomposition
\[V_{\bbQ_p} = \bigoplus_{i=1}^r V_i^{m_i}.\]
For each $i \in I$, 
the existence of $*$-symplectic form implies that $V_i$ is a vector space over $F_i$ of even dimension, so we write
\[d_i = \frac{1}{2} \dim_{F_i} V_i,\quad\tr{thus}\quad
\dim_{\Q_p}V_i=2d_in_i.\]
We have an isomorphism
\[\calO_B\otimes\Z_p\simeq \prod_{i=1}^r\Mat_{m_i}(\calO_{R_i})\] with $\calO_{R_i}$ the maximal order of $R_i$, and the induced decomposition \[\Lambda = \bigoplus_{ i=1}^r \Lambda_i^{m_i},\]
where each $\Lambda_i$ is an $\calO_{R_i}$-lattice in $V_i$, and there is an induced pairing $\langle\cdot,\cdot\rangle_i: \Lambda_i\times\Lambda_i\ra \calO_{R_i}$ if $i$ is of type (C) or (AL),  and $\langle\cdot,\cdot\rangle_i: \Lambda_i\times\Lambda_i\ra \calO_{F_i}$ if $i$ is of type (AU).
\subsubsection{Rational group structure}\label{subsub rational}
The restriction of the pairing $\langle\cdot,\cdot\rangle$ on $V_i$ defines an algebraic group $G_i=\Res_{F_i|\Q_p}H_i$ over $\bbQ_p$:
 for any $F_i$-algebra $R$, we have
\[H_i(R) = \{g_i \in \GL_{B_i}(V_{i,R}) \mid \Pair{g_i x}{g_i y} = c_i(g_i) \Pair{x}{y} \text{ for some } c_i(g_i) \in R^\times,\,\forall\, x,y \in V_{i,R}\}.\]
Then \[G_{\bbQ_p} \subset \prod_{i \in  I}G_i\] is the subgroup such that the local similitude factors $c_i$ are the same and defined over $\Q_p$.
From now on, we fix an isomorphism of fields $\mathbb{C} \simeq \overline{\mathbb{Q}}_p$. Then we view $\mu$ as a cocharacter of $G$ over $\ov{\Q}_p$.
Let $F$ be a fixed sufficiently large finite Galois extension of $\mathbb{Q}_p$ containing
all embeddings of $F_i$ to $\overline{\mathbb{Q}}_p$ such that $G_F$ is split.
Then the cocharacter $\mu$ is defined over $F$ and we can write
\[\mu: \bbG_{m,F} \to G_F \subset \prod_{i\in I} G_{i,F}.\]
For each $i\in I$,
the projection of $G_F$ to $G_{i,F}$ induces a cocharacter 
\[\mu_i: \bbG_{m,F} \to G_{i,F}.\]

For each $i \in I$, we order the embeddings $ F_i^{\rm ur} \hookrightarrow F$ as $\sigma_{i,1},\dots\,\sigma_{i,f_i}$. For each $1\leq j\leq f_i$,
there are $e_i$ extensions of $\sigma_{i,j}$ to embeddings $F_i\hookrightarrow F$, ordered and denoted by $\sigma_{i,j}^l: F_i \to F$, $1\leq l \leq e_i$.
We have the decompositions 
\[V_{i,F} = \bigoplus_{j=1}^{f_i} V_{i,j},\quad  V_{i,j} = \bigoplus_{l=1}^{e_i} V_{i,j}^l.\]
For each $1\leq j\leq f_i$,
the space $V_{i,j}$ is the subspace of $V_{i,F}$ such that $\Unram{F_i}$ acts through $\sigma_{i,j}$,
and for each $1\leq l\leq e_i$ the space $V_{i,j}^l$ is the subspace of $V_{i,j}$ such that $F_{i}$ acts through $\sigma_{i,j}^l$.
Note that we also have \[2 d_i = \rank_{F} V_{i,j}^l,\] 
and the pairing $\langle\cdot,\cdot\rangle$ on $V_i$ induces pairings
$\langle\cdot,\cdot\rangle: V_{i,j}^l\times V_{i,j}^l\ra F$ (which in fact are defined over $F_i$).
For each $i \in I$, as
$G_i = \Res_{F_i|\bbQ_p} H_i$,
we have 
\[G_{i,F} = \prod_{\sigma_{i,j}^l: F_i \to F} G_{i,j}^l,\]
where in the index $\sigma_{i,j}^l$ runs through all $1\leq j\leq f_i$ and $1\leq l\leq e_i$, and each
$G_{i,j}^l$ is (isomorphic to) a copy of $H_{i,F}$. 
So the cocharacter
\[\mu_i: \bbG_{m,F} \to G_{i,F} = \prod_{j,l} G_{i,j}^l,\]
induces a cocharacter
\[\mu_{i,j}^l: \bbG_{m,F} \to G_{i,j}^l\]
for each $1\leq j\leq f_i$ and $1\leq l \leq e_i$.

Since the only weights of $\mu: \mathbb{G}_{m,F} \to G_{F}$ are $0$ and $1$,
we get an induced decomposition
\[V_{F} = W \oplus W',\]
where $z \in F^\times$ acts on $W$ (resp. $W'$) by 1 (resp. $z$).
This implies that the pairing $\langle\cdot,\cdot\rangle$ induces an isomorphism
\[W \simeq (W')^\vee := \Hom_{F}(W', F).\]
We can also decompose the $F$-vector space $W$ as
\[W = \bigoplus_{i=1}^r W_i^{m_i}, \quad W_i = \bigoplus_{j=1}^{f_i} W_{i,j}\quad \text{ and } \quad W_{i,j} = \bigoplus_{l=1}^{e_i} W_{i,j}^l.\] We have
similar decompositions for $W'$. For each $i,j,l$, we have $V_{i,j}^l=W_{i,j}^l\oplus (W')_{i,j}^l$.
Let $d_{i,j}^l = \dim_F W_{i,j}^l$.
If we write $d_{i,j}^l = d_{\sigma_{i,j}^l}$ to make explicit the embedding $\sigma_{i,j}^l$, 
then there is an induced $*$-action on $\{\sigma_{i,j}^l\}_{i,j,l}$ and we set $(d_{i,j}^l)^* := d_{(\sigma_{i,j}^l)^*}$.
The natural isomorphism $W \simeq (W')^\vee$ gives the identity
\[(d_{i,j}^l)^* = \dim_F (W')_{i,j}^l,\quad \text{ and }\quad d_{i,j}^l + (d_{i,j}^l)^* = 2 d_i.\]
We can make these data more explicit according to the type of $i$: recall $G_i=\Res_{F_i|\Q_p}H_i$
\begin{enumerate}
	\item [(C):] in this case $R_i=F_i$, $\dim_{F_i}V_i=2d_i$, $H_i\simeq \GSp_{2d_i}$. We have
	 \[d_{i,j}^l=\dim_FW_{i,j}^l=\dim_F (W')_{i,j}^l=d_i\] for each $1\leq j\leq f_i$ and $1\leq l\leq e_i$.
	\item [(AL):] in this case $R_i=F_i\times F_i$, so we have a decomposition \[V_i=U_i\oplus U_i^{\vee}\] with $\dim_{F_i}U_i=d_i$, $H_i\simeq \GL_{d_i}\times \mathbb{G}_m$. 
	Then there are induced decompositions
	$V_{i,j}^l=U_{i,j}^l\oplus (U^\vee)_{i,j}^{l}$,
	$W_{i,j}^l=A_{i,j}^l\oplus (A^\vee)_{i,j}^l$ and $(W')_{i,j}^l=(A')_{i,j}^l\oplus ((A')^\vee)_{i,j}^{ l}$. Let $a_{i,j}^l=\dim_F A_{i,j}^l, b_{i,j}^l=\dim_F(A')_{i,j}^l$, then by similar notations as above  \[(a_{i,j}^l)^\ast=b_{i,j}^l \quad \tr{and}\quad a_{i,j}^l+b_{i,j}^l=d_i\] for each $1\leq j\leq f_i$ and $1\leq l\leq e_i$.
	\item [(AU):] in this case $R_i|F_i$ is an unramified quadratic extension, and $V_i$ is a Hermitian space over $R_i|F_i$, $H_i=\tr{GU}(V_i,\langle\cdot,\cdot\rangle_i)$.
	We have $W_{i,j}^l=A_{i,j}^l\oplus B_{i,j}^l$ and $(W')_{i,j}^l=(A')_{i,j}^l\oplus (B')_{i,j}^l$. Let $a_{i,j}^l=\dim_F A_{i,j}^l, b_{i,j}^l=\dim_F(A')_{i,j}^l$, then by similar notations as above  \[(a_{i,j}^l)^\ast=b_{i,j}^l \quad \tr{and}\quad a_{i,j}^l+b_{i,j}^l=d_i\] for each $1\leq j\leq f_i$ and $1\leq l\leq e_i$.
\end{enumerate}

In the rest of the paper, 
the index $(i,j,l)$ will always be a tuple of integers running through
$i \in I, 1\leq j \leq f_i$ and $0 \leq l \leq e_i$.
\subsubsection{Integral group structure}\label{subsubsec int group}
Now we discuss the integral group theoretic data which will be needed later. Consider the parahoric group scheme $\G$ over $\Z_p$  defined by the integral PEL datum $(\calO_B, \ast,\Lambda, \langle\cdot,\cdot\rangle)$.
Recall the decomposition
$\Lambda = \bigoplus_{ i=1}^r \Lambda_i^{m_i}$,
where each $\Lambda_i$ is an $\calO_{R_i}$-lattice in $V_i$ together with an induced form $\langle\cdot,\cdot\rangle_i: \Lambda_i\times\Lambda_i\ra \calO_{R_i}$ ($i$ of type (C) or (AL)) or $\langle\cdot,\cdot\rangle_i: \Lambda_i\times\Lambda_i\ra \calO_{F_i}$ ($i$ of type (AU)).
Let $\calG_i / \bbZ_p$ be the parahoric subgroup of $G_i$ associated to $(\Lambda_i, \langle\cdot,\cdot\rangle_i)$,
then \[\G\subset \prod_{i\in I} \G_i\] is the subgroup of elements with the same local similitude factors defined over $\Z_p^\times$.
For each $i$, the  $\calO_{F_i}$-lattice $\Lambda_i$ is self dual with respect to the form $\langle\cdot,\cdot\rangle_i$, so that the above group $H_i$ is unramified over $F_i$. By abuse of notation, we still denote by $H_i$ its reductive integral model over $\calO_{F_i}$. Then
we have \[\calG_{i} = \Res_{\calO_{F_i}|\bbZ_p} H_i.\]

Recall that we have fixed a large enough field extension $F|\Q_p$.
Let $\kappa$ (resp. $\kappa_i$ for each $i\in I$) be the residue field of $F$ (resp. $F_i$ for each $i\in I$). For each $i\in I$, we denote also by $H_i$ the associated reductive group over $\kappa_i$.
Consider the artinian $\kappa_i$-algebra defined by the quotient of polynomial ring
$\kappa_i[\varepsilon_i] := \kappa_i[x]/(x^{e_i})$.
Then we have
\[ \calG_{i,0}:=\calG_i \otimes_{\bbZ_p} \bbF_p \simeq \Res_{\kappa_i[\varepsilon_i]|\bbF_p} H_i.\]
By \cite{Oesterle1984}*{Appendice 3}, there is an exact sequence
\[1 \to U \to \calG_{i,0} \to \Res_{\kappa_i |\bbF_p} H_i \to 1,\]
where $U$ is a connected unipotent subgroup of $\calG_{i,0}$. Let $\calG_{i,0}^{\rdt}$ be the maximal reductive quotient of $\calG_{i,0}$.
So we have
\[\calG_{i,0}^{\rdt} \simeq \Res_{\kappa_i|\bbF_p} H_i.\]
This group is the similitude automorphism group associated to \[\Lambda_{i,0}^{\rdt} := \Lambda_{i,0} \otimes_{\kappa_i[\varepsilon_i]}\kappa_i,\quad\tr{where}\quad \Lambda_{i,0}=\Lambda_i\otimes_{\Z_p}\bbF_p,\] which admits an induced $\kappa_i[\varepsilon_i]$-action and a pairing $\langle\cdot,\cdot\rangle_i: \Lambda_{i,0}\times \Lambda_{i,0}\ra \kappa_i$ (for $i$ of type (C) or (AU)) or $\langle\cdot,\cdot\rangle_i: \Lambda_{i,0}\times \Lambda_{i,0}\ra \kappa_i\times \kappa_i$ (for $i$ of type (AL)). Let $\calG_{0}^{\rdt}$ be the maximal reductive quotient of $\calG_{0}$. We have similarly an inclusion
\[ \calG_{0}^{\rdt}\subset \prod_{i\in I}\calG_{i,0}^{\rdt}.\]

\subsubsection{The group $\calG_0^\spl$}\label{subsubsec calG}
For each $i\in I$,
we define the group $\calG_{i,0}^{\spl} / \bbF_p$ as the similitude automorphism group associated to $(\Lambda_{i,0}^{\rdt})^{e_i}$ with its natural pairing. Then we have an isomorphism
\[\calG_{i,0}^{\spl} \simeq\Res_{\kappa_i|\bbF_p} H_i^{e_i}.\]Putting together, we define \[\G^{\spl}_0\subset\prod_{i\in I}\calG_{i,0}^{\spl}\] as the subgroup of element with the same $\F_p$-local similitude factors.
For each $i\in I, 1\leq j\leq f_i,1\leq l\leq e_i$,  by our choice of $F$, $G_{i,j}^l$ is a split reductive group over $F$. We denote by the same notations the reductive group scheme over $\calO_F$ and $\kappa$. Then each $\mu_{i,j}^l$ extends to a cocharacter of $G_{i,j}^l$ over $\calO_F$ 
and thus a cocharacter over $\kappa$:
\[\mu_{i,j}^l: \bbG_{m,\kappa} \to G_{i,j}^l.\]
The cocharacters $\mu_{i,j}^l$ then define a cocharacter of $\prod_{i\in I}\G^\spl_{i,0}$ over $\kappa$. One can check carefully that it factors through $\G^{\spl}_{0,\kappa}$:
\[\mu= \prod_{i,j,l} \mu_{i,j}^l : \bbG_{m,\kappa} \to \G^\spl_0\otimes_{\F_p}\kappa \subset \prod_{i,j,l} G_{i,j}^l.\]

Fix an $i\in I$.
For each $1\leq j \leq f_i, 1 \leq l \leq e_i$, we also define an $\calO_F$-lattice as
\[ \Lambda_{i,j}^l := \Lambda_i \otimes_{\calO_{F_i},\sigma_{i,j}^l} \calO_F.\]
The pairing on $\Lambda_i$ induces a natural pairing on the $\calO_F$-lattice $\Lambda_{i,j}^l$,
and the similitude automorphism group is isomorphic to $G_{i,j}^l$ over $\calO_F$. In contrast to the rational case,
in general $\calG_{i,\calO_F}$ does not split as a product of $G_{i,j}^l$ due to ramification.
So we define the split $\calO_F$-lattice as
\[\Lambda^{\spl} := \bigoplus_{i \in I} \Lambda_i^{\spl,m_i},\quad \Lambda_i^{\spl} := \bigoplus_{j,l} \Lambda_{i,j}^l,\] here and in the following the indexes $j$ and $l$ (in the second direct sum) run over $1\leq j\leq f_i$ and $1\leq l\leq e_i$. There are naturally induced pairings on $\Lambda^{\spl} $ and $\Lambda_i^{\spl} $.
Let $\G^\spl$ and $\G^\spl_i$ be the corresponding group schemes over $\calO_F$. Then $\G^\spl_i=\prod_{j,l}G_{i,j}^l$, and we have the inclusion as before \[\G^\spl\subset \prod_{i\in I}\G^\spl_i\] by requiring having the same $\Z_p^\times$-local similitude factors.
Over $F$ we have the following isomorphism
\[\Lambda_i^{\spl} \otimes_{\calO_F} F \simeq V_i \otimes_{\bbQ_p} F,\]
compatible with additional structures on both side. Thus $\G^\spl$ is the split reductive model of $G$ over $F$. Note that there is a natural homomorphism \[\G_{i,\calO_F}\ra \G^\spl_i\] which is not an isomorphism in general as remarked above. Therefore, in general we have
\[\G_{\calO_F}\ncong \G^\spl.\]

For each $i\in I, 1\leq j\leq f_i,1\leq l\leq e_i$, set \[\Lambda_{i,j,0}^l=\Lambda_{i,j}^l\otimes_{\calO_F}\kappa \quad \tr{and} \quad \Lambda_{i,0}^l = \bigoplus_{j=1}^{f_i} \Lambda_{i,j,0}^l.\] 
For each $1 \leq l \leq e_i$, we have the natural isomorphism
\[\Lambda_{i,0}^{\rdt} \otimes_{\F_p} \kappa \simeq \Lambda_{i,0}^l = \bigoplus_{j=1}^{f_i} \Lambda_{i,j,0}^l.\] Therefore, \[
\Lambda_{i,0}^{\spl} := \Lambda_{i}^{\spl} \otimes_{\calO_F} \kappa=\bigoplus_{j,l}\Lambda_{i,j,0}^l=\Lambda_{i,0}^{\rdt, e_i}\otimes_{\F_p}\kappa.\]
So $\calG_{i,0}^{\spl} \otimes \kappa$ is the similitude automorphism group associated to $\Lambda_{i,0}^{\spl}$,
and $\mu_i$ is a cocharacter of $\calG_{i,0}^{\spl}$ defined over $\kappa$.
By construction, we have an isomorphism of reductive groups over $\kappa$: \[\G^\spl_0\otimes_{\F_p}\kappa\simeq \G^\spl\otimes_{\calO_F}\kappa.\]
In other words, $\G^\spl_0$ is an $\F_p$ model of the special fiber of $\G^\spl$. Moreover, one can analyze the decomposition of $\Lambda_{i,0}^{\spl}$ induced by $\mu_i$ exactly as in the characteristic zero case (see the end of in \ref{subsub rational}).

\subsection{Smooth Pappas-Rapoport splitting models}
Recall that we have $\mu: \mathbb{G}_{m,\ov{\Q}_p}\ra G_{\ov{\Q}_p}$ the Hodge cocharacter of $G$ over $\ov{\Q}_p$.
Let $E$ be the field of definition of the conjugacy class attached to $\mu$. Then by the assumption on $F$, we have $E\subset F$.
 
Let $K^p$ be a sufficiently small open compact subgroup of $G(\mathbb{A}_f^p)$,
which will be fixed in the rest of this section. By works of Kottwitz \cite{Kottwitz1992} and Rapoport-Zink \cite{RapoportZink1996},
there is a scheme $\mathscr{A}^{\naive}=\mathscr{A}^{\naive}_{K^p}$ over $\calO_E$ representing the following moduli problem (see \cite{RapoportZink1996} chapter 6 for more details):
for any $\calO_E$-scheme $S$, $\mathscr{A}^{\naive}(S)$ classifies the isogeny classes of tuples
$(A,\lambda,\iota,\alpha)$, where
\begin{itemize}
	\item $A /S$ is an abelian scheme,
	\item $\lambda: A \to A^\vee$ is a $\mathbb{Z}_{(p)}^\times$-polarization,
	\item $\iota: \calO_B \to \End_S(A)\otimes_\mathbb{Z} \mathbb{Z}_{(p)}$ is an $\calO_B$-structure of $(A,\lambda)$,
		which is compatible with the Rosatti involution.
	\item $\alpha$ is a $K^p$-level structure.
\end{itemize}
Its generic fiber $\mathscr{A}^{\naive}_E$ is called the rational moduli space with respect to the PEL datum.

For any $\calO_E$-scheme $S$ and a point $\underline{A} \in \mathscr{A}^{\naive}(S)$,
there is an exact sequence of $\calO_S$-modules
\[0 \to \omega_{A/S} \to H_{\dR}^1(A/S) \to \Lie_{A^\vee/S} \to 0,\]
where $\omega_{A/S}$ is the sheaf of invariant differentials.
For simplicity, write $\omega = \omega_{A/S}$, and $\calH = H^1_{\dR}(A/S)$. 
The action of $\calO_B\otimes\Z_p$ on the $\calO_S$-modules $\calH$ and $\omega$ induces decompositions
\[\calH=\bigoplus_{i\in I}\calH_i^{m_i},\quad \calH_i=\bigoplus_{j=1}^{f_i}\calH_{i,j},\quad
\omega = \bigoplus_{i\in I} \omega_i^{m_i},\quad \omega_i = \bigoplus_{j=1}^{f_i} \omega_{i,j},\]
and similarly for $\Lie_{A^\vee/S}$. For each $i, j$ we have an exact sequence of $\calO_S$-modules
\[0 \to \omega_{i,j} \to \H_{i,j} \to \Lie_{A^\vee/S, i, j} \to 0.\]
As $\iota$ is compatible with $\lambda$, there is an isomorphism
\[\mathcal{H}_{i,j} \simeq \mathcal{H}_{i,j}^{\vee}.\]
 $\calH_{i,j}$ is a self-dual $\calO_S$-module of rank $2d_i e_i$. If $i$ is an index of type (AL),  there is a natural decomposition $\calH_{i,j} = \calH'_{i,j} \times \calH_{i,j}^{'\vee}$ such that $\calH_{i,j}'$ is an $\calO_S$-module  and $\calH_{i,j}^{'\vee}:=\Hom(\calH'_{i,j}, \calO_S)$. Moreover, the pairing of $\calH_{i,j}$ is given by the natural pairing of product of dual objects.

For each $i\in I$, let $\pi_i$ be a uniformizer of $\calO_{F_i}$.
Moreover, for each triple $(i,j,l)$, 
set \[\pi_{i,j}^l = \sigma_{i,j}^l(\pi_i).\] For each $i, j, l$,
we define two polynomials as
\[Q_{i,j}^{\leq l}(T) = \prod_{k=1}^l (T-\pi_{i,j}^k)\quad \text{ and }\quad Q_{i,j}^{>l}(T) = \prod_{k=l+1}^{e_i} (T- \pi_{i,j}^k).\]

\begin{definition}[See also Definition \ref{def gen splitting str}]\label{spl-str-def}
Let $S$ be an $\calO_F$-scheme and $\underline{A} = (A,\iota,\lambda,\alpha) \in \mathscr{A}^{\naive}(S)$ with the associated $\H$ and $\omega$ together with the induced additional structure.
	A \emph{splitting structure} with respect to $\underline{A}$ is given by the datum $\ul{\mathscr{F}_\bullet}=(\mathscr{F}_{i,j}^l)$:
	for each $i\in I, 1\leq j\leq f_i$,
	there is a filtration of $\mathcal{O}_S$-modules with $\calO_{F_i}$-action
	\[0 = \mathscr{F}_{i,j}^0 \subset \mathscr{F}_{i,j}^1 \subset \cdots \subset \mathscr{F}_{i,j}^{e_i} = \omega_{i,j},\]
	such that
	\begin{enumerate}
		\item For each $1\leq l\leq e_i$, $\mathscr{F}_{i,j}^l$ and $\mathcal{H}_{i,j} / \mathscr{F}_{i,j}^l$ are  finite locally free $\mathcal{O}_S$-modules.
		\item Write $[\pi_i]$ for the action of $\pi_i \in \calO_{F_i}$. Then for all $1\leq l \leq e_i$, we have
			\[([\pi_i] - \pi_{i,j}^l)\cdot \mathscr{F}_{i,j}^l \subset \mathscr{F}_{i,j}^{l-1},\]
			i.e. $(\pi_i \otimes 1 - 1 \otimes \sigma_{i,j}^l (\pi_i)) \cdot \mathscr{F}_{i,j}^l \subset \mathscr{F}_{i,j}^{l-1}$.
		\item The $\mathcal{O}_S$-module $\mathscr{F}_{i,j}^l / \mathscr{F}_{i,j}^{l-1}$ is locally free of rank $d_{i,j}^l$ for all $1 \leq l \leq e_i$.
		\item For each $1 \leq l \leq e_i$, 
			let $\mathscr{F}_{i,j}^{e_i+l} = {Q_{i,j}^{>e_i-l}(\pi_i)}^{-1}\mathscr{F}_{i,j}^{e_i-l}$. Then we require that
			\[\mathscr{F}_{i,j}^{e_i+l} =\mathscr{F}_{i,j}^{e_i-l, \perp}\quad \tr{and}\quad ([\pi_i] - \pi_{i, j}^{e_i}) \cdots ([\pi_i] - \pi_{i, j}^{e_i+l-1}) \mathscr{F}_{i, j}^{e_i+l} \subset \mathscr{F}_{i, j}^{e_i-l}.\]
			Here $\mathscr{F}_{i,j}^{e_i-l, \perp}$ is the orthogonal complement of $\mathscr{F}_{i,j}^{e_i-l}$ in $\calH_{i,j}$ under the pairing on $\calH_{i,j}$ induced by the polarization $\lambda$.
	\end{enumerate}
\end{definition}

More explicitly, we have the following description of splitting structures according to the type of $i$ (see the notations in \ref{subsub rational}; see also \cite{BijakowskiHernandez2022} subsections 2.3 and 2.4):
\begin{itemize}
  \item (C): We have $R_i = F_i$ and for each index $i, j, l$, $d_{i, j}^l = d_i$. One has a filtration
    \[0 = \mathscr{F}_{i, j}^0 \subset \mathscr{F}_{i, j}^1 \subset \cdots \subset \mathscr{F}_{i, j}^{e_i} = \omega_{i, j} \subset \calH_{i, j}\]
    where for each $1\leq l\leq e_i$, $\scrF_{i, j}^l$ is a locally a direct factor of rank $d_i l$, and the filtration satisfies the relations as in the above conditions (2) and (4).
    
  \item (AL): We have $R_i = F_i \times F_i$, and $\calH_{i} = \calH_i' \oplus \calH_i^{'\vee}$. For each $j$, the splitting structure is reduced to a filtration
  \[0 = \mathscr{F}_{i, j}^0 \subset \mathscr{F}_{i, j}^1 \subset \cdots \subset \mathscr{F}_{i, j}^{e_i} = \omega_{i, j}' \subset \calH_{i, j}',\]
  where each $\scrF_{i, j}^l$ is locally a direct factor of rank $a_{i, j}^1 + \cdots + a_{i, j}^l$, and the filtration satisfies the above condition (2).
  
  \item (AU): We have a quadratic unramified field extension $R_i|F_i$. For each $j$, we have a further decomposition
  \[\omega_{i, j} = \omega_{i, j}' \oplus (\omega_{i, j}')^\vee\]
  given by the unramified $\calO_{R_i}$-action, and the splitting structure is reduced to a filtration
  \[ 0 = \mathscr{F}_{i, j}^0 \subset \mathscr{F}_{i, j}^1 \subset \cdots \subset \mathscr{F}_{i, j}^{e_i} = \omega_{i, j}' \subset \calH_{i, j},\]
  where each $\scrF_{i, j}^l$ is locally a direct factor of rank $a_{i, j}^1 + \cdots + a_{i, j}^l$, and the filtration satisfies the above condition (2).
\end{itemize}

Finally, we can give the definition of the splitting model over $\mathscr{A}^{\naive}$, cf. \cite{PappasRapoport2005}.
\begin{definition}
	The splitting model $\mathscr{A}^{\spl}$ over $\calO_F$ is the scheme which represents the following moduli problem:
	for any scheme $S$ over $\calO_F$, $\mathscr{A}^{\spl}(S)$ is the isomorphism classes of tuples 
	$(\underline{A},\underline{\mathscr{F}_\bullet})$, where
	\begin{itemize}
		\item $\underline{A} = (A, \lambda, \iota, \alpha)$ is an $S$-point of $\mathscr{A}^{\naive}$.
		\item $\underline{\mathscr{F}_\bullet} = (\mathscr{F}_{i,j}^l) $ is a splitting structure of $\underline{A}$.
		\item The isomorphism is given by $\mathbb{Z}_{(p)}^\times$-isogenies between abelian schemes with $\calO_B$-structures and splitting structure.
	\end{itemize}
\end{definition}
Let $A^{\tr{univ}}$ be the universal abelian scheme over $\mathscr{A}^{\naive}$ with the associated $\H$ and $\omega$.
By the notation of \cite{Lan18}, we have
\[ \mathscr{A}^{\spl}=\mathrm{Spl}^+_{(\H,\omega,\iota)/\mathscr{A}^{\naive}\otimes \calO_F}. \]
By \cite{PappasRapoport2005} section 15 and \cite{Lan18} Lemma 2.3.9, 
there is a canonical isomorphism of schemes over $F$
\[\scrA^{\spl} \otimes_{\calO_F} F \simeq \mathscr{A}^{\naive} \otimes_{\calO_E} F.\]

\begin{proposition}\label{prop smooth spl}
	Assume that each $i\in I$ has type either (C), or (AL), or (AU), then the splitting model $\scrA^\spl$ is smooth over $\calO_F$.
\end{proposition}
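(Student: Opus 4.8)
The plan is to reduce smoothness of $\scrA^\spl$ over $\calO_F$ to the known smoothness of the unramified local models $\mathbb{M}^\loc(\calG_{i,j}^l,\mu_{i,j}^l)$ via a local model diagram. First I would recall, as asserted in the introduction, that Pappas–Rapoport's construction realizes $\mathbb{M}^\spl(\calG,\mu)$ as an iterated ("twisted") product of the unramified local models: over a point, the splitting filtration $0=\scrF_{i,j}^0\subset\cdots\subset\scrF_{i,j}^{e_i}=\omega_{i,j}$ with the conditions $([\pi_i]-\pi_{i,j}^l)\scrF_{i,j}^l\subset\scrF_{i,j}^{l-1}$ is built step by step, and at each step $l$ the choice of $\scrF_{i,j}^l/\scrF_{i,j}^{l-1}$ inside the rank-$2d_i$ space $\scrF_{i,j}^{e_i+?}/\scrF_{i,j}^{l-1}$ (a copy of $\Lambda_{i,j}^l\otimes\calO_F$-modulo-earlier-pieces) is governed by a Grassmannian-type condition with the self-duality constraint — exactly the unramified local model $\mathbb{M}^\loc(\calG_{i,j}^l,\mu_{i,j}^l)=\mathbb{M}^\loc(H_{i,F},\mu_{i,j}^l)$ for types (C), (AL), (AU). Since each $H_i$ is unramified over $F_i$, these are the smooth local models of Görtz/Pappas–Rapoport (minuscule cocharacter, hyperspecial level), hence $\mathbb{M}^\loc(\calG^\spl,\mu)=\prod_{i,j,l}\mathbb{M}^\loc(\calG_{i,j}^l,\mu_{i,j}^l)$ is smooth over $\calO_F$.

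Next I would invoke the modified local model diagram of Proposition \ref{prop loc mod Gspl}:
\[
\xymatrix{
& \wt{\scrA}^\spl \ar[dl]_{\pi} \ar[dr]^{q} & \\
\scrA^\spl & & \mathbb{M}^\loc(\calG^\spl,\mu),
}
\]
where $\pi$ is a $\calG^\spl$-torsor (trivializing the de Rham realization $\H$ with its $\calO_B\otimes\calO_F$-structure, pairing, and the split lattice chain) and $q$ is $\calG^\spl$-equivariant and smooth. The point is that $q$ being smooth — which follows because $\scrA^\spl$ is, Zariski-locally on the base, formally the same moduli problem of filtrations as the local model, a standard consequence of Grothendieck–Messing/de Rham deformation theory packaged into the local model diagram — transports smoothness: $\wt{\scrA}^\spl$ is smooth over $\calO_F$ since $\mathbb{M}^\loc(\calG^\spl,\mu)$ is and $q$ is smooth; then $\pi$, being a torsor under the smooth group scheme $\calG^\spl/\calO_F$, is itself smooth and surjective (in particular fppf-locally trivial), so smoothness descends from $\wt{\scrA}^\spl$ to $\scrA^\spl$. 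Combined with the isomorphism $\scrA^\spl\otimes_{\calO_F}F\simeq\scrA^\naive\otimes_{\calO_E}F$ (which is smooth over $F$ as $G_F$ is split), this gives smoothness over all of $\calO_F$; one could alternatively just quote that $\scrA^\naive$ (hence its base change) carries a smooth local model diagram in the generic fiber.

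The main obstacle — and the step requiring genuine care rather than formal nonsense — is establishing that the iterated/twisted product structure on $\mathbb{M}^\spl$ really does identify, factor by factor, with a \emph{product} of \emph{unramified} local models under the hypothesis that no $i$ has type (AR). Concretely: at each stage $l$ one must check that the self-duality condition $\scrF_{i,j}^{l,\perp}=(\pi_i)^{e_i-l}\scrF_{i,j}^l$ (type (C)) or the corresponding conditions in cases (AL), (AU) cut out, on the associated graded $\scrF^l/\scrF^{l-1}\subset(\varepsilon_i^{-1}\scrF^{l-1}/\scrF^{l-1})$, precisely the minuscule orbit closure for $H_{i,F}$ acting on an honest (unramified, self-dual) lattice $\Lambda_{i,j}^l$ — and that the \emph{ramified} quadratic case (AR), where $H_i$ would be a ramified unitary group with a non-smooth local model (à la Pappas's counterexample to flatness), is exactly what this excludes. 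Once the factor-by-factor identification is in place, smoothness of each factor is classical (minuscule coweight, hyperspecial/very special level: Görtz, Pappas–Zhu), and the twisted-product/local-model-diagram formalism does the rest. I would also remark, as the introduction notes, that under this hypothesis $K_p=\calG(\Z_p)$ is very special in the sense of \cite{Zhu}, which is the conceptual reason the relevant local models are smooth.
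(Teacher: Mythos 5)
Your proposal is correct and follows essentially the same route as the paper's proof: one forms the auxiliary scheme $\wt{\scrA^\spl}$ of splitting structures together with local trivializations $\tau_{i,j}^l$ (which exist Zariski-locally by Propositions 5.2 and 9.2 of \cite{PappasRapoport2005}), obtains the local model diagram of Proposition \ref{prop loc mod Gspl} with $\pi$ a torsor under a smooth group scheme and $q$ a smooth equivariant map to $\prod_{i,j,l}\mathbb{M}^\loc(\G_{i,j}^l,\mu_{i,j}^l)$, and concludes since each factor is an unramified (hyperspecial, minuscule) local model, hence smooth — exactly the point where excluding type (AR) is used.
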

\begin{proof}
	Let $\widetilde{\mathscr{A}^{\spl}}$ be the scheme over $\calO_F$ such that for any $\calO_F$-scheme $S$
	\[\widetilde{\mathscr{A}^{\spl}}(S) = \{(\underline{A},\underline{\mathscr{F}_\bullet}, \underline{\tau} = \{\tau_{i,j}^l\})\},\]
	where $(\underline{A},\underline{\scrF_\bullet})$ is an $S$-point of $\Spl{\mathscr{A}}$ and for each $i\in I, 1\leq j\leq  f_i$, $\underline{\tau}$ is a collection of isomorphisms
	\[\tau_{i,j}^l: \Ker({(\pi_i-\sigma_{i,j}^{l}(\pi_i)})|_{\mathcal{H}_{i,j} / \mathscr{F}_{i,j}^{l-1}}) \simeq \Lambda_{i,j}^l \otimes_{\calO_F} \mathcal{O}_S.\]
	By Propositions 5.2 and 9.2 of \cite{PappasRapoport2005}, when $i$ is of type (AL) or type (C), such an isomorphism exists locally.
	Moreover, when $i$ is of type (AU), the same proof  equally applies.

	Let $\mathbb{M}_{i,j}^l = \mathbb{M}^{\rm loc}(G_{i,j}^l, \mu_{i,j}^l)$ be the unramified local models over $\calO_F$.
	Then we have the following local model diagram 
	(which is a special case of the local model diagram in Proposition \ref{prop loc mod Gspl}):
	\[\xymatrix{
		&{\widetilde{\mathscr{A}^{\spl}}} \ar[ld]_{\pi} \ar[rd]^{q} \\
		{\mathscr{A}^{\spl}} & & {\prod_{i, j, l} \mathbb{M}^{l}_{i,j}},
	}\]
	where the index $(i,j,l)$ runs through $1 \leq i \leq r, 1 \leq j \leq f_i, 1 \leq l \leq e_i $, the morphism $q$ is smooth and given by
	\[(\underline{A}, \underline{\mathscr{F}_\bullet}, \underline{\tau}) \mapsto (\tau_{i,j}^l(\mathscr{F}_{i,j}^l / \mathscr{F}_{i,j}^{l-1}))_{i\in  I,1\leq j\leq  f_i}^{1\leq l \leq e_i},\]
	which is $\prod_{i, j, l} G_{i,j}^l$-equivariant.
	The morphism $\pi$ is the natural forgetful morphism, which is a $\prod_{i, j, l} G_{i,j}^l$-torsor.

	We have excluded the index of type (AR), so the right hand side of the local model diagram is a product of unramified local models with hyperspecial level, 
	which is known to be smooth. This shows that $\Spl{\mathscr{A}}$ is smooth over $\calO_F$.              
\end{proof}

Let $\scrA$ be the scheme theoretic image of the natural morphism
	\[\mathscr{A}^{\spl} \to \scrA^{\naive} \otimes_{\calO_E} \calO_F \to \mathscr{A}^{\naive}.\]Then $\scrA$ is flat over $\calO_E$. Moreover, $\scrA$ admits a local model diagram, cf. Proposition \ref{Prop loc mod diag}. Thus under the condition of Proposition \ref{prop smooth spl}, the morphism
	\[\scrA^\spl\lra\scrA\] is a resolution of singularities. Note that in this case, the associated parahoric subgroup $K_p=\calG(\Z_p)$ is very special in the sense of \cite{Zhu} Definition 6.1. 
	The purpose of this paper is to show that in our ramified setting, the smooth splitting model $\scrA^\spl$ admits many nice properties as in the unramified setting.
	
	Recall that the Hodge cocharacter $\mu: \mathbb{G}_{m, F}\ra G_F$ defines a parabolic subgroup $P_\mu=\{g\in G_F\,|\,\lim_{t\ra 0}\mu(t)g\mu(t)^{-1}\,\tr{exists}\}$ over $F$ in the usual way.  For any $(V,\eta)\in \Rep_FP_\mu$, we have the associated automorphic vector bundle $\V$ over \[\scrA^{\spl} \otimes_{\calO_F} F \simeq \mathscr{A}^{\naive} \otimes_{\calO_E} F\simeq \scrA\otimes_{\calO_E}F,\] cf. \cite{Milne90}. Since $G$ is split over $F$, we have a canonical reductive model $\calO_F$, which by abuse of notation we still denote by $G$ (the precise meaning of the notation $G$ will be clear from the context; sometimes as in the last subsection we denote it by $\calG^\spl$). Moreover,  the parabolic $P_\mu$ extends to a parabolic subgroup over $\calO_F$, which by abuse of notation we still denote by $P_\mu$, and $(V,\eta)$ naturally extends to a representation of $P_\mu$ over $\calO_F$. In general, it is hard to extend $\V$ to a vector bundle on $\scrA_{\calO_F}$. Nevertheless, we have
	\begin{corollary}\label{coro ext vb}
	$\V$ extends canonically to a vector bundle on $\scrA^\spl$.
	\end{corollary}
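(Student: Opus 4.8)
The plan is to upgrade the morphism of $\calO_F$-stacks $\scrA^\spl\to[\mathbb{M}^{\loc}(\G^\spl,\mu)/\G^\spl]$ attached to the modified local model diagram (Proposition~\ref{prop loc mod Gspl}) into a $P_\mu$-torsor on $\scrA^\spl$, and then to define $\V$ as the vector bundle associated to this torsor and the representation $(\eta,V)$. First I would observe that, since the Hodge cocharacter $\mu$ has weights only $0$ and $1$, each factor $\mathbb{M}^{\loc}(G_{i,j}^l,\mu_{i,j}^l)$ is the partial flag variety $G_{i,j}^l/P_{i,j}^l$ attached to the minuscule cocharacter $\mu_{i,j}^l$, which is smooth and projective over $\calO_F$; taking the product over $(i,j,l)$ gives $\mathbb{M}^{\loc}(\G^\spl,\mu)\simeq\G^\spl/P_\mu$ over $\calO_F$, and hence
\[
	[\mathbb{M}^{\loc}(\G^\spl,\mu)/\G^\spl]\;\simeq\;[(\G^\spl/P_\mu)/\G^\spl]\;\simeq\;BP_\mu=[\Spec\calO_F/P_\mu].
\]
Composing the local model morphism with this equivalence produces a $P_\mu$-torsor $\mathcal{E}_{P_\mu}$ over $\scrA^\spl$; concretely, $\mathcal{E}_{P_\mu}$ is the sheaf of trivializations of $(\H,\iota,\langle\cdot,\cdot\rangle)$ by $\Lambda^\spl\otimes\calO_{\scrA^\spl}$ that are in addition compatible with the Pappas-Rapoport filtration $\ul{\scrF_\bullet}$, and its local triviality is exactly what the local existence of the isomorphisms $\tau_{i,j}^l$ in the proof of Proposition~\ref{prop smooth spl} guarantees.

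Granting, as recalled in the excerpt, that $(\eta,V)$ extends canonically to a representation of the parabolic $\calO_F$-group scheme $P_\mu$, I would then set $\V:=\mathcal{E}_{P_\mu}\times^{P_\mu}V$, which is a locally free $\calO_{\scrA^\spl}$-module, functorial in $(\eta,V)$. It remains to check that this is an extension of the classical automorphic vector bundle: over $F$ the étale algebra $\calO_{F_i}\otimes_{\Z_p}F\simeq\prod_{j,l}F$ splits, so the Pappas-Rapoport filtration of $\omega_{i,j}$ degenerates to the tautological decomposition by isotypic pieces and records no extra datum. Thus under $\scrA^\spl\otimes F\simeq\scrA^{\naive}\otimes_{\calO_E}F$ the torsor $\mathcal{E}_{P_\mu}\otimes F$ is the usual $P_\mu$-torsor cut out by the Hodge filtration on $H^1_{\dR}$ inside the $G_F$-torsor of PEL-trivializations (via $\G^\spl\otimes F\simeq G_F$, compatibly with $P_\mu$), and therefore $\V\otimes F$ is the automorphic bundle of \cite{Milne90}. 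Canonicity is then clear, since $\mathcal{E}_{P_\mu}$ depends only on the intrinsic local model morphism, not on the auxiliary choice of the $\tau_{i,j}^l$.

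The main obstacle I anticipate is the integral statement $\mathbb{M}^{\loc}(\G^\spl,\mu)\simeq\G^\spl/P_\mu$ over all of $\calO_F$ — not just over $F$: one must know that a product of unramified local models attached to reductive group schemes and minuscule cocharacters really is the corresponding flag variety over $\calO_F$, and it is precisely here that the hypothesis excluding type (AR), which keeps all the local models in sight smooth and flat, enters. A secondary point to be careful about is matching the parabolic $P_\mu$ produced by the stacky reformulation with the one used to define automorphic vector bundles on the generic fiber in \cite{Milne90}. The remaining ingredients — the associated-bundle formalism, functoriality in $(\eta,V)$, and the generic-fiber comparison — are routine.
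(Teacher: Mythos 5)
Your proposal is correct and is essentially the paper's argument: the paper likewise identifies $\prod_{i,j,l}\mathbb{M}_{i,j}^l$ with the integral flag variety $\G^\spl/P_\mu$ over $\calO_F$ and then runs the associated-bundle construction of \cite{Milne90} through the modified local model diagram, which is exactly the $P_\mu$-torsor $\mathcal{E}_{P_\mu}=q^{-1}(\text{base point})\subset\wt{\scrA^\spl}$ you describe. The point you flag as the main obstacle — that the unramified hyperspecial local models for minuscule cocharacters are the corresponding flag varieties over $\calO_F$ — is precisely what the paper invokes, and it holds since these are just (Lagrangian) Grassmannians.
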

\begin{proof}
Note that the integral flag variety over $\calO_F$ is $\Fl(G,\mu)=G/P_\mu	=\prod_{i,j,l}\mathbb{M}_{i,j}^l $. The diagram in the proof of the above proposition is the diagram of schemes over $\calO_F$
\[\xymatrix{
& \wt{\scrA^{\spl}}\ar[rd]^q \ar[ld]_\pi&\\
\scrA^{\spl}& &\Fl(G,\mu).
} \]
Then we can construct an integral canonical model of $\V$ over $\scrA^\spl$ by using this diagram as in \cite{Milne90}.
\end{proof}

By abuse of notation, we still denote by $\V$ the integral automorphic vector bundle over $\scrA^\spl$. We discuss the example of the standard representation $\Lambda^\spl$. Let $(A^\tr{univ},\lambda,\iota)$ be the universal abelian scheme over $\scrA^\spl$ and $\H=H^1_{\dR}(A^{\tr{univ}}/\scrA^\spl)$. Then as before we have decompositions $\H=\bigoplus_i\H_i^{m_i}, \H_i=\bigoplus_j\H_{i,j}$. We have also the universal splitting structure $\ul{\scrF_\bullet}$ on $\H$.
For each $i,j,l$, we define
\[\mathcal{M}_{i,j}^l=\Ker((\pi_i\otimes 1-1\otimes\sigma_{i,j}^l(\pi_i))|\H_{i,j}/\scrF_{i,j}^{l-1}).\]
Then the automorphic vector bundle over $\scrA^\spl$ associated to $\Lambda^\spl$ is \[\mathcal{M}=\bigoplus_i\mathcal{M}_i^{m_i},\quad \mathcal{M}_i=\bigoplus_{j,l}\mathcal{M}_{i,j}^l.\]

\section{$F$-zips with additional structure and Ekedahl-Oort stratification}\label{section Fzip}

In this section, we keep our assumption that there is no index of type (AR), 
so we have a smooth splitting integral model $\Spl{\mathscr{A}} / \calO_F$.
We will construct a universal $F$-zip with additional structure over the special fiber $\Spl{\mathscr{A}}_0$ of $\Spl{\mathscr{A}}$. Then we study some basic properties of the induced Ekedahl-Oort stratification on $\Spl{\mathscr{A}}_0$.

\subsection{$F$-zips and $G$-zips}\label{subsec G-zip}
We first recall the notion of $F$-zips (\cite{MW}). For any $\F_p$-scheme $S$ and any object $M$ over $S$, we write $M^{(p)}$ for the pullback of $M$ under the absolute Frobenius of $S$. 
\begin{definition}
	Let $S$ be a scheme over $\F_p$ and $\sigma: S \to S$ is the absolute Frobenius of $S$. An $F$-zip over $S$ is a tuple $\underline{M} = (M, C^\bullet, D_\bullet, \varphi_\bullet)$ where
	\begin{itemize}
		\item $M$ is a locally free sheaf of finite rank on $S$;
		\item $C^\bullet = (C^i)_{i \in \Z}$ is a descending filtration on $M$ and each $C^i$ is locally free $\calO_S$-module;
		\item $D_\bullet = (D_i)_{i \in \Z}$ is an ascending filtration on $M$ and each $D_i$ is locally free $\calO_S$-module;
		\item $\varphi_\bullet = (\varphi_i)_{i \in \Z}$ and for each $i$, $\varphi_i: C^i /C^{i+1} \to D_i / D_{i-1}$ is a $\sigma$-linear map whose linearization
		\[\varphi_i^{\rm lin}: (C^i / C^{i+1})^{(p)} \to D_i / D_{i-1}\]
		is an isomorphism.
	\end{itemize}
\end{definition}

Next we briefly review the notion of $G$-zips and the theory of $G$-zip stacks (\cite{PinkWedhornZiegler2011,PinkWedhornZiegler2015}) for later use in this section.
Let $G$ be a connected reductive group over $\mathbb{F}_p$, and $\chi$ a cocharacter of $G$ defined over a finite field $\kappa|\mathbb{F}_p$.
Let $P_+$ (resp. $P_-$) be the parabolic subgroup of $G_\kappa$ such that its Lie algebra is the sum of spaces with non-negative weights (resp. non-negative weights) in $\Lie(G_\kappa)$ under $\Ad \circ \chi$.
We will also write $U_+$ (resp. $U_-$) for the unipotent radical of $P_+$ (resp. $P_-$).
Let $L$ be the common Levi subgroup of $P_+$ and $P_-$.
\begin{definition}
	Let $S$ be a scheme over $\kappa$.
	\begin{enumerate}
		\item A $G$-zip of type $\chi$ over $S$ is a tuple $\underline{I} = (I,I_+,I_-,\iota)$ consisting of
		\begin{itemize}
			\item a right $G_\kappa$-torsor $I$ over $S$,
			\item a right $P_+$-torsor $I_+ \subset I$,
			\item a right $P_{-}^{(p)}$-torsor $I_{-} \subset I$,
			\item an isomorphism of $L^{(p)}$-torsors $\iota: I_{+}^{(p)} / U_+^{(p)} \to I_{-} / U_-^{(p)} $.
		\end{itemize}
		\item A morphism $(I,I_+,I_-,\iota) \to (I',I'_+,I'_-,\iota')$ of $G$-zips of type $\chi$ over $S$ consists of 
			equivariant morphisms $I \to I'$ and $I_\pm \to I'_\pm$ that are compatible with inclusions and the isomorphism $\iota$ and $\iota'$.
	\end{enumerate}
\end{definition}

One can prove that the category of $F$-zips of rank $n$ is equivalent to the category of $\GL_n$-zips, see \cite{PinkWedhornZiegler2015} 8A. More generally, for a classical group $G$, a $G$-zip is equivalent to the $F$-zip associated to its natural faithful representation, together with the additional structure corresponding to the linear algebraic data defining the group, cf. \cite{PinkWedhornZiegler2015} section 8. 

The category of $G$-zips of type $\chi$ over $S$ will be denoted by $\Zip{G}{\chi}_\kappa(S)$.
This defines a category fibered in groupoids $\Zip{G}{\chi}_\kappa$ over $\kappa$.
\begin{theorem}[\cite{PinkWedhornZiegler2015}]
	The fibered category $\Zip{G}{\chi}_\kappa$ is a smooth algebraic stack of dimension 0 over $\kappa$.
\end{theorem}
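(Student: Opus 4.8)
The fibered category $\Zip{G}{\chi}_\kappa$ is a smooth algebraic stack of dimension $0$ over $\kappa$.

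\medskip

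The plan is to realize $\Zip{G}{\chi}_\kappa$ as a quotient stack of a smooth affine group scheme acting on $G_\kappa$, and then read off algebraicity, smoothness, and the dimension count from that presentation. First I would introduce the \emph{zip group}
\[
E := E_{G,\chi} = \{(p_+,p_-) \in P_+ \times P_-^{(p)} : \theta_+(p_+)^{(p)} = \theta_-(p_-)\},
\]
where $\theta_\pm \colon P_\pm \to L$ are the canonical projections onto the Levi quotient; $E$ is a subgroup scheme of $P_+ \times P_-^{(p)}$. Let $E$ act on $G_\kappa$ by $(p_+,p_-)\cdot g = p_+\, g\, p_-^{-1}$. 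The core of the argument is the equivalence of fibered categories
\[
\Zip{G}{\chi}_\kappa \;\simeq\; \bigl[\, E \backslash G_\kappa \,\bigr],
\]
which I would prove by the standard torsor-twisting dictionary: given a $G$-zip $\underline{I}=(I,I_+,I_-,\iota)$, locally trivializing $I_+$ and $I_-$ (fppf-locally on $S$) and using $\iota$ to compare the induced $L^{(p)}$-reductions produces, up to the $E$-action, a well-defined map $S \to G_\kappa$; conversely a map $S\to G_\kappa$ together with the tautological data reconstructs a $G$-zip. One must check this is an equivalence of groupoids functorially in $S$, i.e. that morphisms of $G$-zips correspond to morphisms in the quotient stack — this is bookkeeping with torsor morphisms but needs care.

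\medskip

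Granting the presentation $\Zip{G}{\chi}_\kappa \simeq [E\backslash G_\kappa]$, the remaining points are formal. \textbf{Algebraicity:} $G_\kappa$ is a smooth affine scheme and $E$ is a smooth affine group scheme over $\kappa$ (it is an extension of $L$ by $U_+ \times U_-^{(p)}$, both smooth; alternatively $E$ is the fiber product of smooth morphisms $P_\pm \to L$, one of which is additionally twisted by Frobenius, hence still smooth), so the quotient stack $[E\backslash G_\kappa]$ is an algebraic stack of finite type over $\kappa$. \textbf{Smoothness:} the quotient of a smooth scheme by a smooth affine group is a smooth algebraic stack, since smoothness can be checked after the smooth surjective atlas $G_\kappa \to [E\backslash G_\kappa]$. \textbf{Dimension:} $\dim [E\backslash G_\kappa] = \dim G_\kappa - \dim E$. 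Now $\dim E = \dim L + \dim U_+ + \dim U_-$ (the Frobenius twist does not change dimensions), and since $P_+$ and $P_-$ are opposite parabolics with common Levi $L$ one has $\dim U_+ = \dim U_-$ and $\dim G = \dim L + \dim U_+ + \dim U_-$; hence $\dim E = \dim G_\kappa$ and the dimension is $0$.

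\medskip

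\textbf{Main obstacle.} The genuinely substantive step is establishing the equivalence $\Zip{G}{\chi}_\kappa \simeq [E\backslash G_\kappa]$ as stacks — i.e. making precise the descent/twisting argument that turns the torsor-theoretic data $(I,I_+,I_-,\iota)$ into a point of the quotient stack and verifying full faithfulness on morphisms. Everything after that (smoothness, finite type, dimension $0$) is a routine consequence of standard facts about quotient stacks by smooth affine groups. Since this equivalence and all its verifications are carried out in detail in \cite{PinkWedhornZiegler2015}, I would cite loc. cit. for the construction of $E$ and the proof of the equivalence, and only spell out the dimension bookkeeping.
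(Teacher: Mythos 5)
The paper gives no proof of this statement — it is quoted directly from \cite{PinkWedhornZiegler2015} — and your proposal follows exactly the route taken there: present the stack as $[E_{G,\chi}\backslash G_\kappa]$ (the paper's own Theorem on the quotient presentation, also cited from loc.\ cit.), then deduce algebraicity and smoothness from the smooth atlas $G_\kappa\to[E\backslash G_\kappa]$ and dimension $0$ from $\dim E=\dim L+\dim U_++\dim U_-=\dim G$. Your argument and bookkeeping are correct and essentially identical to the standard proof.
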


Denote by $\Frob_p:L \to L^{(p)}, \, l\mapsto l^{(p)}$ the relative Frobenius of $L$, and define $E_{G,\chi}$ by the fiber product
\[\xymatrix{
E_{G,\chi} \ar[rr] \ar[d] & & P_-^{(p)} \ar[d] \\
P_+ \ar[r] & L \ar[r]^{\Frob_p} & L^{(p)}
}\]
Then we have
\[E_{G,\chi} = \{(p_+ := l u_+, p_- := l^{(p)} u_-) \,|\, l \in L, u_+ \in U_+, u_- \in U_-^{(p)}\}.\]
It acts on $G_\kappa$ from the left-hand side as follows: for $(p_+, p_-) \in E_{G,\chi}(S)$ and $g \in G_\kappa(S)$,
set $(p_+, p_-) \cdot g := p_+ g p_-^{-1}$.
\begin{theorem}[\cite{PinkWedhornZiegler2015}]\label{thm quotient zip stack}
	We have an isomorphism of algebraic stacks $[E_{G,\chi} \backslash G_\kappa] \simeq \Zip{G}{\chi}_\kappa$.
\end{theorem}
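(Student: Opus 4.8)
The plan is to construct an explicit morphism $\omega\colon G_\kappa\to \Zip{G}{\chi}_\kappa$ sending a point $g\in G_\kappa(S)$ to a ``standard'' $G$-zip $\underline{I}_g$, to check that $\omega$ is equivariant for the left $E_{G,\chi}$-action on $G_\kappa$ (so that it descends to a morphism $\bar\omega\colon [E_{G,\chi} \backslash G_\kappa]\to \Zip{G}{\chi}_\kappa$), and finally to verify that $\bar\omega$ is fully faithful and essentially surjective locally for the \'etale topology, hence an equivalence of stacks. For $g\in G_\kappa(S)$ one sets $\underline{I}_g:=(I,I_+,I_-,\iota_g)$ with $I:=G_{\kappa,S}$ the trivial torsor, $I_+:=P_{+,S}\subset I$, $I_-:=g\,P_{-,S}^{(p)}\subset I$, and $\iota_g$ the isomorphism of $L^{(p)}$-torsors $I_+^{(p)}/U_+^{(p)}=L_S^{(p)}\to I_-/U_-^{(p)}=g\,L_S^{(p)}$ given by left translation $x\mapsto g x$. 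One checks directly that $\underline{I}_g$ is a $G$-zip of type $\chi$ and that the assignment $g\mapsto\underline{I}_g$ is functorial in $S$, giving $\omega$.

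Next I would verify $E_{G,\chi}$-equivariance: for $(p_+,p_-)=(lu_+,l^{(p)}u_-)\in E_{G,\chi}(S)$, left translation $L_{p_+}$ by $p_+$ on $I=G_{\kappa,S}$ is an isomorphism of $G_\kappa$-torsors carrying $P_{+,S}$ to itself and $g\,P_{-,S}^{(p)}$ to $p_+gp_-^{-1}\,P_{-,S}^{(p)}$ (since $p_-\in P_-^{(p)}$), and tracing through the Frobenius twist on $I_+^{(p)}$ one sees that it intertwines $\iota_g$ with $\iota_{p_+gp_-^{-1}}$ precisely because the $L$-part of $p_+$ and the $L^{(p)}$-part of $p_-$ agree under $\Frob_p$ by the very definition of $E_{G,\chi}$. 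Thus $\bar\omega$ is well defined. For full faithfulness I would run the same computation in reverse: an isomorphism $\underline{I}_g\xrightarrow{\sim}\underline{I}_{g'}$ of $G$-zips over $S$ is necessarily left translation $L_a$ by some $a\in G_\kappa(S)$; compatibility with $I_+$ forces $a=:p_+\in P_+(S)$, compatibility with $I_-$ forces $p_-:=g'^{-1}p_+g\in P_-^{(p)}(S)$, and compatibility with $\iota_g,\iota_{g'}$ forces $(p_+,p_-)\in E_{G,\chi}(S)$. Hence $\underline{\Isom}(\underline{I}_g,\underline{I}_{g'})$ is canonically the transporter $\{(p_+,p_-)\in E_{G,\chi}\mid p_+gp_-^{-1}=g'\}$, which is exactly the $\underline{\Isom}$-sheaf of $[E_{G,\chi}\backslash G_\kappa]$.

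For local essential surjectivity, take an arbitrary $G$-zip $\underline{I}=(I,I_+,I_-,\iota)$ over $S$. Since $P_+$ is smooth and affine, the $P_+$-torsor $I_+$ is \'etale-locally trivial; a trivialization identifies $I\cong G_{\kappa,S}$ with $I_+=P_{+,S}$. Then $I_-$ is a sub-$P_-^{(p)}$-torsor of $G_{\kappa,S}$, i.e.\ a section of $(G/P_-^{(p)})_S$, which lifts \'etale-locally (as $G\to G/P_-^{(p)}$ is a smooth torsor) to $g_0\in G_\kappa(S)$ with $I_-=g_0\,P_{-,S}^{(p)}$; and $\iota$, being an $L^{(p)}$-equivariant map $L_S^{(p)}\to g_0L_S^{(p)}$, has the form $x\mapsto g_0\ell x$ for a unique $\ell\in L^{(p)}(S)$, whence $\underline{I}\cong\underline{I}_{g_0\ell}$ \'etale-locally. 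Since $\Zip{G}{\chi}_\kappa$ is a stack for the \'etale topology, combining this with full faithfulness shows $\bar\omega$ is an equivalence; its source is a smooth algebraic stack because $E_{G,\chi}$ is a smooth affine group scheme, consistent with the structure theorem already recorded.

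The main obstacle is the bookkeeping in the equivariance and full-faithfulness steps: one must carefully keep track of the Frobenius pullback appearing in $I_+^{(p)}/U_+^{(p)}$ and check that the condition ``an isomorphism of $G$-zips is compatible with the $\iota$'s'' is \emph{exactly} the Frobenius-linked relation defining $E_{G,\chi}$ (neither weaker nor stronger). Everything else---local triviality of the various torsors, lifting of sections, and fppf/\'etale descent for $G$-zips---is routine given the smoothness of $P_\pm$, $U_\pm$ and $L$.
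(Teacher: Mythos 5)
Your argument is correct and is essentially the standard proof of Pink--Wedhorn--Ziegler, which is exactly what the paper invokes here: the paper gives no proof of its own but cites \cite{PinkWedhornZiegler2015}, where the equivalence is established precisely by the construction $g\mapsto \underline{I}_g=(G_S,P_{+,S},gP_{-,S}^{(p)},\iota_g)$, the identification of the transporter with $E_{G,\chi}$, and \'etale-local trivialization of the torsors. Your bookkeeping of the Frobenius twist in the equivariance and full-faithfulness steps is the right (and the only delicate) point, and it checks out.
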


Let $B \subset G$ be a Borel subgroup and $T \subset B$ a maximal torus.
Let $W := W(B,T)$ be the absolute Weyl group, and $I := I(B,T)$ the set of simple reflections defined by $B$.
Let $J \subset I$ be the simple roots whose inverse are roots of $P_+$.
Let $W_J$ be the subgroup of $W$ generated by $J$, and $^J W$ the set of elements $w$ such that $w$ is the element of minimal length in some coset $W_J w'$. By \cite{PinkWedhornZiegler2011} section 6, there is a partial order $\preceq$ on $^J W$. Let $k=\ov{\kappa}$ be an algebraic closure of $\kappa$, and $\varphi: W\ra W$ the isomorphism induced from the Frobenius of $G$. There is a distinguished element $x\in W$ satisfying certain technical conditions, see \cite{PinkWedhornZiegler2011} 12.2.
\begin{theorem}[\cite{PinkWedhornZiegler2011}]\label{thm PWZ11}
	For $w \in {}^J W$, and $(B', T')$ a Borel pair of $G_k$ such that $T' \subset L_k$ and $B' \subset P_{-,k}^{(p)}$,
	let $g, \dot{w} \in N_{G_k}(T')$ be the representative of $\varphi^{-1}(x)$ and $w$ respectively,
	and $G_w \subset G_k$ the $E_{G,\chi}$-orbit of $g B' \dot{w} B'$. Then
	\begin{enumerate}
		\item The orbit $G_w$ does not depend on the choices of $\dot{w}, T', B'$ or $g$.
		\item The orbit $G_w$ is a locally closed smooth subvariety of $G_k$. 
			Its dimension is $\dim(P)+l(w)$. Moreover, $G_w$ consists of only one $E_{G,\chi}$-orbit.
		\item Denote by $|[E_{G,\chi} \backslash G_\kappa] \otimes k|$ the topological space of $[E_{G,\chi} \backslash G_\kappa] \otimes k$,
		  and $^J W$ the topological space induced by the partial order $\preceq$.
			Then the association $w \mapsto G_w$ induces a homeomorphism $^J W \simeq |[E_{G,\chi} \backslash G_\kappa]|$.
	\end{enumerate}
\end{theorem}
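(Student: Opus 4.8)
The plan is to follow the treatment of algebraic zip data in \cite{PinkWedhornZiegler2011}; by Galois descent it suffices to work over $k=\ov{\kappa}$, as the whole construction will be visibly Frobenius-equivariant. The first step is to fix a \emph{frame}: with $(B',T')$ as in the statement (so $T'\subset L_k$ and $B'\subset P_{-,k}^{(p)}$) and $g\in N_{G_k}(T')$ a representative of $\varphi^{-1}(x)$, the role of the element $x$ of \cite{PinkWedhornZiegler2011}*{12.2} is precisely that ${}^{g}B'$ is then a Borel of $G_k$ contained in $P_{+,k}$ with ${}^{g}T'=T'$, compatibly with $\varphi$. Record $W=W(G_k,T')$, the simple reflections $I$ attached to $B'$, the subset $J\subset I$ of type $P_{+,k}$, and the automorphism $\psi\colon W\to W$ induced by $\varphi$ together with conjugation by $g$; it permutes $I$ and preserves $J$.

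For parts (1) and (2), I would show that $w\mapsto G_w:=E_{G,\chi}\cdot(gB'\dot wB')$ is a bijection from ${}^{J}W$ onto the set of $E_{G,\chi}$-orbits on $G_k$. The inputs are: the Bruhat decomposition of $G_k$ with respect to the two Borels ${}^{g}B'\subset P_{+,k}$ and $B'\subset P_{-,k}^{(p)}$, both of which contain $T'$; the inclusions $U_+\times U_-^{(p)}\subset E_{G,\chi}$ and the Levi graph $L_k\hookrightarrow E_{G,\chi}$, $l\mapsto(l,\varphi(l))$; and a stabilizer computation reducing to a statement inside $B'$. Together these show that $E_{G,\chi}\cdot(gB'\dot wB')$ is a single orbit depending only on the image $w\in{}^{J}W$, that every orbit arises in this way, and that $w$ is uniquely determined; hence (1), since any two admissible choices of $\dot w$, $g$, or $(B',T')$ differ by elements of $T'\subset L_k$ and of $E_{G,\chi}$ and so do not change $G_w$. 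For (2): orbits of a smooth algebraic group action are locally closed and smooth; and since $\dim E_{G,\chi}=\dim L_k+2\dim U_+=\dim G_k$, the dimension formula $\dim G_w=\dim P_{+,k}+\ell(w)$ reduces to computing $\mathrm{Stab}_{E_{G,\chi}}(g\dot w)$ and checking it is smooth of dimension $\dim U_+-\ell(w)$.

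The essential content is the closure relation in (3), namely $\overline{G_w}=\coprod_{w'\preceq w}G_{w'}$. Since $\overline{G_w}$ is a union of orbits, I would first reduce to determining the orbits of codimension one in $\overline{G_w}$ --- equivalently, which $w'$ with $\ell(w')=\ell(w)-1$ satisfy $G_{w'}\subset\overline{G_w}$. To identify these, for each simple reflection $s\in I$ I would carry out an explicit rank-one computation: translating $g\dot w$ by a one-parameter family in a simple root subgroup $U_\alpha$ sitting inside $E_{G,\chi}$ and taking $\lim_{t\to0}$ produces exactly the degenerations $G_{w'}$ occurring in the elementary ``$s$-steps'' that generate the order $\preceq$ on ${}^{J}W$ (\cite{PinkWedhornZiegler2011}*{\S6}). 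The general case follows by induction on $\ell(w)$, using that $\preceq$ is generated by these steps. Granting the closure relation, $w\mapsto G_w$ is an isomorphism of posets from $({}^{J}W,\preceq)$ onto the set of $E_{G,\chi}$-orbits ordered by inclusion of closures, under which $\preceq$-downward-closed subsets correspond to closed subsets; this is precisely a homeomorphism $|{}^{J}W|\simeq|[E_{G,\chi}\backslash G_k]|$, which descends to $\kappa$ since $x$, the frame and all the orbits are defined over $\kappa$. I expect this closure relation to be the main obstacle: the rank-one limit arguments have to be organized carefully, and controlling lengths under the twisted conjugation $w\mapsto y\,w\,\psi(y)^{-1}$ that underlies $\preceq$ is delicate --- $\preceq$ being genuinely coarser than the restriction of the Bruhat order to ${}^{J}W$.
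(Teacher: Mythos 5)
This statement is not proved in the paper at all: it is quoted verbatim from Pink--Wedhorn--Ziegler and used as a black box, so there is no in-paper argument to match your proposal against. Judged as a reconstruction of the PWZ proof, your setup of the frame, the parametrization of orbits by ${}^J W$ via the Bruhat decomposition relative to ${}^{g}B'\subset P_{+,k}$ and $B'\subset P_{-,k}^{(p)}$, and the dimension count $\dim E_{G,\chi}=\dim G_k$ are all in line with the actual argument. But two points you pass over are genuine theorems, not routine checks. First, the claim in (2) that $E_{G,\chi}\cdot(gB'\dot wB')$ is a \emph{single} orbit is exactly where the Frobenius twist is used: for abstract zip data with $\varphi$ an isomorphism of $L$ this set can break into infinitely many orbits, and PWZ isolate the ``orbitally finite'' property (which holds here because $\varphi=\Frob_p$ has finite, in fact trivial, differential and finitely many fixed points in each torus) to rule this out. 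Your ``stabilizer computation reducing to a statement inside $B'$'' does not engage with this.

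Second, for the closure relation your plan only addresses the inclusion $\overline{G_w}\supseteq\coprod_{w'\preceq w}G_{w'}$, and even that rests on two unproved inputs: that every $w'\preceq w$ is reachable by a chain of covering relations realized by rank-one degenerations (a combinatorial theorem about $\preceq$, i.e.\ about the twisted conjugation $w\mapsto y\,w\,\psi(y)^{-1}$ composed with the Bruhat order, occupying a whole section of PWZ and related to He's work on $G$-stable pieces), and that the boundary of an orbit closure is reached through codimension-one orbits. The reverse inclusion $\overline{G_w}\subseteq\coprod_{w'\preceq w}G_{w'}$ --- which is where the precise definition of $\preceq$ enters --- is not addressed at all; in PWZ it is obtained by induction on zip data through Levi subgroups, comparing with Bruhat closures, rather than by limit arguments. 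So the proposal is a reasonable roadmap but has a real gap at the heart of part (3) and at the single-orbit claim in part (2).
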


In the following, we will apply the above results to the pair $(G,\chi)=(\G_0^\spl,\mu)$ constructed in subsection \ref{subsec PEL data}.

\subsection{Construction of the universal $\calG^\spl_0$-zip on $\scrA^\spl_0$}\label{subsection zip}

Let $\kappa$ be the residue field of $\calO_F$ and $\sigma$ the absolute Frobenius of $\kappa$.
We will construct a universal $\calG^\spl_0$-zip of type $\mu$ over the special fiber \[\scrA^\spl_0= \Spl{\mathscr{A}} \otimes_{\calO_F} \kappa\] of $\Spl{\mathscr{A}}$.
As in the unramified case, we will first construct a vector bundle $\mathcal{M}$ over $\scrA^\spl_0$, 
which can be viewed as a semi-simplification of the de Rham cohomology of the universal abelian scheme for the additional structure.
The construction of such a vector bundle was already mentioned below \cite{ReduzziXiao2017}*{Corollary~2.10} for Hilbert modular varieties.

For any $\kappa$-scheme $S$,
$\Spl{\mathscr{A}_0}(S)$ classifies the isogeny classes of $(\underline{A}, \underline{\mathscr{F}_\bullet})$, where
\begin{enumerate}
    \item $\underline{A}=(A, \lambda, \iota,\alpha)$ is an $S$-point of $\mathscr{A}^{\naive}$.
    \item $\underline{\mathscr{F}_\bullet}=(\mathscr{F}_{i,j}^l)$ is a splitting structure of $\calO_F \otimes_{\Z_p} \calO_S$-module $\omega_{A/S}$.
          This means: if we write $\mathcal{H} = \mathcal{H}_{\dR}^1(A/S)$, it is an $\calO_F \otimes \calO_S$-module,
          hence has a decomposition \[\mathcal{H} = \bigoplus_{i\in I} \bigoplus_{j=1}^{f_i} \mathcal{H}_{i,j}^{m_i},\]
          with $\mathcal{H}_{i,j}$ a locally free $\calO_S$-module equipped with a pairing $\H_{i,j}\times\H_{i,j}\ra\calO_S$. Similarly we have a decomposition $\omega=\bigoplus_{i\in I}\bigoplus_{j=1}^{f_i}\omega_{i,j}^{m_i}$.
          For each $i\in I,1\leq  j\leq f_i$, the splitting structure is a filtration of locally direct $\calO_S$-factors of $\omega_{i,j}$:
          \[0 = \mathscr{F}_{i,j}^0 \subset \mathscr{F}_{i,j}^1 \subset \cdots \subset \mathscr{F}_{i,j}^{e_i} = {\omega_{i,j}} \subset \mathcal{H}_{i,j}\]
          with $\calO_{F_i}/ (p) \simeq \kappa_i[T]/(T^{e_i}) = \kappa_i[\varepsilon_i]$-action, such that
          \begin{enumerate}
              \item $\kappa_i$ acts on $\mathscr{F}_{i,j}^l$ by $\sigma_{i,j}: \mathcal{O}_{\Unram{F_i}} \to \kappa$.
              \item For each $1 \leq l \leq e_i$, $\varepsilon_i \mathscr{F}_{i,j}^l \subset \mathscr{F}_{i,j}^{l-1}$.
              \item For each $1 \leq l \leq e_i$, $\mathscr{F}_{i,j}^l / \mathscr{F}_{i,j}^{l-1}$ is locally free of rank $d_{i,j}^l$.
              \item For each $1 \leq l \leq e_i$, $\mathscr{F}_{i,j}^{l,\perp}=(\varepsilon_i^{e_i-l})^{-1}\mathscr{F}_{i,j}^l$. 
          \end{enumerate}
\end{enumerate}

Let $S=\Spl{\mathscr{A}_0}$ and $\ul{A}$ the universal abelian scheme over $S$. For each triple $i,j,l$,
we define
\[\mathcal{M}_{i,j}^l := \varepsilon_i^{-1}\mathscr{F}_{i,j}^{l-1} / \mathscr{F}_{i,j}^{l-1}.\]
Note that each $\mathcal{M}_{i,j}^l$ is a locally free $\calO_S$-module of rank $2d_i$. 
As in the description below Definition~\ref{spl-str-def}, in fact we have some reduced description for $\mathcal{M}_{i,j}^l$ according to the type of $i$, which we leave to the reader.
 Now we write
\[\mathcal{M} = \bigoplus_{i\in I} \calM_i^{m_i},\quad \calM_i= \bigoplus_{l=1}^{e_i} \bigoplus_{j=1}^{f_i} \mathcal{M}_{i,j}^l,\]
then each $\mathcal{M}_{i,j}^l$ is locally isomorphic to $\Lambda_{i,j}^l \otimes_{\calO_F} \calO_S$.
First fix $i \in I$ and $1\leq j \leq f_i$. For each $1 \leq l \leq e_i$,
we have two natural maps:
\begin{enumerate}
	\item If $l \neq 1$, we have a natural map: \[V_{i,j}^l: \mathcal{M}_{i,j}^l \to \mathcal{M}_{i,j}^{l-1},\quad x \mapsto \varepsilon_i x,\] 
		which is an $\calO_S$-linear morphism. On the other side, the injection $\varepsilon_i^{-1}\mathscr{F}_{i,j}^{l-2} \hookrightarrow \varepsilon_i^{-1}\mathscr{F}_{i,j}^{l-1}$ induces an $\calO_S$-linear morphism
		\[F_{i,j}^l: \mathcal{M}_{i,j}^{l-1} \to \mathcal{M}_{i,j}^{l}.\]
	\item If $l=1$, let $V_{i,j}: \mathcal{H}_{i,j} \to \mathcal{H}_{i,j-1}$ be the Verschiebung morphism and $F_{i,j}: \mathcal{H}_{i,j-1} \to \mathcal{H}_{i,j}$ the Frobenius morphism.
    	  The map $V_{i,j}$ induces a natural map 
		\[V_{i,j}^1:\mathcal{M}_{i,j}^1 \to \mathcal{M}_{i,j-1}^{e_i},\quad x \mapsto V_{i,j}(\varepsilon_i^{1-e_i}(x)),\] which is $\sigma^{-1}$-linear. 
		Similarly, $F_{i,j}$ induces a $\sigma$-linear map \[F_{i,j}^1: \mathcal{M}_{i,j-1}^{e_i} \to \mathcal{M}_{i,j}^1,\quad x \mapsto F_{i,j}(x).\]
\end{enumerate}
Now for each $1\leq l\leq e_i$, define
\[\mathcal{M}_i^l := \bigoplus_{j=1}^{f_i}\mathcal{M}_{i,j}^l, \quad \Lambda_i^l := \bigoplus_{j=1}^{f_i} \Lambda_{i,j}^l.\]
Each $\calM_{i}^l$ is a locally free $\calO_S[\varepsilon_i]$-module with a $\kappa_i$-action, and we have locally an isomorphism
\[\calM_{i}^l \simeq \Lambda_i^l \otimes_{\calO_F} \calO_S.\]
(Note that the lattice $\Lambda_{i,j}^l$ is denoted by $\Xi_{i,j}^l$ in \cite{PappasRapoport2005}*{Proposition~5.2}.)
We also have locally an isomorphism
\[\calM_{i} = \bigoplus_{l=1}^{e_i} \calM_{i}^l \simeq \bigoplus_{l=1 }^{e_i} \Lambda_{i,0}^l\otimes\calO_S= \Lambda_{i,0}^{\spl}\otimes\calO_S.\]
Each $\calM_i$ is a locally free $\calO_S$-module of rank $2e_i f_i d_i$.
For each $i, l$, let $F_i^l=\bigoplus_j F_{i,j}^l$ and $V_i^l=\bigoplus_j V_{i,j}^l$.
We have constructed the following linear morphisms:
\[\xymatrix{
	{\mathcal{M}_i^{1}} \ar@<1ex>[r]^{F_i^2} & {\mathcal{M}_i^{2}} \ar@<1ex>[l]^{V_i^2} \ar@<1ex>[r]^{F_i^3} & \cdots \ar@<1ex>[l]^{V_i^3} \ar@<1ex>[r]^{F_i^{e_i}} & {\mathcal{M}_i^{e_i}} \ar@<1ex>[l]^{V_i^{e_i}}.
}\]
and semi-linear morphisms ($F_i^{1}$ is $\sigma$-linear and $V_i^{1}$ is $\sigma^{-1}$-linear)
\[\xymatrix{
	{\mathcal{M}_i^{e_i}} \ar@<1ex>[r]^{F_i^1} & {\mathcal{M}_i^1} \ar@<1ex>[l]^{V_i^1}.
}\]

\begin{lemma}\label{lem l=1}
	There is a canonical isomorphism $g_i: \calM^1_{i} \to \calM^1_i$ sending $\Ker(V_i^1)$ to $\Image(F_i^1)$.
\end{lemma}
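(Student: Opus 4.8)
The plan is to analyze the situation one residue-field embedding at a time and use the standard properties of the Frobenius and Verschiebung on the de Rham cohomology of the universal abelian scheme. Fix $i \in I$ and work with $\calM_i^1 = \bigoplus_{j=1}^{f_i} \calM_{i,j}^1$, where $\calM_{i,j}^1 = \varepsilon_i^{-1}\mathscr{F}_{i,j}^0/\mathscr{F}_{i,j}^0 = \Ker(\varepsilon_i \mid \calH_{i,j})$ sits inside $\calH_{i,j}$. The maps $F_i^1 = \bigoplus_j F_{i,j}^1$ and $V_i^1 = \bigoplus_j V_{i,j}^1$ are induced respectively by the Frobenius $F_{i,j}\colon \calH_{i,j-1}\to \calH_{i,j}$ and Verschiebung $V_{i,j}\colon \calH_{i,j}\to \calH_{i,j-1}$ (with the appropriate twist by $\varepsilon_i^{1-e_i}$ in the case of $V$, which is what makes its definition "a little subtle" as flagged in the introduction). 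The first step is to record the basic relations $\Ker F_{i,j} = \Image V_{i,j}$, $\Ker V_{i,j} = \Image F_{i,j}$, and $F_{i,j}V_{i,j} = V_{i,j}F_{i,j} = 0$ on $\calH$, together with the fact that $F$ is $\sigma$-linear and $V$ is $\sigma^{-1}$-linear; these are the standard properties of Dieudonné theory / crystalline cohomology in characteristic $p$ (and are exactly the ingredients the text alludes to with "$\mathrm{Ker}\,F = \mathrm{Im}\,V$" etc.).

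Next I would produce the isomorphism $g_i$ explicitly. The natural candidate is the composite $V_i^1 \circ (\text{something})$ or, more precisely, the map built from the other $F$'s and $V$'s going around the "long way" through $\calM_i^2, \dots, \calM_i^{e_i}$: the $\calO_S$-linear maps $V_i^2, \dots, V_i^{e_i}$ and $F_i^2, \dots, F_i^{e_i}$ between the intermediate pieces are isomorphisms onto the relevant kernels/images by Lemma~\ref{lem l big}, so one can transport $\Ker V_i^1 \subset \calM_i^1$ to the appropriate submodule and back. Concretely I expect $g_i$ to be realized as (a rescaling of) the $\sigma$-linear endomorphism $F_i^1 \circ V_i^1$ composed with an automorphism coming from the $\varepsilon_i$-action, or dually, as the map induced by $\varepsilon_i^{e_i - 1}$ together with the polarization pairing; the point is to find an honest $\calO_S$-linear (not semi-linear) automorphism of $\calM_i^1$ and then check it carries $\Ker V_i^1$ onto $\Image F_i^1$. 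I would verify this first over the standard object $\Lambda_{i,0}^{\spl}\otimes \calO_S$, where everything is explicit linear algebra over $\kappa_i[\varepsilon_i]$, and then observe that $\calM_i^1$ is locally isomorphic to this, with all the maps $F_i^\bullet, V_i^\bullet$ being the transports of the standard ones, so the local construction of $g_i$ glues to a canonical global isomorphism (canonicity following because the construction uses only the intrinsic data $F_i^\bullet, V_i^\bullet, \varepsilon_i$ and the pairing).

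The main obstacle I anticipate is pinning down the correct normalization of $g_i$ so that it is genuinely $\calO_S$-linear and genuinely an isomorphism (not merely an injection or a map with the wrong source/target twist). Because $F_i^1$ is $\sigma$-linear and $V_i^1$ is $\sigma^{-1}$-linear, naive composites land in Frobenius twists $\calM_i^{1,(p)}$ or $\calM_i^{1,(p^{-1})}$, and one must use the $F$-zip compatibility on $\calH$ (the fact that the Hodge and conjugate filtrations are "transverse" in the right sense, i.e. the linearizations $\varphi_i^{\mathrm{lin}}$ are isomorphisms) to cancel these twists and land back in $\calM_i^1$ itself. I would handle this by first establishing, over a point or over the standard lattice, that $\Image F_i^1$ and $\Ker V_i^1$ are both locally direct summands of $\calM_i^1$ of the same rank (using the rank bookkeeping $d_{i,j}^l$ and the duality $d_{i,j}^l + (d_{i,j}^l)^* = 2d_i$), so that it suffices to exhibit any $\calO_S$-linear automorphism of $\calM_i^1$ taking one onto the other; then the explicit formula can be checked on the standard object and the gluing is formal. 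Once $g_i$ is in hand, combining with Lemma~\ref{lem l big} yields the $F$-zip $(\calM, C, D, \varphi_\bullet)$ over $\scrA_0^\spl$ as stated in the introduction.
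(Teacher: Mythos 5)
Your proposal correctly isolates the two real difficulties (cancelling the semi-linear twists, and producing an honest $\calO_S$-linear automorphism rather than merely matching ranks), but the candidate construction you offer fails, and your fallback cannot deliver canonicity. Unwinding the definitions, the composite $F_{i,j}^1\circ V_{i,j}^1$ is \emph{zero}: for $x\in\calM_{i,j}^1$ one has $F_{i,j}^1(V_{i,j}^1(x))=F_{i,j}(V_{i,j}(\tilde x))=p\,\tilde x=0$ in characteristic $p$, where $\tilde x$ is a lift of $x$ under $\varepsilon_i^{e_i-1}$; no rescaling by the $\varepsilon_i$-action or by the pairing can turn the zero map into an isomorphism. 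The missing idea --- and the heart of the paper's proof --- is to pass to a thickening on which Frobenius and Verschiebung become \emph{injective}, so that one can divide. Concretely, the paper reduces to $S=\Spec R$ (and disposes of $e_i=1$ immediately, where $F_i^1=\Frob$, $V_i^1=\Ver$), fixes $\calH_{i,j}\simeq R[\varepsilon_i]^{2d_i}$ and lifts it to $\widetilde{\calH}_{i,j}=R[[t]]^{2d_i}$ together with lifts $\widetilde{\scrF}{}_{i,j}^{l}$ of the filtration, and lifts $\Frob,\Ver$ to injections satisfying $\Ver(\widetilde{\calH}_{i,j})=\widetilde{\scrF}{}_{i,j-1}^{e_i}$ and $\Frob(\widetilde{\scrF}{}_{i,j-1}^{e_i})=t^{e_i}\widetilde{\calH}_{i,j}$. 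Then $t^{-e_i}\circ\Frob\circ\Ver$ is a linear automorphism of $t^{e_i-1}\widetilde{\calH}_{i,j}$ (the semi-linear twists cancel exactly as you anticipated) carrying the lift of $\Ker(V_{i,j}^1)=\varepsilon_i^{e_i-1}\Ver^{-1}(\scrF_{i,j}^{e_i-1})$ onto the lift of $\Image(F_{i,j}^1)=\Frob(\scrF_{i,j}^{e_i-1})$; reduction modulo $(t^{e_i})$ yields $g_i$.

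Your fallback --- observe that $\Ker V_i^1$ and $\Image F_i^1$ are locally direct summands of the same rank and then ``exhibit any $\calO_S$-linear automorphism taking one onto the other'' --- only produces local, non-canonical isomorphisms, and such choices do not glue; nor is gluing ``formal,'' since you have not written down an intrinsic formula to transport between charts. This matters because the lemma is used precisely to furnish the isomorphism $\iota$ of the universal $\G_0^\spl$-zip in Proposition \ref{prop splGzip}, where a specific, functorial identification is required. To repair your argument you would need either the lifting device above or an equivalent one (e.g.\ working with the display/Dieudonn\'e module over a PD-thickening, where $FV=p$ is a nonzerodivisor and the division is meaningful).
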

\begin{proof} We can reduce to the case $S=\Spec\,R$ for a ring $R$ over $\kappa$.
	For $e_i=1$, we have $F_i^1 = \Frob, V_i^1 = \Ver$, so we can assume that $e_i \geq 2$.
	Note that
	\[\Ker(V_{i,j}^1) = \varepsilon_i^{e_i-1} \Ver^{-1}(\scrF_{i,j}^{e_i-1}), \quad \Image(F_{i,j}^1) = \Frob(\scrF_{i,j}^{e_i-1}).\]
	For each $j$, fix an isomorphism $\calH_{i,j} \simeq R[\varepsilon_i]^{2d_i}$ and lift it to $\widetilde{\calH}_{i,j} = R[[t]]^{2d_i}$ such that $\widetilde{\calH}_{i,j} / (t^{e_i}) = \calH_{i,j}$.
	For each $\scrF_{i,j}^l \subset \calH_{i,j}$, let $\widetilde{\scrF}_{i,j}^l$ be its lifting in $\widetilde{\calH}_{i,j}$.
	One can lift the Frobenius and Verschiebung morphisms of $\calH_i$ to injections of $\widetilde{\calH}_{i}$, such that
	\[\Ver(\widetilde{\calH}_{i,j}) = \widetilde{\scrF}_{i,j-1}^{e_i}, \quad \Frob(\widetilde{\scrF}_{i,j-1}^{e_i}) = t^{e_i}\widetilde{\calH}_{i,j}.\]
	This gives the following isomorphism:
	\[\xymatrix{
		t^{e_i-1} \widetilde{\calH}_{i,j} \ar[r]^{\Ver}_{\simeq} & t^{e_i-1} \widetilde{\scrF}_{i,j-1}^{e_i} \ar[r]^{\Frob}_{\simeq} & t^{e_i-1} t^{e_i} \widetilde{\calH}_{i,j} \ar[r]^{\cdot t^{-e_i}}_{\simeq} & t^{e_i-1} \widetilde{\calH}_{i,j}.
		}\]
	Such an isomorphism sends the lifting of $\Ker(V_{i,j}^1)$ to the lifting of $\Image(F_{i,j}^1)$.
	After modding out by $(t^{e_i})$ of the above isomorphism, we get an isomorphism
	\[g_{i}: \calM_i^1 \to \calM_i^1\]
	sending $\Ker(V_{i,j}^1)$ to $\Image(F_{i,j}^1)$.
\end{proof}

\begin{lemma}\label{lem l big}
For each integer $2\leq l\leq e_i$, we have the following identities:
\[\Image(F_i^l) = \Ker(V_i^l), \quad \Ker(F_i^l) = \Image(V_i^l).\]
Moreover, we have
\[\rank(\Ker(V_{i,j}^l)) = d_{i,j}^l\quad \tr{and }\quad \rank(\Ker(F_{i,j}^l)) = d_i - d_{i,j}^l.\]
\end{lemma}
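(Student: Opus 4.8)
The plan is to work locally and reduce to an explicit computation in terms of the lift $\widetilde{\calH}_{i,j} = R[[t]]^{2d_i}$ introduced in the proof of Lemma \ref{lem l=1}. Fix $i$ and $1 \leq j \leq f_i$, and recall that for $2 \leq l \leq e_i$ the map $F_{i,j}^l \colon \calM_{i,j}^{l-1} \to \calM_{i,j}^l$ is induced by the inclusion $\varepsilon_i^{-1}\mathscr{F}_{i,j}^{l-2} \hookrightarrow \varepsilon_i^{-1}\mathscr{F}_{i,j}^{l-1}$, while $V_{i,j}^l \colon \calM_{i,j}^l \to \calM_{i,j}^{l-1}$ is multiplication by $\varepsilon_i$. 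First I would lift the filtration: choose $\widetilde{\mathscr{F}}_{i,j}^l \subset \widetilde{\calH}_{i,j}$ with $\widetilde{\mathscr{F}}_{i,j}^l/(t^{e_i}) = \mathscr{F}_{i,j}^l$ and $t\,\widetilde{\mathscr{F}}_{i,j}^l \subset \widetilde{\mathscr{F}}_{i,j}^{l-1}$, so that $\widetilde{\mathscr{F}}_{i,j}^l$ is a free $R$-module with $\widetilde{\mathscr{F}}_{i,j}^l / \widetilde{\mathscr{F}}_{i,j}^{l-1}$ free of rank $d_{i,j}^l$. Then $\widetilde{\calM}_{i,j}^l := t^{-1}\widetilde{\mathscr{F}}_{i,j}^{l-1}/\widetilde{\mathscr{F}}_{i,j}^{l-1}$ reduces to $\calM_{i,j}^l$, and the two maps become, respectively, the map induced by $\widetilde{\mathscr{F}}_{i,j}^{l-2} \hookrightarrow \widetilde{\mathscr{F}}_{i,j}^{l-1}$ and multiplication by $t$.

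The heart of the argument is then a local chain computation. Writing everything inside $\widetilde{\calH}_{i,j}$, one has $\varphi := F_{i,j}^l$ sending the class of $t^{-1}x$ (for $x \in \widetilde{\mathscr{F}}_{i,j}^{l-2}$) to the class of $t^{-1}x$ in $t^{-1}\widetilde{\mathscr{F}}_{i,j}^{l-1}/\widetilde{\mathscr{F}}_{i,j}^{l-1}$, and $\psi := V_{i,j}^l$ sending the class of $t^{-1}y$ (for $y \in \widetilde{\mathscr{F}}_{i,j}^{l-1}$) to the class of $y$ in $t^{-1}\widetilde{\mathscr{F}}_{i,j}^{l-2}/\widetilde{\mathscr{F}}_{i,j}^{l-2}$. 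The composite $\psi\varphi$ is multiplication by $t$ followed by the inclusion, which is zero modulo $\widetilde{\mathscr{F}}_{i,j}^{l-2}$; similarly $\varphi\psi = 0$. For the reverse inclusions $\Ker\,\psi \subseteq \Image\,\varphi$ and $\Ker\,\varphi \subseteq \Image\,\psi$, I would argue that $\Ker\,\psi$ consists of classes $t^{-1}y$ with $y \in \widetilde{\mathscr{F}}_{i,j}^{l-2}$, which visibly lies in $\Image\,\varphi$; and $\Ker\,\varphi$ consists of classes $t^{-1}x$ ($x \in \widetilde{\mathscr{F}}_{i,j}^{l-2}$) that die in $t^{-1}\widetilde{\mathscr{F}}_{i,j}^{l-1}/\widetilde{\mathscr{F}}_{i,j}^{l-1}$, i.e. $x \in \widetilde{\mathscr{F}}_{i,j}^{l-1}$, whence $t^{-1}x = \psi(t^{-1}(tx)\cdot t^{-1}\cdots)$—more precisely $t^{-1}x$ is the $\psi$-image of the class of $t^{-1}x \in t^{-1}\widetilde{\mathscr{F}}_{i,j}^{l-1}$. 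Taking a suitable basis adapted to the step $\widetilde{\mathscr{F}}_{i,j}^{l-2} \subset \widetilde{\mathscr{F}}_{i,j}^{l-1} \subset \widetilde{\mathscr{F}}_{i,j}^{l-1}$ makes these computations completely transparent and also yields the rank assertions directly: $\Ker(V_{i,j}^l) = \Image(F_{i,j}^l) \cong \widetilde{\mathscr{F}}_{i,j}^{l-1}/\widetilde{\mathscr{F}}_{i,j}^{l-2}$ has rank $d_{i,j}^{l-1}$... —here I would double-check the index bookkeeping against Definition \ref{spl-str-def}, since the precise identification of the graded pieces with the $d_{i,j}^\bullet$'s is exactly where an off-by-one error could creep in, and the lemma as stated asserts $\rank\Ker(V_{i,j}^l) = d_{i,j}^l$.

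Finally I would note that all constructions are functorial in $S$ and the formation of kernels, images and the relevant quotients commutes with base change because every module in sight is locally free of the stated rank; hence it suffices to check the identities after reducing to $S = \Spec R$ as above, and in fact after passing to geometric points. Summing over $j$ gives $\Image(F_i^l) = \Ker(V_i^l)$ and $\Ker(F_i^l) = \Image(V_i^l)$. The main obstacle I anticipate is purely organizational rather than conceptual: getting the identification of $\Ker(V_{i,j}^l)$, $\Image(F_{i,j}^l)$ and their ranks to match the indexing in the lemma statement, and handling the reduced descriptions in the (AL) and (AU) cases (where one works with $\calH_{i,j}'$ rather than $\calH_{i,j}$) uniformly with the (C) case. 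I expect no serious difficulty beyond careful bookkeeping, since the essential mechanism—$\Ker F = \Image V$, $\Ker V = \Image F$ for the pair (inclusion, multiplication by $\varepsilon_i$) on the modules $\varepsilon_i^{-1}\mathscr{F}^{\bullet-1}/\mathscr{F}^{\bullet-1}$—is the same formal fact used implicitly in \cite{ReduzziXiao2017}.
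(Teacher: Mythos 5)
Your strategy is the same as the paper's: for $2\le l\le e_i$ both $F_{i,j}^l$ and $V_{i,j}^l$ are honest $\calO_S$-linear maps (an inclusion-induced map and multiplication by $\varepsilon_i$) between the modules $\varepsilon_i^{-1}\mathscr{F}_{i,j}^{\bullet-1}/\mathscr{F}_{i,j}^{\bullet-1}$, and the lemma is a direct local computation. The paper carries this out inside $\calH_{i,j}\simeq\calO_S[\varepsilon_i]^{2d_i}$ with no lifting; your detour through $R[[t]]^{2d_i}$ is harmless but unnecessary here, since that device is only needed for the semilinear maps treated in Lemma \ref{lem l=1}.

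Two concrete problems remain. (i) You identify $\Ker(V_{i,j}^l)=\Image(F_{i,j}^l)$ with the graded piece $\mathscr{F}_{i,j}^{l-1}/\mathscr{F}_{i,j}^{l-2}$ of rank $d_{i,j}^{l-1}$. That graded piece is the \emph{other} pair: one has
\[
\Ker(F_{i,j}^l)=\Image(V_{i,j}^l)=\mathscr{F}_{i,j}^{l-1}/\mathscr{F}_{i,j}^{l-2}\subset\calM_{i,j}^{l-1},
\qquad
\Image(F_{i,j}^l)=\Ker(V_{i,j}^l)=\varepsilon_i^{-1}\mathscr{F}_{i,j}^{l-2}/\mathscr{F}_{i,j}^{l-1}\subset\calM_{i,j}^{l}.
\]
You flag the discrepancy but leave the rank assertion, which is part of the statement, unproved; pinning it down also requires the reduced description of $\calM_{i,j}^l$ in types (AL)/(AU) (where the relevant module has rank $d_i$ rather than $2d_i$), which you defer. (ii) The inclusion $\Ker(F_{i,j}^l)\subseteq\Image(V_{i,j}^l)$ --- in your notation, dividing a representative by $t$ --- silently uses that $\mathscr{F}_{i,j}^{l-1}\subseteq\varepsilon_i\calH_{i,j}$. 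This holds because $\mathscr{F}_{i,j}^{l-1}$ is killed by $\varepsilon_i^{l-1}$ and $\calH_{i,j}$ is locally free over $\calO_S[\varepsilon_i]$, so that $\mathscr{F}_{i,j}^{l-1}\subseteq\calH_{i,j}[\varepsilon_i^{l-1}]=\varepsilon_i^{e_i-l+1}\calH_{i,j}\subseteq\varepsilon_i\calH_{i,j}$; but without this observation $\Image(V_{i,j}^l)=\bigl(\varepsilon_i\calH_{i,j}\cap\mathscr{F}_{i,j}^{l-1}+\mathscr{F}_{i,j}^{l-2}\bigr)/\mathscr{F}_{i,j}^{l-2}$ could a priori be strictly smaller than $\mathscr{F}_{i,j}^{l-1}/\mathscr{F}_{i,j}^{l-2}$, and your "visibly lies in the image" step does not go through. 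Once these two points are repaired, your argument coincides with the paper's.
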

\begin{proof}
By our definition, for each $l \geq 2$, we have
\[
    \Ker(F_{i,j}^l) = \Image(V_{i,j}^l) = \scrF_{i,j}^l, \quad \Image(F_{i,j}^l) = \Ker(V_{i,j}^l) = \varepsilon_i^{-1} \scrF_{i,j}^{l-2}.
\]
The rank of $\scrF_{i,j}^l / \scrF_{i,j}^{l-1}$ is $d_{i,j}^l$ by the definition of splitting structures.
\end{proof}
Recall that $\calM_{i,j}^l = \varepsilon_i^{-1} \scrF_{i,j}^{l-1} / \scrF_{i,j}^{l-1}\simeq \scrF_{i,j}^{2e_i-l+1}/\varepsilon_i\scrF_{i,j}^{2e_i-l+1}$. The symplectic pairing on $\scrF_{i,j}^{2e_i-l+1}\subset \calH_{i,j}$ 
induces a pairing on $\calM_{i,j}^l$.


Recall the group $\calG_0^{\spl}$ constructed in \ref{subsubsec calG} as a similitude group  with respect to the vector space $\Lambda_{0}^{\spl}$ with its symplectic form $\psi$.
Recall that we have introduced a cocharacter
\[\mu = \prod_{i, j, l} \mu_{i, j}^{l}: \bbG_{m,\kappa} \to \mathcal{G}^{\spl}_{0,\kappa}.\]
Such a datum gives the parabolic subgroups $P_+, P_-$ of $\calG_{\kappa}^{\spl}$ and corresponding unipotent groups $U_+, U_-$,
and common Levi subgroup $L = P_+ \cap P_-$ as in previous subsection.
The cocharacter $\mu$ induces a standard $F$-zip
\[(\Lambda_{0}^{\spl}, C_0, D_0, \varphi_{0,\bullet}).\]
Moreover,  the above construction gives an $F$-zip
\[(\calM = \bigoplus_i \calM_i^{m_i}, C = \bigoplus_i \Ker(F_i)^{m_i}, D = \bigoplus_i \Ker(V_i)^{m_i}, \varphi_\bullet)\]
over $\scrA_0^{\spl}$, equipped with a natural symplectic form $\psi$ and an $\calO_B$-action with the natural isomorphisms
\[\varphi_\bullet:  \calM / C \simeq D^\phi, \quad C \simeq (\calM / D)^\phi,\]
where $\phi = \prod_i \phi_i: \calG_0^{\spl} \to \calG_0^{\spl}$ is the group isomorphism given by 
	\[\phi_i: \calG_{i,0}^{\spl} \to \calG_{i,0}^{\spl}, \quad (x_1, \dots, x_{e_i}) \mapsto (x_2, \dots, x_{e_i}, x_1^p),\]
and $D^\phi$ is the same module $D$ but with $\calG_0^{\spl}$-action twisted by $\phi$. The sub-vector bundles $C$ and $D$ are totally isotropic with respect to the symplectic form $\psi$ on $\calM$.
\begin{proposition}\label{prop splGzip}
We have
	\begin{enumerate}
		\item $I := \Isom((\calM, \psi), (\Lambda_{0}^{\spl}, \psi))$ is a $\calG_{0}^{\spl}$-torsor over $\scrA_0^{\spl}$.
		\item $I_{+} := \Isom(\calM \supset C \supset 0, \Lambda_{0}^{\spl} \supset C_0 \supset 0)$ is a $P_+$-torsor over $\scrA_0^{\spl}$.
		\item $I_{-} := \Isom(\calM\supset D \supset 0, \Lambda_{0}^{\spl} \supset D_0 \supset 0)$ is a $P_-^{\phi}$-torsor over $\scrA_0^{\spl}$.
		\item We have an $L^{\phi}$-equivariant isomorphism $\iota: (I_+ / U_+)^\phi \simeq I_- / U_-^\phi$. 
	\end{enumerate}
	In the above, all the isomorphisms preserve the natural additional structure.
\end{proposition}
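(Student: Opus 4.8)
The plan is to verify the four torsor-type assertions by the same strategy as in the unramified PEL case (cf. \cite{ViehmannWedhorn2013}, \cite{Zhang2018EO}, \cite{PinkWedhornZiegler2015}*{\S8}): each of $I$, $I_+$, $I_-$ is of the form $\Isom$ between a locally free sheaf equipped with extra structure and a fixed standard model, so it is automatically a torsor under the corresponding automorphism group scheme provided (a) the two objects are étale-locally isomorphic and (b) that group scheme is smooth; the isomorphism $\iota$ will then be produced from the maps $\varphi_\bullet$ of the underlying $F$-zip, which is already in place by Lemmas~\ref{lem l big} and~\ref{lem l=1}. Throughout, write $\iota_B$ for the $\calO_B$-action, present on all objects by construction.

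\emph{Assertion (1).} First I would establish that $(\calM,\psi,\iota_B)$ is an étale form of $(\Lambda_0^{\spl},\psi,\iota_B)$. By construction each $\calM_{i,j}^{l}$ is locally free of rank $2d_i$, carries the $\calO_B$-action, and, by \cite{PappasRapoport2005}*{Propositions~5.2 and~9.2} (with the same argument in type (AU), as in the proof of Proposition~\ref{prop smooth spl}), is étale-locally isomorphic to $\Lambda_{i,j}^{l}\otimes_{\calO_F}\calO_S$ compatibly with that action. The pairing induced on $\calM_{i,j}^{l}$ from the polarization pairing on $\calH_{i,j}$ is a perfect pairing of the same linear-algebraic type as the one on $\Lambda_{i,j}^{l}$ (symplectic in type (C); between a module and its dual in type (AL); nondegenerate Hermitian over the unramified quadratic extension in type (AU)), each of which is étale-locally standard. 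After a common refinement the pairings match as well, so $(\calM,\psi,\iota_B)$ is étale-locally isomorphic to $(\Lambda_0^{\spl},\psi,\iota_B)$. Since $\calG_0^{\spl}$ is by construction the sheaf of $\calO_B$-linear similitude automorphisms of the latter and is smooth (indeed reductive), $I$ is a $\calG_0^{\spl}$-torsor.

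\emph{Assertions (2) and (3).} For (2) I would adjoin the subbundle $C=\bigoplus_i\Ker(F_i)^{m_i}$. Lemmas~\ref{lem l big} and~\ref{lem l=1} identify the pieces $\Ker F_{i,j}^{l}$ (and $\Ker F_{i,j}^{1}$) with explicit subsheaves of the $\mathscr{F}_{i,j}^{\bullet}$, show they are locally direct factors, and compute their ranks; after the bookkeeping of the ``semisimplification'' described in the introduction, the resulting graded ranks are exactly the weight multiplicities of $\mu$, so the filtration $C_0\subset\Lambda_0^{\spl}$ attached to $\mu$ has matching invariants. Using the description $\calM_{i,j}^{l}\simeq \mathscr{F}_{i,j}^{2e_i-l+1}/\varepsilon_i\mathscr{F}_{i,j}^{2e_i-l+1}$ and the perpendicularity relations of Definition~\ref{spl-str-def} (and its type-by-type refinement, e.g. $(\mathscr{F}_{i,j}^{l})^{\perp}=(\pi_i)^{e_i-l}\mathscr{F}_{i,j}^{l}$ in type (C)) one checks that $C$ is totally isotropic and $\calO_B$-stable --- the assertion recorded just above the statement. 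Hence $(\calM\supset C,\psi,\iota_B)$ is étale-locally isomorphic to $(\Lambda_0^{\spl}\supset C_0,\psi,\iota_B)$, whose similitude automorphism group is the stabilizer of $C_0$ in $\calG_0^{\spl}$, namely $P_+$, and $I_+$ is a $P_+$-torsor. For (3) one argues identically with $D=\bigoplus_i\Ker(V_i)^{m_i}$, except that the maps $F_i^{1}$, $V_i^{1}$ are only $\sigma$- (resp. $\sigma^{-1}$-)linear: unwinding the cycle, with a Frobenius twist occurring once per $e_i$ steps at the $l=1$ spot, shows that $(\calM\supset D,\psi,\iota_B)$ becomes étale-locally isomorphic to the $\phi$-pullback of $(\Lambda_0^{\spl}\supset D_0,\psi,\iota_B)$, where $\phi=\prod_i\phi_i$ is the explicit automorphism $(x_1,\dots,x_{e_i})\mapsto(x_2,\dots,x_{e_i},x_1^{p})$. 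Therefore $I_-$ is a torsor under $\phi^{\ast}P_-=P_-^{\phi}$.

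\emph{Assertion (4) and the main obstacle.} Finally, the $F$-zip maps $\varphi_\bullet$ --- i.e. the linearizations of the $\varphi_i$, giving $\calM/C\xrightarrow{\sim}D^{\phi}$ and $C\xrightarrow{\sim}(\calM/D)^{\phi}$ --- assemble into an isomorphism of associated gradeds $\gr_C(\calM)\xrightarrow{\sim}\gr_D(\calM)^{\phi}$; since $C$ and $D$ are Lagrangian, $\varphi_0$ and $\varphi_1$ are mutually adjoint for $\psi$ (again by the perpendicularity relations), and the construction is $\calO_B$-linear, this isomorphism preserves $\psi$ and $\iota_B$. Reading $I_+/U_+$ (resp. $I_-/U_-^{\phi}$) as the $L$- (resp. $L^{\phi}$-)torsor of isomorphisms of the decorated associated graded, $\varphi_\bullet$ then furnishes the required $L^{\phi}$-equivariant isomorphism $\iota\colon(I_+/U_+)^{\phi}\xrightarrow{\sim}I_-/U_-^{\phi}$, compatibly with all additional structure. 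I expect the main obstacle to be exactly the two intertwined bookkeeping issues: correctly matching the effective Frobenius twist of the conjugate side to the explicit automorphism $\phi$ (tracking the cyclic shift in $l$ together with the $\sigma$-twist at $l=1$, and the subtlety in the definition of $V_i^{1}$), and verifying uniformly across types (C), (AL), (AU) that $C$ and $D$ remain totally isotropic and that $\varphi_0,\varphi_1$ stay mutually adjoint --- both resting on the perpendicularity conditions of Definition~\ref{spl-str-def} and the explicit comparison $\calM_{i,j}^{l}\simeq\mathscr{F}_{i,j}^{2e_i-l+1}/\varepsilon_i\mathscr{F}_{i,j}^{2e_i-l+1}$.
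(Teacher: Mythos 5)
Your proposal is correct in substance, and for parts (1), (2) and (4) it is essentially the paper's argument in different clothing: the paper packages the \'etale-local trivializations $\tau_{i,j}^l\colon\calM_{i,j}^l\simeq\Lambda_{i,j}^l\otimes\calO_S$ (from \cite{PappasRapoport2005}*{Propositions 5.2, 9.2}) into the local model diagram $\scrA_0^{\spl}\leftarrow\widetilde{\scrA}_0^{\spl}\xrightarrow{q}\calG_0^{\spl}/P_+$, so that $I=\widetilde{\scrA}_0^{\spl}$ and $I_+=q^{-1}(\Lambda_0^{\spl}\supset C_0)$ become torsors by the arguments of \cite{ShenYuZhang2021}*{\S 3.4}, and it likewise deduces (4) from Lemmas \ref{lem l=1} and \ref{lem l big}; your route through ``\'etale-locally isomorphic decorated objects with smooth automorphism group'' proves the same thing, at the cost of checking by hand the rank, isotropy and $\calO_B$-stability statements that the local model diagram delivers for free. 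The genuine divergence is in part (3). The paper does not unwind the Frobenius twist on $D$ directly: it introduces a \emph{conjugate} local model diagram $\scrA_0^{\spl}\leftarrow\widetilde{\scrA}_0^{\spl}\xrightarrow{q^c}M^{\loc,c}=\calG_0^{\spl}/P_-^{\phi}$ (pulled back from the conjugate local model of \cite{ShenYuZhang2021} through the splitting structure), and obtains $I_-=q^{c,-1}(D_0\subset\Lambda_0^{\spl})$ as a $P_-^{\phi}$-torsor exactly as in (2). This is more structural and localizes the entire twist bookkeeping in the single identification $M^{\loc,c}\simeq\calG_0^{\spl}/P_-^{\phi}$, whereas your direct ``unwinding of the cycle'' must track the $\sigma$-linearity at $l=1$ and the cyclic shift in $l$ by hand --- you correctly flag this as the main obstacle, and it is the one step of your outline that would require real care to write down; the conjugate-diagram route is the cleaner way to discharge it.
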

\begin{proof}
  By the proofs of Propositions \ref{prop smooth spl} and \ref{prop loc mod Gspl}, there is a local model diagram
  \[\xymatrix{
  	& \widetilde{\scrA}^{\spl}_0 \ar[ld]_{\pi} \ar[rd]^{q} \\
  	\scrA^{\spl}_0 && M^{\loc},
  }\]
  where $M^{\loc}$ is the special fiber of $\mathbb{M}^\loc(\G^\spl,\mu)$,
  $q$ sends $(x = (\underline{A},\underline{\scrF}_\bullet), \tau:\calM_x \simeq \Lambda^{\spl}_0)$ to $\tau(\calM_x \supset C_x) \in M^{\loc} = \calG_0^{\spl} / P_+$.
  By the same arguments as in \cite[\S 3.4]{ShenYuZhang2021}, we have
  \begin{itemize}
  	\item $I = \widetilde{\scrA}^{\spl}_0$ is a $\calG_{0}^{\spl}$-torsor;
  	\item $I_+ = q^{-1}(\Lambda_0^{\spl} \supset C_0)$ is a $P_+$-torsor;
  \end{itemize}
  Recall the forgetful map $\scrA^{\spl}_0\ra \scrA_0$. There is a conjugate local model diagram for $\scrA_0$:
  \[\xymatrix{
    & \widetilde{\scrA_0} \ar[ld]_{\pi} \ar[rd]^{q^c} \\
    \scrA_0 && M^{\loc,c}(\calG_0, \mu)
  }\]
  where $M^{\loc,c}(\calG_0, \mu)$ classifies the conjugate filtration of $\scrA_0$ defined in \cite{ShenYuZhang2021}
  and $q^c$ is a smooth morphism (same as \cite[Theorem 3.4.2]{ShenYuZhang2021}) sending points of $\widetilde{\scrA}_0$ to corresponding conjugate filtration.
  Let 
  $$M^{\spl, c}(\calG_0, \mu) \to M^{\loc, c}(\calG_0, \mu)$$
  be the scheme over $k$ classifying the splitting structures of $M^{\loc, c}(\calG_0, \mu)$.
  The pull-back of the morphism $q^c$ gives the conjugate splitting local model diagram
  \[\xymatrix{
  	& \widetilde{\scrA}^{\spl}_0 \ar[ld]_{\pi} \ar[rd]^{q^c} \\
  	\scrA^{\spl}_0 && M^{\spl, c}(\calG_0, \mu).
  }\]
  By the previous method of constructing local model diagram of splitting models,
  there is a local model diagram:
  \[\xymatrix{
  	& \widetilde{\scrA}^{\spl}_0 \ar[ld]_{\pi} \ar[rd]^{q^c} \\
  	\scrA^{\spl}_0 && M^{\loc, c}
  }\]
  where $M^{\loc, c}:= M^{\loc, c}(\calG_0^\spl, \mu)$ and $q^c$ is a $\calG_0^\spl$-equivariant smooth morphism sending $(x = (\underline{A},\underline{\scrF_\bullet}), \tau:\calM_x \simeq \Lambda^{\spl}_0)$ to $\tau(D_x \subset \calM_x) \in M^{\loc,c} := \calG_0^{\spl} / P_-^{\phi}$. Combining these diagrams we have
  \begin{itemize}
    \item $I_- := q^{c,-1}(D_0 \subset \Lambda_0^{\spl})$ is a $P_-^{\phi}$-torsor. This follows from the same argument of $I_+$ using the conjugate local model diagram;
    \item There is an $L^{\phi}$-equivariant isomorphism $\iota: (I_+ / U_+)^{\phi} \simeq I_- / U_-^{\phi}$. This follows from Lemma~\ref{lem l=1} and Lemma~\ref{lem l big}.
  \end{itemize}
\end{proof}

Let $\phi' = \prod_i \phi'_i$ be the group isomorphism given by 
\[\phi'_i: \calG_{i,0}^{\spl} \to \calG_{i,0}^{\spl}, \quad (x_1, \dots, x_{e_i}) \mapsto (x_{e_i}, x_1^p, \dots, x_{e_i-1}^p).\]
Then we have $\phi \cdot \phi' = \phi' \cdot \phi = \sigma$,
so $I_-^{\phi'}$ is a $P_-^{(p)}$-torsor and 
\[\iota^{\phi'}: (I_+ / U_+)^{(p)} \simeq I_-^{\phi'} / U_-^{(p)}\]
is an $L^{(p)}$-equivariant isomorphism.
This is the universal $\mathcal{G}_0^\spl$-zip of type $\mu$ over $\Spl{\mathscr{A}_0}$,
i.e. there is a morphism of algebraic stacks over $\kappa$:
\[\zeta: \Spl{\mathscr{A}_0} \to \Zip{\Spl{\mathcal{G}_0}}{\mu}_\kappa.\]

\begin{definition}
	The fibers of $\zeta$ are called the Ekedahl-Oort strata of $\Spl{\mathscr{A}}_0$.
\end{definition}
Note that for each $1\leq i\leq r$, by construction we have in fact a universal $\calG_{i,0}^\spl$-zip of type $\mu_i$ over $\Spl{\mathscr{A}}_0$, thus a morphism \[\zeta_i: \Spl{\mathscr{A}_0} \to \Zip{\Spl{\mathcal{G}_{i,0}}}{\mu_i}_\kappa.\] 
Via the inclusion $\mathcal{G}_0^\spl\subset \prod_i\calG_{i,0}^\spl$,
the universal $\mathcal{G}_0^\spl$-zip of type $\mu$ over $\Spl{\mathscr{A}_0}$ induces a $\prod_i\calG_{i,0}^\spl$-zip of type $\prod_i\mu_i$, which is  isomorphic to the product of all the $\calG_{i,0}^\spl$-zip of type $\mu_i$.

\subsection{Smoothness of $\zeta$}

The dimension formula, closure relation, and smoothness of EO (Ekedahl-Oort) strata follow from the following proposition.
\begin{proposition}\label{prop sm zeta}
	The morphism $\zeta: \Spl{\mathscr{A}_0} \to \Zip{\Spl{\mathcal{G}_0}}{\mu}_\kappa$ is smooth.
\end{proposition}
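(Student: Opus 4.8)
The plan is to deduce the smoothness of $\zeta$ from the (Hodge and conjugate) local model diagrams appearing in the proof of Proposition~\ref{prop splGzip}, following the strategy of \cite{ShenYuZhang2021}*{\S3.4} (compare also \cite{Zhang2018EO,ViehmannWedhorn2013}). Since smoothness of a morphism is fppf-local on the target, and by Theorem~\ref{thm quotient zip stack} the stack $\Zip{\G^\spl_0}{\mu}_\kappa\simeq[E_{\G^\spl_0,\mu}\backslash\G^\spl_0]$ carries the smooth surjective atlas $\G^\spl_0\to\Zip{\G^\spl_0}{\mu}_\kappa$, I would first reduce to showing that the base change
\[ h\colon \mathscr{E}:=\scrA^\spl_0\times_{\Zip{\G^\spl_0}{\mu}_\kappa}\G^\spl_0\longrightarrow\G^\spl_0 \]
is smooth; here $\mathscr{E}$ is naturally an $E_{\G^\spl_0,\mu}$-torsor over $\scrA^\spl_0$ and $h$ is $E_{\G^\spl_0,\mu}$-equivariant. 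By the construction in Proposition~\ref{prop splGzip}, $\mathscr{E}$ is canonically the fibre product $I_+\times_{(I_+/U_+)^{(p)}\,\simeq\,I_-^{\phi'}/U_-^{(p)}}I_-^{\phi'}$ formed inside $\widetilde{\scrA}^\spl_0\times_{\scrA^\spl_0}\widetilde{\scrA}^\spl_0$, where $\widetilde{\scrA}^\spl_0$ denotes the $\G^\spl_0$-torsor of the local model diagram, $I_+=q^{-1}(\mathrm{pt})$ and $I_-^{\phi'}$ is the $\phi'$-twist of $I_-=(q^c)^{-1}(\mathrm{pt})$; the map $h$ sends $(i_+,i_-)$ to the unique $g\in\G^\spl_0$ with $i_+g=i_-$ in $\widetilde{\scrA}^\spl_0$. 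In particular $h$ recovers the smooth morphism $q$ after composing with $\G^\spl_0\to\G^\spl_0/P_+$, and (the $\phi'$-twist of) $q^c$ after composing with $\G^\spl_0\to\G^\spl_0/P_-^{(p)}$.

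Both $\mathscr{E}$ and $\G^\spl_0$ are smooth over $\kappa$ ($\scrA^\spl_0$ is smooth by Proposition~\ref{prop smooth spl}, $E_{\G^\spl_0,\mu}$ is a smooth $\kappa$-group scheme, and $\G^\spl_0$ is reductive), so by the Jacobian criterion for a morphism between smooth $\kappa$-schemes it suffices to check that, at each closed point $x$, the differential $dh_x\colon T_x\mathscr{E}\to T_{h(x)}\G^\spl_0=\Lie(\G^\spl_0)$ is surjective with kernel of dimension $\dim\scrA^\spl_0$. I would compute $dh_x$ by splitting $T_x\mathscr{E}$ into the vertical ($E_{\G^\spl_0,\mu}$-orbit) direction, whose image is governed by the differential of the $E_{\G^\spl_0,\mu}$-action on $\G^\spl_0$, and the horizontal direction surjecting onto $T\scrA^\spl_0$, which by Grothendieck--Messing/display deformation theory for abelian schemes equipped with a splitting structure is the space of first-order deformations of $(\ul A,\ul{\mathscr{F}_\bullet})$. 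The key point would be that, via the explicit maps $F_i^l,V_i^l$ of \S\ref{subsection zip} and the isomorphisms of Lemmas~\ref{lem l=1} and~\ref{lem l big}, the $\G^\spl_0$-zip is a ``semisimplification'' of the de Rham crystal for which the conjugate-filtration data deform freely relative to the Hodge-filtration data; this forces $dh_x$ to be surjective, and a dimension count (again via Lemmas~\ref{lem l=1}, \ref{lem l big}) pins the kernel to dimension $\dim\scrA^\spl_0$. Using the product decompositions $\G^\spl_0=\prod_{i,j,l}G_{i,j}^l$ and $\mu=\prod_{i,j,l}\mu_{i,j}^l$, this reduces the verification to each factor $G_{i,j}^l$, i.e.\ to the unramified split-group situation, where it amounts to the smoothness of the zip morphism in the classical PEL/Hodge-type case (\cite{ViehmannWedhorn2013,Zhang2018EO}; see also \cite{PinkWedhornZiegler2015}).

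The hard part will be precisely this deformation-theoretic comparison in the ramified splitting setting: one has to show that sending a first-order deformation of $(\ul A,\ul{\mathscr{F}_\bullet})$ to the corresponding first-order deformation of its $\G^\spl_0$-zip is a formally smooth operation, which requires controlling how the maps $F_i^l,V_i^l$ vary in families --- in particular the $\sigma^{-1}$-linear map $V_{i,j}^1$, whose definition is the delicate one. Equivalently, the obstacle is to verify that the Hodge and conjugate local model diagrams are compatible over the common torsor $\widetilde{\scrA}^\spl_0$, which is exactly where the identity $\phi\phi'=\phi'\phi=\sigma$ enters. Finally, I would remark that the smoothness of $h$, hence of $\zeta$, can also be obtained a posteriori from the well-positionedness of the Ekedahl--Oort strata in the sense of Lan--Stroh \cite{LS18}, along the lines of the unramified PEL case, which provides a second, independent route.
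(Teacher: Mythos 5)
Your opening reduction --- base changing along the atlas $\G^\spl_k \to \Zip{\G^\spl_0}{\mu}_\kappa\otimes k$ and reducing smoothness of $\zeta$ to surjectivity of the differential of $h$ at closed points --- is exactly how the paper's proof begins, and your description of $\mathscr{E}$ as the fibre product of $I_+$ and $I_-^{\phi'}$ inside $\widetilde{\scrA}^\spl_0\times_{\scrA^\spl_0}\widetilde{\scrA}^\spl_0$ is correct. But the decisive step is only asserted, not proved: the claim that ``the conjugate-filtration data deform freely relative to the Hodge-filtration data; this forces $dh_x$ to be surjective'' \emph{is} the content of the proposition, and you flag this ``deformation-theoretic comparison'' yourself as the unresolved hard part. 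The paper closes exactly this gap, and does so without any appeal to Grothendieck--Messing theory: via the local model diagram it identifies the complete local ring $A$ of $\scrA^\spl_k$ at $x$ with the complete local ring of $U_-$ at the identity, observes that the trivialized $E$-torsor over $\Spec A$ is $\underline{I}_{ug}$ with $u\in U_-(A)$ the tautological coordinate and $g$ fixed, and then writes the orbit map explicitly as $\beta(u,l,u_+,u_-)=lu_+\,ug\,(l^{(p)}u_-)^{-1}$. Surjectivity of $d\beta$ is then immediate, as in the last paragraph of \cite{Zhang2018EO}*{Theorem 4.1.2}, from $d(\Frob_p)=0$ together with $\Lie(P_+)+\Lie(U_-)=\Lie(\G^\spl_0)$; no separate dimension count on the kernel is needed. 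Your plan never produces this identification of the deformation space with $U_-$ together with the tautological description of the family over it, which is the whole point.

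Two further cautions. The proposed ``reduction to each factor $G_{i,j}^l$, i.e.\ to the unramified split-group situation'' is not accurate as stated: the zip couples the factors through the cyclic, Frobenius-twisted maps $F_{i,j}^1$, $V_{i,j}^1$ (that is, through $\phi$ and $\phi'$), so the tangent-space computation must be run for the whole group $\G^\spl_0$ with its twisted zip datum --- which is what the explicit formula for $\beta$ accomplishes in one stroke. And the closing remark that well-positionedness \`a la Lan--Stroh furnishes ``a second, independent route'' is circular for this proposition: in the paper (and in \cite{LS18}) well-positionedness serves to propagate smoothness of $\zeta$ from the open Shimura variety to $\zeta^\tor$ on the toroidal compactification, so it presupposes, rather than yields, the smoothness of $\zeta$ on $\scrA^\spl_0$.
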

\begin{proof}
	Let $k$ be the algebraic closure of $\kappa$. 
	Consider the following cartesian diagram
	\[\xymatrix{
	{\mathscr{A}_{k}^{{\spl}\#}} \ar[r]^{\zeta^{\#}} \ar[d] & {\Spl{\mathcal{G}}_k} \ar[d] \\
	{\Spl{\mathscr{A}}_{k}} \ar[r] & {\Zip{\mathcal{G}_0^{\spl}}{\mu}_\kappa \otimes k}.
	}\]
	The smoothness of $\zeta$ is equivalent to the smoothness of  $\zeta^\#$, which is also equivalent to the surjectivity of the induced map on tangent spaces at points of $\mathscr{A}^{{\spl}\#}(k)$.

	Recall $\Zip{\mathcal{G}_0^{\spl}}{\mu}_\kappa \simeq [E \backslash \mathcal{G}_0^{\spl}]$.
	Let $x^\#$ be a closed point of $\mathscr{A}_k^{\spl}$ with image $x$ in $\mathscr{A}_k^{\spl}$.
	Let $A$ be the complete local ring of $\mathscr{A}_k^{\spl}$ at $x$. Consider the cartesian diagram
\[\xymatrix{
	X \ar[r]^{\alpha} \ar[d] & {\Spl{\mathcal{G}}_k} \ar[d] \\
	{\Spec\,A} \ar[r] & {\Zip{\mathcal{G}_0^{\spl}}{\mu}_\kappa \otimes k}
}\]
The morphism $X \to \Spec\,A$ is a trivial $E$-torsor isomorphic to $\underline{I}_{u g}$ for an $A$-point $u g$ of $\Spl{\mathcal{G}}$.
Let $U_- \subset \Spl{\mathcal{G}}_k$ be the opposite unipotent subgroup determined by $\mu$, 
then $A$ is isomorphic to the complete local ring of $U_-$ at identity.
The trivialization induces an isomorphism and translates $\alpha$ into the morphism 
$\beta: \Spec\,A \times_k E_k \to \Spl{\mathcal{G}}_k$
given by for any $k$-scheme $T$, on $T$-points $\beta: (u,l,u_+,u_-) \mapsto l u_+ u g (l^{(p)}u_-)^{-1}$. 
Now the same method of the last paragraph of \cite{Zhang2018EO}*{Theorem 4.1.2} shows the map on the tangent space at $x$ is surjective.
\end{proof}

\subsection{Non-emptiness of EO strata}
Recall that we have natural morphisms over $\kappa$:
\[\scrA_{0}^\spl\ra \scrA_{0}\ra\scrA_{0}^{\naive}.\]
In the following we work over $k=\ov{\kappa}$. Although we are primarily interested in the geometry of EO strata of $\scrA_k^\spl$, sometimes it would be helpful to study the geometry of $\scrA_k$ together. 
On $\scrA_{k}^{\naive}$, there is
 a KR (Kottwitz-Rapoport) stratification given by the isomorphism class of $\omega_{A/S}$, cf. \cite{Hartw}.
By \cite{PappasRapoport2005, Levin2016, HainesRicharz2020}, there is also a KR stratification \[\scrA_k=\coprod_{w\in\Adm(\mu)_{K}}\scrA_k^w\] indexed by the $\mu$-admissible set: let $K=K_p=\G(\Z_p)$ and $\Adm(\mu)_{K}$ the $\mu$-admissible set of level $K$ as introduced in \cite{HeRapoport2017}, which is the partially ordered set of all KR types of level $K$. By the discussions in \ref{subsubsec int group}, we have a decomposition $K=\prod_{i\in I}K_i$ and thus $\Adm(\mu)_{K}=\prod_{i\in I}\Adm(\mu_i)_{K_i}$.
We also have an EKOR (Ekedahl-Kottwitz-Oort-Rapoport) stratification of $\mathscr{A}_k$, which is a refinement of the KR stratification, such that for each $w\in \Adm(\mu)_{K}$, we have a morphism of algebraic stacks \[\zeta_w: \mathscr{A}_k^w\ra \calG_0^{\rdt}\tr{-Zip}^{J_w},\]
see subsections \ref{subsec EKOR loc} and \ref{subsec EKOR gl} in the Appendix for more details. If $w=(w_i)\in \prod_{i\in I}\Adm(\mu_i)_{K_i}$, for each $i$, we have in fact a morphism $\zeta_{w_i}: \mathscr{A}_k^w\ra \calG_{0,i}^{\rdt}\tr{-Zip}^{J_{w_i}}$, the $i$-th component of $\zeta_w$.

\begin{proposition}\label{prop:EO-EKOR}
    Let $\pi: \mathscr{A}^{\spl}_k \to \mathscr{A}_k$ be the natural forgetful morphism.
    Let $x\in \mathscr{A}^{\spl}(k)$ and $y = \pi(x) \in \mathscr{A}(k)$.
    \begin{itemize}
        \item If $x$ is a minimal EO point of $\mathscr{A}^{\spl}(k)$ (i.e. $x$ lies in the minimal EO stratum), then $y$ is a minimal EKOR point of $\mathscr{A}(k)$ (i.e. $y$ lies in the minimal EKOR stratum, cf. \cite{ShenYuZhang2021} 1.2.5).
        \item If $y$ is a minimal EKOR point, then there exists $x' \in\pi^{-1}(y)\subset \mathscr{A}^{\spl}(k)$ such that $x'$ is a minimal EO point.
    \end{itemize}
\end{proposition}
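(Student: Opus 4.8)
The plan is to reduce both assertions to a dictionary, along $\pi$, between the Ekedahl--Oort types on $\scrA^\spl_k$ and the EKOR types on $\scrA_k$, and then to combine this with a properness argument on the fibres of $\pi$.

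First I would record intrinsic descriptions of minimality. Since $\zeta\colon\scrA^\spl_0\to\Zip{\calG_0^{\spl}}{\mu}_\kappa$ is smooth (Theorem~\ref{thm intro 1}(2)), the Ekedahl--Oort strata of $\scrA^\spl_k$ are smooth and, by Theorem~\ref{thm PWZ11}, their dimensions and closure relations are controlled by $({}^JW,\preceq)$; in particular there is a unique zero-dimensional stratum, equivalently a unique closed stratum, namely $\scrA^{\spl,w}_k$ with $w$ the minimal element of ${}^JW$, and the minimal EO points are exactly its $k$-points. Likewise, using the smooth morphisms $\zeta_w\colon\scrA_k^w\to\Zip{\calG_0^{\rdt}}{J_w}$ on the KR strata and the closure relations for the EKOR stratification, the minimal EKOR points are exactly the $k$-points of the unique zero-dimensional EKOR stratum, indexed by the minimal element of ${}^K\Adm(\mu)$, whose underlying KR type is the minimal element of $\Adm(\mu)_K$.

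The heart of the matter is a comparison between the two structures. For $x\in\scrA^\spl(k)$ with $y=\pi(x)$ and $w:=\mathrm{KR}(y)\in\Adm(\mu)_K$, one relates the $\calG_0^{\spl}$-zip of type $\mu$ at $x$ to the $\calG_0^{\rdt}$-zip of type $J_w$ at $y$. This should be carried out, as in the treatment of the maximal KR stratum in \S\ref{subsec mu-ord}, by combining the modified splitting local model diagram of Proposition~\ref{prop loc mod Gspl}, the parahoric local model diagram for $\scrA_k$ recalled in the Appendix, and the semi-simplification $\calH\rightsquigarrow\calM$ of \S\ref{subsection zip}: over $\pi^{-1}(\scrA_k^w)$ the $\calG_0^{\spl}$-zip is obtained from the pullback of $\zeta_w$ by transporting the Hodge and conjugate filtrations through the Pappas--Rapoport filtration and passing to $\calM$, using the product decomposition $\zeta=\prod_i\zeta_i$ and that $K_p$ is very special. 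The combinatorial consequence I want to extract is: the EO type of $x$ equals the minimal element of ${}^JW$ if and only if $w$ is the minimal KR type and $\zeta_w(y)$ is the minimal $J_w$-zip, i.e.\ if and only if $y$ is a minimal EKOR point. This proves the first bullet. It also yields the refinement needed below: if $y$ is a minimal EKOR point and $x'\in\pi^{-1}(y)$ has EO type minimal among the EO types occurring on $\pi^{-1}(y)$, then that EO type is already the global minimum of ${}^JW$.

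For the second bullet, let $y$ be a minimal EKOR point. As $\pi$ is a proper surjective resolution of singularities, $\pi^{-1}(y)$ is a non-empty projective $k$-variety, and the Ekedahl--Oort stratification of $\scrA^\spl_k$ restricts to a stratification of $\pi^{-1}(y)$ into locally closed subsets; hence one of these is closed in $\pi^{-1}(y)$, and therefore complete and non-empty. Any $k$-point $x'$ of it has EO type minimal among those occurring on $\pi^{-1}(y)$, which by the previous paragraph is the minimal element of ${}^JW$; thus $x'\in\pi^{-1}(y)$ is a minimal EO point. Combined with the non-emptiness of the minimal EKOR stratum (Proposition~\ref{prop non-empty EKOR}), this in particular shows that the minimal EO stratum of $\scrA^\spl_k$ is non-empty, and then the smoothness of $\zeta$ together with Theorem~\ref{thm PWZ11} gives the non-emptiness of every EO stratum.

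The step I expect to be the main obstacle is the comparison in the third paragraph: one must make precise, uniformly in the KR type $w$, how the semi-simplification interacts with the KR and EKOR stratifications of $\scrA_k$ and with the two local model diagrams, and then squeeze out the clean statement that over a minimal EKOR point the only fibrewise-minimal EO type is the global minimum. As a sanity check one can carry this out by hand in the Hilbert case, where the $\calM$-construction, the Pappas--Rapoport filtration on the fibres of $\pi$, and the partial Hasse invariants are all explicit (cf.\ Example~\ref{example EKOR} and \cite{ReduzziXiao2017}).
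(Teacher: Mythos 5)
Your plan correctly identifies where the substance lies, but it does not supply it: the ``dictionary'' in your third paragraph --- relating the $\calG_0^{\spl}$-zip at $x$ to the $\calG_0^{\rdt}$-zip of type $J_w$ at $y$ uniformly in $w$, and extracting from it that minimal EO corresponds to minimal EKOR --- is precisely the content of the proposition, and you flag it as ``the main obstacle'' without carrying it out. Moreover the clean statement you hope to extract is too strong: an ``if and only if'' of the form \emph{the EO type of $x$ is minimal iff $y$ is a minimal EKOR point} would force every point of $\pi^{-1}(y)$ over a minimal EKOR point to be minimal EO, which is not what is true (and is why the proposition is stated asymmetrically, with an existence claim in the second bullet). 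The paper's proof is exactly the explicit comparison you defer: case by case in the type of $i$, it characterizes minimal EO points by concrete conditions on the Pappas--Rapoport filtration (for $l=1$: $\scrF_{i,j}^1\subset\Ker(V_{i,j}^1)$ or $\Ker(V_{i,j}^1)\subset\scrF_{i,j}^1\subset\ve_i^{-1}\scrF_{i,j}^{0}$; for $l\geq 2$: $\scrF_{i,j}^l\subset\ve_i^{-1}\scrF_{i,j}^{l-2}$ or $\ve_i^{-1}\scrF_{i,j}^{l-2}\subset\scrF_{i,j}^l\subset\ve_i^{-1}\scrF_{i,j}^{l-1}$), shows these force $\omega_{x,i,j}$ to have the isomorphism type $\omega_{\min}$ of the minimal KR stratum and the residual zip to be minimal (hence $y$ minimal EKOR), and conversely constructs by hand a filtration satisfying these conditions over any minimal EKOR point.

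Your properness argument for the second bullet does not repair this. It produces a point $x'$ whose EO type is minimal \emph{among the types occurring on} $\pi^{-1}(y)$, but to upgrade that to the global minimum of $({}^JW,\preceq)$ you invoke a ``refinement'' which you derive from the unproven dictionary (equivalently, from the existence statement you are trying to prove); a priori the fibre over a minimal EKOR point could avoid the minimal EO stratum entirely, and only the explicit construction of a splitting structure rules this out. Also note that a stratum closed in the fibre need not have type minimal among occurring types when $\preceq$ is not total. So the argument is circular as written, and in the paper's approach it is unnecessary: once the explicit filtration is exhibited, no compactness input is needed.
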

\begin{proof}
	Both the involved $\calG_0^{\spl}$-zip and $\calG_0^{\rdt}$-zips admit decompositions over the indices $i$.
	We proceed according to the type of $i$.
	
Case (AL/AU):
   For each $j$, write $\sum_{l} d_{i,j}^l = t_j d_i + s_j$, where $0 \leq s_j < d_i$, and 
   \[\omega_{\min} := (\ve_i^{e_i - t_j  - 1})^{\oplus s_j} \bigoplus (\ve_i^{e_i - t_j})^{\oplus d_i - s_j}.\]
   Then a point $y \in \scrA_0$ is a minimal KR point if and only if \[\omega_{y,i,j} \simeq \omega_{\min} \bigoplus \omega_{\min}'\] (cf. \cite{PR03} section 3; here $\omega'_{\min}$
   and the splitting structure is determined by such structure on $\omega_{\min}$).
   Assume that $x = (\underline{A}, \underline{\scrF_\bullet})\in\mathscr{A}^{\spl}(k)$ is a minimal EO point and $y = \pi(x)$.
   The minimal condition is equivalent to
   \begin{enumerate}
   	\item $\scrF_{i,j}^1 \subset \Ker(V_{i,j}^1)\quad \tr{or}\quad \Ker(V_{i,j}^1) \subset \scrF_{i,j}^1 \subset \ve_i^{-1}\scrF_{i,j}^{0}$.
   	\item For each $l \geq 2$, $\scrF_{i,j}^l \subset \ve_i^{-1}\scrF_{i,j}^{l-2}\quad \tr{or}\quad 
   	\ve_i^{-1}\scrF_{i,j}^{l-2} \subset \scrF_{i,j}^l \subset \ve_i^{-1}\scrF_{i,j}^{l-1}$.
   \end{enumerate}
   This forces \[\omega_{x,i,j} \simeq \omega_{\min} \bigoplus \omega_{\min}',\] so $y = \pi(x)$ is a minimal KR point, which is then a minimal EKOR point by the condition (1).
   Conversely, if $y$ is a minimal EKOR point, then by the above condition (1) and (2), there is a splitting structure $\ul{\scrF_\bullet}$ of $y$, such that $x' = (y, \ul{\scrF_\bullet})$ minimal.
   
  Case (C):
   A point $y \in \scrA_0$ is a minimal KR point if and only if $\omega_{y,i,j}$ isomorphic to
   \[\omega_{\min} := ((\ve_i^{\lceil{e_i/2}\rceil}) \bigoplus (\ve_i^{\lfloor{e_i/2}\rfloor}))^{\oplus d_i} \subset k[\ve_i]^{\oplus 2d_i} .\]
  A similar construction as above gives the proof of type (C).
   
\end{proof}

\begin{corollary}
	The morphism $\zeta$ is surjective.
\end{corollary}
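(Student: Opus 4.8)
The plan is to obtain surjectivity from the smoothness of $\zeta$ (Proposition \ref{prop sm zeta}) together with the non-emptiness of the minimal Ekedahl-Oort stratum, the latter being supplied by Proposition \ref{prop:EO-EKOR} and the non-emptiness of the minimal EKOR stratum of $\scrA_k$. The underlying principle is simple: a smooth morphism has open image, and an open substack of a $G$-zip stack that meets the unique closed point must be everything.

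Concretely, I would first replace $\kappa$ by $k = \ov{\kappa}$, which is harmless since $\Spec k \to \Spec \kappa$ is faithfully flat and surjective, so $\zeta$ is surjective if and only if $\zeta_k \colon \scrA^\spl_k \to \Zip{\G^\spl_0}{\mu}_\kappa \otimes_\kappa k$ is. By Proposition \ref{prop sm zeta}, $\zeta_k$ is smooth, hence its image is an open substack $U \subseteq \Zip{\G^\spl_0}{\mu}_\kappa \otimes k$. By Theorem \ref{thm PWZ11} the underlying space of the target is the finite poset ${}^JW$ whose closed subsets are the down-sets for $\preceq$; this poset has a unique minimal element $w_{\min}$ (the length-zero element $e$, corresponding to the unique closed point of the zip stack), a fact one may also read off from the product decomposition ${}^JW = \prod_{i \in I} {}^{J_i}W_i$ coming from the factorization of the $\G^\spl_0$-zip into the $\calG_{i,0}^\spl$-zips noted at the end of subsection \ref{subsection zip}. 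Since $U$ is open, i.e. an up-set, and $w_{\min} \preceq w$ for all $w$, we have $U = {}^JW$ as soon as $w_{\min} \in U$; thus it is enough to show that the Ekedahl-Oort stratum $\scrA^{\spl,w_{\min}}_k$ is non-empty, i.e. that $\scrA^\spl(k)$ contains a minimal EO point.

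To produce such a point I would invoke Proposition \ref{prop non-empty EKOR} of the Appendix, which gives a minimal EKOR point $y \in \scrA(k)$, and then the second bullet of Proposition \ref{prop:EO-EKOR}, which provides $x' \in \pi^{-1}(y) \subseteq \scrA^\spl(k)$ that is a minimal EO point. Hence $\scrA^{\spl,w_{\min}}_k \neq \varnothing$, so $w_{\min} \in U$ and $\zeta$ is surjective. The real work is thus pushed into the two cited inputs — the non-emptiness of the minimal EKOR stratum, proved by adapting He-Zhou \cite{HeZhou2020}, and the explicit type-by-type matching of minimal EO and minimal EKOR points in Proposition \ref{prop:EO-EKOR}; given those, the only point deserving care here is the reduction of surjectivity to hitting a single closed point, which is exactly the content of the Pink-Wedhorn-Ziegler description of the zip stack recalled in Theorem \ref{thm PWZ11}.
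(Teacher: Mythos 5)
Your proof is correct and follows essentially the same route as the paper: the paper likewise combines the smoothness of $\zeta$ (open image) with the existence of a minimal EO point, obtained from the non-emptiness of the minimal EKOR stratum via Proposition \ref{prop:EO-EKOR}. Your write-up merely makes explicit the topological step that an open substack of $\Zip{\calG_0^{\spl}}{\mu}_\kappa$ containing the unique closed point $e \in {}^J W$ must be everything, which the paper leaves implicit.
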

\begin{proof}
	By Proposition \ref{prop non-empty EKOR}, the EKOR strata of $\mathscr{A}_0$ are non-empty.
	Hence, minimal EO points of $\scrA_k^{\spl}$ exist by Proposition \ref{prop:EO-EKOR}.
	The smoothness of $\zeta: \scrA_0^\spl \to \Zip{\calG_0^{\spl}}{\mu}_\kappa$ plus the fact that its image contains a minimal point imply the surjectivity of $\zeta$.
\end{proof}

We summarize the above results in a theorem, which describes the expected properties of Ekedahl-Oort stratification.

\begin{theorem}\label{thm EO}
	We have the following basic properties of EO strata on smooth splitting models.
	\begin{enumerate}
	\item There is a smooth surjective morphism \[\zeta: \scrA_0^{\spl} \to \Zip{\calG_0^{\spl}}{\mu}_\kappa\] of algebraic stacks.
	\item Let ${}^J W$ be the topological space of $\Zip{\calG_0^{\spl}}{\mu}_k$,
		and define the EO stratum as ${\scrA_k^{\spl,w}} := \zeta^{-1}(w)$ for each $w \in {}^J W$.
		Then each EO stratum ${\mathscr{A}^{\spl,w}_k}$ is a smooth and locally closed subscheme of dimension $l(w)$,
		with the closure relation by the partial order $\preceq$, i.e. 
		\[\overline{\scrA_k^{\spl,w}} = \coprod_{w' \preceq w} {\mathscr{A}^{\spl,w'}_k}, \quad \tr{ for all } w \in {}^J W.\]
	\end{enumerate}
\end{theorem}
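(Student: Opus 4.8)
The plan is to deduce Theorem~\ref{thm EO} as a formal consequence of what has already been proved, with no essentially new argument required. For part~(1), the smoothness of $\zeta$ is exactly Proposition~\ref{prop sm zeta}, and for the surjectivity I would recall the argument of the corollary above: since $\scrA_0^{\spl}$ is smooth and $\zeta$ is smooth, the image of $\zeta$ is an open substack of $\Zip{\calG_0^{\spl}}{\mu}_\kappa$; the underlying space $|\Zip{\calG_0^{\spl}}{\mu}_k|\cong{}^JW$ is finite and carries the order topology of $\preceq$ (Theorem~\ref{thm PWZ11}), hence has a unique closed point, and any open subset of such a space containing that closed point is the whole space. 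So it suffices to know that the image of $\zeta$ meets the unique closed point, i.e.\ that $\scrA_k^{\spl}$ possesses a minimal EO point, and this is supplied by Proposition~\ref{prop:EO-EKOR} together with the non-emptiness of the minimal EKOR stratum of $\scrA_k$ proved in subsection~\ref{subsec nonempty} (Proposition~\ref{prop non-empty EKOR} in the Appendix).

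For part~(2), I would transport the description of the zip stack from Theorem~\ref{thm PWZ11} along $\zeta$. That theorem attaches to each $w\in{}^JW$ a smooth, locally closed substack $\mathcal{Z}_w$ of $\Zip{\calG_0^{\spl}}{\mu}_k$ (a single $E_{\calG_0^{\spl},\mu}$-orbit) with $\overline{\mathcal{Z}_w}=\coprod_{w'\preceq w}\mathcal{Z}_{w'}$. Putting $\scrA_k^{\spl,w}:=\zeta^{-1}(\mathcal{Z}_w)$, smoothness of $\zeta$ gives immediately that $\scrA_k^{\spl,w}$ is smooth and locally closed in $\scrA_k^{\spl}$; it is non-empty by the surjectivity from part~(1); and it has dimension $l(w)$, since $\zeta$ has relative dimension $\dim\scrA_0^{\spl}=\dim(\calG_0^{\spl}/P_+)$ while $\dim\mathcal{Z}_w=\dim G_w-\dim E_{\calG_0^{\spl},\mu}=l(w)-\dim(\calG_0^{\spl}/P_+)$ by Theorem~\ref{thm PWZ11}(2). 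For the closure relation I would use that a smooth morphism is open, whence $\zeta^{-1}(\overline{\mathcal{Z}_w})=\overline{\zeta^{-1}(\mathcal{Z}_w)}$; combined with $\overline{\mathcal{Z}_w}=\coprod_{w'\preceq w}\mathcal{Z}_{w'}$ this gives $\overline{\scrA_k^{\spl,w}}=\coprod_{w'\preceq w}\scrA_k^{\spl,w'}$.

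I do not expect any genuine obstacle. The only step requiring a little care is the identity $f^{-1}(\overline Z)=\overline{f^{-1}(Z)}$ for an open morphism $f$, which here is applied to algebraic stacks rather than schemes; this is formal and can be verified on a smooth presentation. All the substance --- the smoothness of $\zeta$ (the deformation-theoretic argument of Proposition~\ref{prop sm zeta}, adapted from \cite{Zhang2018EO}) and the non-emptiness of the minimal stratum (Proposition~\ref{prop:EO-EKOR} and the Appendix) --- has already been dealt with, and Theorem~\ref{thm EO} merely repackages these together with the Pink--Wedhorn--Ziegler structure theory recalled in Theorem~\ref{thm PWZ11}.
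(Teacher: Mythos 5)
Your proposal is correct and follows exactly the route the paper intends: Theorem~\ref{thm EO} is explicitly stated as a summary, with smoothness supplied by Proposition~\ref{prop sm zeta}, surjectivity by the preceding corollary (via Proposition~\ref{prop:EO-EKOR} and Proposition~\ref{prop non-empty EKOR}), and the local closedness, dimension count and closure relation transported from Theorem~\ref{thm PWZ11} along the smooth map $\zeta$. The routine details you fill in (the dimension bookkeeping $\dim\mathcal{Z}_w=l(w)-\dim(\calG_0^{\spl}/P_+)$ and the identity $\zeta^{-1}(\overline{\mathcal{Z}_w})=\overline{\zeta^{-1}(\mathcal{Z}_w)}$ for an open morphism) are all sound.
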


\subsection{The $\mu$-ordinary locus}\label{subsec mu-ord}

Let $U \subset \Spl{\mathscr{A}}_k$ be the maximal EO stratum, then by Theorem \ref{thm EO}, $U$ is open dense in $\Spl{\mathscr{A}}_k$.

Recall the Kottwitz set $B(G,\mu)$ (see subsection \ref{subsec nonempty}). 
The universal abelian scheme with additional structure $(A,\lambda,\iota)$ over  $\scrA_0^\spl$ defines as usual a map
\[\Newt: \scrA_0^\spl(k)\lra B(G,\mu),\]which by construction factors through $\scrA_{0}(k)$ and $\scrA_0^{\naive}(k)$. 
\begin{definition}
	The fibers of $\Newt$ define a decomposition of $\mathscr{A}^{\spl}_k$, which
	we call the Newton stratification of $\mathscr{A}^{\spl}_k$.
	Moreover, as $G$ is quasi-split at $p$, there is a unique maximal Newton stratum in $\mathscr{A}_k^\spl$, called the $\mu$-ordinary locus.
\end{definition}

When the group $G$ is unramified over $\mathbb{Q}_p$, Moonen \cite{Moonen2004} proved that
the $\mu$-ordinary locus coincides with the maximal EO stratum of $\scrA_k = \scrA_k^{\spl}$.
This fact was generalized to general Hodge-type Shimura varieties with good reduction at $p$ in \cite{Wortmann2013}.
We will prove that for the ramified PEL-type case, the same result holds for smooth splitting models. As in the last subsection, we will apply the geometry of KR and EKOR stratifications of $\scrA_k$.
Recall that $\Adm(\mu)_{K}$ is the $\mu$-admissible set of level $K$ , which is the partially ordered set of all KR types of level $K$. As $K\subset G(\bbQ_p)$ is a \emph{special parahoric subgroup}, by \cite{Richarz2016}*{Theorem~4.2, Corollary~4.6} 
the set $\Adm(\mu)_{K}$ has a unique maximal element.
\begin{proposition}\label{maxKR}
	Let $w_0$ be the maximal element of $\Adm(\mu)_{K}$ and $\mathscr{A}_k^{w_0}$ the maximal KR stratum of $\mathscr{A}_k$.
	Then the morphism $\pi: \mathscr{A}^{\spl}_k \to \mathscr{A}_k$ induces an isomorphism over $\mathscr{A}_k^{w_0}$,
	i.e. $\pi: \pi^{-1}(\mathscr{A}_k^{w_0}) \st{\sim}{\ra} \mathscr{A}_k^{w_0}$.
\end{proposition}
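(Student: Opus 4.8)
The plan is to reduce the statement to a computation of the cotangent complex / fibers of $\pi$ over the maximal KR stratum, using the splitting local model diagram and the very special description of the maximal $\mu$-admissible element. First I would set up the picture: over $\scrA_k$ we have the local model diagram with target $M^{\loc}_k$ (the special fiber of $\mathbb{M}^\loc(\G,\mu)$), and $\pi$ is compatible with the forgetful map from the splitting local model to $M^\loc_k$. So the claim is local on $M^\loc_k$: it suffices to show that the forgetful morphism from the splitting local model to the naive local model is an isomorphism over the open KR stratum corresponding to $w_0$. Equivalently, for a point $y\in \scrA^{w_0}_k(k)$, with $\omega_{A/k} = \bigoplus_{i,j}\omega_{i,j}$, I want to show there is a \emph{unique} splitting structure $\ul{\scrF_\bullet}$ on $\omega$, and that this persists in families (i.e.\ the fiber of $\pi$ is a single reduced point scheme-theoretically, which together with properness and the fact that both schemes are smooth over $k$ of the same dimension on this locus gives the isomorphism).

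The key input is the explicit shape of $\omega_{y,i,j}$ when $y$ lies in the maximal KR stratum. By Richarz \cite{Richarz2016} the set $\Adm(\mu)_K$ has a unique maximal element $w_0$ because $K$ is very special, and (as in the proof of Proposition~\ref{prop:EO-EKOR}) the corresponding KR type is the ``most balanced'' one: for type (C), $\omega_{y,i,j}\simeq ((\varepsilon_i^{\lceil e_i/2\rceil})\oplus(\varepsilon_i^{\lfloor e_i/2\rfloor}))^{\oplus d_i}$, and for types (AL)/(AU) the analogous balanced module built from $\sum_l d_{i,j}^l = t_j d_i + s_j$. The point is that for such an $\calO_{F_i}/(p)=k[\varepsilon_i]$-module $N$ of this specific isomorphism type, the space of filtrations $0=\scrF^0\subset\scrF^1\subset\cdots\subset\scrF^{e_i}$ by $\calO_S$-direct factors with $\varepsilon_i\scrF^l\subset\scrF^{l-1}$ and $\dim \scrF^l/\scrF^{l-1}=d_{i,j}^l$ is a single point: the condition $\varepsilon_i\scrF^l\subset\scrF^{l-1}$ together with the rank constraints forces $\scrF^l = N[\varepsilon_i^l] \cap (\text{appropriate graded piece})$, i.e.\ the filtration is the canonical $\varepsilon_i$-adic one (suitably truncated), with no moduli. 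I would verify this by working in the local chart: decompose $N\simeq k[\varepsilon_i]/(\varepsilon_i^{a})^{\oplus *}\oplus k[\varepsilon_i]/(\varepsilon_i^{a-1})^{\oplus *}$ and check that the balanced rank vector $(d_{i,j}^l)_l$ leaves exactly one choice of each $\scrF^l$ among direct factors. This rigidity is essentially the statement that the maximal KR stratum is where the local model $M^\loc_k$ is ``already smooth'' and the resolution $\mathbb{M}^{\spl}_k\to M^\loc_k$ is an isomorphism, which is known in the Hilbert case (Deligne–Pappas / Reduzzi–Xiao) and here just needs the twisted-product structure.

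Having established that $\pi^{-1}(y)$ is a single reduced point for every $k$-point $y$ of $\scrA_k^{w_0}$, I would conclude as follows: $\pi$ is proper (it is projective, being built from a projective local model), $\scrA_k^{w_0}$ is smooth of some dimension $e$ (it is an open KR stratum, or one can compute $e$ from $l(w_0)$), and $\pi^{-1}(\scrA_k^{w_0})$ is smooth of the same dimension since $\scrA^\spl_k$ is smooth by Proposition~\ref{prop smooth spl} and $\pi^{-1}(\scrA_k^{w_0})$ is open in it — here one needs that $\pi^{-1}(\scrA_k^{w_0})$ has the right dimension, which follows because $\pi$ is surjective with $0$-dimensional fibers over this locus. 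Then $\pi$ restricted to $\pi^{-1}(\scrA_k^{w_0})$ is a proper, quasi-finite, birational morphism between smooth varieties of the same dimension which is bijective on $k$-points with reduced fibers; by Zariski's main theorem it is an isomorphism.

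The main obstacle I expect is the rigidity computation in the second paragraph: one must show genuinely that over the maximal KR stratum the splitting filtration is forced and has no infinitesimal deformations (scheme-theoretic, not just on $k$-points), so that $\pi$ is unramified there. This is where the precise combinatorics of the balanced rank vector $(d^l_{i,j})_l$ and the $\varepsilon_i$-divisibility conditions from Definition~\ref{spl-str-def} have to be used carefully; once that is in hand, the descent through the local model diagram and the final application of Zariski's main theorem are routine.
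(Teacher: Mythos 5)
There is a genuine error in the key step. You identify the maximal KR stratum with the locus where $\omega_{y,i,j}$ is the ``balanced'' module, e.g.\ $((\varepsilon_i^{\lceil e_i/2\rceil})\oplus(\varepsilon_i^{\lfloor e_i/2\rfloor}))^{\oplus d_i}$ in type (C). That is the description of the \emph{minimal} KR stratum (it is exactly the module $\omega_{\min}$ appearing in the proof of Proposition~\ref{prop:EO-EKOR}). The maximal KR stratum is the opposite extreme: it is the locus where $\omega_{x,i,j}$ is ``as free as possible'' as a $k[\varepsilon_i]$-module --- in type (C) literally $\omega_{x,i}\simeq k[\varepsilon_i]^{d_i}\subset k[\varepsilon_i]^{2d_i}$, and in types (AL)/(AU) the module $\omega_{\max,i,j}$ built from the decreasing rearrangement $a_{i,j}^1\geq\cdots\geq a_{i,j}^{e_i}$ of the $d_{i,j}^l$. (This is consistent with the Hilbert case you invoke: the Rapoport locus is where $\omega$ is locally free over $\calO_L\otimes\calO_S$, not where it is balanced.)

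This is not a cosmetic slip, because your rigidity claim is false for the module you wrote down. Take type (C) with $e_i=2$, $d_i=1$: the balanced module is $\omega=\varepsilon_i k[\varepsilon_i]^{2}$, on which $\varepsilon_i$ acts as zero, so a splitting filtration is just a choice of a rank-one direct factor $\scrF^1$ of a two-dimensional $k$-vector space --- a $\mathbb{P}^1$ of choices, with both conditions $\varepsilon_i\scrF^1\subset\scrF^0$ and $\varepsilon_i\scrF^2\subset\scrF^1$ automatic. So over the minimal stratum the fiber of $\pi$ is positive-dimensional (which is precisely why $\pi$ is a nontrivial resolution), and the ``forced filtration'' argument cannot work there. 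By contrast, over the correct maximal stratum the rigidity does hold and is immediate: since $\varepsilon_i^l\scrF^l\subset\scrF^0=0$, one has $\scrF^l\subset\omega[\varepsilon_i^l]$, and for $\omega\simeq\omega_{\max,i,j}$ the submodule $\omega[\varepsilon_i^l]$ is a locally free direct factor of rank exactly $d_{i,j}^1+\cdots+d_{i,j}^l=\operatorname{rank}\scrF^l$, forcing $\scrF^l=\omega[\varepsilon_i^l]$. This argument is valid for an arbitrary base $S$, so the fiber of $\pi$ is the trivial (representable-by-the-base) moduli problem and $\pi$ is an isomorphism over $\scrA_k^{w_0}$ directly, without needing the properness/Zariski's-main-theorem detour in your last paragraph. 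Apart from the misidentification of the stratum, your overall strategy (uniqueness and rigidity of the splitting structure over the maximal KR stratum, checked via the local model) is the same as the paper's.
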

\begin{proof}
For every $i,j$,
let $\{a_{i,j}^l \mid 1 \leq l \leq e_i \}$ be a permutation of $\{d_{i,j}^l \mid 1 \leq l \leq e_i\}$ such that
\[a_{i,j}^1 \geq a_{i,j}^2 \geq \cdots \geq a_{i,j}^{e_i}.\]
We define a module \(\omega_{\max,i} = \bigoplus_{j} \omega_{\max, i, j}\), where
\[\omega_{\max,i,j} :=  k[\varepsilon_i]^{a_{i,j}^{e_i}} \bigoplus (\varepsilon_i)^{a_{i,j}^{e_{i-1}} - a_{i,j}^{e_i}} \bigoplus \cdots \bigoplus (\varepsilon_i^{e_i-1})^{a_{i,j}^1 - a_{i,j}^2}.\]
The maximal KR stratum $\mathscr{A}_k^{w_0}$ can be described as follows: $x \in \mathscr{A}_k^{w_0}(k) $ if and only if the following condition ( dependent on the type of the index \(i\)) holds:
\begin{itemize}
	\item Case (AL/AU): Under the natural decomposition \(\omega_{x,i} = \omega_{x,i,1} \bigoplus \omega_{x,i,2}\) given by the \(R_i / F_i\)-action, we require that \(\omega_{x,i,1} \simeq \omega_{\max,i}\).
	\item Case (C): We require that \(\omega_{x,i} \simeq \omega_{\max, i} = k[\ve_i]^{d_i} \subset k[\ve_i]^{2 d_i}\).
\end{itemize}
This condition comes from the explicit description of the maximal element in $\Adm(\mu_i)_{K_i}$.

Let $x\in \mathscr{A}_k^{w_0}(k)$. In the case (C) we have $a_{i,j}^l = d_{i,j}^{l}$ for all $l$,
then $\omega_{\max,i,j}[\varepsilon_i^l]$ has rank $d_{i,j}^1 + \cdots + d_{i,j}^l$.
This forces \[\scrF_{i,j}^l = \omega_{x,i,j}[\varepsilon^l_i].\]
Hence the splitting structure over $\omega_{x,i,j}$ is unique. The case (AL/AU) is similar. Therefore the splitting structure over $\omega_{x}$ is unique,
and $\pi: \pi^{-1}(\mathscr{A}_k^{w_0}) \simeq \mathscr{A}_k^{w_0}$ is an isomorphism.
\end{proof}

Since there is a unique maximal KR stratum in $\scrA_k$,
we have a unique maximal EKOR stratum in $\scrA_k$. Recall that for each $1\leq i\leq r$, we have $\calG_{i,0}^{\spl}\simeq \calG^{\rdt,e_i}_{i,0}$. We get a map $\calG_0^{\spl}\ra \calG_0^{\rdt}$ which is induced by the projection to the first factor $\calG_{i,0}^{\spl}\ra \calG^{\rdt}_{i,0}$ for each $i$. Let $\mu'$ be the cocharacter of $\calG^{\rdt}_0$ induced by $\mu$ under this map.
As a continuation of Proposition \ref{maxKR}, we have
\begin{proposition}\label{prop EO in max KR} 
	Let $U_{\rm KR}=\mathscr{A}_k^{w_0}$ be the maximal KR stratum of $\mathscr{A}_k$.
	Then the morphism \(\zeta: \scrA_k^{\spl} \to \Zip{\calG_0^{\spl}}{\mu}_k\) induces a morphism
	\[\zeta_1: \pi^{-1}(U_{\rm KR}) \to \Zip{\calG_0^{\rdt}}{\mu'}_k.\]
	Moreover, the natural morphism $\pi: \Spl{\mathscr{A}}_k \to \mathscr{A}_k$ induces a commutative diagram
	\[\xymatrix{
		\pi^{-1}(U_{\rm KR}) \ar[d]_{\pi}^{\simeq} \ar[r]^{\zeta_1} & \Zip{\calG_0^{\rdt}}{\mu'}_k \\
		U_{\rm KR} \ar[ur]_{\zeta_2}
	}\]
	where $\zeta_2=\zeta_{w_0}$ is the zip morphism on the maximal KR stratum of $\scrA_k$ constructed in subsection \ref{subsec EKOR loc}.
\end{proposition}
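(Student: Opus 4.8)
The plan rests on the rigidity of Proposition~\ref{maxKR}: over $U_{\rm KR}$ the splitting structure is the unique one compatible with the Hodge filtration (in case (C), $\scrF_{i,j}^l=\omega_{i,j}[\varepsilon_i^l]$, with analogous formulas in cases (AL)/(AU)), so on $\pi^{-1}(U_{\rm KR})$ the whole package $(\calM,C,D,\{F_i^l\},\{V_i^l\})$ of Section~\ref{section Fzip} is determined by the de Rham data $(\calH,\omega,F,V)$ of the universal abelian scheme alone, and the isomorphism $\pi:\pi^{-1}(U_{\rm KR})\st{\sim}{\ra}U_{\rm KR}$ identifies the corresponding frame bundles.

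I would then define $\zeta_1$ as the composite of $\zeta|_{\pi^{-1}(U_{\rm KR})}$ with the morphism of zip stacks $\Zip{\calG_0^\spl}{\mu}_k\to\Zip{\calG_0^\rdt}{\mu'}_k$ obtained by pushing forward torsors along the $\F_p$-homomorphism $\calG_0^\spl\to\calG_0^\rdt$ (first-factor projection of each $\Res_{\kappa_i|\F_p}H_i^{e_i}\to\Res_{\kappa_i|\F_p}H_i$). Since this homomorphism carries $\mu$ to $\mu'$ and is defined over $\F_p$, it is automatically compatible with the zip Frobenius, so $\zeta_1$ is a well-defined morphism to $\Zip{\calG_0^\rdt}{\mu'}_k$; the content of the proposition is the identity $\zeta_1=\zeta_{w_0}\circ\pi$, which in particular presupposes the identification $\Zip{\calG_0^\rdt}{\mu'}_k=\Zip{\calG_0^\rdt}{J_{w_0}}_k$.

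To prove the identity I would compare the two $\calG_0^\rdt$-zips through their local model diagrams. The zip $\zeta_{w_0}$ is built (subsection~\ref{subsec EKOR loc}) from the reduction of the de Rham package of the universal abelian scheme together with the conjugate local model diagram for $\scrA_k$ restricted to $U_{\rm KR}$, whose local model there is the single Schubert cell attached to $w_0$; while $\zeta_1$ is the pushforward of the $\calG_0^\spl$-zip of Proposition~\ref{prop splGzip}, hence built from $I,I_+,I_-,\iota$ via the splitting (conjugate) local model diagram restricted to $\pi^{-1}(U_{\rm KR})$. The key point, checked separately for each type, is that after first-factor projection and over $U_{\rm KR}$ the linear maps $F_i^l,V_i^l$ for $2\le l\le e_i$ are the rigid maps prescribed by $w_0$ (Lemma~\ref{lem l big} and the rigidity above) and contribute nothing to the projected zip, while the semilinear maps $F_i^1,V_i^1$, corrected by the canonical isomorphism $g_i$ of Lemma~\ref{lem l=1}, reduce to exactly the Frobenius and Verschiebung maps defining $\zeta_{w_0}$; transporting this through the frame bundles identified via Proposition~\ref{maxKR} gives the desired isomorphism of $\calG_0^\rdt$-zips.

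The main obstacle is the bookkeeping of Frobenius twists. In the construction of the $\calG_0^\spl$-zip the cyclic automorphisms $\phi,\phi'$, with $\phi\phi'=\sigma$, link the $e_i$ splitting factors; one must therefore unwind how the cycle $\calM_i^1\to\calM_i^2\to\cdots\to\calM_i^{e_i}\to\calM_i^1$, semilinear only in its last step, relates after first-factor projection and over $U_{\rm KR}$ to the single $\sigma$-linear Frobenius and Verschiebung appearing in $\zeta_{w_0}$ — concretely, one shows that over $U_{\rm KR}$ this cycle collapses to a single semilinear pair on the reductive quotient, and this is precisely where the rigidity of Proposition~\ref{maxKR} and Lemmas~\ref{lem l=1} and \ref{lem l big} are used together. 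A secondary point is the matching of types $\mu'\leftrightarrow J_{w_0}$, which follows from the explicit description of the maximal element of $\Adm(\mu)_K=\prod_i\Adm(\mu_i)_{K_i}$ for the very special parahoric $K$ together with the recipe for $J_{w_0}$ recalled in the Appendix.
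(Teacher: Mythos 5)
Your proposal is correct and follows essentially the same route as the paper: define $\zeta_1$ by pushing the $\calG_0^\spl$-zip forward along the first-factor projection $\calG_0^\spl\to\calG_0^\rdt$, use the rigidity of Proposition~\ref{maxKR} to see that the zip on $\pi^{-1}(U_{\rm KR})$ is determined by the de Rham data alone, and match the semilinear maps $F_i^1,V_i^1$ (via the canonical isomorphism of Lemma~\ref{lem l=1}) with the Frobenius/Verschiebung defining $\zeta_{w_0}$. The paper carries out the "bookkeeping" you defer by an explicit pointwise computation with trivialized modules, case by case in types (AL/AU) and (C), identifying $M'=M/\ve_iM$ with $\ve_i^{e_i-1}\Lambda_{k,i}=\calM_i^1$ and $\ve_i^{-1}(\ve_iC)/\ve_iC=\calM_i^{e_i}$ — exactly the collapse of the cycle to a single semilinear pair that you describe.
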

\begin{proof}
The projection \(\calG_0^{\spl} \to \calG_0^{\rdt}\)  induces a morphism $\Zip{\calG_0^{\spl}}{\mu}_k\ra \Zip{\calG_0^{\rdt}}{\mu'}_k$. Composing with $\zeta: \scrA_k^{\spl} \to \Zip{\calG_0^{\spl}}{\mu}_k$ and restricting to $\pi^{-1}(U_{\rm KR})$, we get a morphism \[\zeta_1: \pi^{-1}(U_{\rm KR}) \to \calG_0^{\rdt}\text{-}{\rm Zip}^{\mu'}_{k}.\]
By Proposition \ref{maxKR}, the splitting structure over \(U_{\rm KR}\) is unique, 
hence the EO strata in \(\pi^{-1}(U_{\rm KR})\) are uniquely determined by the fiber of $\zeta_1$. In other words, the restriction of the universal $\calG_0^{\spl}$-zip of type $\mu$ to \(\pi^{-1}(U_{\rm KR})\) is uniquely determined by the associated $\calG_0^{\rdt}$-zip of type $\mu'$. To prove that the above diagram commutes, we need to show that this $\calG_0^{\rdt}$-zip is the pullback (under $\pi$) of the $\calG_0^{\rdt}$-zip on $U_{\rm KR}$ given by $\zeta_2=\zeta_{w_0}$.

Let $x = (A, \lambda, \iota, \alpha) \in U_{\rm KR}(k)$ and $y = \pi^{-1}(x)$.
We will explicitly compare the $\calG_0^{\rdt}$-zips at $x$ and $y$. The argument will be dependent on the type of the indexes \(i\).
We first investigate the $\calG_0^{\rdt}$-zip at $x$.

Case (AL/AU):
Fix an isomorphism \[\calH_{x,i} \simeq \Lambda_{k,i} = k[\ve_i]^{\oplus2 d_i f_i}\] and 
$\Lambda_{k,i}$ decomposes as $W \oplus W'$, with $W = k[\ve_i]^{\oplus d_i f_i}$, $W'$ the dual of $W$. There are induced Frobenius morphism $F$ and Verschiebung morphism $V$ on $\Lambda_{k,i}$.
The $F$-zip associated to $x$ (with $\calG_{i,0}^{\rdt}$-structure) induces an $F$-zip with $\kappa_i$-action 
$M = (W, C, D, \iota)$ given by \[W = k[\ve_i]^{\oplus d_i f_i},\quad C = \Ker(V|_W),\quad D = \Ker(F|_W),\]
and $M$ determines the $F$-zip associated to $x$.

By the proof of Proposition \ref{maxKR}, we have $C = \bigoplus_j C_j$, where
\[C_j \simeq k[\ve_i]^{\oplus a_{i,j}^{e_i}} \bigoplus (\varepsilon_i)^{\oplus(a_{i,j}^{e_{i-1}} - a_{i,j}^{e_i})} \bigoplus \cdots \bigoplus (\ve_i^{e_i-1})^{\oplus (a_{i,j}^1 - a_{i,j}^2)}.\]
The EO strata inside  $U_{\rm KR}$ are determined by an $F$-zip \((M', C', D', \iota')\) with $\calG_{i,0}^{\rm rdt}$-structure (recall that \(\calG_{i,0} = \Res_{\kappa_i[\ve_i] | \mathbb{F}_p} H_i, \calG_{i,0}^{\rm rdt} = \Res_{\kappa_i |\mathbb{F}_p} H_i\)). More precisely, we have
\begin{itemize}
	\item \(M' = M / \ve_i M = k^{\oplus d_i}.\)
	\item \(C'\) is the natural image of \(C\) in \(M'\), 
	we have $C' = \bigoplus_j C'_j$ and \(\dim_k C'_j = a_{i,j}^{e_i}\).
	\item Under the natural isomorphism \(\ve_i^{e_i-1}M \simeq M'\), let \(D' := D \cap \ve_i^{e_i-1}M\), 
	we have \(D' = \bigoplus_j D'_j\) and \(\dim_k D'_j = d_i - a_{i,j}^{e_i}\).
	\item The morphisms \(F,V\) of $M$ restrict to the morphisms
	\[F: M' \to \ve_i^{-1}(\ve_i C) / (\ve_i C) \quad \tr{and}\quad V: \ve_i^{-1}(\ve_i C) / (\ve_i C) \to M'.\]
	Composing with the natural isomorphism (same as Lemma \ref{lem l=1})
	\[\ve_i^{-1}(\ve_i C) / (\ve_i C) \simeq M'\]
	give semi-linear isomorphisms $F', V'$ of $M'$, such that
	\[\Ker(F') = \Image(V') = D', \quad \Ker(V') = \Image(F') = C'.\]
	The morphism \(\iota' = (\iota'_0, \iota'_1)\) is induced from $F', V'$.
\end{itemize}
The type of a $\calG_{i,0}^{\rm rdt}$-zip is determined by a cocharacter $\mu'$ associated to the tuple \((a_{i,j}^{e_i})_j\) (up to conjugate).

Case (C):
Fix an isomorphism \[\calH_{x,i} \simeq \Lambda_{k,i} = k[\ve_i]^{\oplus 2 d_i f_i}.\]
The $F$-zip associated to $x$ (with $\calG_{i,0}^{\rdt}$-structure) induces an $F$-zip
$M = (\Lambda_{k,i}, C, D, \iota)$, $C = \Ker(V), D = \Ker(F)$.
In this case, we have $d_{i,j}^l = d_i$, and $C = \bigoplus_j C_j$ where
\[C_j \simeq k[\ve_i]^{\oplus d_i} \subset k[\ve_i]^{\oplus 2 d_i}.\]
Similar to the case (AL/AU), such data induce an $F$-zip \((M', C', D', \iota')\) with \(\calG_{i,0}^{\rm rdt}\)-structure, with type \((1^g, 0^g)^{e_i f_i}\).

On the other hand, fixing an isomorphism \(\calM_y \simeq \Lambda^{\rm spl}_k\) with associated 
\(F\)-zip \((\Lambda_k^{\rm spl}, C, D, \iota)\), we have
\[\calM_{i}^{e_i}  = \ve_i^{-1}(\ve_i C) / \ve_i C, \quad \calM_{i}^1 = \ve_i^{e_i-1} \Lambda_{k,i}.\]
By identifying \(\calM_{i}^{e_i}\) and \(\calM_{i}^1\), our construction in subsection \ref{subsection zip} induces the same $F$-zip with \(\calG_{i,0}^{\rm rdt}\)-structure $(M', C', D', \iota')$ as that at $x$. This finishes the proof.
\end{proof}

\begin{corollary}\label{cor mu-ordinary}
	The $\mu$-ordinary locus of $\mathscr{A}_k^\spl$ coincides with the maximal EO stratum, thus it is open dense in $\scrA_k^\spl$.
\end{corollary}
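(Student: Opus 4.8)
The plan is to reduce the statement to the maximal Kottwitz--Rapoport stratum of $\scrA_k$, where Propositions~\ref{maxKR} and~\ref{prop EO in max KR} let us transport the question to the good reduction PEL situation treated by Moonen. Since the maximal EO stratum $U\subset\scrA^{\spl}_k$ is open and dense by Theorem~\ref{thm EO}, it is enough to identify $U$ with the $\mu$-ordinary locus. First I would record that $U_{\mathrm{KR}}:=\scrA_k^{w_0}$ is open and dense in $\scrA_k$: the admissible set $\Adm(\mu)_K$ of the special parahoric $K$ has a unique maximal element $w_0$ by \cite{Richarz2016}, and KR strata satisfy the usual closure relations. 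Hence $\pi^{-1}(U_{\mathrm{KR}})$ is open and dense in $\scrA^{\spl}_k$; by Proposition~\ref{maxKR}, $\pi\colon\pi^{-1}(U_{\mathrm{KR}})\st{\sim}{\lra}U_{\mathrm{KR}}$ is an isomorphism, and by Proposition~\ref{prop EO in max KR} the restriction of $\zeta$ to $\pi^{-1}(U_{\mathrm{KR}})$ has the same fibers as $\zeta_1=\zeta_2\circ\pi$, with $\zeta_2\colon U_{\mathrm{KR}}\to\Zip{\calG_0^{\rdt}}{\mu'}_k$; in particular $\pi^{-1}(U_{\mathrm{KR}})$ is a union of EO strata of $\scrA^{\spl}_k$.

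Next, since $U$ is dense it meets the open set $\pi^{-1}(U_{\mathrm{KR}})$, and as the latter is a disjoint union of EO strata while $U$ is a single one, we get $U\subset\pi^{-1}(U_{\mathrm{KR}})$; under the isomorphism $\pi$ the stratum $U$ then corresponds to the \emph{maximal} zip stratum of $\zeta_2$. Now I would invoke the good reduction result: Moonen's theorem \cite{Moonen2004} (compare also \cite{Wortmann2013}), whose proof is essentially a statement about the $\calG_0^{\rdt}$-zip $\zeta_2$ together with the Newton map, shows that the maximal zip stratum of $\zeta_2$ equals the $\mu$-ordinary locus of $U_{\mathrm{KR}}$, that is, $\{y\in U_{\mathrm{KR}}:\Newt(y)\text{ is the maximal element of }B(G,\mu)\}$. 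Hence, via $\pi$, $U$ coincides with the $\mu$-ordinary locus of $U_{\mathrm{KR}}$.

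To conclude, I would use that the Newton map on $\scrA^{\spl}_k$ factors through $\scrA_k$, so the $\mu$-ordinary locus of $\scrA^{\spl}_k$ is $\pi^{-1}$ of the $\mu$-ordinary locus of $\scrA_k$, together with the fact that the $\mu$-ordinary locus of $\scrA_k$ is contained in $U_{\mathrm{KR}}$; equivalently, the $\mu$-ordinary locus of $\scrA^{\spl}_k$ lies in the generalized Rapoport locus $\pi^{-1}(U_{\mathrm{KR}})$. This last inclusion follows from the comparison of the Hodge and Kottwitz--Rapoport stratifications (the maximal Hodge stratum equals $\pi^{-1}(U_{\mathrm{KR}})$, cf. subsection~\ref{subsection Hodge and KR} and Proposition~\ref{maxKR}) together with the fact, due to \cite{BijakowskiHernandez2022}, that the maximal Hodge stratum contains the $\mu$-ordinary locus. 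Combining, the $\mu$-ordinary locus of $\scrA^{\spl}_k$ equals $\pi^{-1}$ of the $\mu$-ordinary locus of $U_{\mathrm{KR}}$, which by the previous paragraph is $U$; open density then follows from Theorem~\ref{thm EO}.

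The main obstacle is the reduction step, i.e. Proposition~\ref{prop EO in max KR}: one must know that over $\pi^{-1}(U_{\mathrm{KR}})$ the universal $\calG_0^{\spl}$-zip of type $\mu$ carries exactly the information of the $\calG_0^{\rdt}$-zip of type $\mu'$ on $U_{\mathrm{KR}}$, so that its EO strata there are honest pullbacks of the EKOR strata of $U_{\mathrm{KR}}$ and the maximal EO stratum is matched with the maximal EKOR stratum. A secondary point is to ensure that Moonen's identification ``$\mu$-ordinary $=$ maximal zip stratum'' is available for the open subvariety $U_{\mathrm{KR}}$ of the singular model $\scrA_k$ rather than for a smooth integral model of a Shimura variety, and that the inclusion ``$\mu$-ordinary $\subset\pi^{-1}(U_{\mathrm{KR}})$'' is correctly imported; both are routine once Propositions~\ref{maxKR} and~\ref{prop EO in max KR} are in place.
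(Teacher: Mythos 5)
Your reduction to the maximal Kottwitz--Rapoport stratum via Propositions~\ref{maxKR} and~\ref{prop EO in max KR} is exactly the paper's strategy, and the surrounding bookkeeping (density of $U_{\mathrm{KR}}$, $U\subset\pi^{-1}(U_{\mathrm{KR}})$ because $U$ is a single EO stratum meeting the open union of EO strata $\pi^{-1}(U_{\mathrm{KR}})$, and ``$\mu$-ordinary $\subset\pi^{-1}(U_{\mathrm{KR}})$'' via $\Newt\leq\Hdg\leq\PR$) is fine. The gap is in the key input you dismiss as a ``secondary point'': you justify the identification ``maximal fiber of $\zeta_2$ on $U_{\mathrm{KR}}$ $=$ $\mu$-ordinary locus'' by appealing to Moonen \cite{Moonen2004}. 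Moonen's theorem is proved for unramified PEL data at hyperspecial level, where the points are abelian varieties whose Dieudonn\'e modules are lattices for an unramified group and where the $F$-zip is the literal mod~$p$ reduction of the $F$-crystal. On $U_{\mathrm{KR}}$ the underlying objects are still \emph{ramified}: the Newton point is computed from the $F$-crystal with $\calO_{F_i}$-action at a special parahoric level, while the $\calG_0^{\rdt}$-zip of $\zeta_2$ is only a semi-simplified shadow of it (a quotient by $\varepsilon_i$). The implication ``maximal zip type $\Rightarrow$ maximal Newton point'' (and its converse) in this setting is not a formal consequence of the unramified statement, and it is precisely the content of the theorem of He--Nie \cite{HeNie2017}, which compares EKOR strata and Newton strata for general parahoric level and requires $G_{\Q_p}$ to be quasi-split (which holds here because $G_{\Q_p}$ is built from Weil restrictions of quasi-split groups). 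The paper's proof cites exactly this result in place of Moonen.

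So the argument is repairable by a one-line substitution: replace the appeal to \cite{Moonen2004} by the statement that the $\mu$-ordinary locus of $\scrA_k$ equals the maximal EKOR stratum $U_{\mathrm{EKOR}}\subset U_{\mathrm{KR}}$ by \cite{HeNie2017}, and then conclude $\pi^{-1}(U_{\mathrm{EKOR}})=U$ from Proposition~\ref{prop EO in max KR} as you do. As written, however, the citation does not support the step, and that step is the heart of the proof rather than a routine verification.
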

\begin{proof}
	Note that the Weil restriction of a quasi-split group remains quasi-split, 
	so the group $G_{\mathbb{Q}_p}$ is quasi-split.
	Let $U_{\rm EKOR}$ be the maximal EKOR stratum of $\mathscr{A}_k$. Then $U_{\rm EKOR}\subset U_{\rm KR}$.
	The main result of \cite{HeNie2017} shows that the $\mu$-ordinary locus of $\mathscr{A}_k$ is the same as $U_{\rm EKOR}$. 
	By the above proposition, we have \[\pi^{-1}(U_{\rm EKOR}) = U,\]
	hence the maximal EO stratum $U$ coincides with the $\mu$-ordinary locus of $\Spl{\mathscr{A}}_k$.
\end{proof} 

The open density of the $\mu$-ordinary locus was proved in
\cite{BijakowskiHernandez2022}*{Theorem 1.2} by a different method.

\subsection{Hodge strata and pullbacks of Kottwitz-Rapoport strata}\label{subsection Hodge and KR}
In \cite{BijakowskiHernandez2022},
Bijakowski-Hernandez introduced a Hodge stratification. Here,
we provide an interpretation of their Hodge stratification in our language.

Recall the local setup of \cites{BijakowskiHernandez2017, BijakowskiHernandez2022}.
Let $L$ be a finite extension of $\bbQ_p$ of degree $n = e f$,
where $e$ is the ramification index, and $f$ the residue degree of $L|\bbQ_p$.
Let $\pi$ be a uniformizer of $\calO_L$.
Let $\mathbf{G}$ be a $p$-divisible group over $k$ with $\calO_L$ action $\iota$, such that its height is $nh$. This is the local geometric datum in the case of type (AL). In the case of type (C) or (AU), we require moreover there to be a polarization $\lambda: \mathbf{G}\ra \mathbf{G}^\vee$ of $\mathbf{G}$. Note that even in the type (AL) case, we have a natural polarization $\lambda: \mathbf{G}\times\mathbf{G}^\vee\ra\mathbf{G}^\vee\times \mathbf{G}$.
Let $(M,F)$ be the contravariant $F$-crystal associated to the $p$-divisible group $\mathbf{G}$. 
Then $M$ is a finite free $W(k)$-module of rank $nh$ and $F: M \to M$ a $\sigma$-linear injective morphism.
The $\calO_L$-action induces a decomposition of $W(k)$-module
\[M = \bigoplus_{\tau \in J} M_{\tau},\]
where $J := \Hom(\calO_{L^{\ur}}, W(k))$,
and $M_\tau$ is the $W(k)$-submodule of $M$ such that $\calO_L$-acts through $\tau: \calO_{L^{\ur}} \to W(k)$. In the case of type (C) or (AU), there is an induced perfect pairing $M\times M\ra W(k)$ from $\lambda$, which is compatible with the decomposition.
As $F$ is $\sigma$-linear, we have
\[F(M_{\sigma^{-1}\tau}) \subset M_\tau.\]
For each $\tau \in J$, there is an isomorphism
\[M_\tau / F(M_{\sigma^{-1}\tau}) \simeq \bigoplus_{1 \leq i \leq h} W(k)/a_{\tau,i} W(k),\]
where each $a_{\tau,i} \in W(k)-\{0\}$ and we assume that
\[v(a_{\tau,1}) \geq v(a_{\tau,2}) \geq \dots \geq v({a_{\tau,h}}).\]
For each $0 \leq i \leq h$,
the $i$-th coordinate of the Hodge polygon is:
\[\Hdg_{\calO_L, \tau}(M,F)(i) := v(a_{\tau,1}) + \cdots + v(a_{\tau,h-i+1}).\]
Note that our polygon is upper convex, whereas \cite{BijakowskiHernandez2017} uses lower convex polygon.
All statements of loc. cit. about polygons still hold, after replacing minimal by maximal.
The Hodge polygon of $\ul{\mathbf{G}}=(\mathbf{G},\iota,\lambda)$ is defined as a polygon starting at $(0,0)$ and ending at $(h,d)$ such that for $0 \leq i \leq h$
\[\Hdg(\ul{\mathbf{G}})(i) = \frac{1}{f} \sum_{\tau \in J} \Hdg_{\calO_L, \tau}(M,F)(i),\]
where $d=\frac{1}{f}\sum_{\tau\in J}d_\tau$ and $d_\tau=\sum_{i=1}^h v(a_{\tau,i})$. The precise form of the pair of numbers $(h,d)$ depends on the type (C), (AL) or (AU) which is being studied.  Note that since we have an isomorphism \[\omega_\mathbf{G}\simeq M/FM\] as $\calO_L$-modules, the Hodge polygon of $\ul{\mathbf{G}}$ depends only on the $\calO_L$-module $\omega_\mathbf{G}$.

Now fix integers $0 \leq d_{\tau}^l \leq h$ for $\tau \in J, 1 \leq l \leq e$,
we say that $(M,F)$ satisfies the \emph{Pappas-Rapoport condition} with respect to $\mu := (d_{\tau}^l)$,
if for every $\tau$, there exists a filtration
\[F_\tau M_{\sigma^{-1}(\tau)} = \Fil^0 M_\tau \subset \Fil^1 M_\tau \subset \cdots \subset \Fil^{e} M_\tau = M_\tau, \]
such that
\begin{itemize}
	\item For each $0 \leq l \leq e$, $\Fil^l M_\tau$ is a sub $W(k)$-module of $M_\tau$,
	\item One has $\pi \cdot \Fil^{l} M_\tau \subset \Fil^{l-1} M_\tau$ for all $1 \leq l \leq e$,
	\item $\Fil^l M_\tau / \Fil^{l-1} M_\tau$ is a $k$-vector space of dimension $d_{\tau}^l$ for all $1 \leq l \leq e$.
\end{itemize}
Given the Pappas-Rapoport condition with respect to $\mu$,
the maximal possible Hodge polygon is given by the polygon $\PR(\mu)$,
where for each $\tau$, let
\[\PR_\tau(\mu)(s) := \frac{1}{e} \sum_{l=1}^e \max(s-h+d_{\tau}^l),\]
and $\PR(\mu)$ is the average of $\PR_\tau(\mu)$ for $\tau \in J$.
Recall the following result
\begin{theorem}[\cite{BijakowskiHernandez2017}]
	Let $\underline{\mathbf{G}} = (\mathbf{G}, \iota, \lambda,\Fil^{\bullet})$ be a $p$-divisible group over a perfect field $k$ of characteristic $p$,
	with an $\calO_L$-action,
	plus a Pappas-Rapoport condition given by $\mu = (d_\tau^l)_{\tau,l}$.
	We have three polygons associated to $\underline{\mathbf{G}}$,
	the Newton polygon (upper convex) $\Newt(\underline{\mathbf{G}})$ and the Hodge polygon $\Hdg(\underline{\mathbf{G}})$ with respect to $\underline{\mathbf{G}}$,
	and a PR polygon $\PR(\mu)$ determined by the Pappas-Rapoport condition with respect to $\mu$.
	One has the following inequalities:
	\[\Newt(\underline{\mathbf{G}}) \leq \Hdg(\underline{\mathbf{G}}) \leq \PR(\mu).\]
\end{theorem}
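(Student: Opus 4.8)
The plan is to prove the two inequalities separately, after reducing both to $\tau$-local statements. All three polygons are, by construction, averages over $\tau\in J=\Hom(\calO_{L^{\ur}},W(k))$ of ``$\tau$-components'', and the Newton polygon is compatible with the decomposition $M=\bigoplus_{\tau}M_\tau$ as well; moreover $\sigma$ permutes $J$ in cyclic orbits of length $f$. So I would fix one $\tau$ and work with the injective $\sigma^{f}$-linear endomorphism $\Phi:=F^{f}\colon M_\tau\to M_\tau$ of the free $W(k)$-module $M_\tau$ of rank $h$, together with the telescoping chain of $W(k)$-lattices
\[M_\tau\supset F M_{\sigma^{-1}\tau}\supset F^{2}M_{\sigma^{-2}\tau}\supset\cdots\supset F^{f}M_\tau=\Phi(M_\tau)\]
and the Pappas--Rapoport filtrations on the $M_{\sigma^{i}\tau}$. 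The polarization in types (C) and (AU) only imposes a duality symmetry on the numerical data $(d_\tau^{l})$ and is harmless for the inequalities; the trichotomy (C)/(AL)/(AU) enters only through which $\omega$-component carries the filtration.

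For $\Newt(\underline G)\le\Hdg(\underline G)$ I would invoke Mazur's inequality applied to the single operator $\Phi$. By the chain above, the multiset of elementary divisors of $M_\tau/\Phi(M_\tau)$ is the concatenation of those of $M_{\sigma^{i}\tau}/FM_{\sigma^{i-1}\tau}$ for $i=0,\dots,f-1$, so the Hodge polygon of $\Phi$ is $f\cdot\Hdg(\underline G)$ after the reparametrisation built into the normalisation of the statement, while the Newton polygon of $\Phi$ is $f\cdot\Newt(\underline G)$ since $\Phi$ represents the $f$-th power of Frobenius on the $L^{\ur}$-isotypic piece. Mazur's theorem (equivalently the Rapoport--Richartz, resp.\ Kottwitz, group-theoretic form) gives $\Newt(\Phi)\le\Hdg(\Phi)$, whence the claim after dividing by $f$.

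For $\Hdg(\underline G)\le\PR(\mu)$ I would argue purely with linear algebra over $k$. The Pappas--Rapoport axioms force $pM_\tau\subset FM_{\sigma^{-1}\tau}=\Fil^{0}M_\tau$, so $\omega_{G,\tau}:=M_\tau/FM_{\sigma^{-1}\tau}$ is a finite $k$-vector space, hence a module over $k[\varepsilon]/(\varepsilon^{e})$ via the $\calO_L$-action; write $\omega_{G,\tau}\cong\bigoplus_{l=1}^{e}\bigl(k[\varepsilon]/(\varepsilon^{l})\bigr)^{m_{l}}$, so that $\Hdg_{\calO_L,\tau}(M,F)$ is determined by the partition $(m_{l})_{l}$, and it suffices to bound $(m_{l})_{l}$. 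Reducing the Pappas--Rapoport flag modulo $\Fil^{0}M_\tau$ yields an $\varepsilon$-stable flag $0=\overline{\Fil}^{0}\subset\cdots\subset\overline{\Fil}^{e}=\omega_{G,\tau}$ with $\varepsilon\,\overline{\Fil}^{l}\subset\overline{\Fil}^{l-1}$ and $\dim_{k}\gr^{l}=d_\tau^{l}$; iterating the shift gives $\varepsilon^{l}\omega_{G,\tau}\subset\overline{\Fil}^{\,e-l}$, hence $\dim_{k}\varepsilon^{l}\omega_{G,\tau}\le d_\tau^{1}+\cdots+d_\tau^{\,e-l}$ for every $l$. I would then check that these inequalities, written out through $\dim_{k}\varepsilon^{l}\omega_{G,\tau}=\sum_{j>l}(j-l)m_{j}$, are exactly equivalent to $\Hdg_{\calO_L,\tau}\le\PR_\tau(\mu)$, the extremal partition being the ``most split'' configuration compatible with the prescribed graded dimensions. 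Averaging over $\tau$ (and over the cycle of $\sigma$) then gives $\Hdg(\underline G)\le\PR(\mu)$.

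The routine parts are Mazur's inequality itself and the $f$- and polarization-bookkeeping. The genuinely delicate step is the last one: matching the elementary combinatorial inequalities $\dim_{k}\varepsilon^{l}\omega_{G,\tau}\le\sum_{j\le e-l}d_\tau^{j}$ with the precise definition of $\PR_\tau(\mu)$, and verifying that equality is attained by an actual $F$-crystal satisfying the Pappas--Rapoport condition (which is also what produces a maximal, i.e.\ $\mu$-ordinary, polygon). Here one must be careful that passing to the ``semisimplification'' $\bigoplus_{l}k[\varepsilon]/(\varepsilon^{l})$ does not discard the information carried by the $\pi$-action, and that the two normalisations of the Hodge polygon — the one through the invariant factors $a_{\tau,i}$ and the one through the data $(d_\tau^{l})$ — are reconciled correctly.
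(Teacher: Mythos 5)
First, a point of comparison: the paper does not prove this statement at all --- it is quoted from Bijakowski--Hernandez \cite{BijakowskiHernandez2017} (with only a remark about switching from lower-convex to upper-convex normalization), so there is no in-paper argument to measure yours against. Judged on its own terms, your architecture (reduce to a single $\tau$, Mazur's inequality for $\Newt\leq\Hdg$, and a $k[\varepsilon]/(\varepsilon^e)$-module argument for $\Hdg\leq\PR$) is the right one. But the first half contains a false intermediate claim: the elementary divisors of $M_\tau/\Phi(M_\tau)$ are \emph{not} obtained by concatenating (or summing the polygons of) those of the quotients $M_{\sigma^{i}\tau}/FM_{\sigma^{i-1}\tau}$. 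Already for $F_1=\mathrm{diag}(1,p)$ and $F_2=\mathrm{diag}(p,1)$ the composite is $\mathrm{diag}(p,p)$, whose Hodge polygon differs from the sum of the two factors' polygons. So $\Hdg(\Phi)$ is not ``$f\cdot\Hdg(\underline G)$''; what is true, and what you must invoke instead, is the inequality that the Hodge polygon of a composite of injective $\sigma$-linear maps lies on or below (upper-convex normalization) the sum of the factors' Hodge polygons --- a standard but nontrivial lemma on invariant factors of products. With that substitution the chain $\Newt(\Phi)\leq\Hdg(\Phi)\leq\sum_{i}\Hdg_{\calO_L,\sigma^i\tau}$ does yield the first inequality after dividing by $f$ (you should also say a word on why $\tfrac{1}{f}\Newt(\Phi)$ is the $\calO_L$-normalized Newton polygon of $\underline G$).

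For the second inequality, your bound $\dim_k\varepsilon^{l}\omega_{G,\tau}\leq d_\tau^{1}+\cdots+d_\tau^{e-l}$ is correct and is the right input, but the step you describe as ``then check that these inequalities are exactly equivalent to $\Hdg_{\calO_L,\tau}\leq\PR_\tau(\mu)$'' is precisely where the substance of the theorem lives, and you leave it unproved. It is a majorization statement between the partition recording $\omega_{G,\tau}$ as a $k[\varepsilon]/(\varepsilon^{e})$-module and the partition built from $(d_\tau^{l})_l$, and it has to be written out; note also that the elementary divisors defining $\Hdg_{\calO_L,\tau}$ must be taken over the ramified ring $\calO_L\otimes_{\calO_{L^{\ur}},\tau}W(k)$ with valuation normalized by $1/e$ --- since $pM_\tau\subset FM_{\sigma^{-1}\tau}$, elementary divisors over $W(k)$ only detect $\dim_k\omega_{G,\tau}$ and would render the comparison vacuous. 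As it stands the proposal is a reasonable outline, but both the product-polygon lemma and the final combinatorial equivalence are genuine missing pieces.
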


Using our language,
let $x \in \scrA^{\spl}(k)$,
and $\mathbf{G}_x$ the $p$-divisible group associated to $x$.
Then $\mathbf{G}_x$ decomposes as direct sum $\mathbf{G}_x=\bigoplus_{i=1}^r\mathbf{G}_{x,i}$,
and each $\mathbf{G}_{x,i}$ is a $p$-divisible group over $k$ with $\calO_{F_i}$-action and polarization $\lambda$,
plus a PR condition with respect to $\mu_{i}$.
So for each $i$,
there are three polygons $\Newt_i(x):=\Newt(\mathbf{G}_{x,i}), \Hdg_i(x):=\Hdg(\mathbf{G}_{x,i}), \PR(\mu_i)$ associated to $x$. Note that for each $i$, we have an isomorphism of $\calO_{F_i}$-module $\omega_{x,i}\simeq \omega_{\mathbf{G}_{x,i}}$.
\begin{corollary}\label{cor max Hodge}
	Let $U_{\rm KR}$ be the maximal open dense KR stratum of $\scrA_k$ and $\pi: \scrA_k^{\spl} \to \scrA_k$ the forgetful morphism.
	For each $x\in \scrA_k^{\spl}$, 
	we have $\Hdg_i(x) = \PR(\mu_i)$ for all $i$ if and only if $x \in \pi^{-1}(U_{\rm KR})$.
\end{corollary}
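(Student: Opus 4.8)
The plan is to reduce the equivalence to a pointwise statement, for each index $i$ and each embedding $\tau=\sigma_{i,j}$, about the isomorphism class of the $\calO_{F_i}$-module $\omega_{x,i}$, and then to compare two explicit combinatorial descriptions. On the one hand, Proposition~\ref{maxKR} and its proof show that $x\in\pi^{-1}(U_{\mathrm{KR}})$ if and only if, for every $i$, $\omega_{x,i}\simeq\omega_{\max,i}$ in case (C), resp.\ the summand $\omega_{x,i,1}\simeq\omega_{\max,i}$ in cases (AL)/(AU); in cases (C) and (AU) the polarization, resp.\ Hermitian, datum is determined by this summand, exactly as in loc.\ cit., so it plays no further role. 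On the other hand, as recalled above the Hodge polygon of a $p$-divisible group with $\calO_{F_i}$-action depends only on its cotangent module, so that $\Hdg_i(x)=\Hdg(G_{x,i})$ depends only on $\omega_{x,i}$; moreover $\Hdg_i(x)$ and $\PR(\mu_i)$ are both the average over $\tau=\sigma_{i,j}$ of their $\tau$-components. Hence the assertion splits over the pairs $(i,j)$, and it suffices to prove: for a $\kappa_i[\varepsilon_i]$-module $\omega$ admitting a Pappas-Rapoport filtration of type $(d_{i,j}^l)_{1\le l\le e_i}$, one has $\Hdg_{\calO_{F_i},\tau}(\omega)=\PR_\tau(\mu_i)$ if and only if $\omega\simeq\omega_{\max,i,j}$.

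For the ``if'' direction we would compute both polygons by hand. By construction $\omega_{\max,i,j}$ is the $\kappa_i[\varepsilon_i]$-module whose partition, i.e.\ the tuple of its elementary divisor exponents, is conjugate to the decreasing rearrangement $a_{i,j}^1\ge\cdots\ge a_{i,j}^{e_i}$ of $(d_{i,j}^l)_l$ used in the proof of Proposition~\ref{maxKR}; its $\tau$-Hodge polygon is then the concave polygon whose successive slopes are read off from this partition, and a direct manipulation of the definition of $\PR_\tau(\mu_i)$ recalled above identifies it with $\PR_\tau(\mu_i)$. Averaging over $j$ gives $\Hdg_i(x)=\PR(\mu_i)$ for every $x\in\pi^{-1}(U_{\mathrm{KR}})$, which in particular reproves that $\pi^{-1}(U_{\mathrm{KR}})$ is contained in the maximal Hodge stratum of \cite{BijakowskiHernandez2022}.

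For the ``only if'' direction we would use the inequality $\Hdg(\underline{G})\le\PR(\mu)$ recalled above (due to \cite{BijakowskiHernandez2017}) and analyse the equality case. The $\tau$-Hodge polygon $\Hdg_{\calO_{F_i},\tau}(\omega)$ is a concave polygon determined, up to the fixed common endpoints, by the partition of $\omega$ as a $\kappa_i[\varepsilon_i]$-module, and it is monotone for the dominance order on partitions, strictly so whenever the partitions differ. The key combinatorial point is that among all $\kappa_i[\varepsilon_i]$-modules admitting a Pappas-Rapoport filtration of type $(d_{i,j}^l)_l$, the partition of $\omega_{\max,i,j}$ is the unique dominance-maximal one, so that the value $\PR_\tau(\mu_i)$ is attained by $\Hdg_{\calO_{F_i},\tau}$ only at $\omega\simeq\omega_{\max,i,j}$. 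Running this over all $i$ and $j$ and invoking Proposition~\ref{maxKR} yields $x\in\pi^{-1}(U_{\mathrm{KR}})$, completing the equivalence.

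The hard part is this last combinatorial lemma, namely that $\omega_{\max,i,j}$ is, up to isomorphism, the largest $\kappa_i[\varepsilon_i]$-module carrying a Pappas-Rapoport filtration of the prescribed type and that the resulting maximal Hodge polygon has no ties, together with the explicit identity $\Hdg_{\calO_{F_i},\tau}(\omega_{\max,i,j})=\PR_\tau(\mu_i)$. Both are essentially contained in the polygon comparison results of \cite{BijakowskiHernandez2017} (after interchanging ``minimal'' and ``maximal'' as noted above), and they are consistent with the explicit description of the maximal element of $\Adm(\mu_i)_{K_i}$ entering the proof of Proposition~\ref{maxKR}; one may therefore either invoke loc.\ cit.\ directly or rederive the statement from the combinatorics of the $\mu_i$-admissible set. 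Granting it, the corollary follows by assembling the per-index, per-embedding equivalences.
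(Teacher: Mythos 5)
Your proposal is correct and follows essentially the same route as the paper, whose proof is the single sentence that the claim ``follows from the description of the maximal KR stratum in the proof of Proposition~\ref{maxKR}'': both arguments reduce to the per-index, per-embedding statement that $\Hdg_{\calO_{F_i},\tau}(\omega)=\PR_\tau(\mu_i)$ exactly when $\omega\simeq\omega_{\max,i,j}$, using that the Hodge polygon depends only on the $\kappa_i[\varepsilon_i]$-module $\omega$ and that $\PR_\tau(\mu_i)$ is the polygon of the partition conjugate to the sorted $(d_{i,j}^l)_l$. You have merely made explicit the combinatorial input (the componentwise inequality $\Hdg_\tau\le\PR_\tau$ of \cite{BijakowskiHernandez2017} and its equality case) that the paper leaves implicit.
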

\begin{proof}
	This follows from the description of the maximal KR stratum in the proof of Proposition~\ref{maxKR}.
\end{proof}

Note that $\pi^{-1}(U_{\rm KR})$ is the generalized Rapoport locus in \cite{BijakowskiHernandez2022}, where it is defined as the maximal Hodge stratum of $\scrA_k^{\spl}$.
Also note in general the closure relation does not hold for the Hodge stratification, as claimed by \cite{BijakowskiHernandez2022}.

We give a group theoretic reformulation of Hodge polygon. For simplicity, we assume $r=1$ (the general case follows by combining the independent data for all $i\in I$) in our previous notations.
Let $F_1$ be a finite extension over $\bbQ_p$ of degree $n_1 = e_1 f_1$,
with $e_1$ its ramification index and $f_1$ its residue degree.
We also fix an unramified group $H$ over $F_1$ and let $G_1= \Res_{F_1 | \bbQ_p} H$.
The maximal parahoric subgroup of $G_1(\Q_p)$ is $K = H(\calO_{F_1})$, where we use the same $H$ as the reductive model of $H$.
Let $\breve{\bbQ}_p$ be the completion of maximal unramified extension of ${\bbQ}_p$.
For simplicity, we will write $\breve{G}_1 = G_1(\breve{\bbQ}_p)$ and similar symbols for other groups.

Choose a maximal torus $T \subset G_1$ and $N$ its normalizer.
Let $\breve{I}$ be the Iwahori subgroup of $\breve{G}_1$.
The Iwahori-Weyl group is 
\[\widetilde{W} = N(\breve{\bbQ}_p) / (T(\breve{\bbQ}_p) \cap \breve{I}). \]
We have the following isomorphism
\[\widetilde{W} \simeq X_*(T)_{\Gamma_0} \rtimes W_0\]
where $\Gamma_0 = \Gal(\overline{\bbQ}_p / \breve{\bbQ}_p)$ and $W_0$ is the relative Weyl group of $G_{\breve{\bbQ}_p}$.
Let $\breve{K}\subset \breve{G}_1$ be the induced parahoric subgroup and
\[W_K = (N(\breve{\bbQ}_p) \cap \breve{K}) / (T(\breve{\bbQ}_p) \cap \breve{I}) \subset \widetilde{W},\] 
then $W_K \simeq W_0$ is the section of $W_0$ in $\widetilde{W}$, since $K$ is a special parahoric subgroup.
The following natural bijection
\[W_K \backslash \widetilde{W} / W_K \simeq X_*(T)_{\Gamma_0} / W_0 \]
gives the natural injective map
\[h: W_K \backslash \widetilde{W} / W_K \to X_*(T)_{\Gamma_0, \bbQ}^{+}.\]
Since $G_1$ is a Weil restriction of $H / F_1$, 
this implies
\[\breve{G}_1= H(F_1 \otimes_{\bbQ_p} \breve{\bbQ}_p) = \prod_{\tau: F_1^{\ur} \to \breve{\bbQ}_p} H(\breve{F}_1).\]
For simplicity, we just write $\breve{G}_1= \breve{H}^{f_1}$, where $\breve{H} = H(\breve{F}_1)$.
Similarly we have $T_{\breve{\bbQ}_p} \simeq T_1^f$ with $T_1\subset H$ a maximal torus,
which induces natural maps
\[X_*(T)_{\Gamma_0,\Q} \simeq (X_*(T_1)_{\Gamma_0,\Q})^f \to X_*(T_1)_{\Gamma_0,\bbQ}.\]
The last map sends $(\chi_1, \dots, \chi_f)$ to $\frac{1}{f} (\chi_1 + \cdots + \chi_f)$.
Composing this map with $h$ gives us a map
\[\Hdg: W_K \backslash \widetilde{W} / W_K \to X_*(T_1)_{\Gamma_0, \bbQ}^+.\]
Recall the $\mu$-admissible set $\Adm(\mu)_K \subset W_K \backslash \widetilde{W} / W_K$.
Restricting $\Hdg$ to it gives the following proposition.

\begin{proposition}
	For $x\in \scrA^\spl(k)$, let 
    $w \in \Adm(\mu)_K$ be the KR type of $\pi(x)\in \scrA(k)$. Then the Hodge polygon of $x$ is given by $\Hdg(w) \in X_*(T_1)_{\Gamma_0, \bbQ}^+$.
\end{proposition}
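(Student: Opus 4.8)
The plan is to reduce to $r=1$ (as in the statement), write $L=F_1$ and $G_1=\Res_{L|\bbQ_p}H$ with $H$ unramified, and observe first that both sides of the claimed equality depend only on the isomorphism class of $\omega_x$ as an $\calO_L\otimes_{\bbZ_p}k$-module. Indeed, on $\scrA_k^{\naive}$ the KR stratification is by definition given by the isomorphism class of $\omega_{A/S}$, and pulled back along $\scrA_k\to\scrA_k^{\naive}$ it is the stratification $\scrA_k=\coprod_w\scrA_k^w$; hence the KR type $w$ of $\pi(x)$ \emph{is} the isomorphism class of $\omega_x$. On the other hand $\omega_{G_x}\simeq\omega_x$ as $\calO_L$-modules, and the Hodge polygon $\Hdg(\ul{G}_x)$ depends only on $\omega_{G_x}$ as an $\calO_L$-module, as recalled before the statement. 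So $w\mapsto\Hdg(x)$ is a well-defined map on KR types, and it remains to identify it with the combinatorial map $\Hdg\colon\Adm(\mu)_K\subset W_K\backslash\wt{W}/W_K\to X_*(T_1)_{\Gamma_0,\bbQ}^+$ built above.

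Next I would translate the KR type into cocharacter data. Via the local model diagram for $\scrA_k$ and the identification $W_K\backslash\wt{W}/W_K\simeq X_*(T)_{\Gamma_0}/W_0$, the class $w$ corresponds to a $W_0$-orbit in $X_*(T)_{\Gamma_0}$. Since $\breve G_1=\breve H^{f}$, we have $X_*(T)_{\Gamma_0,\bbQ}\simeq(X_*(T_1)_{\Gamma_0,\bbQ})^{f}$ indexed by $J=\Hom(\calO_{L^\ur},W(k))$, under which $w$ unwinds into a tuple $(\lambda_\tau)_{\tau\in J}$ of dominant cocharacters of $H$. Concretely, decomposing $\omega_x=\bigoplus_{\tau\in J}\omega_{x,\tau}$, each $\omega_{x,\tau}$ is a torsion module over $\calO_L\otimes_{\calO_{L^\ur},\tau}W(k)$, hence isomorphic to a sum of cyclic modules $\bigoplus(\varepsilon_1^{e_1-c})$; the resulting multiset of exponents — read off exactly as in the case-by-case descriptions (C), (AL), (AU) used in the proof of Proposition~\ref{maxKR} — is $\lambda_\tau$ under the standard dictionary between such $\calO_L$-module types and dominant cocharacters of $H$ coming from the faithful representation defining the PEL datum.

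Finally I would match the polygons. By its very definition through the elementary divisors of $FM_{\sigma^{-1}\tau}\subset M_\tau$, equivalently of $\omega_{G_x,\tau}\simeq\omega_{x,\tau}$, the local polygon $\Hdg_{\calO_L,\tau}(M,F)$ is the (upper convex) polygon $P_{\lambda_\tau}$ attached to $\lambda_\tau$ by the usual dictionary between cocharacters and polygons with fixed endpoints; averaging over $\tau\in J$ gives $\Hdg(x)=\tfrac1f\sum_{\tau}P_{\lambda_\tau}=P_{\frac1f\sum_\tau\lambda_\tau}$, using additivity of the polygon map on dominant cocharacters. On the other hand $\Hdg$ was defined as $h$ followed by the averaging $(\chi_\tau)_\tau\mapsto\tfrac1f\sum_\tau\chi_\tau$, so $\Hdg(w)=\tfrac1f\sum_\tau\lambda_\tau$; under the identification of $X_*(T_1)_{\Gamma_0,\bbQ}^+$ with polygons having fixed endpoints this gives $\Hdg(x)=\Hdg(w)$.

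The main obstacle is the bookkeeping in the second step: identifying the KR type $w\in\Adm(\mu)_K$ — an object defined through the Iwahori--Weyl group and the local model — with the $\calO_L$-module isomorphism class of $\omega_x$, and then with the tuple $(\lambda_\tau)$, while keeping all normalizations straight. This involves the passage to $\Gamma_0$-coinvariants (which is exactly what folds the ramification of $L$ into the averaging over $J$), the factor $1/f$, the upper- versus lower-convex convention (flagged in the text), and the type-dependent (C)/(AL)/(AU) identifications of $\omega_{x,\tau}$ with a representation-theoretic datum. None of this needs a new idea — it is already implicit in the local model diagram for $\scrA_k$ and in the explicit $\omega$-descriptions used in the proof of Proposition~\ref{maxKR} — but assembling it cleanly is where the real work lies.
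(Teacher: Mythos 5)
Your proposal is correct and follows essentially the same route as the paper's (much terser) proof: identify the KR type with the isomorphism class of $M/FM\simeq\omega_x$ as an $\calO_L\otimes W(k)$-module, read it as a tuple in $X_*(T)_{\Gamma_0}$ indexed by $\tau\in J$, and observe that averaging this tuple is exactly the definition of both $\Hdg(w)$ and the Hodge polygon of the $F$-crystal. Your write-up merely makes explicit the bookkeeping (dominance of each $\lambda_\tau$, additivity of the polygon map, the type (C)/(AL)/(AU) identifications) that the paper leaves implicit.
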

\begin{proof}
	Let $(M,F)$ be the
   $F$-crystal  with additional structure attached to $x$, which depends only on $\pi(x)$.
    The isomorphism class of $M/FM$ as $\calO_L \otimes W(k)$-module is given by an element in $\Adm(\mu)_K$,
    consisting of a tuple in $X_*(T)_{\Gamma_0}$.
    The average of the  tuple in $X_*(T)_{\Gamma_0}$ gives the Hodge polygon of the $F$-crystal $(M,F)$, which is (as usual) viewed as an element of $X_*(T_1)_{\Gamma_0, \bbQ}^+$.
\end{proof}
\begin{corollary}
	The Hodge stratification of $\scrA_k^\spl$ descends to $\scrA_k$ under the map $\pi: \scrA_k^\spl\ra \scrA_k$. In particular,
each Hodge stratum of $\scrA_k^\spl$ is a finite disjoint union of preimages of Kottwitz-Rapoport strata of $\scrA_k$ under the natural map $\pi: \scrA_k^\spl\ra \scrA_k$.
\end{corollary}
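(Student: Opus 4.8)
The plan is to identify the Hodge stratification of $\scrA_k^{\spl}$ with the fibers of a combinatorially-defined polygon function, and to show that this function factors through $\pi$ by way of the Kottwitz--Rapoport type. Recall from \cite{BijakowskiHernandez2022} that, by definition, two points of $\scrA_k^{\spl}$ lie in the same Hodge stratum precisely when their tuples of Hodge polygons $(\Hdg_i(x))_{i\in I}$ agree, where $\Hdg_i(x)=\Hdg(G_{x,i})$ is attached to the $F$-crystal of $G_{x,i}$ for each $p$-adic place. Thus it suffices to prove that the assignment $x\mapsto(\Hdg_i(x))_{i\in I}$ factors through $\pi:\scrA_k^{\spl}\to\scrA_k$ and moreover through the KR stratification $\scrA_k=\coprod_{w\in\Adm(\mu)_K}\scrA_k^w$.

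First I would recall (as already done in the discussion preceding the corollary) that for each $i$ the polygon $\Hdg_i(x)=\Hdg(G_{x,i})$ depends only on $\omega_{G_{x,i}}$, hence only on $\omega_{x,i}$, as an $\calO_{F_i}$-module; so the tuple $(\Hdg_i(x))_{i\in I}$ depends only on the isomorphism class of the $\calO_B\otimes\Z_p$-module $\omega_x$, i.e.\ only on the image of $x$ in $\scrA_k^{\naive}$, a fortiori only on $y:=\pi(x)\in\scrA_k$. Next I would invoke the Proposition just above (applied componentwise over $i\in I$ in the general case, using $\Adm(\mu)_K=\prod_{i\in I}\Adm(\mu_i)_{K_i}$), which computes $\Hdg_i(x)=\Hdg(w_i)$, where $w=(w_i)_{i\in I}\in\Adm(\mu)_K$ is the KR type of $y=\pi(x)$. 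Consequently the function $x\mapsto(\Hdg_i(x))_{i\in I}$ is the composite
\[
\scrA_k^{\spl}(k)\ \xrightarrow{\ \pi\ }\ \scrA_k(k)\ \xrightarrow{\ \text{KR type}\ }\ \Adm(\mu)_K\ \xrightarrow{\ (\Hdg_i)_{i\in I}\ }\ \textstyle\prod_{i\in I} X_*(T_{1,i})_{\Gamma_0,\bbQ}^{+},
\]
where the last arrow is the combinatorial map $\Hdg$ (one factor per $i$). In particular it factors through the KR stratification of $\scrA_k$.

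To conclude, group the KR strata of $\scrA_k$ according to the value of the Hodge polygon: for a tuple of polygons $P$ set $\scrA_k^{[P]}:=\coprod_{w:\,(\Hdg_i(w))_i=P}\scrA_k^w$. These $\scrA_k^{[P]}$ form a coarsening of the KR stratification of $\scrA_k$, and by the previous paragraph the Hodge stratum of $\scrA_k^{\spl}$ with polygon $P$ equals $\pi^{-1}(\scrA_k^{[P]})$; hence the Hodge stratification of $\scrA_k^{\spl}$ is the $\pi$-pullback of the stratification $\{\scrA_k^{[P]}\}$ of $\scrA_k$, i.e.\ it descends to $\scrA_k$. Since $\Adm(\mu)_K$ is a finite set, each $\scrA_k^{[P]}$ is a finite disjoint union of KR strata $\scrA_k^w$, so each Hodge stratum of $\scrA_k^{\spl}$ is a finite disjoint union of preimages $\pi^{-1}(\scrA_k^w)$ of KR strata, which is the ``in particular'' clause.

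The only real work — and hence the main obstacle — is the bookkeeping needed to match Bijakowski--Hernandez's definition of the Hodge stratification with the polygon-valued function used here, together with the two facts established in the surrounding text: that $\Hdg_i(x)$ is insensitive to the splitting structure $\underline{\scrF_\bullet}$ (so it truly descends along $\pi$), and that it is computed by the combinatorial map $\Hdg$ on $\Adm(\mu)_K$ (which is where the Proposition above is used). Once these identifications are in place, descent and finiteness are formal. I would also note, as remarked after the Proposition, that no compatibility with closure relations is claimed or needed here, since the Hodge stratification need not satisfy one.
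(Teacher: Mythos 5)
Your proposal is correct and follows essentially the same route as the paper: the corollary is an immediate consequence of the preceding Proposition, which shows that the Hodge polygon of $x$ is computed from the KR type of $\pi(x)$ via the combinatorial map $\Hdg$ on $\Adm(\mu)_K$, so the Hodge stratification is the pullback of a coarsening of the KR stratification. Your extra care in reducing to the definition of the Hodge stratification from \cite{BijakowskiHernandez2022} and in treating the product over $i\in I$ matches what the paper leaves implicit.
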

Recall that Corollary \ref{cor max Hodge} states that the maximal Hodge stratum is exactly the preimage of the maximal Kottwitz-Rapoport stratum. One checks easily that $\Hdg(w_0)=\PR(\mu)$.

\section{Hasse invariants for EO strata}

In this section, we apply the general theory of group theoretical Hasse invariants of \cite{GoldringKoskivirta2018, GoldringKoskivirta2019} to the smooth scheme $\scrA_{0}^\spl$, by the map $\zeta: \scrA_{0}^\spl\ra \Zip{\G_0^\spl}{\mu}_\kappa$ constructed in the last section. We will discuss some examples.

\subsection{Group theoretical Hasse invariants}

We come back to the setting of subsection \ref{subsec G-zip}.
Let $(G,\mu)$ be a cocharacter datum over $\mathbb{F}_p$,
this means that $G$ is a connected reductive group over $\mathbb{F}_p$ and $\mu$ is a cocharacter of $G$ defined over a finite field $\kappa$ over $\mathbb{F}_p$. 
Recall that $(P_+,P_-)$ is the pair of opposite parabolic subgroups of $G_\kappa$ determined by $\mu$, and
$L := P_- \cap P_+$ the Levi subgroup of $G_\kappa$.
Let $U_+$ (resp. $U_-$) the unipotent radical of $P_+$ (resp. $P_-$).
The zip group $E_{G,\mu}$ is the subgroup of $G_\kappa \times G_\kappa$
\[ E_{G,\mu}= \{ (p_+ = l u_+, p_- = l^{(p)} u_-) \in P_+ \times P_-^{(p)} \mid l
   \in L, u_+ \in U_+, u_- \in U_-^{(p)} \} . \] 
It acts on $G_\kappa$ as $(p_+, p_-) \cdot g := p_+ g p_-^{-1}$.
By Theorem \ref{thm quotient zip stack}, there is a canonical isomorphism of stacks
\[\Zip{G}{\mu}_\kappa \cong [E_{G,\mu}\backslash G_\kappa].\]

Let $k=\ov{\kappa}$ and $\Zip{G}{\mu} =\Zip{G}{\mu}_k=[E_{G,\mu,k} \backslash G_k]$.
Given $\lambda \in X^*(L)$, which can be viewed as an element of $X^*(E_{G,\mu})$ through the projection $E_{G,\mu}\to L$,
one can associate a line bundle $\calV(\lambda)$ over $[E_{G,\mu,k}  \backslash G_k]$ such that
\[H^0([E_{G,\mu,k}  \backslash G_k], \calV(\lambda)) = \{f: G_k \to \bbA^1_k \mid f(gx) = \lambda(g)f(x) \text{ for all } g \in E_{G,\mu,k} , x \in G_k\}.\]
Let $S$ be a scheme or an algebraic stack over $k$ with a morphism $\zeta : S \rightarrow \Zip{G}{\mu}$.
\begin{definition}
    For every $w \in {}^J W$, denote by $S_w$ the EO stratum of $S$ associated to $w$, i.e. $S_w = \zeta^{-1}(w)$ and $\ov{S}_w$ its Zariski closure in $S$. A Hasse invariant for $(\lambda, S_w)$ is a section $h_w \in H^0(\overline{S}_w, \mathcal{V}(n \lambda))$ for some positive integer $n$, such that its non-vanishing locus $D(h_w)$ is $S_w$. If such $h_w$ exists for all w, then $\lambda$ is called a Hasse generator for $S$.
\end{definition}

The following proposition shows that the Hasse generator is unique up to $k^\times$ if it exists.
\begin{proposition}[\cite{GoldringKoskivirta2018}]
    Let $\lambda \in X^*(L)$ and $U_\mu$ the unique open dense $E$-orbit in $G_k$, then
    \begin{enumerate}
        \item One has $\dim_k(H^0(\Zip{G}{\mu}, \mathcal{V}(\lambda))) \leq \dim_k(H^0(U_\mu, \mathcal{V}(\lambda))) \leq 1$.
        \item There is a positive integer $N_\mu$ such that the space $H^0(U_\mu, \mathcal{V}(N_\mu \lambda))$ has dimension one.
    \end{enumerate}
\end{proposition}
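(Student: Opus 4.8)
The statement is a purely group-theoretic fact about the zip stack $\Zip{G}{\mu}$, so the plan is to work entirely with the presentation $\Zip{G}{\mu}\cong[E_k\backslash G_k]$ from Theorem \ref{thm quotient zip stack} and the explicit description of sections of $\calV(\lambda)$ as functions $f:G_k\to\bbA^1_k$ satisfying $f(gx)=\lambda(g)f(x)$ for $g\in E_k$. The key input is the existence of a unique open dense $E$-orbit $U_\mu\subset G_k$; this is part (2) of Theorem \ref{thm PWZ11} applied to the top element $w_{\max}$ of ${}^JW$ (the orbit $G_{w_{\max}}$ of dimension $\dim P+l(w_{\max})=\dim G$), together with the fact from the same theorem that each $G_w$ is a single $E$-orbit.

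For part (1), first I would observe that restriction of functions from $G_k$ to the open dense subset $U_\mu$ is injective on $H^0(\Zip{G}{\mu},\calV(\lambda))$: a section is a morphism $G_k\to\bbA^1_k$, $G_k$ is reduced and irreducible (connected reductive group), and $U_\mu$ is dense, so a section vanishing on $U_\mu$ vanishes identically. This gives the first inequality $\dim_k H^0(\Zip{G}{\mu},\calV(\lambda))\le \dim_k H^0(U_\mu,\calV(\lambda))$, where the right-hand side means $E$-equivariant functions $U_\mu\to\bbA^1_k$ with the same transformation law. For the second inequality, fix a base point $g_0\in U_\mu(k)$ and let $E_{g_0}\subset E$ be its stabilizer; then $U_\mu\cong E/E_{g_0}$ as homogeneous spaces, and an equivariant function on $U_\mu$ is determined by its value $c=f(g_0)\in\bbA^1_k$, subject to the constraint that $\lambda$ must be trivial on the stabilizer $E_{g_0}$ (otherwise $f(g_0)=\lambda(e)f(g_0)$ forces $c=0$ for $e\in E_{g_0}$ with $\lambda(e)\ne 1$). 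Hence the space of such functions is at most one-dimensional: either $\lambda|_{E_{g_0}}=1$ and it is spanned by the unique function with $f(g_0)=1$, or $\lambda|_{E_{g_0}}\ne 1$ and it is zero. This proves (1).

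For part (2), the point is that even when $\lambda|_{E_{g_0}}\ne 1$, some positive power $N_\mu\lambda$ becomes trivial on $E_{g_0}$. I would argue as follows: the restriction of $\lambda$ to $E_{g_0}$, composed through $E_{g_0}\to L$, lands in $X^*$ of the (not necessarily connected) group $E_{g_0}$, and the image of $E_{g_0}$ in the torus quotient $L/L^{\mathrm{der}}\cdot Z(L)^\circ$-type lattice is finite — more precisely, $\lambda$ restricted to $E_{g_0}$ factors through the finite component group of $E_{g_0}$, since $E_{g_0}^\circ$ maps into the unipotent radical direction and the connected part of its image in $L$ is killed by $\lambda$ up to the fact that a character is trivial on commutator and unipotent subgroups. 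Concretely, the stabilizer $E_{g_0}$ is known (from \cite{PinkWedhornZiegler2011}) to be an extension of a finite group by a connected unipotent group; a character of such a group has finite order, say $N_\mu$, so $N_\mu\lambda$ is trivial on $E_{g_0}$ and the equivariant function on $U_\mu\cong E/E_{g_0}$ normalized by $f(g_0)=1$ gives a nonzero element of $H^0(U_\mu,\calV(N_\mu\lambda))$; by part (1) this space is then exactly one-dimensional.

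The main obstacle I anticipate is the precise structure of the stabilizer $E_{g_0}$ of a point in the open orbit and the verification that the character $\lambda$ has finite order on it. This requires either citing the explicit computation of $E$-orbit stabilizers in \cite{PinkWedhornZiegler2011} (where the open orbit and its stabilizer are described in terms of the Borel pair and the element $x$), or giving a direct argument that $E_{g_0}$ is generated by a connected unipotent normal subgroup and a finite subgroup — on both of which any character of $L$ pulls back to something of finite order (trivial on unipotents, finite on the finite part). Once that structural input is in hand, the rest is the formal homogeneous-space bookkeeping sketched above. I would also remark that $N_\mu$ can be taken uniform in $\lambda$, e.g. the exponent of the component group of $E_{g_0}$, which is what the statement asserts.
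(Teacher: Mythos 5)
The paper does not prove this proposition: it is quoted verbatim from \cite{GoldringKoskivirta2018}, so there is no internal proof to compare against. Your argument is, in substance, the one given in that reference: restrict to the open orbit, identify $U_\mu$ with $E/E_{g_0}$, reduce everything to the behaviour of $\lambda$ on the stabilizer $E_{g_0}$, and invoke the structure of that stabilizer from \cite{PinkWedhornZiegler2011}. Parts (1) and (2) are correct as you have set them up.

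Two points to tighten. First, in part (2) the sentence about ``$E_{g_0}^\circ$ maps into the unipotent radical direction and the connected part of its image in $L$ is killed by $\lambda$'' is garbled and, as written, does not prove finiteness: a character of a connected affine group need not have finite order (tori!), so connectedness of $E_{g_0}^\circ$ alone buys you nothing. What you must use is precisely the theorem of Pink--Wedhorn--Ziegler that the scheme-theoretic stabilizer of any point of $G_k$ under the zip group $E$ is smooth with \emph{unipotent} identity component; a connected unipotent group has no nontrivial homomorphism to $\bbG_m$, so $\lambda|_{E_{g_0}}$ factors through the finite group $\pi_0(E_{g_0})$ and $N_\mu$ can be taken to be its exponent, uniformly in $\lambda$ as you remark. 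Second, to conclude that the normalized equivariant function actually defines a regular section over $U_\mu$ (rather than merely a function on $k$-points), you should note that smoothness of the stabilizer makes the orbit map $E/E_{g_0}\to U_\mu$ an isomorphism, so that $[E\backslash U_\mu]\simeq [\ast/E_{g_0}]$ and $H^0(U_\mu,\calV(N_\mu\lambda))$ is literally the space of $E_{g_0}$-invariants in the one-dimensional representation $N_\mu\lambda$. With those two clarifications the proof is complete and agrees with the cited one.
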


Recall that a cocharacter datum $(G,\mu)$ is Hodge type if there is a symplectic embedding $G\hookrightarrow\GSp_{2g}$ over $\F_p$ such that $\mu$ induces  the standard minuscule cocharacter of $\GSp_{2g}$.
For such datum, 
there is a Hodge line bundle $\calV(\eta_\omega)$ over $[E_{G,\mu} \backslash G_\kappa]$ by pull-back from the given symplectic embedding of $G$ (in general $\calV(\eta_\omega)$ depends on the symplectic embedding).
We have the following theorem.
\begin{theorem}[\cite{GoldringKoskivirta2018}]
    Let $(G,\mu)$ be a cocharacter datum over $\mathbb{F}_p$ and assume $\mu$ is minuscule.
    For $p>2$, the stack $\Zip{G}{\mu}$ admits a Hasse generator.

    Moreover, if $(G,\mu)$ is Hodge type ($p=2$ allowed), then the Hodge line bundle is associated to a Hasse generator.
\end{theorem}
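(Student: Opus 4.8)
The plan is to unwind the presentation $\Zip{G}{\mu}_k\cong[E\backslash G_k]$ of Theorem~\ref{thm quotient zip stack} and to reformulate the statement as follows: a character $\lambda\in X^*(L)\subset X^*(E)$ is a Hasse generator precisely when, for every $w\in{}^JW$, the boundary $\partial\overline{G}_w:=\overline{G}_w\setminus G_w$ is the zero locus on $\overline{G}_w$ of an $E$-semiinvariant regular function of weight $n_w\lambda$ for some $n_w\geq 1$; such a function is exactly a Hasse invariant for the stratum $\mathcal{S}_w:=[E\backslash G_w]$ of $\Zip{G}{\mu}$, i.e. a section $h_w\in H^0(\overline{\mathcal{S}}_w,\calV(n_w\lambda))$ non-vanishing exactly on $\mathcal{S}_w$, where $\overline{\mathcal{S}}_w$ is the Zariski closure. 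The first step is to check that $\partial\overline{G}_w$ is a genuine divisor: by Theorem~\ref{thm PWZ11} the variety $\overline{G}_w$ is irreducible of dimension $\dim(P)+l(w)$ and $\overline{G}_w=\coprod_{w'\preceq w}G_{w'}$, so $\partial\overline{G}_w=\bigcup_{w'\lessdot w}\overline{G}_{w'}$ is a union of closures of the length-$(l(w)-1)$ strata and is therefore of pure codimension one. Hence each $\partial\overline{G}_w$ defines a divisor class on $[E\backslash\overline{G}_w]$, and the problem becomes to realize all of these as proportional to a single $\calV(\lambda)$ while controlling the non-vanishing locus. For $w$ maximal one recovers the $\mu$-ordinary Hasse invariant: $\mathcal{S}_{w_{\max}}=U_\mu$ is the open dense orbit, its complement is the divisor above, and the quoted Proposition of \cite{GoldringKoskivirta2018} together with the exact sequence $X^*(E)\to\mathrm{Pic}([E\backslash G_k])\to\mathrm{Pic}(G_k)\to 0$ produces the admissible $\lambda$ and a section of $\calV(N_\mu\lambda)$ cutting out $U_\mu$.

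To reach the deeper strata I would pass to the stack of zip flags $\Zip{G}{\mu}_{\mathrm{flag}}=[E\backslash\widetilde{G}_k]$, where $\widetilde{G}_k\to G_k$ additionally records a Borel contained in $P_+$, together with the proper morphism $\pi\colon\Zip{G}{\mu}_{\mathrm{flag}}\to\Zip{G}{\mu}$. The flag stack carries a finer stratification $\coprod_{w\in W}\mathcal{X}_w$ with $\dim\mathcal{X}_w=l(w)$ whose closure relations are governed by the Bruhat order of $W$, and for $w\in{}^JW$ (of minimal length in $W_J w$) the map $\pi$ restricts to an isomorphism $\mathcal{X}_w\cong\mathcal{S}_w$, hence to a proper birational surjection $\overline{\mathcal{X}}_w\to\overline{\mathcal{S}}_w$. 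On the flag stack one builds, for each simple reflection $s\in I$, a \emph{partial Hasse invariant}: an $E$-semiinvariant section $h_s$ of a line bundle $\mathcal{L}_s$ on the closure $\overline{\mathcal{X}}_s$ with non-vanishing locus exactly $\mathcal{X}_s$, obtained as the determinant of the Frobenius-twisted linear datum on the $\mathbb{P}^1$-bundle attached to the minimal parabolic $P_s\supset B$. Verifying $h_s\neq 0$ and computing the weight of $\mathcal{L}_s$ in $X^*(T)$ is precisely where the hypotheses that $\mu$ is minuscule and $p>2$ are used, since for small primes that semilinear map can degenerate. For a general $w\in W$ one takes a reduced word $w=s_1\cdots s_r$ and forms $h_w$ as a product of suitable $E$-translates of the $h_{s_i}$; this is a section on $\overline{\mathcal{X}}_w$ cutting out $\partial\overline{\mathcal{X}}_w$, of weight the sum of the corresponding twisted weights. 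Since $X^*(L)_{\Q}$ is finite dimensional and, modulo the part coming from $\mathrm{Pic}(G_k)$, these weights span a single line, after raising to powers and multiplying by further partial Hasse invariants one normalizes every $h_w$ to have weight $n_w\lambda$ for one universal $\lambda\in X^*(L)$.

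It then remains to transfer back: for $w\in{}^JW$ the morphism $\pi$ identifies the open strata, is compatible with the tautological line bundles, and carries $\partial\overline{\mathcal{X}}_w$ onto $\partial\overline{\mathcal{S}}_w$, so $h_w$ descends to a section $h_w\in H^0(\overline{\mathcal{S}}_w,\calV(n_w\lambda))$ with non-vanishing locus $\mathcal{S}_w$; as $\lambda$ is independent of $w$, it is a Hasse generator. For the last assertion, if $(G,\mu)$ is of Hodge type a symplectic embedding $G\hookrightarrow\GSp_{2g}$ with $\mu$ inducing the standard minuscule cocharacter lets one replace the building blocks by the classical Hasse invariant on $\Zip{\GSp_{2g}}{\mu}$ — the determinant of the semilinear map on the rank-$g$ Hodge piece — which is a section of the Hodge line bundle $\calV(\eta_\omega)$ and is nonzero in every characteristic, $p=2$ included; its pullback is the $\mu$-ordinary Hasse invariant on $\Zip{G}{\mu}$, and running the same product construction with $\GSp_{2g}$-partial Hasse invariants (whose weights sum to $\eta_\omega$) shows every boundary divisor is cut out by a section of a power of $\calV(\eta_\omega)$, so $\eta_\omega$ is the Hasse generator. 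The main obstacle is the construction in the second paragraph: producing the partial Hasse invariants with controlled weights and then forcing the weights of all the $h_w$ to be proportional to one character. This fuses the reduced-word combinatorics of $W$ and the Frobenius twist built into $E$ with a computation of the Picard groups of the zip and zip-flag stacks, and it is exactly here that minusculeness of $\mu$ and the restriction $p>2$ cannot be dropped in the general case.
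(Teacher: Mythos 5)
This theorem is not proved in the paper: it is imported verbatim from Goldring--Koskivirta, so there is no in-paper argument to compare against. Judged against the actual proof in the cited source, your reconstruction has the right skeleton --- pass to the stack of zip flags $[E\backslash \widetilde{G}_k]$ with its finer $W$-stratification, build sections cutting out boundaries of flag strata closures, and descend to $\overline{\mathcal{S}}_w$ for $w\in{}^JW$ (where the relevant statement is that $\pi$ restricts to a finite \'etale map $\mathcal{X}_w\to\mathcal{S}_w$, not an isomorphism in general; this needs justification but is available).

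The genuine gap is in your second paragraph, at the step where you force all the weights onto one line. The weights of the various boundary sections do \emph{not} span a single line modulo $\mathrm{Pic}(G_k)$, and one cannot normalize them a posteriori by multiplying translates of partial Hasse invariants. The actual mechanism is the reverse: one fixes a single character $\lambda\in X^*(L)$ in advance and uses the smooth map from $\overline{\mathcal{X}}_w$ to the Bruhat stack $[B\backslash \overline{BwB}/B]$ to pull back the canonical semiinvariant rational function of weight attached to $\lambda$ on the Schubert variety; its divisor is $\sum_{w'\lessdot w} m_{w'}\overline{\mathcal{X}}_{w'}$ with multiplicities $m_{w'}$ given by explicit pairings involving $\lambda$, coroots, and the Frobenius twist $p$ built into $E$. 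The content of the theorem is that all $m_{w'}>0$ simultaneously for every $w$, and this is guaranteed precisely when $\lambda$ is ($J$-ample and) \emph{orbitally $p$-close} in the sense of Goldring--Koskivirta's quasi-constant characters paper. Minusculeness of $\mu$ together with $p>2$ is what produces such a $\lambda$; in the Hodge-type case $\eta_\omega$ is quasi-constant, hence orbitally $p$-close for every $p$ including $p=2$. So the hypotheses enter through this arithmetic positivity condition on one fixed character, not through non-degeneracy of the semilinear map defining the simple-reflection invariants $h_s$, and without the orbitally-$p$-close input your argument does not close.
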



\subsection{Length Hasse invariants}
Keep notations as above.
Let $d=\langle 2\rho,\mu\rangle$, where $\rho$ is the half sum of positive (absolute) roots of $G$. Recall the subsets $G_w\subset G_k$ as in Theorem \ref{thm PWZ11} attached to $w\in {}^JW$.
For any integer $0 \leq j \leq d$, define 
\[G_j = \bigcup_{\ell(w)=j} G_w\]
with the reduced subscheme structure. It is called the $j$-th length stratum of $G_k$.
By \cite[]{GoldringKoskivirta2019}, we have the following identities:
\[\overline{G}_j = \bigcup_{\ell(w)=j} \overline{G}_w = \bigcup_{\ell(w)\leq j} G_w.\]
In the rest of this section, assume that $(G,\mu)$  is of Hodge type.
\begin{proposition}[\cite{GoldringKoskivirta2019}*{Proposition 5.2.2}]\label{prop length Ha}
There exists an integer $N' \geq 1$ such that for each $0 \leq j \leq d$, a section $h_j \in H^0([E_{G,\mu,k} \backslash \overline{G}_j], \calV(\eta_\omega)^{N'})$ such that $D(h_j) = [E_{G,\mu,k} \backslash G_j]$.
\end{proposition}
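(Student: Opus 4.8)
The plan is to reduce the statement to the construction of Hasse generators already available for the zip stacks (via the previously cited results of Goldring-Koskivirta) applied not to $[E\backslash G]$ itself but to the strata-closures $[E\backslash \overline{G}_j]$, exploiting the fact that each length stratum $G_j$ is a union of $E$-orbits and hence $[E\backslash G_j]$ is again a quotient stack of the same flavour. First I would recall from Theorem \ref{thm PWZ11} that each $G_w$ is a single $E$-orbit, locally closed and smooth, of dimension $\dim(P)+\ell(w)$, and that the closure relations are governed by the partial order $\preceq$ on ${}^JW$. Consequently $\overline{G}_j=\bigcup_{\ell(w)\le j}G_w$ is a closed $E$-stable subscheme, and its open complement inside $\overline{G}_j$ is $\overline{G}_j\setminus G_j=\bigcup_{\ell(w)\le j-1}G_w=\overline{G}_{j-1}$, again closed. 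So $[E\backslash G_j]$ is open in $[E\backslash \overline{G}_j]$ with closed complement $[E\backslash \overline{G}_{j-1}]$.

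Next I would analyze the open dense locus of $[E\backslash \overline{G}_j]$. Among the $w$ with $\ell(w)=j$ the orbits $G_w$ are the top-dimensional pieces, each of dimension $\dim(P)+j$; these are precisely the generic points of $[E\backslash \overline{G}_j]$. One then wants, for each such $w$, a section whose non-vanishing locus is exactly $G_w$ inside $\overline{G}_j$. The key input is the analogue of the flag-stratum Hasse invariant used in the proof of the Hasse-generator theorem: for the Hodge-type datum $(G,\mu)$, there is a line bundle $\calV(\eta_\omega)$ on $[E\backslash G]$ and, by the arguments of \cite{GoldringKoskivirta2018} (also recalled in \cite{GoldringKoskivirta2019}), for each stratum one produces a section of a suitable power of $\calV(\eta_\omega)$ over the closure of that stratum vanishing exactly on the boundary. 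I would apply this orbit-by-orbit: restrict $\calV(\eta_\omega)$ to $[E\backslash \overline{G}_j]$, obtain for each $w$ with $\ell(w)=j$ a section $h_w\in H^0([E\backslash \overline{G}_w],\calV(\eta_\omega)^{N_w})$ with $D(h_w)=[E\backslash G_w]$, then extend $h_w$ by zero along the components of $\overline{G}_j$ not containing $G_w$ (using that those other components are disjoint from $G_w$ in a neighbourhood, since the $G_w$ of equal length are pairwise incomparable under $\preceq$ and hence neither lies in the other's closure). Raising to a common power $N'$ independent of $j$ (possible since there are finitely many $w$), the product $h_j:=\prod_{\ell(w)=j}\bigl(\text{extension of }h_w\bigr)$ lies in $H^0([E\backslash\overline{G}_j],\calV(\eta_\omega)^{N'})$ and its non-vanishing locus is $\bigcap_{\ell(w)=j}\bigl(\overline{G}_j\setminus(\overline{G}_j\setminus G_w)\bigr)=\bigcup_{\ell(w)=j}G_w=G_j$, as desired.

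The step I expect to be the main obstacle is the \emph{extension by zero} and the verification that the non-vanishing locus is exactly $G_j$ and not something larger or smaller: one must be careful that each $h_w$, a priori only defined and controlled on $\overline{G}_w$, genuinely extends to a section on all of $\overline{G}_j$ with the correct vanishing behaviour, i.e. that it vanishes identically on every irreducible component of $\overline{G}_j$ other than $\overline{G}_w$ and has the right order of vanishing along $\overline{G}_{j-1}$. This is where one needs the precise geometry of the $E$-orbit stratification (normality or at least reducedness of $\overline{G}_j$, the component structure, and the fact that $\overline{G}_w\cap\overline{G}_{w'}\subseteq\overline{G}_{j-1}$ for distinct $w,w'$ of length $j$) together with the cohomological input guaranteeing that powers of $\calV(\eta_\omega)$ have enough sections; this is exactly the content abstracted in \cite{GoldringKoskivirta2019}*{Proposition 5.2.2}, so in the write-up I would cite that proposition for the delicate part and present the above as the structural outline. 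Finally, uniformity of $N'$ in $j$ follows by taking the least common multiple of the finitely many exponents produced.
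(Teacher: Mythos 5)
First, a remark on the comparison: the paper does not prove this proposition at all — it is quoted directly from \cite{GoldringKoskivirta2019}*{Proposition 5.2.2} — so the only question is whether your reconstruction is sound. Your structural setup is correct: the irreducible components of $\overline{G}_j$ are the orbit closures $\overline{G}_w$ with $\ell(w)=j$, their pairwise intersections lie in $\overline{G}_{j-1}$, and the strata Hasse invariants $h_w\in H^0([E\backslash \overline{G}_w],\calV(\eta_\omega)^{N_w})$ with $D(h_w)=[E\backslash G_w]$ are exactly the right input.

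The error is in the final assembly step. If $\widetilde{h}_w$ denotes the extension by zero of $h_w$ to $\overline{G}_j$, then $\widetilde{h}_w$ vanishes on every $G_{w'}$ with $w'\neq w$ and $\ell(w')=j$, so the non-vanishing locus of the product $\prod_{\ell(w)=j}\widetilde{h}_w$ is the \emph{intersection} $\bigcap_{\ell(w)=j}G_w$, which is empty whenever there are at least two elements of length $j$ (the $G_w$ are pairwise disjoint $E$-orbits). Your displayed identity $\bigcap_{\ell(w)=j}G_w=\bigcup_{\ell(w)=j}G_w$ is false, and the product in fact vanishes identically on $G_j$ — the opposite of what is required. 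The assembly must be additive, not multiplicative: after raising to a common power $N'$, set $h_j:=\sum_{\ell(w)=j}\widetilde{h}_w$ (on $G_{w_0}$ only the term $\widetilde{h}_{w_0}$ survives and is nonzero, while on $\overline{G}_{j-1}$ every term vanishes), or equivalently glue the sections $h_w^{N'/N_w}$ along the components $\overline{G}_w$ of $\overline{G}_j$ using that they all restrict to zero on the overlaps $\overline{G}_w\cap\overline{G}_{w'}\subset\overline{G}_{j-1}$. The subtlety you correctly flag — that the gluing requires agreement on the scheme-theoretic, not merely set-theoretic, intersections — is the genuinely delicate point and is what the cited proposition of Goldring--Koskivirta supplies.
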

Such $h_j$ will be called a length Hasse invariant.

Now we assume that $S$ is a $k$-scheme and $\zeta : S \rightarrow \Zip{G}{\mu}$ is a morphism of stacks (maybe not smooth). For a character $\lambda \in X^{\ast} (L)$, write $\mathcal{V}_S (\lambda) =\zeta^{\ast} (\mathcal{V} (\lambda))$.
For $w \in {}^J W$ and $j \in \{ 0,\ldots, d \}$, 
we define the locally-closed subsets of $S$: $S_w, S_w^{\ast}, S_j, S_j^{\ast}$ as preimage of $[E_{G,\mu,k} \backslash G_w], [E_{G,\mu,k} \backslash \overline{G}_w],
[E_{G,\mu,k} \backslash G_j], [E_{G,\mu,k} \backslash \overline{G}_j]$ respectively, 
equipped with reduced subscheme structure from $S$.

\begin{proposition}[\cite{GoldringKoskivirta2019}*{Proposition 5.2.3}]\label{prop length Hasse}
  Assume that
  \begin{enumerate}
    \item The scheme $S$ is equi-dimensional of dimension $d$.
    \item The stratum $S_w$ is non-empty for all $w \in {}^J W$.
    \item The stratum $S_e = S_0$ is zero-dimensional.
  \end{enumerate}
  Then we have:
  \begin{enumerate}
    \item The schemes $S_j$ and $S_j^{\ast}$ are equi-dimensional of dimension $j$.
    \item The sections $h_j$ are injective; equivalently $S_j$ is open dense in $S_j^{\ast}$.
    \item For $w \in {}^J W$, $S_w$ is equi-dimensional of dimension $\ell(w)$.
  \end{enumerate}
\end{proposition}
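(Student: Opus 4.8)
The plan is to deduce all three parts from the length filtration $S=S_d^{\ast}\supseteq S_{d-1}^{\ast}\supseteq\cdots\supseteq S_0^{\ast}=S_0$ together with the length Hasse invariants $h_j$ of Proposition~\ref{prop length Ha}. Pulling $h_j$ back along $\zeta$ gives a section of $\calV_S(\eta_\omega)^{N'}$ over $S_j^{\ast}$ whose zero scheme $V(h_j)$ is, set-theoretically, $S_{j-1}^{\ast}$ and whose non-vanishing locus $D(h_j)$ is $S_j$; in particular $S_j$ is an open subscheme of $S_j^{\ast}$, and part (2) is precisely the statement that $h_j$ vanishes on no irreducible component of $S_j^{\ast}$. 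Granting that, $S_{j-1}^{\ast}$ is cut out in $S_j^{\ast}$ by a single non-zero-divisor section of a line bundle, so Krull's principal ideal theorem controls its dimension once that of $S_j^{\ast}$ is known. The remaining combinatorial input I would record first: from Theorem~\ref{thm PWZ11} and the identities $\overline{G}_j=\bigcup_{\ell(w)=j}\overline{G}_w=\bigcup_{\ell(w)\le j}G_w$, pulled back along $\zeta$, one gets $S_w^{\ast}=\bigsqcup_{w'\preceq w}S_{w'}$, $S_j^{\ast}=\bigsqcup_{\ell(w')\le j}S_{w'}$, and that each $S_w$ with $\ell(w)=j$ is open and closed in $S_j$, hence open in $S_j^{\ast}$. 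Since $\preceq$ is compatible with length and a non-empty open subscheme of an equidimensional scheme is equidimensional of the same dimension, part (3) and the $S_w$-parts of (1)--(2) follow formally (using hypothesis (2) for non-emptiness) from the corresponding statements for $S_j^{\ast}$, $j=\ell(w)$. So everything reduces to a descending induction on $j$ with inductive hypothesis: $S_j^{\ast}$ is equidimensional of dimension $j$ and $S_j$ is dense in $S_j^{\ast}$.

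The base case $j=d$ is hypothesis (1), since $S_d^{\ast}=S$ and $S_d=\zeta^{-1}([E\backslash U_\mu])$. For the inductive step, if $S_j^{\ast}$ is equidimensional of dimension $j$ with $S_j$ dense, then $h_j$ is a non-zero-divisor on $S_j^{\ast}$, so $S_{j-1}^{\ast}=V(h_j)$ contains no component of $S_j^{\ast}$; by the principal ideal theorem every component of $S_{j-1}^{\ast}$ then has codimension exactly one in $S_j^{\ast}$, and $S_{j-1}^{\ast}$ is non-empty by hypothesis (2), so $S_{j-1}^{\ast}$ is equidimensional of dimension $j-1$. It then remains to propagate the density statement, i.e.\ to show $S_{j-1}$ meets every component of $S_{j-1}^{\ast}$: a component of $S_{j-1}^{\ast}$ missing $S_{j-1}$ lies in $S_{j-2}^{\ast}=\bigsqcup_{\ell(w')\le j-2}S_{w'}$, hence inside some $\overline{S_{w'}}$ with $\ell(w')\le j-2$, and one must exclude that such a closure has dimension $j-1$. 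I would do this as in \cite[Prop.~5.2.3]{GoldringKoskivirta2019}: the chain $S_d^{\ast}\supsetneq\cdots\supsetneq S_0^{\ast}$ has exactly $d+1$ terms, strict by hypothesis (2); its bottom term $S_0$ is zero-dimensional by hypothesis (3); each step drops dimension by at most one by Krull; hence every step drops it by exactly one, $\dim S_j^{\ast}=j$ for all $j$, which supplies the bound $\dim\overline{S_{w'}}\le\ell(w')$ and closes the induction. Conclusions (1)--(3) then all follow as above.

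The main obstacle is exactly this last dimension bookkeeping. Since $S$ is not proper, a section of a line bundle can vanish on an entire component or be a unit on one, so cutting $S_j^{\ast}$ by $h_j$ could a priori either swallow a component into $S_{j-1}^{\ast}$ (excluded by part (2), which is what is being proved) or let a component of $S_j^{\ast}$ escape entirely into $S_j$, leaving $S_{j-1}^{\ast}$ of too small dimension. Ruling out both possibilities simultaneously is the heart of the proof, and it is precisely here that hypotheses (2) and (3), together with the gradedness of $({}^J W,\preceq)$ from Theorem~\ref{thm PWZ11}, are indispensable; once $\dim S_j^{\ast}=j$ is established, the rest is formal, and the argument of \cite[Prop.~5.2.3]{GoldringKoskivirta2019} applies verbatim in our setting.
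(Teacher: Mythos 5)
The paper gives no proof of this statement; it is quoted verbatim from \cite{GoldringKoskivirta2019}*{Proposition~5.2.3}, so your reconstruction has to stand on its own. Your reduction of parts (1) and (3) to the single claim $\dim S_j^{\ast}=j$, via the decomposition $S_j^{\ast}=\bigsqcup_{\ell(w')\le j}S_{w'}$ and the unconditional lower bound (every component of $S_{j-1}^{\ast}$ is a component of $V(h_j|_Y)$ for some component $Y$ of $S_j^{\ast}$, hence has dimension $\ge\dim Y-1$ by Krull), is sound. The gap is in the one sentence that carries all the weight: ``each step drops dimension by at most one by Krull.'' Krull's principal ideal theorem bounds the codimension of a component of $V(h_j)$ inside the component of $S_j^{\ast}$ that contains it; it yields $\dim S_{j-1}^{\ast}\ge\dim S_j^{\ast}-1$ only if some component of $S_j^{\ast}$ of maximal dimension actually meets $V(h_j)$. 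If a top-dimensional component of $S_j^{\ast}$ lies entirely inside the open part $S_j$ (equivalently, $h_j$ is invertible on it), that component simply exits the chain, $\dim S_{j-1}^{\ast}$ may drop by more than one, and the ``$d$ steps from dimension $d$ to dimension $0$'' bookkeeping no longer forces $\dim S_j^{\ast}=j$. You name exactly this escape scenario in your final paragraph and call excluding it ``the heart of the proof,'' but you never exclude it --- you defer back to the citation.

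This scenario is not vacuous under hypotheses (1)--(3) alone: since $\zeta$ is an arbitrary morphism of stacks, $S$ could have one irreducible component of dimension $d$ mapping entirely into a single stratum $[E\backslash G_w]$ with $0<\ell(w)<d$ (for instance a constant map), while a second $d$-dimensional component meets every stratum in the expected dimension and supplies hypotheses (2) and (3). The first component is then a $d$-dimensional irreducible component of $S_{\ell(w)}^{\ast}$ contained in $S_{\ell(w)}$; it never reaches $S_0$, so hypothesis (3) produces no contradiction, yet conclusions (1) and (3) fail for it. Whatever input rules this out --- properness or connectedness of $S$, smoothness of $\zeta$, or the precise way the hypotheses are formulated in the source --- is exactly what is absent from your write-up, and without it the induction does not close. (Your base case has the same defect in miniature: hypothesis (1) gives equidimensionality of $S_d^{\ast}=S$ but not the density of $S_d$ in $S$ that your inductive hypothesis asserts.)
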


\subsection{Hasse invariants on splitting models}
Back to splitting models,
let $S = \scrA^{\spl}_k$. Recall that by Theorem \ref{thm EO}, there is a smooth surjective morphism
\[\zeta: S \to \Zip{\calG_0^{\spl}}{\mu}_k.\]
For each $w \in {}^J W$,
let $S_w = \zeta^{-1}(w)$ be the EO stratum of $S$.
By our construction, $\calG_0^{\spl}$ is the similitude group of the lattice $\Lambda^\spl_0$ with a pairing induced from that on $\Lambda$, hence the pair $(\calG_0^\spl,\mu)$ is of Hodge type  and the general theory of Hasse invariants applies. For simplicity, here in the following we denote $\mathsf{E}=E_{\calG_0^\spl,\mu,k}$.

\begin{corollary}
	There exists an integer $N \geq 1$ such that for every $w \in {}^J W$,
	there exists a section $h_w \in H^0([\mathsf{E} \backslash \overline{\calG_{0,w}^{\spl}}], \calV(N \eta_\omega))$ whose non-vanishing locus is precisely $[\mathsf{E} \backslash \calG_{0,w}^{\spl}]$.
\end{corollary}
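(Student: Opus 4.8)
The plan is to deduce the statement directly from the Hodge-type Hasse generator theorem of Goldring--Koskivirta \cite{GoldringKoskivirta2018}, applied to the cocharacter datum $(\calG_0^{\spl},\mu)$, followed by an elementary uniformization step over the finite index set ${}^J W$. So the first thing to record is that $(\calG_0^{\spl},\mu)$ is of Hodge type. By the construction in \ref{subsubsec calG}, $\calG_0^{\spl}$ is the group of symplectic similitudes of the $\bbF_p$-space $\Lambda_0^{\spl}$ equipped with the alternating form $\psi$ induced from the pairing on $\Lambda$; writing $2g=\dim_{\bbF_p}\Lambda_0^{\spl}$, this yields a closed embedding $\calG_0^{\spl}\hookrightarrow\GSp_{2g}$ over $\bbF_p$. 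Moreover, the weight decomposition of $\Lambda_0^{\spl}$ under $\mu$, obtained exactly as in the characteristic-zero discussion at the end of \ref{subsub rational}, has only the weights $0$ and $1$, the two summands being totally isotropic and placed in perfect duality by $\psi$; hence $\mu$ maps to the standard minuscule cocharacter of $\GSp_{2g}$. In particular $\mu$ is minuscule and, in the terminology of \ref{subsec G-zip}, the Hodge line bundle $\calV(\eta_\omega)$ is defined on $\Zip{\calG_0^{\spl}}{\mu}$.

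Next, since $p>2$, I would invoke the theorem of \cite{GoldringKoskivirta2018}: for a Hodge-type cocharacter datum the Hodge line bundle is a Hasse generator for $\Zip{G}{\mu}$. Applied to $(\calG_0^{\spl},\mu)$, this gives, for every $w\in {}^J W$, a positive integer $n_w$ and a section
\[ h_w'\in H^0\big([E\backslash\overline{\calG_{0,w}^{\spl}}],\ \calV(n_w\eta_\omega)\big) \]
whose non-vanishing locus is exactly $[E\backslash\calG_{0,w}^{\spl}]$. Finally, because ${}^J W$ is finite, I would set $N:=\mathrm{lcm}_{w\in{}^JW}(n_w)\geq 1$ and replace $h_w'$ by
\[ h_w:=(h_w')^{\otimes(N/n_w)}\in H^0\big([E\backslash\overline{\calG_{0,w}^{\spl}}],\ \calV(N\eta_\omega)\big), \]
using the identification $\calV(n_w\eta_\omega)^{\otimes(N/n_w)}\simeq\calV(N\eta_\omega)$; since raising a section to a positive power does not change its vanishing locus, $D(h_w)=[E\backslash\calG_{0,w}^{\spl}]$, which is precisely what is claimed.

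There is essentially no serious obstacle: the corollary is a formal consequence of the machinery already in place, the only substantive inputs being the Hodge-type property of $(\calG_0^{\spl},\mu)$ and the Goldring--Koskivirta theorem. If any point deserves attention, it is the verification that the symplectic embedding $\calG_0^{\spl}\hookrightarrow\GSp_{2g}$ carries $\mu$ to the standard cocharacter — that is, that $(\calG_0^{\spl},\mu)$ is genuinely of Hodge type and not merely equipped with a symplectic representation — but this is already contained in Section \ref{sec:splitting} via the weight analysis of the standard representation $\Lambda^{\spl}$ and its reduction $\Lambda_0^{\spl}$, so in the write-up it only needs to be cited rather than reproved.
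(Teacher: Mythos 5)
Your proposal is correct and follows essentially the same route as the paper: the paper likewise observes that $\calG_0^{\spl}$ is by construction the similitude group of $(\Lambda_0^{\spl},\psi)$, so that $(\calG_0^{\spl},\mu)$ is of Hodge type, and then invokes the Goldring--Koskivirta theorem that the Hodge line bundle is a Hasse generator; the uniform integer $N$ is obtained exactly as you do, by the finiteness of ${}^J W$ and passing to powers of the sections. Your write-up merely makes explicit (the lcm step and the weight-$\{0,1\}$ verification for $\mu$) what the paper leaves implicit.
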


\begin{corollary}\label{cor hasse inv}
	Fix a large enough integer $N$ as in the above corollary.
	For every EO stratum $S_w \subset S$,
	the section $\zeta^*(h_w) \in H^0(\overline{S}_w, \omega_{\Hdg}^N)$ is $G(\bbA_f^p)$-equivariant,
	and its non-vanishing locus is $D(\zeta^* (h_w)) = S_w$.
\end{corollary}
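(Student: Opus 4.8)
The plan is to deduce Corollary~\ref{cor hasse inv} by a formal pullback of the sections $h_w$ from the previous corollary along the smooth morphism $\zeta\colon S=\scrA^\spl_k\to\Zip{\calG_0^{\spl}}{\mu}$ produced in Theorem~\ref{thm EO}. First I would record the compatibility of strata and their closures. By Theorem~\ref{thm PWZ11} the topological space of $\Zip{\calG_0^{\spl}}{\mu}$ is $({}^JW,\preceq)$, the substack $[E\backslash\calG_{0,w}^{\spl}]$ has closure $[E\backslash\overline{\calG_{0,w}^{\spl}}]=\coprod_{w'\preceq w}[E\backslash\calG_{0,w'}^{\spl}]$, and by Theorem~\ref{thm EO}(2) one has $\zeta^{-1}([E\backslash\calG_{0,w}^{\spl}])=S_w$ and $\zeta^{-1}([E\backslash\overline{\calG_{0,w}^{\spl}}])=\coprod_{w'\preceq w}S_{w'}=\overline{S}_w$. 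Since $\zeta$ is smooth, the scheme-theoretic preimage of a reduced locally closed substack is reduced, so these identifications hold for the reduced scheme structures as well; thus $\zeta$ restricts to a smooth morphism $\overline{S}_w\to[E\backslash\overline{\calG_{0,w}^{\spl}}]$.

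Next I would identify the line bundles. The pair $(\calG_0^{\spl},\mu)$ is of Hodge type via the symplectic embedding $\calG_0^{\spl}\hookrightarrow\GSp(\Lambda_0^{\spl},\psi)$ of subsection~\ref{subsubsec calG}, and by construction (subsection~\ref{subsection zip}) the morphism $\zeta$ classifies the $F$-zip $(\calM,C,D,\varphi_\bullet)$ together with its symplectic form and $\calO_B$-action. Hence the pullback $\zeta^{*}\calV(\eta_\omega)$ of the Hodge line bundle on $\Zip{\calG_0^{\spl}}{\mu}$ is canonically the Hodge line bundle $\omega_{\Hdg}=\det C$ on $S$; this is exactly the statement that $\zeta$ is compatible with the chosen symplectic embedding. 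Consequently, for the section $h_w\in H^0([E\backslash\overline{\calG_{0,w}^{\spl}}],\calV(N\eta_\omega))$ of the previous corollary, the pullback $\zeta^{*}(h_w)$ lies in $H^0(\overline{S}_w,\omega_{\Hdg}^N)$, and since pullback commutes with restriction to opens and with the formation of non-vanishing loci,
\[
D(\zeta^{*}(h_w))=\zeta^{-1}(D(h_w))=\zeta^{-1}([E\backslash\calG_{0,w}^{\spl}])=S_w,
\]
which is the asserted description of the non-vanishing locus.

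Finally, for the $G(\bbA_f^p)$-equivariance I would argue that the tower $\{\scrA^\spl_{K^p}\}_{K^p}$ carries a right $G(\bbA_f^p)$-action by finite étale transition maps, under which the universal abelian scheme with its $\calO_B$-structure, polarization and splitting structure — hence $\H=H^1_{\dR}$, $\omega$, the Frobenius and Verschiebung morphisms, and therefore the whole construction of $\calM$, $C$, $D$, $\varphi_\bullet$ in subsection~\ref{subsection zip} — is $G(\bbA_f^p)$-equivariant. Thus $\zeta$ is $G(\bbA_f^p)$-equivariant for the trivial action on $\Zip{\calG_0^{\spl}}{\mu}$, the bundle $\omega_{\Hdg}$ is $G(\bbA_f^p)$-equivariant, and $\zeta^{*}(h_w)$, being pulled back from the stack, is automatically $G(\bbA_f^p)$-invariant. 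The only points requiring genuine care are the canonical identification $\zeta^{*}\calV(\eta_\omega)\cong\omega_{\Hdg}$ and the matching of reduced scheme structures on $\overline{S}_w$; both are forced by the construction of $\zeta$ via $(\calM,C,D,\varphi_\bullet)$ and the smoothness of $\zeta$, so no input beyond Theorem~\ref{thm EO} and the previous corollary is needed.
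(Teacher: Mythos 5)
Your proposal is correct and follows exactly the route the paper intends: the corollary is obtained by pulling back the group-theoretic Hasse invariants of Goldring--Koskivirta along the smooth, surjective, $G(\bbA_f^p)$-equivariant morphism $\zeta$, using the stratum/closure compatibility from Theorem \ref{thm EO} and the fact that $\omega_{\Hdg}$ is by definition the pullback of $\calV(\eta_\omega)$. The details you supply (reducedness of preimages, compatibility of non-vanishing loci with pullback, equivariance of the universal splitting structure under prime-to-$p$ Hecke action) are the ones the paper leaves implicit.
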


Here we denote $\omega_{\Hdg}$ as the Hodge line bundle with weight $\eta_\omega$ on $S=\scrA^{\spl}_k$.
We will use the same symbols $h_w$ as their pullbacks to $\overline{S}_w$ and 
call them the \emph{Hasse invariants} of $S$. We also have the length Hasse invariants on $S$, and by our previous results in section \ref{section Fzip}, Proposition \ref{prop length Hasse} holds for $S$.

\subsection{Example: Hilbert modular varieties}\label{subsection exa Hilbert}
Now consider the splitting models of Hilbert modular varieties. We would like to compare our construction with that of Reduzzi-Xiao in \cite{ReduzziXiao2017}. 

Let $L|\Q$ be a totally real field extension and
$G = (\Res_{L|\Q}\GL_2)^{\det \in \Q^\times}$ the similitude group associated to the Hilbert moduli space with respect to $L$.
Fix a prime number $p$ and a prime to $p$ level structure $K^p \subset G(\bbA_f^p)$,
which is defined in \cite[]{ReduzziXiao2017} as $\Gamma_{00}(\mathcal{N})$-level structures.
Let $F$ be a Galois extension of $\bbQ_p$ containing all the $p$-adic factors of $L$.
Then we have a splitting model $\scrA^{\spl}$ defined over $\calO_F$ with level $K^p$. Let $k$ be an algebraic closure of the residue field of $\calO_F$,
and $\scrA_0^{\spl} = \scrA^{\spl} \otimes k$.

For any $k$-scheme $S$,
$\scrA_0^{\spl}(S)$ classifies the isomorphism class of tuples $(A, \lambda, \alpha, \underline{\scrF}_\bullet)$, where
\begin{enumerate}
	\item $(A, \lambda)$ is a polarized abelian scheme over $S$ with $\calO_L$-action, with level structure $\alpha$.
	\item $\underline{\scrF_\bullet} = (\scrF_{i,j}^l)_{1 \leq i \leq r, 1 \leq j \leq f_i, 0 \leq l \leq e_i}$, 
		each $\scrF_{i,j}^l$ is a locally free sheaf over $S$ such that
		\begin{itemize}
			\item $0 = \scrF_{i,j}^0 \subset \scrF_{i,j}^1 \subset \cdots \subset \scrF_{i,j}^{e_i} = \omega_{A/S,i,j}$ and each $\scrF_{i,j}^l$ is stable under the $\calO_L$-action.
			\item each subquotient $\scrF_{i,j}^l / \scrF_{i,j}^{l-1}$ is a locally free $\calO_S$-module of rank one.
			\item the $\calO_F$-action on each subquotient $\scrF_{i,j}^l / \scrF_{i,j}^{l-1}$ factors through $\sigma_{i,j}^l: \calO_L \to \calO_F$.
		\end{itemize}
\end{enumerate} 
Consider $S = \scrA_0^{\spl}$ and $(A,\lambda,\alpha, \underline{\scrF_\bullet})$ the universal object over $S$.
The sheaf $\calH_{i,j}=H_{\dR}^1(A/S)_{i,j}$ is a locally free $\calO_S[\varepsilon_i]$-module ($\varepsilon_i^{e_i}=0$) of rank two, 
where the $\pi_i$-action is given by $\varepsilon_i$. For each $i, j$ denote $\omega_{i,j}=\omega_{A/S,i,j}$.

For each $i,j,l$, let $h_{i,j}^l$ be the partial Hasse invariant defined by \cite[]{ReduzziXiao2017},
which is a section of a certain line bundle over $S$.
For a point $x = (\underline{A}, \underline{\scrF_\bullet}) \in S(k)$, 
the construction of partial Hasse invariants shows that
\begin{itemize}
	\item For $2 \leq l \leq e_i$, $h_{i,j}^l(x) = 0$ if and only if $\varepsilon_i \scrF_{i,j}^l = \scrF_{i,j}^{l-2}$.
	\item For $l = 1$, $h_{i,j}^1(x) = 0$ if and only if $\scrF_{i,j}^1 = \Ker(\Ver_{i,j}^1)$.
\end{itemize}
The vanishing loci of these $h_{i,j}^l$ cut out the stratification in \cite{ReduzziXiao2017}.
We claim that their stratification coincides with the Ekedahl-Oort stratification defined in this paper.

For each $i,j,l$ we have a locally free sheaf $\calM_{i,j}^l = \varepsilon_i^{-1} \scrF_{i,j}^{l-1} /\scrF_{i,j}^{l-1} $. Let $\calM_i=\bigoplus_{j,l}\calM_{i,j}^l $.
By our construction in subsection \ref{subsection zip},  there is a natural $G_i:= \Res_{\kappa_i|\bbF_{p}} \GL_2^{e_i}$-zip
\[(\calM_{i}, C_{i}, D_{i}, \varphi_0, \varphi_1)\]
of type $\mu_i = (1,0)^{e_if_i}$ over $S$.
The maps $V_{i,j}^l: \calM_{i,j}^l \to \calM_{i,j}^{l-1}$ (for $l\geq 2$) and $V_{i,j}^1:\calM_{i,j}^1 \to \calM_{i,j-1}^{e_i,(p)}$ (for $l=1$)  induce maps \[V_{i,j}^l: \omega_{i,j}^l \to \omega_{i,j}^{l-1} \quad l\geq 2, \quad \tr{and}\quad V_{i,j}^1: \omega_{i,j}^1 \to \omega_{i,j-1}^{e_i,(p)} = \omega_{i,j-1}^{e_i, \otimes p} \]
by restriction to $\omega$. 
Such morphisms give sections of $\omega_{i,j}^{l-1}\bigotimes(\omega_{i,j}^1)^{-1}$ and $\omega_{i,j-1}^{e_i, \otimes p} \bigotimes (\omega_{i,j}^1)^{-1}$ respectively, 
which are the partial Hasse invariants of Reduzzi-Xiao. More precisely, 
$\omega_{i,j}^l$ is just a Frobenius twisted version of $\omega_{\tau_{\mathfrak{p}_i,j}^l}$ in \cite[]{ReduzziXiao2017},
i.e. $(\omega_{i,j}^{l})^{(p)} = \omega_{\tau_{\mathfrak{p}_i,j}^l}$. 
Now by \cite[]{ImaiKoskivirta2021}*{Proposition 5.2.3}, 
every codimension one closed Ekedahl-Oort stratum of zip stacks can be cut out from zip partial Hasse invariants.
So we at least have codimension one closed EO strata coincide with those in Reduzzi-Xiao's definition.

In the Hilbert case, any EO stratum can be cut out from codimension one strata as the associated Weyl group is isomorphic to $(\bbZ/2\bbZ)^n$.
So all the EO strata of splitting models of Hilbert modular varieties coincide with the strata constructed by the partial Hasse invariants in \cite[]{ReduzziXiao2017}.

\subsection{Example: Hilbert-Siegel case}\label{subsec ex Gsp}

Let $L$ be a totally real field over $\Q$ with $n=[L:\Q]>1$ and \[G = (\Res_{L|\Q} \GSp_{2g})^{\det \in \Q^\times}\] the similitude subgroup of $\Res_{L|\Q} \GSp_{2g}$. 
In fact one can also work with Shimura varieties associated to the sightly larger group $\Res_{L|\Q} \GSp_{2g}$, which is of abelian type, by taking some suitable group quotient which does not change the geometry at $p$, see for example \cite{BCGP, SZ, ERX, KS}.
Fix a prime to $p$ level $K^p\subset G(\mathbb{A}_f^p)$ which is sufficiently small.
Let $\scrA^{\spl}$ be the splitting model over $\calO_F$ corresponding to $G$. Note that when $g=1$, we return to the Hilbert case in the last subsection.

As $L = B$, we have 
\[L \otimes_{\bbQ} \bbQ_p \simeq \prod_{i=1}^r F_i\]
and
\[\calG_0^{\spl} \subset \prod_{i=1}^r (\Res_{\kappa_i | \bbF_p} \GSp_{2g})^{e_i}.\]
Recall that the EO index set of $\GSp_{2g}$ is $\{0,1\}^g$ (see \cite[\S 5.4]{ViehmannWedhorn2013}),
so the EO index set $^J W$ of $\scrA_0^{\spl}$ can be identified with $(\{0,1\}^g)^n$, under the isomorphism $\calG_{0,\kappa,der}^{\spl} \otimes_{\bbF_p} k \simeq (\mathrm{Sp}_{2g})^n$. Given $x = (\underline{A}, \underline{\scrF_\bullet}) \in \scrA_0^{\spl}(k)$, the EO type of $x$ can be written as a tuple $a = (a_{i,j}^l)$, where $a_{i,j}^l \in \{0,1\}^g$. For each index $i,j,l$, by the construction in subsection \ref{subsection zip} at the point $x$, we have an $F$-zip structure at the standard tuple $(\Lambda_{i,j}^l \otimes \kappa, C_{0,i,j}^l, D_{0,i,j}^l)$ (of type $\mu_{i,j}^l$) by trivializing the filtrations
\[\calM_{i,j}^l \supset C_{i,j}^l \supset 0, \quad \text{ and }\quad 0 \subset D_{i,j}^l \subset \calM_{i,j}^{l+1}.\]
The classification of $F$-zips of type $\mu_{i,j}^l$ is given by the set $\{0,1\}^g$. This gives the element $a=(a_{i,j}^l)\in (\{0,1\}^g)^n$ with respect to $x$.

We give a description of the $\mu$-ordinary Hasse invariants.
For each $i$ and $l \geq 2$, we have defined a map $V_i^{l}: \calM_i^l \to \calM_i^{l-1}$, which reduces to a morphism
\[V_i^l: \omega_i^{l} \to \omega_i^{l-1}, \quad\text{ where }\quad \omega_i^{l} := \bigoplus_j C_{i,j}^{l}.\]
For $l = 1$, we have the morphism
\[V_i^1: \omega_i^{1} \to (\omega_i^{e_i})^{\otimes p}.\]
Taking summation induces a morphism
\[\det(V_i): \det(\omega_i^{1}) \bigotimes \cdots \bigotimes \det(\omega_i^{e_i})  \to \det(\omega_i^{1}) \bigotimes \cdots \bigotimes \det(\omega_i^{e_i-1}) \bigotimes \det(\omega_i^{e_i})^{\otimes p}, \]
and a global section
\[h_i \in H^0(\scrA_0^{\spl}, \det(\omega_i^{e_i})^{\otimes (p-1)}).\]
Let $h$ be the product of the $h_i$, which is a section of the product of the line bundles $\det(\omega_i^{e_i})^{\otimes (p-1)}$. It coincides with the $\mu$-ordinary Hasse invariant coming from the zip stack.

\subsection{Example: unitary Shimura varieties}\label{subsec ex GU}

Let $L^+$ be a totally real field and $L|L^+$ a totally imaginary quadratic extension. We denote by $c\in \tr{Gal}(L/L^+)$ the non trivial element. Let $I=\Hom(L^+,\ov{\Q})$ and for any $\sigma\in I$ we fix a choice of extension $\tau: L\ra \ov{\Q}$ of $\sigma$. Then we have $\Hom(L,\ov{\Q})=I\coprod I\circ c$. 

Let $V$ be an $L$-vector space of dimension $n$ together with a hermitian form $\langle\cdot,\cdot\rangle$. We assume that this form is not totally definite. Let $G=GU(V,\langle\cdot,\cdot\rangle)$ be the associated reductive group of unitary similitudes  over $\Q$. Let $(p_\tau,q_\tau)_{\tau\in I}$ be the signature of $G_{\mathbb{R}}$ so that $G_{\mathbb{R}}=G(\prod_{\tau\in I}U(p_\tau,q_\tau))$ and we get a standard $h: \Res_{\mathbb{C}|\R}\mathbb{G}_m\ra G_{\mathbb{R}}$.
If we write $G_1=U(V,\langle\cdot,\cdot\rangle)$ as the corresponding unitary group over $\Q$, then $G_1=\Res_{L^+|\Q}U$, where $U$ is the unitary group over $L^+$ defined by $(V,\langle\cdot,\cdot\rangle)$. 

Let $p>2$ be a prime and $\Lambda\subset V_{\Q_p}$ be a PEL $\calO_L$-lattice. We get a parahoric group scheme $\G$ over $\Z_p$. Let $v_1,\cdots,v_r$ be the places of $L^+$ over $p$. For each $v_i$, as in subsection \ref{subsec PEL data} we assume $v_i$ is unramified in $L$, thus we have  the following two cases:
\begin{itemize}
	\item (AL): $v_i$ splits in $L$,
	\item (AU): $v_i$ is inert in $L$.
\end{itemize}
Fix a tame level $K^p\subset G(\mathbb{A}_f^p)$ and let $K=\G(\Z_p)K^p$. Let $E$ be the local reflex field and $F|\Q_p$ a sufficiently large field extension as before.
We get integral models over $\calO_F$ of the associated PEL moduli space
$\scrA_K^\spl\ra \scrA_K\otimes_{\calO_E}\calO_F$.

Over the algebraically closed field $k$,
the group $\calG_{0,k}^{\spl}$ is just the associated similitude subgroup of
$$ \prod_{i,j,l} \GL_{d_i}\times \bbG_m.$$
Moreover, the cocharacter has a decomposition $\mu = \prod_{i,j,l} \mu_{i,j}^l$, where each $\mu_{i,j}^l$ has type $(d_{i,j}^l,d_i - d_{i,j}^l)$, corresponding to some $(p_\tau, q_\tau)$ above.
By the classification of $F$-zips in \cite{MW} and the decomposition of $\calG_0^{\spl}$, the EO index set $^J W$ decomposes as 
\[ ^J W = \prod_{i,l} W_{i}^l, \quad\tr{ where }\quad W_i^l \simeq \prod_{1 \leq j \leq f_i}  (S_{d_{i,j}^l} \times S_{d_i - d_{i,j}^l})\setminus S_{d_i},\]
where $S_{d_i}$ is the permutation group of $d_i$ elements, and $(S_{d_{i,j}^l} \times S_{d_i - d_{i,j}^l})$ is the subgroup of $S_{d_i}$ consisting of the permutations of the first $d_{i,j}^l$ elements  and the permutations of the last $d_i - d_{i,j}^l$ elements. For a precise description of the minimal length representatives for the left coset $(S_{d_{i,j}^l} \times S_{d_i - d_{i,j}^l})\setminus S_{d_i}$, see \cite{MW} subsection 2.6.
The partial order $\preceq$ on $^J W$ is given by the product of the partial orders on $W_{i}^l$.
We remark that different types of $i$ induce different Frobenius actions on $W_i^l$, which induces different partial orders $\preceq$ on $W_i^l$. The construction of Hasse invariants is the same as above.

\section{Extensions to compactifications}
To study applications to cohomology, we need to extend the previous constructions to compactifications. In other words, we need to study the degeneration of $F$-zips with additional structure on splitting models.
Fortunately, the arithmetic compactifications for general splitting models have already been established by Lan in \cite{Lan18}. Here we single out the special case of smooth splitting models. It turns out the properties of these compactifications are as good as that in the unramified setting \cite{Lan13}.

\subsection{Modifications of moduli spaces and integral models}
Before talking about compactifications of splitting models, we need to slightly modify our integral models following \cite{Lan18}. Recall we have fixed a tame level $K^p\subset G(\mathbb{A}_f^p)$ and $K=K^p\G(\Z_p)$. Let $\M_K$ be the integral model over $\calO_E$ constructed as in \cite{Lan16} (denoted by $\overrightarrow{\M}_\H$ in \cite{Lan16, Lan17, Lan18}, where $\H$ denotes the level as in loc. cit.). Then we get an open and closed embedding (cf. \cite{Lan18} Corollary 2.4.8) of $\calO_E$-schemes
\[\M_K\hookrightarrow\scrA_{K^p},\]
thus composing with the closed immersion $\scrA_{K^p}\subset \scrA^{\naive}_{K^p}$ we get also a closed immersion $\M_K\subset \scrA^{\naive}_{K^p}$. On generic fibers, we have an open and closed embedding of $E$-schemes (cf. \cite{Lan13} Lemma 1.4.4.2)
\[\M_{K,E}\hookrightarrow\scrA_{K^p,E}.\]

Let $(\ul{\H},\ul{\scrF}, \ul{\iota})$ be the polarized $\calO_B\otimes\calO_{\M_K}$-modules (see Definition \ref{def triple modules}) associated to the pullback of the universal object over $\scrA^{\naive}$. Here in fact $\scrF=\omega$.
By the notation of \cite{Lan18} Proposition 2.3.10,
we define the corresponding splitting model as the moduli scheme of splitting structures of $(\ul{\H},\ul{\scrF}, \ul{\iota})$ over $\M_K\otimes \calO_F$
\[\M_K^\spl=\mathrm{Spl}^+_{(\ul{\H},\ul{\scrF},\ul{\iota})/\M_K\otimes \calO_F}.\]Then we get an induced open and closed embedding of $\calO_F$-schemes
\[\M_K^\spl\hookrightarrow \scrA_{K^p}^\spl.\]
The difference of these schemes is bounded by the size of failure of the Hasse principle (cf. \cite{Lan13} Remark 1.4.3.12). All the results in previous sections still hold for $\M_K^\spl$ and $\M_K$, as in fact our previous constructions can be done for the related integral models of the associated Shimura varieties.

\subsection{Arithmetic compactifications of splitting models}
We briefly review some basic properties of the toroidal and minimal compactifications of $\M_K^\spl$ constructed by Lan in \cite{Lan18}, which will be used in the following.

Recall that we have a morphism $\M_K^\spl\ra \M_K\otimes \calO_F$. By \cite{Lan16,Lan17}
we choose a toroidal compactification $\M^{\tor}_{K,\Sigma}$ of $\M_K$ over $\calO_E$ associated to a compatible collection $\Sigma$ of cone decompositions.
Over $\M_K$, we have the polarized $\calO_B\otimes\calO_{\M_K}$-modules $(\ul{\H},\ul{\mathscr{F}}, \ul{\iota})$ associated to the universal abelian scheme with additional structure as above. By \cite{Lan18} Proposition 3.1.2, the triple $(\ul{\H},\ul{\mathscr{F}}, \ul{\iota})$ uniquely extends to an $\scrL$-set of polarized $\calO_B\otimes\calO_{\M^\tor_{K,\Sigma}}$-modules $(\ul{\H}^\ext,\ul{\scrF}^\ext, \ul{\iota}^\ext)$. Then we define
\[\M_{K,\Sigma}^{\spl,\tor}=\mathrm{Spl}^+_{(\ul{\H}^\ext,\ul{\scrF}^\ext, \ul{\iota}^\ext)/\M^\tor_{K,\Sigma}\otimes \calO_F}\]
as the scheme of splitting structures of $(\ul{\H}^\ext,\ul{\scrF}^\ext, \ul{\iota}^\ext)$ over $\M^\tor_{K,\Sigma}\otimes \calO_F$.
By \cite{Lan18} Theorem 3.4.1, for suitable choice of $\Sigma$, the scheme $\M_{K,\Sigma}^{\spl,\tor}$ is normal, projective and flat over $\calO_F$, and admits a similar description as the usual compactifications $\M^{\tor}_{K,\Sigma}$. 
\begin{proposition}
For a projective smooth $\Sigma$, the scheme $\M_{K,\Sigma}^{\spl,\tor}$ is projective and smooth over $\calO_F$.
\end{proposition}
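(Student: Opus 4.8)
The plan is to treat the two assertions separately. Projectivity, normality and flatness of $\M_{K,\Sigma}^{\spl,\tor}$ over $\calO_F$ are already provided by \cite{Lan18}*{Theorem~3.4.1}, a projective smooth $\Sigma$ being in particular among the ``suitable'' cone decompositions allowed there; so the substance of the statement is \emph{smoothness} over $\calO_F$. Since smoothness may be checked formally-locally on the source, I would decompose $\M_{K,\Sigma}^{\spl,\tor}$ into its interior and the formal completions along the boundary strata. On the interior one has $\M_{K,\Sigma}^{\spl,\tor}|_{\M_K\otimes\calO_F}=\M_K^{\spl}$, which is smooth over $\calO_F$ since it is open and closed in $\scrA_{K^p}^{\spl}$ (Proposition~\ref{prop smooth spl}). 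Thus everything comes down to the boundary.

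For the boundary I would invoke Lan's description of the formal completion of $\M_{K,\Sigma}^{\spl,\tor}$ along a boundary stratum: by \cite{Lan18}*{\S3.3--3.4} the formation of splitting structures is compatible with the boundary construction, so this completion is, after a free quotient by $\Gamma_\Phi$ (available because $K$ is neat), the torus-embedding --- associated to the relevant cone in $\Sigma$ --- of a torus torsor over an abelian-scheme torsor over the splitting model $\M_{K_\Phi}^{\Phi,\spl}$ of the abelian-part PEL datum attached to the cusp label $\Phi$. The point to check is that passing to this boundary datum does not alter the local algebra $B_{\bbQ_p}$, hence not the fields $F_i$ nor their ramification; in particular no factor of type (AR) is created, so Proposition~\ref{prop smooth spl} applies and $\M_{K_\Phi}^{\Phi,\spl}$ is smooth over $\calO_F$. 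Torus and abelian-scheme torsors are smooth, the torus-embedding attached to a smooth cone of $\Sigma$ is smooth, and a free quotient preserves smoothness; so each such formal completion is smooth over $\calO_F$, and together with the interior this yields smoothness of $\M_{K,\Sigma}^{\spl,\tor}$ over $\calO_F$. This is exactly \cite{Lan18}*{Proposition~3.4.14}.

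An alternative, more hands-on route, which I would also keep in mind, is to extend the local model diagram from the proof of Proposition~\ref{prop smooth spl} across the boundary. Over $\M_{K,\Sigma}^{\spl,\tor}$ one has the $\scrL$-set $(\ul{\H}^\ext,\ul{\scrF}^\ext,\ul{\iota}^\ext)$ with its universal splitting structure $\ul{\scrF_\bullet}$; adjoining trivializations $\tau_{i,j}^l$ of $\Ker((\pi_i-\sigma_{i,j}^l(\pi_i))|_{\H_{i,j}^\ext/\scrF_{i,j}^{\ext,l-1}})$, which exist Zariski-locally by Lan's extension to $\scrL$-sets of the local structure results of \cite{PappasRapoport2005}*{Propositions~5.2 and 9.2}, one would obtain
\[\xymatrix{
& \wt{\M_{K,\Sigma}^{\spl,\tor}} \ar[ld]_{\pi} \ar[rd]^{q} & \\
\M_{K,\Sigma}^{\spl,\tor} & & \prod_{i,j,l}\mathbb{M}^{l}_{i,j},
}\]
with $\pi$ a $\prod_{i,j,l}G_{i,j}^l$-torsor and $q$ a $\prod_{i,j,l}G_{i,j}^l$-equivariant morphism; since each $\mathbb{M}_{i,j}^l=\mathbb{M}^{\loc}(G_{i,j}^l,\mu_{i,j}^l)$ is an unramified local model at hyperspecial level, hence smooth over $\calO_F$, smoothness of $q$ and of $\pi$ then force $\M_{K,\Sigma}^{\spl,\tor}$ to be smooth over $\calO_F$. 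The main obstacle here --- and the step I expect to require the most input from \cite{Lan18} and \cite{LS18} --- is the smoothness of $q$ at the boundary: in the interior it follows from Grothendieck--Messing/Pappas--Rapoport deformation theory, but at a cusp one must use the degeneration theory to identify the deformation problem with the one governed by the $\mathbb{M}_{i,j}^l$. Granting this, the argument closes exactly as in Proposition~\ref{prop smooth spl}.
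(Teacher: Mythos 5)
Your proposal is correct and follows essentially the same route as the paper: the paper's proof simply cites Lan's Propositions 3.4.13 and 3.4.14 together with Proposition \ref{prop smooth spl} (your first argument is precisely an unpacking of what those results of Lan deliver, including the key observation that the boundary PEL data introduce no new ramification type), and it mentions the extended local model diagram of the next subsection as the alternative — which is your second route.
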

\begin{proof}
This follows from \cite{Lan18} Propositions 3.4.13, 3.4.14, and our Proposition \ref{prop smooth spl}.

Alternatively, the smoothness of $\M_{K,\Sigma}^{\spl,\tor}$  can also be seen from the extended local model diagram in the next subsection.
\end{proof}

From now on we assume that $\Sigma$ is projective and smooth. Thus the canonical morphism
\[ \M_{K,\Sigma}^{\spl,\tor}\ra \M_{K,\Sigma}^{\tor}\otimes \calO_F\]
is a resolution of singularities.

By \cite{Lan18} Propositions 4.1.22, 4.2.31 and 4.2.34, we have the minimal compactification $\M_K^{\spl,\min}$ together with a canonical morphism
$\oint: \M_{K,\Sigma}^{\spl,\tor}\ra \M_K^{\spl,\min}$, which fits into a commutative diagram of schemes over $\calO_F$:
\[\xymatrix{
\M_{K,\Sigma}^{\spl,\tor}\ar[r]\ar[d]& \M^\tor_{K,\Sigma}\otimes \calO_F\ar[d]\\
\M_K^{\spl,\min}\ar[r]& \M_K^{\min}\otimes \calO_F.
}\]
By loc. cit. Theorem 4.3.1, the minimal compactification $\M_K^{\spl,\min}$ 
admits a similar description to  $ \M_K^{\min}$.

Let $Z^\spl\subset \M_K^{\spl,\min}$ be a boundary stratum. There is a corresponding boundary stratum $Z\subset \M_K^{\min}$, which is an analogue of $\M_K$ for some boundary PEL datum $(\calO_B, \ast, \Lambda^Z,\langle\cdot,\cdot\rangle^Z, h^Z)$ so that we have the tautological abelian scheme $(B,\lambda,\iota)$ over $Z$ with associated $({}^\sharp\ul{\H},{}^\sharp\ul{\scrF}, {}^\sharp\ul{\iota})$. By construction, we have
\[Z^\spl=\tr{Spl}^+_{({}^\sharp\ul{\H},{}^\sharp\ul{\scrF}, {}^\sharp\ul{\iota})/Z\otimes \calO_F}.\]
Attached to $Z^\spl$ and $Z$, we have the following data:
\begin{itemize}
\item an admissible cone decomposition $\Sigma_Z$ of some cone $\mathbf{P}=\mathbf{P}_Z$, as well as a subset $\Sigma_Z^+$ of $\Sigma_Z$ which forms a cone decomposition of the interior $\mathbf{P}^+$ of $\mathbf{P}$.
\item an arithmetic group $\Gamma=\Gamma_Z$ acting on $\mathbf{P}$ and hence aslo on $\Sigma_Z$; the open cone $\mathbf{P}^+$ and the corresponding $\Sigma_Z^+$ are stable under the action of $\Gamma$. As in \cite{LS18} we may and we shall assume that for each $\sigma\in\Sigma_Z^+$, the stabilizer $\Gamma_\sigma$ is trivial.
\item a finite free abelian group $S$; let $\mathbf{E}=\mathbf{E}_Z$ be a split torus over $\Z$ with character group $S$.
\item a normal scheme $C^\spl$ which is flat over $\calO_F$, together with a proper surjective morphism $C^\spl\ra Z^\spl$. 
\item a morphism of schemes $\Xi^\spl\ra C^\spl$ which is an $\mathbf{E}$-torsor; for each $\sigma\in \Sigma_Z$, we have an affine toroidal embedding $\Xi^\spl\hookrightarrow\Xi^\spl(\sigma)$ over $C^\spl$ with a closed subscheme $\Xi_{\sigma}^\spl$.
\item for each representative $\sigma\in\Sigma_Z^+$ of an orbit $[\sigma]\in \Sigma_Z^+/\Gamma$, let $Z_{[\sigma]}^\spl\subset \M_{K, \Sigma}^{\spl,\tor}$ be the corresponding toroidal boundary stratum, and $\mathfrak{X}^\spl_\sigma:=(\Xi^\spl(\sigma))^{\wedge}_{\Xi^\spl_{\sigma}}$ the formal completion, then there is a canonical isomorphism of formal schemes \[\mathfrak{X}^\spl_\sigma\simeq (\M_{K, \Sigma}^{\spl,\tor})^{\wedge}_{Z^\spl_{[\sigma]}}.\]
\item let $\Xi_Z^\spl$ be the full toroidal embedding attached to $\Sigma_Z$ and \[\mathfrak{X}^\spl:=\mathfrak{X}_Z^\spl=(\Xi_Z^\spl)^{\wedge}_{\quad\cup_{\tau\in\Sigma_Z^+}\Xi_{\tau}^\spl}\] the formal completion, then there is a canonical isomorphism
	\[\mathfrak{X}^\spl/\Gamma\simeq (\M_{K, \Sigma}^{\spl,\tor})^{\wedge}_{\quad\cup_{\tau\in\Sigma_Z^+/\Gamma}Z_{[\tau]}^\spl}.\]
	Note the (disjoint) union $\cup_{\tau\in\Sigma_Z^+/\Gamma}Z_{[\tau]}^\spl$ is exactly the preimage of $Z^\spl$ under the natural projection $\oint: \M_{K,\Sigma}^{\spl,\tor}\ra \M_K^{\spl,\min}$.
	\item there are similar and parallel objects $C, \Xi$, etc. for the boundary stratum $Z$, such that $C^\spl=\tr{Spl}^+_{({}^\sharp\ul{\H},{}^\sharp\ul{\scrF}, {}^\sharp\ul{\iota})/C\otimes \calO_F}, \quad \Xi^\spl=\tr{Spl}^+_{({}^\sharp\ul{\H},{}^\sharp\ul{\scrF}, {}^\sharp\ul{\iota})/\Xi\otimes \calO_F}$, etc., see \cite{Lan18} Lemma 3.2.4.
\end{itemize}

The same arguments as in the proof of Proposition \ref{prop smooth spl} show that
\begin{proposition}\label{prop sm boundary}
Each boundary stratum $Z^\spl$ is smooth over $\calO_F$.
\end{proposition}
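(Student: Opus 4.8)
The plan is to reduce the smoothness of the boundary stratum $Z^\spl$ to the smoothness of a product of unramified local models, exactly as was done in the proof of Proposition~\ref{prop smooth spl}. Recall that $Z^\spl = \mathrm{Spl}^+_{({}^\sharp\ul{\H},{}^\sharp\ul{\scrF},{}^\sharp\ul{\iota})/Z\otimes\calO_F}$, where $Z$ is the boundary stratum of $\M_K^{\min}$ attached to a boundary PEL datum $(\calO_B,\ast,\Lambda^Z,\langle\cdot,\cdot\rangle^Z,h^Z)$, with tautological abelian scheme $(B,\lambda,\iota)$ over $Z$ and associated polarized $\calO_B\otimes\calO_Z$-module $({}^\sharp\ul{\H},{}^\sharp\ul{\scrF},{}^\sharp\ul{\iota})$. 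The key point is that $Z$ is itself an analogue of the integral model $\M_K$ for the boundary PEL datum; in particular $Z$ is smooth over $\calO_E$ (by \cite{Lan16}, since the relevant level is still hyperspecial at $p$ and the boundary datum inherits the hypothesis that no index is of type (AR)), and the boundary group $G^Z$ has the same shape $\Res_{F_i|\Q_p}H_i$ at $p$ with the same local étale-locally-constant combinatorial data $(d_{i,j}^l)$ determining the splitting structures.

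First I would set up the analogue of the scheme $\widetilde{\mathscr{A}^\spl}$ over $Z\otimes\calO_F$: let $\widetilde{Z^\spl}$ be the moduli scheme parametrizing $(\,(B,\lambda,\iota,\alpha),\ul{\scrF_\bullet},\ul{\tau}=\{\tau_{i,j}^l\}\,)$ where $\ul{\scrF_\bullet}$ is a splitting structure on ${}^\sharp\ul{\scrF}$ and $\tau_{i,j}^l$ is an isomorphism
\[\tau_{i,j}^l:\ \Ker\bigl((\pi_i-\sigma_{i,j}^l(\pi_i))\,\big|\,{}^\sharp\H_{i,j}/\scrF_{i,j}^{l-1}\bigr)\ \simeq\ \Lambda_{i,j}^l\otimes_{\calO_F}\calO_S.\]
By Propositions~5.2 and~9.2 of \cite{PappasRapoport2005} (and the remark in the proof of Proposition~\ref{prop smooth spl} that the same proof applies in type (AU)), such trivializations exist locally, so the forgetful morphism $\pi:\widetilde{Z^\spl}\to Z^\spl$ is a $\prod_{i,j,l}G_{i,j}^l$-torsor. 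Then I would produce the local model diagram
\[\xymatrix{
& \widetilde{Z^\spl}\ar[ld]_{\pi}\ar[rd]^{q} & \\
Z^\spl & & \prod_{i,j,l}\mathbb{M}^{l}_{i,j},
}\]
with $q$ sending $(\,\cdots,\ul{\scrF_\bullet},\ul{\tau}\,)\mapsto(\tau_{i,j}^l(\scrF_{i,j}^l/\scrF_{i,j}^{l-1}))$, which is $\prod_{i,j,l}G_{i,j}^l$-equivariant and, crucially, \emph{smooth} — this last point follows because the splitting-structure conditions of Definition~\ref{spl-str-def} are étale-locally the same linear-algebra conditions as in the main text, now imposed on ${}^\sharp\H$ rather than on the de Rham cohomology of the universal abelian scheme over $\scrA^{\naive}$; formally, this is the boundary analogue of the local model diagram of Proposition~\ref{prop loc mod Gspl}, and is precisely what is invoked in \cite{Lan18} in constructing $Z^\spl$ compatibly with the toroidal-boundary charts.

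Finally, since we have excluded type (AR), each $\mathbb{M}^{l}_{i,j}=\mathbb{M}^\loc(G_{i,j}^l,\mu_{i,j}^l)$ is an unramified local model with hyperspecial level, hence smooth over $\calO_F$; as $Z$ is smooth over $\calO_E$, the source $\widetilde{Z^\spl}$ is smooth over $\calO_F$ via $q$, and then $Z^\spl$ is smooth over $\calO_F$ via the torsor $\pi$. The main obstacle I anticipate is purely bookkeeping rather than conceptual: one must check that the boundary PEL datum $(\calO_B,\ast,\Lambda^Z,\langle\cdot,\cdot\rangle^Z,h^Z)$ genuinely inherits the structural hypotheses used above — in particular that its local group at $p$ is again a product of Weil restrictions $\Res_{F_i|\Q_p}H_i$ with $H_i$ unramified, that the combinatorial invariants $(d_{i,j}^l)$ defining ${}^\sharp\ul{\scrF}$ coincide with those of the original datum (which is what makes the \emph{same} local models $\mathbb{M}^l_{i,j}$ appear), and that $Z$ itself is smooth over $\calO_E$; all of this is contained in Lan's construction of the boundary strata in \cite{Lan16,Lan18}, so the proof amounts to citing those facts and then repeating the argument of Proposition~\ref{prop smooth spl} verbatim with ${}^\sharp\ul{\H}$ in place of $\H$.
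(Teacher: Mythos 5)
Your proposal is correct and matches the paper, whose entire proof is the one-line remark that ``the same arguments as in the proof of Proposition \ref{prop smooth spl}'' apply; you have simply spelled out that argument (the boundary local model diagram over $Z\otimes\calO_F$ with target a product of unramified local models, plus the bookkeeping that the boundary PEL datum inherits the no-(AR) hypothesis), which is exactly what the authors intend.
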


\subsection{Canonical extensions of automorphic vector bundles}\label{subsection can ext}
Since $G$ is split over $F$, it defines a reductive group scheme over $\calO_F$, denoted by $\G^\spl$ (or $G$) as before. Moreover, recall that as in the paragraph above Corollary \ref{coro ext vb} the parabolic subgroup $P_\mu$ extends to a paraholic group scheme of $\G^\spl$ over $\calO_F$. Thus
the flag variety $\Fl(G,\mu)_F$ extends canonically to a smooth scheme $\G^\spl/P_\mu$ (the integral flag variety) over $\calO_F$ which is nothing else but $\prod_{i\in I, 1\leq j\leq f_i, 1\leq l \leq e_i}\mathbb{M}_{i,j}^l$. We still denote by $\Fl(G,\mu)$ the integral flag variety over $\calO_F$. 

Now we have an extension of the local model diagram of schemes over $\calO_F$:
\[\xymatrix{
& \wt{\M_{K,\Sigma}^{\spl, \tor}}\ar[rd]^q \ar[ld]_\pi&\\
\M^{\spl,\tor}_{K,\Sigma}& &\Fl(G,\mu)
}\]
where $\pi$ is a $\calG^\spl$-torsor and $q$ is $\calG^\spl$-equivariant.
For any representation $(V,\eta)\in \Rep_{\calO_F}P_\mu$, we get the associated $G$-equivariant vector bundle $\ul{V}$ on $\Fl(G,\mu)$.  Via the above diagram, we get a vector bundle $\V^\can=\V_\eta^\can$ on $\M^{\spl,\tor}_{K,\Sigma}$, which we call the canonical extension of the vector bundle $\V=\V_\eta$ on $\M_K^{\spl}$. Let $D'$ be the effective Cartier divisor of \cite{Lan18} Corollary 4.4.4 with $D'_{red}=\M^{\spl,\tor}_{K,\Sigma}\setminus \M_K^{\spl}$ and set $\V^\sub=\V^\can(-D')$, which we call the sub-canonical extension of $\V=\V_\eta$. 

Let $\eta\in X^\ast(T)^+_L$ and $V\in \Rep_{\calO_F}L$ the irreducible representation of $L$ of highest weight $\eta$. Recall the map $\oint: \M^{\spl,\tor}_{K,\Sigma}\ra \M_K^{\spl,\min}$.
\begin{proposition}\label{prop rel vanish}
For any $i>0$, we have $R^i\oint_{\ast}\V_\eta^\sub=0$.
\end{proposition}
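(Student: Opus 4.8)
The statement $R^i\oint_\ast \V_\eta^\sub = 0$ for $i > 0$ is the relative vanishing of higher direct images of subcanonical extensions along the projection $\oint\colon \M^{\spl,\tor}_{K,\Sigma}\to \M^{\spl,\min}_K$. The plan is to reduce to the formal/local structure of $\oint$ near a boundary stratum and then invoke known vanishing results, exactly as in the unramified PEL case treated by Lan (\cite{Lan18} Theorem 4.4.9) and in \cite{Lan16a} Proposition 5.6. First I would use that $\oint$ is proper, so by the theorem on formal functions it suffices to check vanishing after completing along each boundary stratum $Z^\spl \subset \M^{\spl,\min}_K$; on the interior $\M_K^\spl$ the map $\oint$ is an isomorphism, so there is nothing to prove there. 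The completion of $\M^{\spl,\tor}_{K,\Sigma}$ along the preimage $\oint^{-1}(Z^\spl) = \bigcup_{\tau\in\Sigma_Z^+/\Gamma}Z^\spl_{[\tau]}$ is identified, via the list of boundary data recalled just before the proposition, with $\mathfrak{X}^\spl/\Gamma$, where $\mathfrak{X}^\spl$ is the formal completion of the toroidal embedding $\Xi^\spl_Z \to $ (its torus-embedding closure) along the union of the $\Xi^\spl_\tau$, and $\Xi^\spl \to C^\spl$ is a torsor under the split torus $E$ with character group $S$.

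The key steps, in order: (1) reduce via formal functions to computing $R^i\oint_\ast$ on formal completions along boundary strata; (2) pass through the $\Gamma$-quotient — since we arranged each stabilizer $\Gamma_\sigma$ to be trivial for $\sigma\in\Sigma_Z^+$, taking $\Gamma$-invariants is exact on the relevant sheaves, so the computation descends to the quotient once done $\Gamma$-equivariantly on $\mathfrak{X}^\spl$; (3) on $\mathfrak{X}^\spl$, the subcanonical extension $\V_\eta^\sub$ restricted to the torus-embedding is built from the pullback of $\V_\eta$ on $C^\spl$ twisted by the ideal sheaf cutting out the boundary, so $R^i(\Xi^\spl_Z \to C^\spl)_\ast$ of it decomposes along the weight decomposition for the $E$-action into a sum of line bundles $\calO(\chi)$ indexed by $\chi\in S$, and only the weights $\chi$ lying in the relevant cone contribute; (4) the subcanonical twist $(-D')$ is precisely what forces the contributing characters into the strict interior, killing all the positive-degree cohomology of the affine toroidal embedding fibers, so $R^i$ vanishes for $i>0$ and the $\chi=0$ part of $R^0$ gives exactly the pushforward on $\M^{\spl,\min}_K$. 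Since $Z^\spl = \mathrm{Spl}^+_{(\,{}^\sharp\ul{\H},{}^\sharp\ul{\scrF},{}^\sharp\ul{\iota})/Z\otimes\calO_F}$ and $C^\spl, \Xi^\spl$ are the analogous splitting constructions over $C, \Xi$ (\cite{Lan18} Lemma 3.2.4), the smoothness established in Proposition \ref{prop sm boundary} and the fact that all these splitting models sit in local model diagrams with products of unramified local models mean the cohomological computations are literally the same combinatorial torus computations as in the unramified case — the splitting structure only changes the base $Z^\spl, C^\spl$, not the toroidal/torus-embedding geometry in the fiber directions.

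The main obstacle I anticipate is purely bookkeeping: verifying that the subcanonical extension $\V_\eta^\sub = \V_\eta^\can(-D')$ on $\M^{\spl,\tor}_{K,\Sigma}$ matches, under the formal isomorphism $\mathfrak{X}^\spl/\Gamma \simeq (\M^{\spl,\tor}_{K,\Sigma})^\wedge_{\oint^{-1}(Z^\spl)}$, the expected ``interior-weight'' twisted sheaf on the torus embedding — i.e. that Lan's construction of $D'$ (\cite{Lan18} Corollary 4.4.4) and the canonical-extension recipe through the extended local model diagram of subsection \ref{subsection can ext} are compatible with the boundary description. Once that compatibility is in place, the vanishing is formal from the cohomology of affine toric varieties (or one may directly cite \cite{Lan18} Theorem 4.4.9, whose proof applies verbatim to our smooth splitting models since we are in the special case where $\M^{\spl,\tor}_{K,\Sigma}$ is smooth and all boundary strata $Z^\spl$ are smooth by Proposition \ref{prop sm boundary}). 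I would therefore structure the written proof as: cite the formal-functions reduction, cite the boundary structure bullet points already recalled above, reduce to the torus-embedding fiber computation, and conclude by the standard vanishing, remarking that every ingredient is the ramification-insensitive one already available from \cite{Lan18}.
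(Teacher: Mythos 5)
Your proposal is correct and takes essentially the same route as the paper: the paper simply cites \cite{Lan18} Theorem 4.4.9 (whose proof is exactly the formal-functions/torus-weight-decomposition argument you unpack) and isolates the one thing that must be checked in the splitting-model setting, namely that $\V_\eta^\can$ is \emph{formally canonical} in Lan's sense — i.e.\ that its pullback to each $\mathfrak{X}_{\sigma,\ov{x}}^\wedge$ decomposes as $\wh{\bigoplus}_{\ell\in\sigma^\vee}\bigl((\Psi^\spl_Z(\ell))^{\wedge}_{\ov{x}}\otimes\V_{0,\ov{x}}\bigr)$ with $\V_{0,\ov{x}}$ suitably filtered — which is precisely the "bookkeeping" compatibility you flag as the main obstacle. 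The paper discharges that verification by adapting the proof of \cite{Lan16a} Proposition 5.6 to take the splitting structures into account, matching your observation that the splitting structure only changes the base of the boundary torsors and not the toroidal fiber geometry.
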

\begin{proof}
This will follow from \cite{Lan18} Theorem 4.4.9, once we have verified that $\V^\can=\V_\eta^\can$ is formally canonical in the sense of loc. cit. (The definition of this notion there is given by the corresponding analogue of \cite{Lan17} Definition 8.5.) Recall that this means: for any boundary stratum $Z^\spl$ of $\M_K^{\spl,\min}$, and any geometric point $\ov{x}$ over $Z^\spl$, there exists a coherent sheaf $
\V_{0,\ov{x}}$ over $C_{\ov{x}}^{\spl,\wedge}$ such that
\begin{enumerate}
	\item for any $\sigma\in\Sigma_Z^+$, the pullback $\V^{\can,\wedge}$ to $\mathfrak{X}_{\sigma,\ov{x}}^\wedge$ is of the form
	\[\wh{\bigoplus}_{\ell\in\sigma^\vee}\Big((\Psi^\spl_Z(\ell))^{\wedge}_{\ov{x}}\otimes\V_{0,\ov{x}}\Big),\]
	\item $\V_{0,\ov{x}}$ admits a finite filtration whose graded pieces are isomorphic to pullbacks of quasi-coherent sheaves over $\calO_F$ via the structural morphism $C_{\ov{x}}^{\spl,\wedge}\ra \Spec\,\calO_F$.
\end{enumerate}

One needs to check that in the smooth reduction case the sheaf $\V^\can$ satisfies the above two conditions. This can be achieved by modifying the arguments in the proof of \cite{Lan16a} Proposition 5.6, where we take account of the splitting structures everywhere.
\end{proof}

\subsection{Extensions of $\G_0^\spl$-zips and Hasse invariants}
Recall that $\kappa$ is the residue field of $\calO_F$ and $k=\ov{\kappa}$.
Let $\M^{\spl}_{K,0}=\M^{\spl}_{K}\otimes_{\calO_F} k$, $\M^{\spl,\tor}_{K,\Sigma,0}=\M^{\spl,\tor}_{K,\Sigma}\otimes_{\calO_F} k$ and $\M^{\spl,\min}_{K,0}=\M^{\spl,\min}_{K}\otimes_{\calO_F} k$ be the geometric special fibers. By section \ref{section Fzip} we have a smooth surjective morphism \[\zeta: \M^{\spl}_{K,0}\ra \calG^\spl_0\tr{-}\mathrm{Zip}^\mu_k.\]

Before discussing extensions of zips to the boundary, we need some group theoretical preparation. 
For each $Z^\spl\subset \M^{\spl,\min}_{K}$ and the associated $Z\subset \M^{\min}_K$, recall that we have the boundary PEL-type $\calO_B$-lattice $(\Lambda^Z, \langle\cdot,\cdot\rangle^Z,h^Z)$. Over $\Q$, we have the associated parabolic subgroups \[Q=Q_Z\subset P=P_Z\subset G,\]
cf. \cite{LS18} Lemma 3.3.6 and Definition 3.3.8. More precisely, attached to the boundary stratum $Z\subset \M^{\min}_K$ we have a symplectic filtration $0\subset V_{-2}\subset V_{-1}\subset V_0=V$ satisfying the condition as in Lemma 3.3.6 of \cite{LS18}. The group $P=P_Z$ is defined as the stabilizer of this filtration. The group $Q=Q_Z$ is defined as the kernel of the homomorphism $P\ra \GL(gr_{-2}V)\times\GL(gr_{0}V),\; (g,r)\mapsto (r^{-1}gr_{-2}(g),gr_0(g))$, where $P\ra \mathbb{G}_m, \; (g,r)\mapsto r$ is the similitude character. The groups $P$ and $Q$ have the same unipotent radical $U$, such that $M_h:=Q/U$ is the reductive group for the rational PEL datum corresponding to $Z$, and $M:=P/U=M_h\times M_l$ where $M_l\simeq P/Q$ is a reductive group factor of the Levi $M$, see \cite{LS18} Definition 3.3.8. 

Consider the reductive group $\G_0^\spl$ over $\bbF_p$. We want to adapt the construction of \cite{And22} subsection 3.3 to our setting.
Consider the induced symplectic filtration \[0\subset \Lambda_{-2}\subset \Lambda_{-1}\subset \Lambda_0=\Lambda\] so that \[
\Lambda^Z=gr_{-1}\Lambda=\Lambda_{-1}/\Lambda_{-2},\] and the decomposition $\Lambda=\bigoplus_{1 \leq i \leq r} \Lambda_i^{m_i}$, we get similar filtrations on each $\Lambda_i$. Using the fact that each $\Lambda_i$ is self-dual over $\calO_{F_i}$, by similar construction as in subsection \ref{subsec PEL data}, we have parabolic subgroups
\[\mathcal{Q}_0^\spl\subset \mathcal{P}_0^\spl\subset \G_0^\spl \]
with the same unipotent radical $U^\spl_0$. Let $M^\spl_0=\mathcal{P}_0^\spl/U^\spl_0$ and $M^\spl_{h,0}=\mathcal{Q}_0^\spl/U^\spl_0$. Then $M^\spl_0=M^\spl_{h,0}\times M^\spl_{l,0}$ where $M^\spl_{l,0}=\mathcal{P}_0^\spl/\mathcal{Q}_0^\spl$.
Recall that we have the cocharacter $\mu$ of $\G_0^\spl$ which is defined over $\kappa$, and $P_+:=P_\mu\subset \G_{0,\kappa}^\spl$ the associated parabolic subgroup. Set $\ov{P}:= \mathcal{P}_{0,\kappa}^\spl \cap P_+$. We get the inclusions \[\ov{P}\subset P_+\subset  \G_{0,\kappa}^\spl.\] Consider the induced cocharacter $\mu_Z$ of $M^\spl_{h,0}$ from $h^Z$. We get a natural morphism \[\gamma: M^\spl_{h,0}\tr{-Zip}^{\mu_Z}_\kappa\ra M^\spl_{0}\tr{-Zip}^{\mu_Z}_\kappa,\] where we denote by the same notation $\mu_Z$ for  the induced cocharacter of $M^\spl_{0}$. Identifying $M^\spl_{0}$ as the standard Levi of $\mathcal{P}_0^\spl$, the inclusion $M^\spl_{0}\subset \G_0^\spl$ induces a morphism \[\upsilon: M^\spl_{0}\tr{-Zip}^{\mu_Z}_\kappa\ra [E_{\ov{\mathcal{Z}}}\setminus \G_{0,\kappa}^\spl],\] where $\ov{\mathcal{Z}}=(\ov{P}, \ov{P}^{(p)}, \varphi)$ is the algebraic zip data for $\G_{0,\kappa}^\spl$ associated to $\ov{P}$. Then the natural inclusion $\ov{P}\subset P_+$ induces a morphism \[\epsilon: [E_{\ov{\mathcal{Z}}}\setminus \G_{0,\kappa}^\spl]\ra \G^\spl_0\tr{-}\mathrm{Zip}^\mu_\kappa.\]
We define \[\tau_Z=\epsilon\circ\upsilon\circ\gamma: M^\spl_{h,0}\tr{-Zip}^{\mu_Z}_\kappa\ra \G^\spl_0\tr{-}\mathrm{Zip}^\mu_\kappa.\]
In the following we also write $\G^\spl_{Z,0}=M^\spl_{h,0}$ to indicate its analogue with $\G^\spl_0$.

Given the canonical extensions of automorphic vector bundles to $\M^{\spl,\tor}_{K,\Sigma,0}$, the constructions of subsection \ref{subsection zip} can be generalized.
\begin{theorem}\label{thm zip map ext}
\begin{enumerate}
	\item 
The $\calG^\spl_0$-zip of type $\mu$ on $\M^{\spl}_{K,0}$ extends to a $G(\bbA_f^p)$-equivariant $\calG^\spl_0$-zip of type $\mu$ on $\M^{\spl,\tor}_{K,\Sigma,0}$.
\item The induced map $\zeta^\tor: \M^{\spl,\tor}_{K,\Sigma,0}\ra \calG^\spl_0\tr{-}\mathrm{Zip}^\mu_k$ is smooth.
\end{enumerate}
\end{theorem}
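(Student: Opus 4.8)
The plan is to reduce everything to the interior statement (Proposition \ref{prop sm zeta}, i.e. Theorem \ref{thm EO}(1)) plus the degeneration theory already available from Lan \cite{Lan18}, following the strategy of Andreatta \cite{And22} in the unramified Hodge-type case. First, for part (1), I would observe that the $\calG_0^\spl$-zip over $\M_{K,0}^\spl$ was built in subsection \ref{subsection zip} purely out of the de Rham bundle $\calH = H^1_{\dR}(A^\univ/\M_{K,0}^\spl)$, its Hodge filtration $\omega$, the splitting structure $\ul{\scrF_\bullet}$, and the Frobenius/Verschiebung morphisms, together with the symplectic form and $\calO_B$-action. By \cite{Lan18} Proposition 3.1.2 all of $(\ul\H,\ul\scrF,\ul\iota)$ extends canonically to $\ul{\H}^\ext$ over $\M^{\spl,\tor}_{K,\Sigma}$, and the extended de Rham bundle carries extended Frobenius and Verschiebung (the Gauss--Manin connection and the log-crystalline/Kodaira--Spencer data extend, so do $F$ and $V$). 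Repeating verbatim the construction of the modules $\calM_{i,j}^l := \ve_i^{-1}\mathscr{F}_{i,j}^{l-1,\ext}/\mathscr{F}_{i,j}^{l-1,\ext}$, the maps $F_i^l, V_i^l$, and Lemmas \ref{lem l=1} and \ref{lem l big} (whose proofs are local on the base and hence insensitive to the compactification), one obtains the $F$-zip $(\calM,C,D,\varphi_\bullet)$ with symplectic form and $\calO_B$-action over $\M^{\spl,\tor}_{K,\Sigma,0}$, and comparison with the standard $F$-zip on $\Lambda_0^\spl$ gives the $\calG_0^\spl$-zip of type $\mu$. The $G(\bbA_f^p)$-equivariance is inherited from the tower structure on the $\M^{\spl,\tor}_{K,\Sigma}$ and the functoriality of Lan's canonical extensions.

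For part (2), the cleanest route is via an extended local model diagram. As indicated in subsection \ref{subsection can ext}, the modified local model diagram for $\scrA^\spl$ extends to
\[\xymatrix{
& \wt{\M_{K,\Sigma,0}^{\spl, \tor}}\ar[rd]^q \ar[ld]_\pi&\\
\M^{\spl,\tor}_{K,\Sigma,0}& &\Fl(\G^\spl,\mu)_0
}\]
with $\pi$ a $\calG_0^\spl$-torsor and $q$ smooth $\calG_0^\spl$-equivariant, exactly as in Proposition \ref{prop loc mod Gspl}; the only new input is that the splitting filtration $\ul{\scrF_\bullet}^\ext$ and the trivializations $\tau_{i,j}^l$ of the relevant subquotients of $\ul\H^\ext$ exist locally on the compactification, which follows from the local structure of $\M^{\spl,\tor}_{K,\Sigma}$ described in \cite{Lan18} Theorem 3.4.1 (the completions along boundary strata look like twisted products of the same unramified local models). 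Granted this, I would run the tangent-space argument of Proposition \ref{prop sm zeta}: base change to $k$, identify $\Zip{\calG_0^\spl}{\mu}_k \simeq [E\backslash \calG_{0,k}^\spl]$, and at a closed point compare the deformation of the $F$-zip (controlled by the complete local ring of the opposite unipotent $U_-$ via the same computation as in \cite{Zhang2018EO} Theorem 4.1.2) against the deformation of the point on $\M^{\spl,\tor}_{K,\Sigma,0}$. Because $\M^{\spl,\tor}_{K,\Sigma,0}$ is smooth of the expected dimension (Proposition on smoothness of $\M^{\spl,\tor}_{K,\Sigma}$) and the local model diagram realizes its completed local rings as those of the flag variety up to a smooth $\calG_0^\spl$-torsor, surjectivity on tangent spaces at every $k$-point is reduced to the interior case, which is Proposition \ref{prop sm zeta}.

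Alternatively, and this is the approach I would record as a remark, one can deduce smoothness of $\zeta^\tor$ from well-positionedness of the EO strata in the sense of Lan--Stroh \cite{LS18}: using the morphisms $\tau_Z\colon \G^\spl_{Z,0}\text{-}\mathrm{Zip}^{\mu_Z}_\kappa \to \G^\spl_0\text{-}\mathrm{Zip}^\mu_\kappa$ constructed just before the theorem, one checks that the restriction of the $\calG_0^\spl$-zip to each boundary chart $\mathfrak{X}^\spl_\sigma$ factors (after the torus-torsor structure is stripped off) through the boundary zip datum, so that the EO stratification pulls back compatibly along the stratification by boundary components; then smoothness of $\zeta^\tor$ follows from smoothness of the interior $\zeta$, smoothness of the analogous morphisms on the boundary strata, and the gluing via the formal completions $\mathfrak{X}^\spl/\Gamma \simeq (\M^{\spl,\tor}_{K,\Sigma})^\wedge$. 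The main obstacle, and where real care is needed, is precisely this boundary compatibility: one must verify that the extended Frobenius and Verschiebung on $\ul\H^\ext$ respect the weight filtration coming from the boundary PEL datum and induce on the graded pieces the Frobenius/Verschiebung of the smaller abelian scheme $B$ over $Z$ (and its own splitting model $Z^\spl$), so that the diagram relating $\zeta^\tor$ near $Z^\spl_{[\sigma]}$ to $\tau_Z$ and to $\zeta_{Z^\spl}$ genuinely commutes. This is a degeneration-of-$F$-zips computation; it is the analogue of \cite{And22} subsections 3.3--3.4 and of the PEL case in \cite{LS18}, but bookkeeping the splitting structures $\ul{\scrF_\bullet}^\ext$ through the boundary — in particular checking that $\ve_i^{-1}\scrF^{l-1,\ext}/\scrF^{l-1,\ext}$ interacts correctly with the weight filtration — is the one genuinely new verification.
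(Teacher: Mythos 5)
Part (1) of your proposal matches the paper: the extension of the zip is obtained by running the construction of subsection \ref{subsection zip} verbatim on Lan's canonical extension $(\ul{\H}^\ext,\ul{\scrF}^\ext,\ul{\iota}^\ext)$, and Lemmas \ref{lem l=1} and \ref{lem l big} are indeed local on the base. For part (2), however, your \emph{primary} route has a genuine gap. You claim that the extended local model diagram over $\M^{\spl,\tor}_{K,\Sigma,0}$ "realizes its completed local rings as those of the flag variety up to a smooth $\calG_0^\spl$-torsor," so that surjectivity on tangent spaces "is reduced to the interior case." This is not true at a boundary point: the completed local ring there is $\wh{\calO}_{\Xi^\spl(\sigma),x'}$, which fibers over $C^\spl\to Z^\spl$ and contains toroidal directions (deformations of the torus-extension data) that are invisible to the flag variety $\Fl(\G^\spl,\mu)$. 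The proof of Proposition \ref{prop sm zeta} via \cite{Zhang2018EO} depends on identifying the completed local ring with that of the opposite unipotent $U_-$ at the identity, and this identification fails on the boundary; the diagram in subsection \ref{subsection can ext} only asserts that $q$ is $\calG^\spl$-equivariant, not smooth, precisely because its smoothness on the boundary is essentially equivalent to what you are trying to prove. So the tangent-space computation does not "reduce to the interior" without an argument that accounts for the toroidal directions.

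Your \emph{alternative} route is the correct one, and it is in substance the paper's proof. The paper works at the completed local ring $C$ of a boundary point and factors $\zeta_C=\tau_Z\circ\zeta_D\circ\oint$, where $D=\wh{\calO}_{Z^\spl,y}$; it then checks in four steps that the factorization holds (using \cite{Lan18} Proposition 3.3.21 and Corollary 3.3.16: the restriction of the extended splitting structure to the boundary splits as ${}^\sharp\scrF\oplus{}^\flat\scrF$ with the torus part ${}^\flat\scrF$ carrying a unique, rigid splitting structure, so the boundary $F$-zip is the direct sum of the zip of $B$ over $Z^\spl$ and a fixed torus contribution), that $\oint$ is smooth (this is where the toroidal directions are contracted), that $\zeta_D$ is smooth (the boundary analogue of Proposition \ref{prop sm zeta}, using smoothness of $Z^\spl$), and that $\tau_Z$ is smooth (as in \cite{And22} Lemma 3.4). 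The "degeneration-of-$F$-zips computation" you flag as the genuinely new verification is exactly the paper's Step 1, and your observation that the splitting filtration on the toric part must be tracked through the weight filtration is the right point of care. The well-positionedness formulation you sketch is recorded in the paper as a second proof (Proposition \ref{prop well-position} and the remark following it), not as the main argument. If you promote your alternative route to the main proof and drop the tangent-space reduction, the argument is sound.
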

\begin{proof}
For (1), we just repeat the construction in subsection \ref{subsection zip} starting from $(\ul{\H}^\ext,\ul{\scrF}^\ext, \ul{\iota}^\ext)$ together with its universal splitting structure.

For (2), we follow the idea in the proof of \cite{And22} Theorem 3.1. 
	Let $x\in \M^{\spl,\tor}_{K,\Sigma,0}$ be a closed point and $C=\wh{\calO}_{\M^{\spl,\tor}_{K,\Sigma,0},x}$. We need only check that the induced morphism \[\zeta_C:  \Spec\,C\ra \calG^\spl_0\tr{-}\mathrm{Zip}^\mu_k\] is smooth. If $x\in \M^{\spl}_{K,0}$, this has been done in Proposition \ref{prop sm zeta}. So we may assume $x\in Z_{[\sigma]}^\spl$ for a boundary stratum. Then there is a corresponding point $x'\in \Xi^\spl(\sigma)$ such that $C=\wh{\calO}_{\Xi^\spl(\sigma),x'}$. Let $y\in Z^\spl$ be the image of $x$ and $D=\wh{\calO}_{Z^\spl,y}$. We get an induced morphism \[\oint: \Spec\,C\ra \Spec\,D.\] Moreover, we have the boundary version $\zeta_Z: Z^\spl\ra \G_{0,Z}^\spl\tr{-Zip}^{\mu_Z}_k$ and the analogue \[\zeta_D: \Spec\,D\ra \G_{0,Z}^\spl\tr{-Zip}^{\mu_Z}_k\] of $\zeta_C$. Recall that by Proposition \ref{prop sm boundary}, $Z^\spl$ is smooth. We have also a natural morphism \[\tau_Z: \G_{0,Z}^\spl\tr{-Zip}^{\mu_Z}_\kappa\ra \calG^\spl_0\tr{-}\mathrm{Zip}^\mu_\kappa.\] To show the smoothness of $\zeta_C:  \Spec\,C\ra \calG^\spl_0\tr{-}\mathrm{Zip}^\mu_\kappa$, we proceed in four steps.
	
	\emph{Step 1.} We show $\zeta_C=\tau_Z\circ\zeta_D\circ\oint$. In other words, the following diagram commutes
	\[\xymatrix{\Spec\,C\ar[d]_\oint\ar[r]^{\zeta_C}& \calG^\spl_0\tr{-}\mathrm{Zip}^\mu_k.\\
		\Spec\,D\ar[r]^{\zeta_D}& \G_{0,Z}^\spl\tr{-Zip}^{\mu_Z}_k\ar[u]_{\tau_Z}
}\] We need to analyze the structure of the $\G_0^\spl$-zip on $C$ in terms of the associated $\G_{0,Z}^\spl$-zip. Consider the restriction of $(\ul{\H}^\ext,\ul{\scrF}^\ext, \ul{\iota}^\ext)$ on $Z_{[\sigma]}^\spl$, which we denote by $(\ul{\H}^\natural,\ul{\scrF}^\natural, \ul{\iota}^\natural)$. By \cite{Lan18} proof of Proposition 3.3.21, there are isomorphisms \[\scrF^\natural\simeq {}^\sharp \scrF\oplus {}^\flat\scrF, \quad \tr{where} \quad {}^\flat\scrF=\omega_{T^\vee/Z_{[\sigma]}}.\] Moreover, the filtration $\ul{\scrF_\bullet}^\natural$ on $\scrF^\natural$ induces filtrations on $ {}^\sharp \scrF$ and ${}^\flat\scrF$ as \cite{Lan18} Lemma 3.3.11, and by loc. cit. Corollary 3.3.16 the filtration on ${}^\flat\scrF$ is independent of the filtration on $\scrF^\natural$. By the construction in section 3, the $F$-zip $\mathcal{M}^\natural$ (constructed from $(\ul{\H}^\natural,\ul{\scrF}^\natural, \ul{\iota}^\natural)$) is a direct sum of the $F$-zip $\mathcal{M}_Z$ constructed from $({}^\sharp\ul{\H},{}^\sharp\ul{\scrF}, {}^\sharp\ul{\iota})$ and the $F$-zip constructed from ${}^\flat\scrF$. Translating further into the language of $\calG^\spl_0$-zips, we have the factorization of $\zeta_C$.
	
	\emph{Step 2.} $\oint: \Spec\,C\ra \Spec\,D$ is smooth. For this, we apply the local diagrams for $\M^{\spl,\tor}_{K,\Sigma}$ and $Z^\spl$ to realize $C$ and $D$ as local rings of the corresponding local models, which are also completions along identities of some unipotent subgroups, see the proof of Proposition \ref{prop sm zeta}. Then one can prove the smoothness of $\oint$ by checking that on tangent spaces it induces a projection.
	
	\emph{Step 3.} $\zeta_D: \Spec\,D\ra \G_{0,Z}^\spl\tr{-Zip}^{\mu_Z}_k$ is smooth. As $Z^\spl$ is smooth, the smoothness of $\zeta_D$ follows from the boundary version of Proposition \ref{prop sm zeta}.
	
	\emph{Step 4.} $\tau_Z: \G_{0,Z}^\spl\tr{-Zip}^{\mu_Z}_k\ra \calG^\spl_0\tr{-}\mathrm{Zip}^\mu_k$ is smooth.
	This follows from a similar argument as the proof of \cite{And22} Lemma 3.4.
\end{proof}

We write $X^\tor=\M^{\spl,\tor}_{K,\Sigma,0}$ and the fiber of $\zeta^\tor$ at $w$ as $X^\tor_w$. Then we get the EO stratification \[X^\tor=\coprod_{w\in {}^JW}X^\tor_w.\] For each $w\in {}^JW$ let $\ov{X^\tor_w}$ be the Zariski closure of $X^\tor_w$ in $X^\tor$.
\begin{corollary}\label{cor ext hasse}
	The Hasse invariant $h_w\in H^0(\ov{X_w}, \omega_{\Hdg}^{N_w})$ of Corollary \ref{cor hasse inv} extends to a $G(\bbA_f^p)$-equivariant section $h_w^\tor \in H^0(\ov{X^\tor_w}, \omega_{\Hdg}^{N_w})$ with non-vanishing locus being precisely $X_w^\tor$.
\end{corollary}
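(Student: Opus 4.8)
The plan is to produce $h_w^\tor$ simply by pulling back, along the smooth morphism $\zeta^\tor$ of Theorem \ref{thm zip map ext}, the group theoretic Hasse invariant that already gave rise to $h_w$ in Corollary \ref{cor hasse inv}. Recall from \cite{GoldringKoskivirta2018, GoldringKoskivirta2019} that, the pair $(\calG_0^\spl,\mu)$ being of Hodge type, for a suitable integer $N=N_w$ there is a section $h_w^{\mathrm{grp}}\in H^0([E\backslash \ov{\calG_{0,w}^\spl}],\calV(N\eta_\omega))$ whose non-vanishing locus is exactly the open zip stratum $[E\backslash \calG_{0,w}^\spl]$, and that by the $k^\times$-uniqueness of Hasse generators this section is determined up to a scalar. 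One then sets $h_w^\tor:=(\zeta^\tor)^\ast h_w^{\mathrm{grp}}$, viewed as a section over $\ov{X_w^\tor}$.

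First I would take care of the combinatorics of strata and of the line bundle. Since $\zeta^\tor$ is smooth, hence flat and open, taking preimages commutes with taking closures, so $(\zeta^\tor)^{-1}([E\backslash\ov{\calG_{0,w}^\spl}])=\ov{X^\tor_w}$; thus $h_w^\tor$ is a genuine section over $\ov{X^\tor_w}$, and its non-vanishing locus is $D(h_w^\tor)=(\zeta^\tor)^{-1}(D(h_w^{\mathrm{grp}}))=X_w^\tor$. Next I would identify the underlying bundle: the claim is $(\zeta^\tor)^\ast\calV(\eta_\omega)\simeq\omega_{\Hdg}$ on $X^\tor=\M^{\spl,\tor}_{K,\Sigma,0}$, where $\omega_{\Hdg}$ now denotes the canonical extension obtained from the extended local model diagram of subsection \ref{subsection can ext} applied to the Hodge character $\eta_\omega$. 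Over the open part $\M^{\spl}_{K,0}$ this is precisely Corollary \ref{cor hasse inv}; over all of $X^\tor$ it should come out of the boundary construction of $\zeta^\tor$ in the proof of Theorem \ref{thm zip map ext}, which is carried out starting from $(\ul\H^\ext,\ul{\scrF}^\ext,\ul\iota^\ext)$ and its universal splitting structure, together with Lan's identification of the relevant determinant of $\ul{\scrF}^\ext$ with the canonical extension of $\omega$. Granting this, $(\zeta^\tor)^\ast\calV(N_w\eta_\omega)=\omega_{\Hdg}^{N_w}$ and $h_w^\tor\in H^0(\ov{X^\tor_w},\omega_{\Hdg}^{N_w})$ as required.

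Then I would dispose of the two remaining assertions. The extension property is immediate: the restriction of $\zeta^\tor$ to $\M^{\spl}_{K,0}$ is $\zeta$, and $\ov{X_w}=\ov{X_w^\tor}\cap\M^{\spl}_{K,0}$, so $h_w^\tor|_{\ov{X_w}}=\zeta^\ast h_w^{\mathrm{grp}}=h_w$ by Corollary \ref{cor hasse inv}. For $G(\bbA_f^p)$-equivariance, $\zeta^\tor$ is $G(\bbA_f^p)$-equivariant by Theorem \ref{thm zip map ext}; hence for $g\in G(\bbA_f^p)$ the translate of $h_w^\tor$ is again a section of $\omega_{\Hdg}^{N_w}$ over $\ov{X_w^\tor}$ with non-vanishing locus $X_w^\tor$, so by the $k^\times$-uniqueness of the group theoretic Hasse invariant it is a scalar multiple of $h_w^\tor$, and the scalar must be $1$ because it is already $1$ on the open part.

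The hard part will be the line bundle identification over the boundary in the second paragraph, namely checking that the Hodge line bundle implicitly produced by the boundary $\calG_0^\spl$-zip of Theorem \ref{thm zip map ext} really is the canonical extension $\omega_{\Hdg}$ of subsection \ref{subsection can ext}. This amounts to seeing that Lan's canonical extension $(\ul\H^\ext,\ul{\scrF}^\ext)$ of the de Rham bundle together with its Hodge filtration is compatible, via the extended local model diagram, with the canonical extensions of automorphic vector bundles; alternatively one may invoke the well-positionedness of the EO strata mentioned in the introduction to transport the statement along $\oint:\M^{\spl,\tor}_{K,\Sigma,0}\to\M^{\spl,\min}_{K,0}$. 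Everything else is formal.
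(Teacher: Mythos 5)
Your proposal is correct and is essentially the argument the paper intends: Corollary \ref{cor ext hasse} is stated without proof as an immediate consequence of Theorem \ref{thm zip map ext}, the point being exactly that one pulls back the group-theoretic Hasse invariant of Goldring--Koskivirta along the smooth (hence open) morphism $\zeta^\tor$, so that preimages commute with closures and the non-vanishing locus is $X_w^\tor$. Your extra care about identifying $(\zeta^\tor)^\ast\calV(\eta_\omega)$ with the canonical extension $\omega_{\Hdg}$ over the boundary is a compatibility the paper leaves implicit (both are built from Lan's $(\ul{\H}^\ext,\ul{\scrF}^\ext,\ul{\iota}^\ext)$ via the extended local model diagram), and your verification of it, as well as of the equivariance, is sound.
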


The proof of Theorem \ref{thm zip map ext} (2) actually gives us the following (expected) description of $\zeta^\tor$ on the boundary. Let $Z_0^\spl\subset \M^{\spl,\min}_{K,0}$ be a boundary stratum and $Z_0^{\spl,\tor}:=\oint^{-1}(Z_0^\spl)$. We get an induced morphism $\oint: Z_0^{\spl,\tor}\ra Z_0^\spl$.
\begin{proposition}
The restriction \[\zeta^\tor|_{Z_0^{\spl,\tor}}= \tau_Z\circ\zeta_Z\circ\oint,\] where $\tau_Z$ and $\zeta_Z$ are as in the proof of Theorem \ref{thm zip map ext} (2).
\end{proposition}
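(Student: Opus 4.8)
The plan is to observe that the restriction $\zeta^\tor|_{Z_0^{\spl,\tor}}$ is entirely determined locally, so the claimed factorization can be checked after passing to completions at closed points, where it was already established as Step 1 of the proof of Theorem \ref{thm zip map ext}(2). More precisely, I would proceed as follows. First, recall that $\oint: Z_0^{\spl,\tor}\ra Z_0^\spl$ is the restriction of the projection $\oint: \M^{\spl,\tor}_{K,\Sigma,0}\ra \M^{\spl,\min}_{K,0}$, and that $Z_0^{\spl,\tor}=\coprod_{[\sigma]}Z_{[\sigma],0}^\spl$ is the disjoint union of the toroidal boundary strata lying over $Z_0^\spl$ (the special fiber analogue of the isomorphism $\mathfrak{X}^\spl/\Gamma\simeq (\M_{K,\Sigma}^{\spl,\tor})^{\wedge}_{\cup Z_{[\tau]}^\spl}$ recalled in subsection on arithmetic compactifications). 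Both sides of the asserted identity are morphisms of $k$-schemes (or stacks) $Z_0^{\spl,\tor}\ra \calG^\spl_0\tr{-}\mathrm{Zip}^\mu_k$; since $\calG^\spl_0\tr{-}\mathrm{Zip}^\mu_k$ is an algebraic stack and $Z_0^{\spl,\tor}$ is a reduced (indeed smooth, being a locally closed subscheme of the smooth $\M^{\spl,\tor}_{K,\Sigma,0}$ — alternatively, smooth by the same local model argument as in Proposition \ref{prop sm boundary}) scheme, it suffices to check that the two morphisms agree on all closed points together with their first-order neighborhoods, i.e. after base change to $\Spec\,C$ for $C=\wh{\calO}_{Z_{[\sigma],0}^\spl,x}$ at each closed point $x$.

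The second step is to identify this completed local picture with the one already analyzed. Fix a closed point $x\in Z_{[\sigma],0}^\spl$ with image $y\in Z_0^\spl$. The completed local ring $C$ of $Z_{[\sigma],0}^\spl$ at $x$ is a quotient (the fiber over the boundary divisor) of the completed local ring of $\M^{\spl,\tor}_{K,\Sigma,0}$ at $x$ used in Step 1; likewise $D:=\wh{\calO}_{Z_0^\spl,y}$ is the relevant completed local ring of the minimal-compactification boundary stratum. The $\calG^\spl_0$-zip on $\M^{\spl,\tor}_{K,\Sigma,0}$ is, by construction, built from $(\ul{\H}^\ext,\ul{\scrF}^\ext,\ul{\iota}^\ext)$ and its universal splitting structure via the recipe of subsection \ref{subsection zip}; restricting to $Z_{[\sigma],0}^\spl$ replaces this triple by $(\ul{\H}^\natural,\ul{\scrF}^\natural,\ul{\iota}^\natural)$, and the decomposition $\scrF^\natural\simeq {}^\sharp\scrF\oplus{}^\flat\scrF$ of \cite{Lan18} (Proposition 3.3.21, Lemma 3.3.11, Corollary 3.3.16), together with the fact that the filtration on ${}^\flat\scrF=\omega_{T^\vee/Z_{[\sigma],0}^\spl}$ is the canonical constant one, shows that the associated $F$-zip $\mathcal{M}^\natural$ splits as the direct sum of the $F$-zip $\mathcal{M}_Z$ attached to $({}^\sharp\ul{\H},{}^\sharp\ul{\scrF},{}^\sharp\ul{\iota})$ and the constant $F$-zip coming from ${}^\flat\scrF$. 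Translating the resulting linear-algebra data into $\calG^\spl_0$-zips exactly as in Step 1 of the proof of Theorem \ref{thm zip map ext}(2) yields the identity $\zeta_C=\tau_Z\circ\zeta_D\circ\oint$ of morphisms $\Spec\,C\ra \calG^\spl_0\tr{-}\mathrm{Zip}^\mu_k$.

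The third step is a gluing/density argument: since the identity $\zeta_C=\tau_Z\circ\zeta_D\circ\oint$ holds at every closed point of $Z_0^{\spl,\tor}$ (which is of finite type over $k$, hence closed points are very dense), and since $Z_0^{\spl,\tor}$ is reduced and $\calG^\spl_0\tr{-}\mathrm{Zip}^\mu_k$ is separated (being a quotient stack $[E_{\G^\spl_0,\mu}\backslash \G^\spl_{0,\kappa}]$ with affine diagonal), the two morphisms $\zeta^\tor|_{Z_0^{\spl,\tor}}$ and $\tau_Z\circ\zeta_Z\circ\oint$ coincide. Here one uses that the formation of both $\zeta^\tor$ and $\zeta_Z$ commutes with the passage to completed local rings (both are defined Zariski-locally from coherent data, as in subsections \ref{subsection zip} and \ref{subsection can ext}), and that $\tau_Z$ was defined in a purely group-theoretic, global way. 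The main obstacle is the careful bookkeeping in the second step: one must check that all the identifications of \cite{Lan18}—the splitting $\scrF^\natural\simeq{}^\sharp\scrF\oplus{}^\flat\scrF$, the compatibility of the Pappas-Rapoport filtrations, and the induced Frobenius/Verschiebung structures—are compatible with the additional ($\calO_B$-action and polarization) structure, so that the direct-sum decomposition of $F$-zips promotes to the asserted factorization through $\tau_Z=\epsilon\circ\upsilon\circ\gamma$ at the level of $\calG^\spl_0$-zips; this is exactly the content that Step 1 of Theorem \ref{thm zip map ext}(2) already supplies, so modulo invoking that argument the present statement is essentially formal.
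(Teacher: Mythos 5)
Your proposal is correct and follows essentially the same route as the paper: the paper offers no separate argument for this proposition beyond the remark that Step 1 of the proof of Theorem \ref{thm zip map ext}(2) (the identity $\zeta_C=\tau_Z\circ\zeta_D\circ\oint$ on completed local rings, derived from Lan's decomposition $\scrF^\natural\simeq{}^\sharp\scrF\oplus{}^\flat\scrF$ and the resulting direct-sum splitting of the $F$-zip) already yields it. Your additional care in globalizing from completed local rings is sound, though one could shortcut it by noting that Lan's decomposition is already global over each $Z_{[\sigma],0}^\spl$, so the isomorphism of $\calG_0^\spl$-zips witnessing the $2$-commutativity is defined globally from the start.
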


\begin{corollary}\label{cor min stratum non boundary}
Let $e\in {}^JW$ be the minimal element (then $\ell(e)=0$). The associated EO stratum $X_e$ does not intersect with the boundary of $X^\tor$, in other words we have $X_e=X_e^\tor$. In particular, the conclusions of Proposition \ref{prop length Hasse} hold for $X^\tor$.
\end{corollary}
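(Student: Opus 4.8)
The plan is to combine the factorization of $\zeta^\tor$ along the boundary (the Proposition immediately preceding this corollary) with the fact, coming from Theorem \ref{thm zip map ext} together with Theorem \ref{thm PWZ11}, that $\dim X^\tor_w = \ell(w)$ for every $w \in {}^J W$ with $X^\tor_w$ non-empty. In particular $X^\tor_e := (\zeta^\tor)^{-1}(e)$ has dimension $\ell(e) = 0$; moreover it is closed in $X^\tor$, since $e$ is the $\preceq$-minimal element of ${}^J W$ and so corresponds to the unique closed point of $\calG^\spl_0\tr{-}\mathrm{Zip}^\mu_k$. As $X^\tor$ is projective over $k$, $X^\tor_e$ is a finite set of closed points; it is non-empty because it contains the (non-empty) minimal EO stratum $X_e$ of $\M^\spl_{K,0}$.

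The heart of the matter is to show that no point of $X^\tor_e$ lies on the boundary $X^\tor \setminus \M^\spl_{K,0}$. Suppose $x$ were such a point, lying in a toroidal boundary stratum $Z_{[\sigma]}^\spl$, and put $Z_0^{\spl,\tor} := \oint^{-1}(Z_0^\spl)$, so that $x \in Z_0^{\spl,\tor}$. By the preceding Proposition, $\zeta^\tor|_{Z_0^{\spl,\tor}} = \tau_Z \circ \zeta_Z \circ \oint$, so $\zeta^\tor$ is constant on each fiber of $\oint: Z_0^{\spl,\tor} \to Z_0^\spl$. By Lan's description of the arithmetic compactifications of splitting models \cite{Lan18}, the fiber $\oint^{-1}(\oint(x))$ is a positive-dimensional proper closed subscheme of $Z_0^{\spl,\tor}$: the torus $E$ attached to the boundary datum $Z^\spl$ has positive rank and $\Sigma_Z^+$ contains cones of dimension strictly smaller than that rank, so the fiber contains toric orbit closures of positive dimension. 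Since $\zeta^\tor$ takes the constant value $\zeta^\tor(x) = e$ on this fiber, we would obtain a positive-dimensional subscheme of $X^\tor_e$, contradicting $\dim X^\tor_e = 0$. Hence $X^\tor_e \subseteq \M^\spl_{K,0}$, and since $\zeta^\tor$ restricts to $\zeta$ over $\M^\spl_{K,0}$ we conclude $X^\tor_e = (\zeta^\tor)^{-1}(e) \cap \M^\spl_{K,0} = \zeta^{-1}(e) = X_e$.

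For the final assertion I would verify the three hypotheses of Proposition \ref{prop length Hasse} for $S = X^\tor$: first, $X^\tor$ is smooth and projective over $k$, hence equidimensional, of dimension $d = \langle 2\rho, \mu\rangle$, this being the dimension of the dense open $\M^\spl_{K,0}$, which is smooth of pure dimension $d$ because its local model is the flag variety $\prod_{i,j,l}\mathbb{M}_{i,j}^l$; second, every stratum $X^\tor_w$ is non-empty, since $\zeta^\tor$ extends the surjective morphism $\zeta$ of Theorem \ref{thm EO} and is therefore itself surjective; third, $X^\tor_e = X^\tor_0$ is zero-dimensional by the equality $X^\tor_e = X_e$ just proved. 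Hence Proposition \ref{prop length Hasse} applies to $X^\tor$.

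The one ingredient that is not formal bookkeeping is the assertion that $\oint$ has positive-dimensional fibers over every boundary stratum — equivalently, that $e$ is not in the image of $\tau_Z$ for any $Z$ — and this is the step I expect to be the main obstacle. In the present setting it is part of Lan's structure theory for the compactifications of splitting models \cite{Lan18}; conceptually it reflects that the semiabelian degeneration over a boundary stratum acquires a nontrivial torus part whose associated $F$-zip is of multiplicative (ordinary) type, so that the EO type of a boundary point has strictly positive length (for the Hilbert and Hilbert--Siegel examples this uses $[L:\mathbb{Q}] > 1$). Alternatively this can be read off from the explicit description of $\tau_Z$ employed in the proof of Theorem \ref{thm zip map ext}(2), along the lines of \cite{And22} Lemma 3.4. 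Everything else reduces to the $\calG^\spl_0$-zip dimension formula and the factorization already in hand.
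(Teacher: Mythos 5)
Your reduction of the corollary to the claim that $\oint$ has positive-dimensional fibres over every boundary stratum is where your argument diverges from the paper's, and it is also where it has a genuine gap. First, that claim is \emph{not} equivalent to the statement that $e$ is not in the image of $\tau_Z$: one is an assertion about the geometry of Lan's compactification, the other about zip stacks, and only the latter is what the corollary actually needs. Second, the claim does not hold for arbitrary boundary strata: the fibre of $\oint$ over a point of $Z^\spl$ has dimension $(\operatorname{rank}(E)-1)+\dim(C^\spl/Z^\spl)$, which is zero when the torus $E$ has rank one and $C^\spl\to Z^\spl$ is finite (modular-curve-type degenerations). Your parenthetical caveat about $[L:\Q]>1$ concedes exactly this, but the corollary is stated for general PEL data in the paper's setting, so an argument whose key step can fail for some strata — and which you defer to ``Lan's structure theory'' without proof — is incomplete. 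In such cases the corollary remains true, but for a reason invisible to a dimension count on $X^\tor_e$.

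The paper's proof sidesteps the compactification entirely and argues on the target of $\zeta^\tor$: by Step 4 of the proof of Theorem \ref{thm zip map ext}, $\tau_Z$ is smooth, hence open, so its image is an open substack of $\calG^\spl_0\textrm{-}\mathrm{Zip}^\mu_k$; open subsets of ${}^JW$ are the upward-closed ones, and since $e$ is the minimum for $\preceq$, the only open subset containing $e$ is all of ${}^JW$ — so a proper open image cannot contain the closed point $e$. Combined with the factorization $\zeta^\tor|_{Z_0^{\spl,\tor}}=\tau_Z\circ\zeta_Z\circ\oint$, no boundary point has EO type $e$. This is precisely the ``alternative'' you relegate to your final sentence; it should be the main argument, not a fallback. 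The rest of your write-up — that $X^\tor_e$ is closed and zero-dimensional, that $X^\tor_e=X_e$ once $X^\tor_e\subset\M^{\spl}_{K,0}$ is known, and the verification of the hypotheses of Proposition \ref{prop length Hasse} — is correct.
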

\begin{proof}
Similar to \cite{And22} Corollary 3.7, this follows from the fact that for each boundary $Z$, $\tau_Z: \G_{0,Z}^\spl\tr{-Zip}^{\mu_Z}_k\ra \calG^\spl_0\tr{-}\mathrm{Zip}^\mu_k$ is smooth, thus open, therefore its image does not contain the closed point of $\calG^\spl_0\tr{-}\mathrm{Zip}^\mu_k$. Indeed, if its image contained the closed point, it would be surjective and any maximal chain of points of $|\calG^\spl_0\tr{-}\mathrm{Zip}^\mu_k|$ would be the image of a maximal chain of points of $|\G_{0,Z}^\spl\tr{-Zip}^{\mu_Z}_k|$. This is impossible since $\dim\,Z<\dim\,X$ with $X=\M^\spl_{K,0}$ (see also the proof of \cite{And22} Corollary 3.7).
\end{proof}

Next, we discuss the well-positionedness of EO strata in the sense of  \cite{LS18} Definition 2.2.1, which is in fact closely related to the smoothness of $\zeta^\tor$. Let $Z^\spl$ be a boundary stratum of $\M^{\spl,\min}_{K}$. Recall for each $\sigma\in \Sigma_Z^+$, we have $\Xi^\spl(\sigma)$ and its closed subscheme $\Xi^\spl_{\sigma}$. Consider the closed subscheme \[\Xi^\spl(\sigma)^+:=\cup_{\tau\in\Sigma_Z^+,\ov{\tau}\subset\ov{\sigma}}\Xi^\spl_\tau\] and the formal completion \[\mathfrak{X}_\sigma^{\spl,\circ}:=(\Xi^\spl(\sigma))^\wedge_{\Xi^\spl(\sigma)^+}.\] By \cite{LS18} Proposition 2.1.3, the formal scheme $\Xi^{\spl}_Z$ admits an open covering by $\mathfrak{X}_\sigma^{\spl,\circ}$ for $\sigma$ running through elements of $\Sigma_Z^+$, and for each $\sigma$ we have an isomorphism \[\mathfrak{X}_\sigma^{\spl,\circ}\simeq (\M_{K, \Sigma}^{\spl,\tor})^\wedge_{\quad\cup_{\tau\in\Sigma_Z^+,\ov{\tau}\subset\ov{\sigma}}Z_{[\tau]}}.\] For any open affine formal subscheme $\mathrm{Spf}\,R$ of $\mathfrak{X}_\sigma^{\spl,\circ}$, let $\mathbf{W}=\Spec\,R$, then we get induced morphisms \[\mathbf{W}\ra \M_{K, \Sigma}^{\spl,\tor}, \quad \mathbf{W}\ra \Xi^\spl(\sigma).\] By these morphisms, the two stratifications of $\mathbf{W}$ induced respectively by those of $\M_{K, \Sigma}^{\spl,\tor}$ and $\Xi^\spl(\sigma)$ coincide. Let \[\mathbf{W}^0\subset \mathbf{W}\] be the open stratum, which is the preimage of $\M_{K}^\spl$ and $\Xi^\spl$ under the above morphisms. Now consider the geometric special fiber $\M_{K, \Sigma,0}^{\spl,\tor}$ and we denote by the same notations $Z^\spl, C^\spl, \Xi^\spl, \mathbf{W}$, etc. the corresponding objects base-changed to $k$. By \cite{LS18} Definition 2.2.1, a locally closed subset \[Y\subset \M_{K,0}^\spl\] is called well-positioned if there exists a collection  \[(Y_Z^\sharp)_Z\] indexed by the boundary strata of $\M_{K,0}^{\spl,\min}$, where $Y_Z^\sharp\subset Z^\spl$ is a locally closed subset (which may be empty), such that for any $\Sigma$, any $\sigma\in\Sigma_Z^+$ and any $\mathrm{Spf}\,R$ as above, if $Y_Z^\sharp\neq\emptyset$, then under the induced morphisms \[\mathbf{W}^0\ra\M_{K,0}^\spl,\quad \mathbf{W}^0\ra Z^\spl\]  the preimages of $Y$ and $Y_Z^\sharp$ in $\mathbf{W}^0$ coincide. Here $\mathbf{W}^0\ra Z^\spl$ is the composition
	\[\mathbf{W}^0\ra \Xi^\spl\ra C^\spl\ra Z^\spl.\] By \cite{LS18} Lemma 2.2.2, it suffices to verify the condition for just one collection of cone decompositions $\Sigma$ and some affine open covering $\mathrm{Spf}\,R$ of each $\mathfrak{X}_\sigma^{\spl,\circ}$.
\begin{proposition}\label{prop well-position}
For each $w\in {}^JW$, the locally closed subset $X_w$ of $X=\M_{K,0}^\spl$ is well-positioned.
\end{proposition}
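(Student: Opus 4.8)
The plan is to deduce the well-positionedness of each EO stratum $X_w$ from the compatibility of the extended zip morphism $\zeta^\tor$ with the boundary charts, exactly as worked out in Steps 1--4 of the proof of Theorem~\ref{thm zip map ext}. First I would define the candidate boundary pieces: for each boundary stratum $Z^\spl\subset \M^{\spl,\min}_{K,0}$ and each $w\in{}^JW$, we have the boundary zip datum $(\calG^\spl_{Z,0},\mu_Z)$ together with the morphism $\tau_Z:\calG^\spl_{Z,0}\tr{-}\mathrm{Zip}^{\mu_Z}_k\ra \calG^\spl_0\tr{-}\mathrm{Zip}^\mu_k$. Since $\tau_Z$ factors through $[E_{\ov{\mathcal{Z}}}\backslash\calG^\spl_{0,k}]$ and each of $\gamma,\upsilon,\epsilon$ is a morphism of zip stacks, the preimage $\tau_Z^{-1}(w)$ is a union of EO strata of $\calG^\spl_{Z,0}\tr{-}\mathrm{Zip}^{\mu_Z}_k$. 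Define $Y_Z^\sharp:=\zeta_Z^{-1}\big(\tau_Z^{-1}(w)\big)\subset Z^\spl$, a locally closed subset (possibly empty); here $\zeta_Z:Z^\spl\ra\calG^\spl_{Z,0}\tr{-}\mathrm{Zip}^{\mu_Z}_k$ is the boundary zip morphism used in Theorem~\ref{thm zip map ext}. By Lemma~2.2.2 of \cite{LS18} it suffices to check the defining condition for one choice of $\Sigma$ and one affine open cover $\mathrm{Spf}\,R$ of each $\mathfrak{X}^{\spl,\circ}_\sigma$.

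Next I would verify the matching of strata on such a chart $\mathbf{W}=\Spec\,R$. The key input is precisely the commutative square established in Step~1 of the proof of Theorem~\ref{thm zip map ext}: on the formal completions along a boundary stratum, $\zeta^\tor$ factors as $\tau_Z\circ\zeta_Z\circ\oint$, where $\oint:\mathbf{W}\ra Z^\spl$ is the composite $\mathbf{W}\ra\Xi^\spl\ra C^\spl\ra Z^\spl$. Restricting to the open stratum $\mathbf{W}^0$ (the preimage of $\M^\spl_{K,0}$ and of $\Xi^\spl$), one has on the one hand $\mathbf{W}^0\ra\M^\spl_{K,0}\st{\zeta}{\ra}\calG^\spl_0\tr{-}\mathrm{Zip}^\mu_k$, whose fiber over $w$ is (by definition) the preimage of $X_w$; and on the other hand the factorization identifies this with $\mathbf{W}^0\st{\oint}{\ra}Z^\spl\st{\zeta_Z}{\ra}\calG^\spl_{Z,0}\tr{-}\mathrm{Zip}^{\mu_Z}_k\st{\tau_Z}{\ra}\calG^\spl_0\tr{-}\mathrm{Zip}^\mu_k$. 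Pulling back $w$ stepwise, the preimage of $X_w$ in $\mathbf{W}^0$ equals $\oint^{-1}\big(\zeta_Z^{-1}(\tau_Z^{-1}(w))\big)=\oint^{-1}(Y_Z^\sharp)$, which is exactly the preimage of $Y_Z^\sharp$ under $\mathbf{W}^0\ra Z^\spl$. This is the required equality of locally closed subsets of $\mathbf{W}^0$, so $X_w$ is well-positioned with boundary data $(Y_Z^\sharp)_Z$.

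I expect the main obstacle to be a careful bookkeeping point rather than a conceptual one: one must make sure that the "restriction to $\mathbf{W}^0$" of the formal-completion statement in Step~1 of Theorem~\ref{thm zip map ext} is legitimate, i.e. that the factorization $\zeta^\tor=\tau_Z\circ\zeta_Z\circ\oint$ holds not just on the formal completion $\mathfrak{X}^{\spl,\circ}_\sigma$ but on the affine chart $\mathbf{W}$ and is compatible with the forgetful maps to $\Xi^\spl$ and to $\M^{\spl,\tor}_{K,\Sigma,0}$ used to define the two stratifications of $\mathbf{W}$. This hinges on the decomposition $\scrF^\natural\simeq{}^\sharp\scrF\oplus{}^\flat\scrF$ of \cite{Lan18} and the resulting direct-sum decomposition $\mathcal{M}^\natural=\mathcal{M}_Z\oplus(\text{the }F\text{-zip of }{}^\flat\scrF)$, together with the independence (loc. cit. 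Corollary~3.3.16) of the ${}^\flat\scrF$-filtration from the splitting structure on $\scrF^\natural$; one uses that the extra factor contributes trivially to the $\calG^\spl_0$-zip type, which is encoded in the definition of $\tau_Z$ via $\epsilon$ and $\ov P$. Once this is in place, the proof is essentially the argument of \cite{LS18} in the unramified PEL case adapted through our local model diagrams, and it also re-proves the smoothness of $\zeta^\tor$ since a morphism whose EO fibers are all well-positioned with the expected boundary strata is smooth by the criterion of \cite{LS18}.
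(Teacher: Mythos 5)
Your overall strategy is essentially the paper's: one takes as boundary data $Y_Z^\sharp=\zeta_Z^{-1}(\tau_Z^{-1}(w))$ (equivalently, the union of those EO strata of $Z^\spl$ whose $\calG_{0,Z}^\spl$-zip corresponds to $w$) and matches the two stratifications of $\mathbf{W}^0$ by comparing the $\calG_0^\spl$-zip of $A[p]$ with the $\calG_{0,Z}^\spl$-zip of $B[p]$ pulled back from $Z^\spl$. The paper's proof does exactly this, with the key input being \cite{LS18} Lemma 3.4.3 specialized to $p$-torsion: over $\mathbf{W}^0$ the triple $(A[p],\lambda,\iota)$ determines and is determined by the degeneration data $(X,Y,\phi)$ together with $(B[p],\lambda_B,\iota_B)$, the splitting structure on the toric part being unique by \cite{Lan18} Corollary 3.3.16, so the zip of $A[p]$ is determined by that of $B[p]$.

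The one step in your write-up that does not quite close as stated is the justification of $\zeta|_{\mathbf{W}^0}=\tau_Z\circ\zeta_Z\circ\oint|_{\mathbf{W}^0}$. Step 1 of the proof of Theorem \ref{thm zip map ext} rests on the decomposition $\scrF^\natural\simeq{}^\sharp\scrF\oplus{}^\flat\scrF$, which is a statement about the restriction of the canonical extension to the \emph{closed} boundary stratum $Z^\spl_{[\sigma]}$ (that is what the superscript $\natural$ denotes), whereas the well-positionedness condition concerns the \emph{open} part $\mathbf{W}^0$, which lies over $\M^\spl_{K,0}$. Over $\mathbf{W}^0$ the universal object is a genuine abelian scheme and its Hodge bundle does not split as a direct sum; the comparison of its zip with that of $B[p]$ must go through the description of $A[p]$ via degeneration data (the Mumford construction), i.e.\ \cite{LS18} Lemma 3.4.3. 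You correctly identify this as the point needing care, but the ingredient you propose to use (the boundary decomposition of $\scrF^\natural$) is not the one that does the work there; replacing it by the degeneration lemma makes your argument coincide with the paper's. As you note, this route also recovers the smoothness of $\zeta^\tor$ via \cite{LS18}, which is the remark the paper makes immediately after the proposition.
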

\begin{proof}
We can translate \cite{LS18} Lemma 3.4.3 into zips. 
Indeed, the setting of loc. cit. includes the splitting model case. Specializing to the case of $p$-torsion ($n=1$ there), we see that the triple $(A[p], \lambda, \iota)$ determines and is determined by the isomorphism classes of $(X, Y, \phi: Y\ra X)$ and of $(B[p], \lambda_B,\iota_B)$. The splitting structure on $\omega_A$ is induced by those on $X, Y$ and $\omega_B$.
For the torus part, the splitting structure is unique, cf. \cite{Lan18} Corollary 3.3.16. Thus by construction, the $\G_0^\spl$-zip attached to $(A[p], \lambda, \iota)$ determines and is determined by the isomorphism classes by the $\G_{0,Z}^\spl$-zip attached to $(B[p], \lambda_B,\iota_B)$. See also the arguments of Step 1 in the proof of Theorem \ref{thm zip map ext}.
Therefore the EO strata are well-positioned.
\end{proof}

\begin{remark}
In \cite{LS18} subsection 3.5, Lan and Stroh firstly proved that EO strata in their case (Nm) for a good prime $p$ are well-positioned. They then deduced that $\zeta^\tor$ is smooth if $\zeta$ is, cf. loc. cit. Corollary 3.5.8. Following their idea, we sketch how to deduce the smoothness of $\zeta^\tor$ from Proposition \ref{prop well-position} as follows:

By Proposition \ref{prop sm zeta}, each $X_w$ is smooth. As $X_w$ is well-positioned by Proposition \ref{prop well-position}, the partial toroidal  compactification $(X_{w})^\tor_\Sigma$ of $X_w$ (in the sense of \cite{LS18} Definition 2.3.1 and Theorem 2.3.2)  is also smooth by \cite{LS18} Proposition 2.3.13. By construction, we see that $(X_{w})^\tor_\Sigma$ equals to the fiber of $\zeta^\tor$ at $w$. As each fiber of $\zeta^\tor$ is smooth, we get $\zeta^\tor$ is smooth once we know it is flat. The actual argument of \cite{LS18} (proof of Corollary 3.5.8 there) says that \'etale locally $\zeta^\tor$ factors through $C^\spl$ (and $Z^\spl$), which follows from Proposition \ref{prop well-position}.
\end{remark}

\section{Application to Galois representations}
In this section we study the coherent cohomology of the smooth schemes $\M_{K,\Sigma}^{\spl,\tor}$.
We deduce some consequences to Hecke algebras and Galois representations following the same treatments of \cite{GoldringKoskivirta2019}.

\subsection{Hecke actions on coherent cohomology}

Recall that $K=K^p\G(\Z_p)$ is our level of moduli spaces.
Let $S$ be the finite set of primes $\ell$ where $K_\ell$ is not hyperspecial. Consider the Hecke algebra \[\H^S=\bigotimes_{v\notin S}'\H_v,\] the restricted tensor product of the spherical Hecke algebras $\H_v=\Z_p[K_v\backslash G(\Q_v)/K_v]$ outside $S$.
We assume that $G$ is ramified over $\Q_p$, i.e. $p\in S$, since otherwise all the following discussions are covered by \cite{GoldringKoskivirta2019}.

Consider also the Hecke algebra $\H_K$. There is a natural morphism $\H^S\ra\H_K$. Recall for any $\calO_F$-representation $V$ of $L$, we have the automorphic vector bundle $\V^\sub$ on the smooth toroidal compactification $\M_{K,\Sigma}^{\spl,\tor}$ as in subsection \ref{subsection can ext}.
In the following
we describe the action of $\H_K$ on the coherent cohomology groups $H^i(\M_{K,\Sigma}^{\spl,\tor}, \V^\sub)$, so that we get an induced action of $\H^S$.
Since the Hecke algebra $\H_K$ is generated by characteristic functions of $KgK\in K\backslash G(\bbA_f)/K$ with $g\in G(\bbA_f)$, it suffices to describe the action of $KgK$ on $H^i(\M_{K,\Sigma}^{\spl,\tor}, \V^\sub)$. Let $K_g=K\cap gKg^{-1}$. By \cite{Lan18} Proposition 2.4.17, we get the associated Hecke correspondence 
\[\xymatrix{
&\M_{K_g}^\spl\ar[ld]_{p_1}\ar[rd]^{p_2}&\\
\M_{K}^\spl& &\M_{K}^\spl,
}\]
where $p_1$ is the natural projection, $p_2$ is the composition of natural projection with $g: \M_{K_g}^\spl\st{\sim}{\ra}\M_{g^{-1}K_gg}^\spl$.
For $i=1,2$, the induced $\Sigma_g^i=p_i^\ast\Sigma$ are admissible finite rpcd for the level $K_g$. Let $\Sigma_g$ be a common smooth refinement of $\Sigma_g^1$ and $\Sigma_g^2$. Then by \cite{Lan18} Proposition 3.4.10, we get an extended Hecke correspondence
\[\xymatrix{
	&\M_{K_g,\Sigma_g}^{\spl,\tor}\ar[ld]_{q_1}\ar[rd]^{q_2}&\\
	\M_{K,\Sigma}^{\spl,\tor}& &\M_{K,\Sigma}^{\spl,\tor},
}\]
where $q_i$ is the composition  \[ \M_{K_g,\Sigma_g}^{\spl,\tor}\st{r_i}{\lra} \M_{K_g,\Sigma_g^i}^{\spl,\tor}\st{\pi_i}{\lra} \M_{K,\Sigma}^{\spl,\tor}.\]
The morphisms $r_i$ satisfy $R^jr_{i,\ast}\calO_{\M_{K_g,\Sigma_g}^{\spl,\tor}}=0$ for $j>0$ and $r_{i,\ast}\calO_{\M_{K_g,\Sigma_g}^{\spl,\tor}}=\calO_{\M_{K_g,\Sigma_g^i}^{\spl,\tor}}$ by \cite{Lan18} Proposition 3.4.10 and \cite{Lan17} Proposition 7.5. By \cite{Lan18} Proposition 3.4.14, the schemes $\M_{K_g,\Sigma_g^i}^{\spl,\tor}$ are Cohen-Macaulay as $\M_{K_g}^{\spl}$ is. Arguing as \cite{GoldringKoskivirta2019} 8.1.5, we get that the morphisms $\pi_i$ are finite flat. 
Thus we get a trace map \[\tr{tr}\pi_i: \pi_{i,\ast}\calO_{\M_{K_g,\Sigma_g^i}^{\spl,\tor}}\lra \calO_{\M^{\spl,\tor}_{K, \Sigma}},\]
 which induces the associated Hecke operator: for each $i\geq 0$
\[T_g: H^i(\M_{K,\Sigma}^{\spl,\tor}, \V^\sub)\lra H^i(\M_{K,\Sigma}^{\spl,\tor}, \V^\sub).\]

Let $\varpi$ be a uniformizer of $\calO_F$. Recall the Levi subgroup $L$ of $P=P_\mu$ over $\calO_F$. For any $i\geq 0, n\geq 1$ and $(V,\eta)\in \Rep_{\calO_F}L$, consider the vector bundle $\V_\eta^\sub$ on $\M_{K, \Sigma,\calO_F/\varpi^n}^{\spl,\tor}$.
we get an action
\[\H^S\lra \End\big(H^i(\M_{K, \Sigma,\calO_F/\varpi^n}^{\spl,\tor}, \V_\eta^\sub)\big). \]
Let $\H^{i,n}_\eta$ be its image.

\subsection{Factorizations to $H^0$}

For any $i\geq 0, n\geq 1$ and $\eta\in X^\ast(T)_L^+$, as in \cite{GoldringKoskivirta2019} we consider the following set
\[F(i,n,\eta)=\{\eta'\in X^\ast(T)_L^+\,|\,\H^S\ra \H^{i,n}_\eta\,\tr{factors through}\,\H^S\ra \H^{0,n}_{\eta'}\}.\]With all the ingredients at hand, by the method of \cite{GoldringKoskivirta2019} we have the same consequences as Theorem 8.2.1 of loc. cit.
\begin{theorem}\label{Thm Hecke alg}
	For any $(i,n,\eta)$ as above, we have
	\begin{enumerate}
		\item There exists an arithmetic progression $A$ such that $\eta+a\eta_\omega\in F(i,n,\eta)$ for all $a\in A\cap \Z_{\geq 1}$.
		\item Let $\mathcal{C}$ be the cone defined in \cite{GoldringKoskivirta2019} 3.4.3. Then for all $\nu\in\mathcal{C}$ and $\eta_1\in  F(i,n,\eta)$, there exists $m=m(\nu,n)\in\Z_{\geq 1}$ such that for all $j\in\Z_{\geq 1}$, we have $\eta_1+jm\nu\in F(i,n,\eta)$.
		\item For all $\delta\in \mathbb{R}_{\geq 0}$, $F(i,n,\eta)$ contains a $\delta$-regular character in the sense of \cite{GoldringKoskivirta2019} Definition N.5.5.
	\end{enumerate}
\end{theorem}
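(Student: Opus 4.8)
The plan is to deduce Theorem \ref{Thm Hecke alg} exactly as in \cite{GoldringKoskivirta2019}, Section 8, by transporting the geometric input we have just assembled into their formal machinery. The point is that \cite{GoldringKoskivirta2019} isolates a short list of properties of the integral models, their compactifications, and the group-theoretic Hasse invariants on the zip stack (their conditions 6.4.2 and 7.1.2), and once these hold the argument is purely formal. So the first step is bookkeeping: check that our smooth splitting models $\M_K^\spl$, their smooth toroidal compactifications $\M_{K,\Sigma}^{\spl,\tor}$, the Hodge line bundle $\omega_{\Hdg}$, the map $\zeta^\tor$ of Theorem \ref{thm zip map ext}, and the extended Hasse invariants $h_w^\tor$ of Corollary \ref{cor ext hasse} satisfy these axioms. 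The key ingredients are: (i) $\zeta^\tor$ is smooth and surjective with EO strata of dimension $\ell(w)$ and the expected closure relations (Theorem \ref{thm EO}, Theorem \ref{thm zip map ext}); (ii) the length Hasse invariants of Proposition \ref{prop length Ha} pull back to $G(\bbA_f^p)$-equivariant sections cutting out the length strata, with the minimal stratum disjoint from the boundary (Corollary \ref{cor min stratum non boundary}); (iii) the relative vanishing $R^i\oint_\ast \V_\eta^\sub=0$ of Proposition \ref{prop rel vanish}; (iv) the finite flatness of the maps $\pi_i$ in the Hecke correspondence, using Cohen-Macaulayness from \cite{Lan18} Proposition 3.4.14, which gives the trace maps and hence a well-defined $\H^S$-action with the $\H_\eta^{i,n}$ as stated.

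Granting these, the second step is to run the \cite{GoldringKoskivirta2019} argument. One multiplies a cohomology class in $H^i(\M_{K,\Sigma,\calO_F/\varpi^n}^{\spl,\tor},\V_\eta^\sub)$ by powers of the Hasse invariants $h_w^\tor$ to kill cohomology supported on the boundary strata of the EO stratification, descending the cohomological degree; this is the ``Hasse-regular sequence'' formalism of \cite{GoldringKoskivirta2019} Section 4, applied to the flag of closed subschemes $\ov{X_w^\tor}$. Because $\omega_{\Hdg}$ is ample on the minimal compactification and the higher direct images along $\oint$ vanish, multiplication by a high enough power of Hasse invariants induces, after twisting $\eta$ by a multiple of $\eta_\omega$, a Hecke-equivariant surjection (or at least an injection with controlled cokernel) from $H^0$ of a suitable bundle onto the relevant subquotient of $H^i$. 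Iterating over the length strata $0,\dots,d$ and combining with the branching/base-change arguments of \cite{GoldringKoskivirta2019} Section 8 produces the arithmetic progression $A$ in part (1), the cone statement in part (2) (using that the cone $\mathcal{C}$ is built from the dominant characters whose associated bundles carry Hasse-type sections), and the $\delta$-regular character in part (3).

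The main obstacle, and the one step requiring genuine care rather than citation, is verifying that the \emph{two technical conditions} 6.4.2 and 7.1.2 of \cite{GoldringKoskivirta2019} really hold in our ramified splitting setting, where the relevant reductive group is $\calG_0^\spl$ rather than the reductive quotient $\calG_0^{\rdt}$ of a genuine parahoric reduction, and where the zip datum has type $\mu$ assembled from the many factors $\mu_{i,j}^l$. Concretely one must check: that the Hodge line bundle $\omega_{\Hdg}$ on $\M_K^\spl$ pulls back from the zip stack via $\zeta$ with the weight $\eta_\omega$ and is a Hasse generator — this is where the Hodge-type property of the pair $(\calG_0^\spl,\mu)$ (established in Section 4, since $\calG_0^\spl$ is the similitude group of $\Lambda_0^\spl$) is used; that $\omega_{\Hdg}$ descends to an ample bundle on $\M_K^{\spl,\min}$, which follows from Lan's construction in \cite{Lan18} together with the relative vanishing; and that the flatness/Cohen-Macaulay inputs for the Hecke correspondences survive passage to $\Sigma_g$-refinements. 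All of these are assertions we have prepared (Theorem \ref{thm zip map ext}, Corollary \ref{cor ext hasse}, Proposition \ref{prop rel vanish}, and the Hecke discussion above), so once they are marshalled the proof is, as in \cite{GoldringKoskivirta2019}, word for word the same; we therefore omit the repetition of the lengthy bookkeeping and invoke \cite{GoldringKoskivirta2019} Theorem 8.2.1.
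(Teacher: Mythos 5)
Your proposal is correct and follows essentially the same route as the paper: the paper likewise reduces the theorem to the Hasse-regular-sequence machinery of Goldring--Koskivirta, citing exactly the geometric inputs you list (smoothness of $\zeta^\tor$, the extended length Hasse invariants with the minimal stratum avoiding the boundary, the relative vanishing $R^i\oint_\ast\V_\eta^\sub=0$, and the finite flat Hecke correspondences), and then invokes their argument verbatim. The only cosmetic differences are that the paper spells out the definition of a Hasse-regular sequence via the \emph{length} Hasse invariants $h_j$ (rather than the individual $h_w^\tor$) and explicitly mentions the passage to the flag space to gain regularity for parts (2) and (3), both of which are implicit in your appeal to the relevant sections of \cite{GoldringKoskivirta2019}.
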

The proof of the above theorem is by the same arguments as in  \cite{GoldringKoskivirta2019} subsections 7.2, 7.3, 8.3 and 9.2. In particular, one plays with the machinery of Hasse-regular sequences based on our Corollaries \ref{cor hasse inv}, \ref{cor ext hasse} and \ref{cor min stratum non boundary}, Propositions \ref{prop length Hasse} and \ref{prop rel vanish},  and one applies the associated flag space to increase the regularity. For the reader's convenience, we recall that a Hasse regular sequence of length $r$ with $0\leq r\leq d=\dim \M^{\spl}_{K,F}$ on $X:=\M^{\spl,\tor}_{K,\Sigma, \calO_F/\varpi^n}$ (in the sense of \cite{GoldringKoskivirta2019} Definition 7.2.1) is given by a filtration of  closed subschemes
\[X=Z_0\supset Z_1\supset\cdots\supset Z_r \] together with some integers $a_j$ and global sections $f_j\in H^0(Z_j, \omega_{\Hdg}^{a_j})$,
such that each $f_j$ is a lifting of a length Hasse invariant, and  for each $0\leq j\leq r-1$, we have $Z_{j+1}=V(f_j)$. More precisely,
on the reduced locus
\[X_{\tr{red}}=\coprod_{w\in {}^JW}X_w\supset Z_{j,\tr{red}}=\coprod_{w\in {}^JW, \ell(w)\leq d-j}X_w.\] By Proposition \ref{prop length Ha}, there exists a large integer $N_{d-j}$ and a length $d-j$ Hasse invariant \[h_{d-j}\in H^0(Z_{j,\tr{red}}, \omega_{\Hdg}^{N_{d-j}})\] such that its vanishing locus is $Z_{j+1,\tr{red}}=\coprod_{w\in {}^JW, \ell(w)\leq d-j-1}X_w$. Then one requires \[f_j\in H^0(Z_j, \omega_{\Hdg}^{a_j})\] to be a lifting of certain power of $h_{d-j}$ (which exists by \cite{GoldringKoskivirta2019} Theorem 5.1.1) for some integer $a_j\geq N_{d-j}$. From a regular Hasse sequence of length $r$, we get an exact sequence of sheaves over $Z_{r-1}$: 
\[ 0\lra \V_\eta^\sub\otimes\omega_{\Hdg}^s\st{\cdot f_{r-1}}{\longrightarrow} \V_\eta^\sub\otimes\omega_{\Hdg}^{\otimes (a_{r-1}+s)} \longrightarrow \V_\eta^\sub\otimes\omega_{\Hdg}^{\otimes (a_{r-1}+s)}|_{Z_r} \lra 0,\]
where $\omega_{\Hdg}$ is the Hodge line bundle and $s$ is an integer.
From here,  one gets congruences between cohomology of different degrees using the vanishing result $H^i(Z_r, \V_\eta^\sub\otimes\omega_{\Hdg}^m)=0$ for $m\gg 0$ and $i>0$ (cf. \cite{GoldringKoskivirta2019} Lemma 7.1.4). In particular, to study $H^i(X, \V_\eta^\sub)$, one performs a Hasse regular sequence of length $i$ to finally reduce to $H^0$.

\begin{remark}
	\begin{enumerate}
		\item As mentioned above, in the unramified case Theorem \ref{Thm Hecke alg} was proved by Goldring-Koskivirta in \cite{GoldringKoskivirta2019}; in this case part (1) also follows from the work of Boxer \cite{Box}. 
		\item In the Hodge type case (which may be ramified), part (1) was proved by Pilloni-Stroh in \cite{PS16} (Th\'eor\`eme 3.5 and Remarque 3.9) for some quite different integral models (constructed by Scholze's method). As they remarked there,  the torsion classes for these integral models seem to be quite different from those associated to the unramified Kottwitz or Kisin models. Their torsion classes also seem to be rather different from these associated to the smooth splitting models here.
		\item By \cite{GoldringKoskivirta2019} Remark 8.2.4, parts (2) and (3) of Theorem \ref{Thm Hecke alg} do not follow the methods of \cite{Box} and \cite{PS16}.
	\end{enumerate}
\end{remark}

\subsection{Galois representations}\label{subsec Galois}
We can now deduce some consequences on Galois representations from Theorem \ref{Thm Hecke alg} as \cite{GoldringKoskivirta2019} section 10.

Let $v\neq p$ be a finite unramified place of $\Q$ for $G$. Let $\Frob_v$ be a geometric Frobenius at $v$ and $\H_v$ the unramified Hecke algebra at $v$. Then we have the Satake isomorphism (cf. \cite{GoldringKoskivirta2019} (10.2.1))
\[\H_v[\sqrt{v}]\st{\sim}{\ra} R({}^LG_v)[\sqrt{v}],\]
where $R({}^LG_v)$ is the algebra obtained by restricting character representations of ${}^LG_v$ to semisimple ${}^LG_v^\circ(\ov{\Q}_p)$-conjugacy classes in ${}^LG_v\rtimes\Frob_v$.
If $\pi_v$ is an unramified irreducible smooth representation of $G(\Q_v)$, we get the corresponding semisimple ${}^LG_v^\circ(\ov{\Q}_p)$-conjugacy class $\Sat(\pi_v)$, the Satake parameter of $\pi_v$.

Let \[r: {}^LG\ra \GL_m\] be a representation of the Langlands dual group. For each place $v$ as above, we get an induced representation $r_v$ of ${}^LG_v$.   Let $\pi$ be a $C$-algebraic cuspidal automorphic representation of $G$. If $v$ is an unramified place of $\pi$, then we get \[r_v(\Sat(\pi_v))\in \GL_m(\ov{\Q}_p).\] Let $Ram(\pi)$ be the set of ramified places of $\pi$.
We say that $(\pi,r)$ satisfies \emph{$LC_p$}, if there exists a continuous semisimple Galois representation 
\[\rho(\pi,r): \mathrm{Gal}(\ov{\Q}/\Q)\ra \GL_m(\ov{\Q}_p)\] such that for any $v\notin Ram(\pi)\cup\{p\}$, we have \[\rho(\pi,r)(\Frob_v)=r_v(\Sat(\pi_v))\] as $\GL_m(\ov{\Q}_p)$-conjugacy classes.

For any $j\geq 1$, the function ${}^LG(\ov{\Q}_p)\ra \mathbb{\ov{\Q}}_p,\, g\mapsto \tr{tr}(r(g)^j)$ defines an element of $R({}^LG_v)$ and thus an element $T_v^{(j)}(r)\in \H_v[\sqrt{v}]$. For any $i\geq 0, n\geq 1, \eta\in X^\ast(T)_L^+$, let \[T_v^{(j)}=T_v^{(j)}(r;i,n,\eta)\in \H^{i,n}_\eta\] be its image in $\H^{i,n}_\eta$.
In the following we fix $\delta\in\mathbb{R}_{\geq 0}$ and $r: {}^LG\ra \GL_m$. Here is the version of \cite{GoldringKoskivirta2019} Theorems 10.4.1 and 10.5.1 in the ramified setting. The proof is identical to loc. cit. by applying Theorem \ref{Thm Hecke alg} here. Part (1) also generalizes Theorem 1.1 of \cite{ReduzziXiao2017}.
\begin{theorem}\label{thm Galois}
Suppose that for any $\delta$-regular, $C$-algebraic cuspidal automorphic representation $\pi'$ with $\pi_{\infty}'$ discrete series, the pair $(\pi',r)$ satisfies $LC_p$. 
\begin{enumerate}
	\item 
For any $i\geq 0, n\geq 1, \eta\in X^\ast(T)_L^+$,
there exists a continuous Galois pseudo-representation
\[\rho: \mathrm{Gal}(\ov{\Q}/\Q)\lra \H^{i,n}_\eta, \]such that $\rho(\Frob_v^j)=T_v^{(j)}$ for all $v\notin S$.

\item Let $\pi$ be a $C$-algebraic cuspidal automorphic representation of $G$ such that $\pi_\infty$ is a ($C$-algebraic) non-degenerate limit of discrete series and $\pi_p^{K_p}\neq 0$. Then $(\pi,r)$ also satisfies $LC_p$.
\end{enumerate}
\end{theorem}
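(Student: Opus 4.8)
The plan is to run the argument of \cite{GoldringKoskivirta2019}*{\S 10} essentially verbatim; the whole point is that the geometric ingredients it needs are now available for smooth splitting models in the ramified setting. Concretely, I would feed in: the smooth surjective zip morphism $\zeta$ and its toroidal extension $\zeta^\tor$ (Theorems \ref{thm EO}, \ref{thm zip map ext}), the Hasse invariants and length Hasse invariants together with their extensions to $\M^{\spl,\tor}_{K,\Sigma,0}$ (Corollaries \ref{cor hasse inv}, \ref{cor ext hasse}, \ref{cor min stratum non boundary}, Propositions \ref{prop length Ha}, \ref{prop length Hasse}), and the relative vanishing $R^i\oint_{\ast}\V_\eta^\sub=0$ (Proposition \ref{prop rel vanish}). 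Together these verify the technical conditions \cite{GoldringKoskivirta2019}*{6.4.2, 7.1.2}, so that Theorem \ref{Thm Hecke alg} holds in our setting, and it is Theorem \ref{Thm Hecke alg} that drives everything below.

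For part (1), the first step is Theorem \ref{Thm Hecke alg}(3): for the fixed $\delta$ there is a $\delta$-regular $\eta_1\in F(i,n,\eta)$, so the $\H^S$-action on $H^i(\M^{\spl,\tor}_{K,\Sigma,\calO_F/\varpi^n},\V_\eta^\sub)$ factors through its action on $H^0(\M^{\spl,\tor}_{K,\Sigma,\calO_F/\varpi^n},\V_{\eta_1}^\sub)$, and it suffices to build a pseudo-representation valued in $\H^{0,n}_{\eta_1}$. Since $\H^{0,n}_{\eta_1}$ is a finite $\calO_F/\varpi^n$-algebra, after a faithfully flat base change it is a finite product of local Artinian rings, so one only needs to attach a Galois representation with the prescribed Frobenius traces to each factor, i.e.\ to each mod-$\varpi^n$ system of Hecke eigenvalues occurring in $H^0$. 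The next step lifts such an eigensystem to characteristic zero by a Hasse-regular sequence: multiply by a power of a lift of a length Hasse invariant and apply the $H^{>0}$-vanishing in large weight resting on Proposition \ref{prop rel vanish}, producing a characteristic-zero cusp form in $H^0$ of a sufficiently regular weight, which via the coherent-cohomology/automorphic-forms dictionary arises from a $\delta$-regular $C$-algebraic cuspidal automorphic representation $\pi'$ with $\pi'_\infty$ a (holomorphic) discrete series. Feeding $(\pi',r)$ into the hypothesis $LC_p$ and reducing modulo $\varpi^n$ then yields the Galois representation on each factor; gluing over factors gives $\rho:\Gal(\ov\Q/\Q)\to\H^{i,n}_\eta$ with $\rho(\Frob_v^j)=T_v^{(j)}$ for all $v\notin S$, as in \cite{GoldringKoskivirta2019}*{\S 10.4}. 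Specialised to the Hilbert case, part (1) recovers \cite{ReduzziXiao2017}*{Theorem 1.1}.

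For part (2), given $\pi$ with $\pi_\infty$ a non-degenerate limit of discrete series and $\pi_p^{K_p}\neq 0$, the assumed $LC_p$ for \emph{discrete series} does not apply directly, which is exactly why part (1) is needed. I would use that a non-degenerate limit of discrete series contributes to coherent cohomology in a single degree $i_0$ and a definite weight $\eta_0$, and that $K_p=\calG(\Z_p)$ being very special (so $\pi_p$ is pinned down by its spherical parameter, cf.\ \cite{Zhu}*{\S 6}) means $\pi$ is visible at level $K$; hence its Hecke eigensystem occurs in $\H^{i_0,n}_{\eta_0}$ for all $n$. Specializing the pseudo-representation of (1) at the maximal ideal of $\H^{i_0,n}_{\eta_0}$ attached to $\pi$, taking the limit over $n$, and semisimplifying produces a continuous semisimple $\rho(\pi,r):\Gal(\ov\Q/\Q)\to\GL_m(\ov\Q_p)$; local-global compatibility outside $S$ is then immediate, since $T_v^{(j)}$ is sent to $\tr\bigl(r_v(\Sat(\pi_v))^j\bigr)$ by the eigenvalue character of $\pi$, so $(\pi,r)$ satisfies $LC_p$. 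This is verbatim \cite{GoldringKoskivirta2019}*{\S 10.5}.

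The main obstacle, exactly as in \cite{GoldringKoskivirta2019}, sits inside part (1): one must guarantee that a $\delta$-regular mod-$\varpi^n$ Hecke eigensystem in $H^0$ genuinely lifts to a characteristic-zero contribution that is \emph{cuspidal and a holomorphic discrete series}, so that the assumed $LC_p$ for discrete-series $\pi'$ can be invoked. This is precisely where Theorem \ref{Thm Hecke alg}(3) and the regularity-increasing flag-space argument of \cite{GoldringKoskivirta2019}*{\S 9} are indispensable; once that is secured, the remainder — bookkeeping with Hecke algebras and gluing pseudo-representations — is formal and transfers from the unramified PEL case without change.
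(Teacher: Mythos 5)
Your proposal is correct and follows essentially the same route as the paper, which itself simply invokes Theorem \ref{Thm Hecke alg} and declares the proof identical to \cite{GoldringKoskivirta2019}*{Theorems 10.4.1, 10.5.1}; your reconstruction of the factorization-to-$H^0$, lifting, and gluing steps is a faithful expansion of that citation. No discrepancies with the paper's (very terse) argument.
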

Recall that $K_p\subset G(\Q_p)$ is a very special parahoric subgroup, and irreducible smooth representations $\pi_p$ of $G(\Q_p)$ such that $\pi^{K_p}_p\neq 0$ can be classified by their spherical parameters, see \cite{Zhu} section 6.

\subsection{Examples}
We discuss some concrete examples where the condition $LC_p$ is essentially known. In these examples, the notation $L$ is also used as certain number fields. Thus
to avoid confusion, we denote $\mathbf{L}\subset P_\mu$ for the Levi subgroup.
\subsubsection{Unitary case}
We use the notations of subsection \ref{subsec ex GU}.

For any regular $C$-algebraic cuspidal automorphic representation $\pi$ of $G$, the associated Galois representation satisfying the condition $LC_p$ is known to exist by the works of many people. We only mention
\cite{HT, Shin, CH, Clo}.
\begin{corollary}
For any $i\geq 0, n\geq 1, \eta\in X^\ast(T)_\mathbf{L}^+$,
there exists a continuous Galois pseudo-representation
\[\rho: \mathrm{Gal}(\ov{L}/L)\lra \H^{i,n}_\eta, \]such that $\rho(\Frob_v^j)=T_v^{(j)}$ for all $v\notin S$.
\end{corollary}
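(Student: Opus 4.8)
The plan is to derive this corollary about the unitary case as a direct specialization of Theorem \ref{thm Galois}(1), which is the general statement for ramified PEL-type Shimura varieties. So the real content is verifying that the hypothesis of Theorem \ref{thm Galois} holds in the unitary setup of subsection \ref{subsec ex GU}, namely that for every $\delta$-regular, $C$-algebraic cuspidal automorphic representation $\pi'$ of the unitary similitude group $G = GU(V,\langle\cdot,\cdot\rangle)$ with $\pi'_\infty$ in the discrete series, the pair $(\pi',r)$ satisfies the condition $LC_p$. First I would recall that $G_1 = U(V,\langle\cdot,\cdot\rangle) = \Res_{L^+|\Q}U$ with $L|L^+$ a CM extension, so that automorphic representations of $G$ restrict to those of the unitary group and, via base change to $\GL_n/L$ together with the known descent results, acquire associated Galois representations. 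The existence of these Galois representations with the expected local-global compatibility away from $S$ is exactly what the cited works \cite{HT, Shin, CH, Clo} provide (with various hypotheses on $\pi'$, all of which are implied by regularity plus the discrete series condition at infinity in the relevant cases, or can be arranged by a solvable base change argument). Thus $LC_p$ holds for such $\pi'$ and the hypothesis of Theorem \ref{thm Galois} is met.

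The remaining steps are formal: once the hypothesis is verified, Theorem \ref{thm Galois}(1) yields, for each $i\geq 0$, $n\geq 1$, and $\eta\in X^\ast(T)_{\mathbf L}^+$, a continuous Galois pseudo-representation $\rho$ valued in the Hecke image algebra $\H^{i,n}_\eta$ with $\rho(\Frob_v^j) = T_v^{(j)}$ for all $v\notin S$. One must only note that in the unitary case the natural "base field" of the construction is the reflex field, and one records the statement with $\mathrm{Gal}(\ov L/L)$ in place of $\mathrm{Gal}(\ov\Q/\Q)$: this is because the PEL datum here produces a Shimura variety with reflex field a subfield of $L$ (or of the CM field generated by the embeddings), and the Hecke operators $T_v^{(j)}$ are indexed by places $v$ of $L$ unramified for $G$; the Satake/Chebotarev argument producing the pseudo-representation then naturally takes values parametrized by Frobenii of $\mathrm{Gal}(\ov L/L)$. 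Concretely, one applies the construction of the pseudo-representation in subsection \ref{subsec Galois} verbatim, with the dual group side $r\colon {}^LG\to\GL_m$ restricted to the decomposition group at $L$.

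The main obstacle — really the only nontrivial point — is checking that the precise regularity and local hypotheses under which the cited automorphy results are proven are implied by "$\delta$-regular, $C$-algebraic cuspidal, $\pi'_\infty$ discrete series" for a suitable choice of $\delta$ and of the dual-group representation $r$. In the unitary case this is essentially standard: $\delta$-regularity for $\delta$ large enough forces the infinitesimal character of $\pi'$ to be regular, which combined with the discrete series condition puts $\pi'$ in the range covered by \cite{Shin, CH}, possibly after a solvable base change to reduce to the quasi-split inner form and to ensure the needed ramification hypotheses (Steinberg or supercuspidal at an auxiliary place), the effect of which on the Galois side is harmless for the local-global compatibility away from $S\cup\{p\}$. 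I would state this as a short lemma ("$LC_p$ holds for $\pi'$ as above") citing the literature, rather than reproving it, and then simply invoke Theorem \ref{thm Galois}. Since the excerpt already grants all earlier results, the proof of the corollary is a one-line deduction once this verification is in place, and I would write it as such.
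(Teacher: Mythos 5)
Your proposal matches the paper's treatment: the paper likewise notes that $LC_p$ is known for regular $C$-algebraic cuspidal $\pi'$ with discrete series at infinity by the cited works \cite{HT, Shin, CH, Clo}, and then deduces the corollary as a direct application of Theorem \ref{thm Galois}(1) with $\Q$ replaced by $L$ (following \cite{GoldringKoskivirta2019} \S10.6). Your additional remarks on base change and regularity hypotheses are elaboration of the citation step, not a different route.
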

Here, when applying Theorem \ref{thm Galois} we can replace $\Q$ by the totally real field $L$, cf. \cite{GoldringKoskivirta2019}  10.6.
One may also state and prove a similar version of \cite{GoldringKoskivirta2019} Theorem 10.5.3 for Galois representations associated to automorphic representations $\pi$ with non degenerate limit of discrete series $\pi_{\infty}$. 

\subsubsection{Hilbert-Siegel case}
We use the notations of subsection \ref{subsec ex Gsp}.

For the group $G=\Res_{L|\Q}\GSp_{2g}$,  any regular $C$-algebraic cuspidal automorphic representation $\pi$ of $G$, the associated Galois representation satisfying the condition $LC_p$ is known to exist for $g\leq 2$:
\begin{itemize}
	\item for $g=1$, see \cite{ERX, ReduzziXiao2017} and the references therein for the related classical works, 
	\item for $g=2$ this has been intensively studied, see \cite{Tay, Lau, Wei, Sor} for example,
	\item for general $g$, see \cite{KS, Xu} for some recent progress. 
\end{itemize}
\begin{corollary}
	Assume the condition $LC_p$ holds.
	For any $i\geq 0, n\geq 1, \eta\in X^\ast(T)_\mathbf{L}^+$,
	there exists a continuous Galois pseudo-representation
	\[\rho: \mathrm{Gal}(\ov{L}/L)\lra \H^{i,n}_\eta, \]such that $\rho(\Frob_v^j)=T_v^{(j)}$ for all $v\notin S$.
\end{corollary}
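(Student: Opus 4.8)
The statement to prove is the final corollary in the Hilbert–Siegel subsection: assuming $LC_p$ holds for the group $G = (\Res_{L|\Q}\GSp_{2g})^{\det\in\Q^\times}$, for any $i\geq 0$, $n\geq 1$, $\eta\in X^\ast(T)^+_{\mathbf L}$ there exists a continuous Galois pseudo-representation $\rho\colon\Gal(\ov L/L)\to\H^{i,n}_\eta$ with $\rho(\Frob_v^j)=T_v^{(j)}$ for all $v\notin S$.

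\medskip

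\emph{Plan of proof.} The plan is to simply invoke Theorem~\ref{thm Galois}(1), which is the abstract machine that converts the geometric inputs of Sections~3--5 (the smooth zip morphism $\zeta$, its extension $\zeta^\tor$ to the toroidal compactification, the Hasse invariants and length Hasse invariants, and the relative vanishing $R^i\oint_\ast\V^\sub_\eta=0$) together with the hypothesis $LC_p$ into the desired pseudo-representation. First I would check that the Hilbert–Siegel PEL datum of subsection~\ref{subsec ex Gsp} falls within the setting of Section~\ref{sec:splitting}: all local places $i\in I$ are of type (C), so in particular none is of type (AR), hence Proposition~\ref{prop smooth spl} gives a smooth splitting model $\scrA^\spl$ (equivalently the modification $\M_K^\spl$), the universal $\calG_0^\spl$-zip and the smooth surjective $\zeta$ of Theorem~\ref{thm EO} exist, and the compactification results of Section~5 (Theorem~\ref{thm zip map ext}, Corollaries~\ref{cor hasse inv}, \ref{cor ext hasse}, \ref{cor min stratum non boundary}, Propositions~\ref{prop length Hasse}, \ref{prop rel vanish}) all apply. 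This verifies the geometric hypotheses underlying Theorem~\ref{Thm Hecke alg}, hence Theorem~\ref{thm Galois}.

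\medskip

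Next I would address the one genuinely hypothesis-dependent point: Theorem~\ref{thm Galois}(1) requires that for every $\delta$-regular, $C$-algebraic cuspidal automorphic representation $\pi'$ of $G$ with $\pi'_\infty$ a discrete series, the pair $(\pi',r)$ satisfies $LC_p$ (for the fixed $r\colon{}^LG\to\GL_m$ and $\delta\in\R_{\geq0}$). For $G=(\Res_{L|\Q}\GSp_{2g})^{\det\in\Q^\times}$ one reduces, as in \cite{GoldringKoskivirta2019}*{\S10.6}, to automorphic representations of $\Res_{L|\Q}\GSp_{2g}$ over the totally real field $L$; the existence of the associated $\GL_m$-valued $p$-adic Galois representations with local-global compatibility outside the ramified set is known for $g\leq 2$ by the works cited (\cite{ERX}, \cite{ReduzziXiao2017} for $g=1$; \cite{Tay, Lau, Wei, Sor} for $g=2$) and in progress for general $g$ (\cite{KS, Xu}) — this is exactly why the corollary is stated conditionally, "Assume the condition $LC_p$ holds". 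So under that assumption the hypothesis of Theorem~\ref{thm Galois} is met verbatim, and part (1) of that theorem yields a continuous pseudo-representation $\rho\colon\Gal(\ov\Q/\Q)\to\H^{i,n}_\eta$ with $\rho(\Frob_v^j)=T_v^{(j)}$ for $v\notin S$. Replacing the base field $\Q$ by the totally real field $L$ throughout — legitimate by the discussion following the unitary corollary and \cite{GoldringKoskivirta2019}*{\S10.6} — upgrades this to $\rho\colon\Gal(\ov L/L)\to\H^{i,n}_\eta$, which is the assertion.

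\medskip

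The main obstacle is not in the present corollary at all, which is a formal consequence of the earlier theorems once the geometric setup is matched up; the real content was already spent in establishing Theorem~\ref{Thm Hecke alg}, namely the factorization of the Hecke action on $H^i$ through $H^0$ with increased weight, which rests on the machinery of Hasse-regular sequences, the length Hasse invariants of Proposition~\ref{prop length Hasse}, Corollary~\ref{cor min stratum non boundary} (so that the minimal stratum avoids the boundary), and the relative vanishing of Proposition~\ref{prop rel vanish}. If one wanted to be self-contained about the Hilbert–Siegel case, the only nontrivial check is the one flagged above: that the type-(C) places genuinely satisfy the no-(AR) hypothesis and that the very special parahoric $K_p=\calG(\Z_p)$ makes the relevant local representation theory (classification of $\pi_p$ with $\pi_p^{K_p}\neq0$ by spherical parameters, \cite{Zhu}*{\S6}) available — but both are immediate here, so the corollary follows.
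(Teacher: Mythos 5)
Your proposal is correct and follows the same route as the paper: the corollary is a direct application of Theorem~\ref{thm Galois}(1) (itself resting on Theorem~\ref{Thm Hecke alg} and the geometric inputs of Sections~3--5), with the base field $\Q$ replaced by the totally real field $L$ as in \cite{GoldringKoskivirta2019}*{\S10.6}, and with $LC_p$ taken as a hypothesis precisely because it is only known unconditionally for $g\leq 2$. The paper gives no further argument beyond this, so your additional verification that the Hilbert--Siegel places are all of type (C) is a harmless (and correct) elaboration rather than a divergence.
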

Here, as above, when applying Theorem \ref{thm Galois} we can replace $\Q$ by the totally real field $L$.
One may also state and prove a version for Galois representations associated to automorphic representations $\pi$ with non degenerate limit of discrete series $\pi_{\infty}$.

\appendix

\section{Local models and EKOR stratifications in ramified PEL-type case}\label{section loc mod}
In this appendix, we first review the related local model diagrams for integral models of PEL-type Shimura varieties with general parahoric level at $p$, following \cite{PappasRapoport2005}. Then we briefly explain how to extend the construction of \cite{ShenYuZhang2021} to this setting (the groups of \cite{ShenYuZhang2021} are supposed to be tamely ramified at $p$ as those in \cite{KisinPappas2018}, but the construction there only needs local model diagrams as the input).

\subsection{Integral models of PEL-type Shimura varieties}
We keep the notations and assumptions of section \ref{sec:splitting}.
Let $G$ be the connected reductive group defined by the rational PEL datum and $\mathscr{L}$ a multichain of $\calO_B$-lattices (see Definition \ref{def multichain}).
By \cite{PappasRapoport2005}, there are three integral models $\mathscr{A}^{\naive}, \mathscr{A}$ and $\mathscr{A}^{\spl}$ of 
the PEL moduli space over $E$ (with respect to the multichain $\mathscr{L}$). 
The models $\mathscr{A}^{\naive}, \mathscr{A}$ are defined over $\calO_E$ and $\mathscr{A}^{\spl}$ is defined over $\calO_F$.
We first recall the definition of PEL datum with parahoric level structure following \cite{RapoportZink1996}, 
see also the appendix of \cite{ShenYuZhang2021} or \cite{Hartw} section 2.

\subsubsection{Parahoric data at $p$}

To simplify the notation, 
we will write $(B,*, V, \psi=\Pair{\cdot}{\cdot}, \calO_B, \Lambda)$ for the base change of such data in section~\ref{sec:splitting} to $\mathbb{Q}_p$.
So we have
\[
	B  \simeq \prod_{i=1}^r \Mat_{m_i}(R_i), \quad 
	\calO_B  \simeq  \prod_{i=1}^r \Mat_{m_i}(\calO_{R_i}).
\]
By Morita equivalence, we can decompose the $B$-module $V$ (resp. any $\calO_B$-lattice $\Lambda$ in $V$) as
\[V =\bigoplus_{i=1}^r V_i^{m_i} \quad (\tr{ resp. } \Lambda \simeq \bigoplus_{i=1}^r \Lambda_i^{m_i}),\]
where each factor $V_i$ is a free $R_i$-module (resp. $\Lambda_i$ is an $\mathcal{O}_{B_i}$-lattice in $V_i$).
Write $2d_i = \rank_{F_i} V_i$.

\begin{definition}\label{def multichain}
	\begin{enumerate}
		\item A chain of $\calO_B$-lattices in $V$ is a set of totally ordered $\calO_B$-lattices $\mathscr{L}$ such that
			for every element $x \in B^\times$ which normalizes $\calO_B$, one has
			\[\Lambda \in \mathscr{L} \implies x \Lambda \in \mathscr{L}.  \]
		\item A set $\mathscr{L}$ of $\calO_B$-lattices in $V$ is said to be a \emph{multichain of $\calO_B$-lattices} if there
			exists a chain of $\mathcal{O}_{B_i}$-lattices $\mathscr{L}_i$ in $V_i$ for each $i=1,\dots, m$ such that for any member
			$\Lambda \in \mathscr{L}$ one has $\Lambda_i \in \mathscr{L}_i$ for all $i=1,\dots,m$.
		\item A multichain of $\calO_B$-lattices $\mathscr{L}$ is called \emph{self-dual} if for every member $\Lambda \in \mathscr{L}$,
			its dual lattice $\Lambda^\vee$	also belongs to $\mathscr{L}$, where
			\[\Lambda^\vee := \{x \in V \mid \psi(x,\Lambda) \subset \mathbb{Z}_p\}.\]
	\end{enumerate}
\end{definition}

For a multichain $\mathscr{L}$ of $\calO_B$-lattices, 
we write \[\calG = \calG_\mathscr{L}\] for the group scheme over $\mathbb{Z}_p$ defined by the stabilizer of the multichain $\mathscr{L}$ as in \cite{RapoportZink1996}*{\S 6}.
By \cite{HainesRicharz2020}*{Corollary 4.8},
for a reductive group of the form $G = \Res_{F|\bbQ_p} G'$,
every parahoric group scheme of $G$ is of the form $\Res_{\calO_F|\bbZ_p} \calG'$ for a unique parahoric group scheme $\calG'$ of $G'$.
If we assume that $G'$ unramified, 
the stabilizer group scheme $\calG = \calG_\mathscr{L}$  associated to $\mathscr{L}$ is always connected. 
So $\calG$ is a parahoric group scheme of $G$ and (since the prime to $p$ level $K^p$ is fixed) we write $K=\G(\Z_p)$.

\subsubsection{Integral models}

Now let $\AV$ be the category of abelian varieties with $\calO_B$-actions, where morphisms are prime to $p$ isogenies.

\begin{definition}
	Let $\mathscr{L}$ be a multichain of $\calO_B$-lattices in $V$. 
	A \emph{$\mathscr{L}$-set of abelian varieties} over a $\mathbb{Z}_{p}$-scheme $S$ is a functor
	\[\mathscr{L} \to \AV,\quad \Lambda \mapsto A_\Lambda,\]
	satisfying the following.
	\begin{enumerate}
		\item For each inclusion $\Lambda \subset \Lambda'$ in $\mathscr{L}$, a quasi-isogeny $A_\Lambda \to A_{\Lambda'}$.
		\item For any element $a \in B^\times \cap \calO_B$ which normalizes $\calO_B$
			and any member $\Lambda \in \mathscr{L}$, there exists an isomorphism $\theta_{a,\Lambda}: A_\Lambda^a \to A_{a\Lambda}$
			such that the following diagram commutes
			\[\xymatrix{
				{A_{\Lambda}^a} \ar[r]^{\theta_{a,\Lambda}} \ar[rd] & {A_{a \Lambda}} \ar[d]^{\rho_{a\Lambda,\Lambda}} \\
				& {A_{\Lambda}}
			}\]
	\end{enumerate}
\end{definition}

Let $\mathscr{L}$ be a self-dual multichain of $\calO_B$-lattices in $V$,
and fix a sufficiently small open compact subgroup $K^p \subset G(\mathbb{A}_f^p)$. Recall that $E$ is the local reflex field. 
There is a naive integral model $\mathscr{A}^{\naive}_\mathscr{L}$ over $\calO_E$,
which is a moduli scheme classifying the objects 
$(A_\mathscr{L}, \overline{\lambda},\overline{\eta})/S$ for each scheme $S/\calO_E$, where
\begin{itemize}
	\item $A_\mathscr{L} = (A_\Lambda)_{\Lambda\in\mathscr{L}}$ is a $\mathscr{L}$-set of abelian schemes over $S$ in $\AV$;
	\item $\overline{\lambda} = \mathbb{Q}^\times \cdot \lambda$ is a $\mathbb{Q}$-homogeneous principal polarization on $A_\mathscr{L}$;
	\item $\overline{\eta}$ is a $\pi_1(S,\overline{s})$-invariant $K^p$-orbit of isomorphism
		$\eta: V \otimes \mathbb{A}_f^p \to T^p(A_{\overline{s}})$
		which preserves the pairings up to a scalar in $(\mathbb{A}_f^p)^\times$. 
		Here $T^p(A_{\overline{s}})$ is the prime to $p$ Tate module of $A_{\overline{s}}$ and for simplicity we assume that $S$ is connected.
\end{itemize}
For the exact meaning of the above terms, we refer to \cites{PappasRapoport2005,RapoportZink1996,Lan18}
and appendix of \cite{ShenYuZhang2021}.

In general the naive model $\mathscr{A}^{\naive}_\mathscr{L}$ is not flat over $\calO_E$, cf. \cite{Pappas1}.
In order to define a good integral model, 
Pappas and Rapoport introduced an alternative integral model of 
$\mathscr{A}^{\naive}_\mathscr{L} \otimes_{\calO_E} F$ in \cite{PappasRapoport2005},
where $F$ is a large enough extension of $E$ (which contains the Galois closure of $E$ over $\mathbb{Q}_p$).
We briefly recall their construction. To this end, we find it convenient to use Lan's formulation of splitting structures (\cite{Lan18}).

\begin{definition}[\cite{Lan18}*{Definition 2.1.12}]\label{def triple modules}
	Suppose that $S$ is a scheme over $\calO_F$.
	A $\mathscr{L}$-set of polarized $\calO_B \otimes \calO_S$-modules is a triple 
	$(\underline{\calH}, \underline{\mathscr{F}},\underline{\iota})$, where:
	\begin{enumerate}
		\item $\underline{\calH}: \Lambda \mapsto \calH_\Lambda$ and $\underline{\mathscr{F}}: \Lambda \mapsto \mathscr{F}_\Lambda$
			are functors from the category $\mathscr{L}$ to the category of $\calO_B\otimes \calO_S$-modules.
		\item For each $\Lambda\in\mathscr{L}$,  both $\mathscr{F}_{\Lambda}$ and 
			$\calH_{\Lambda}/\mathscr{F}_{\Lambda}$ are finite locally free $\calO_S$-modules,
			such that $\calH_{\Lambda}/\mathscr{F}_{\Lambda}$ satisfies the determinant condition.
		\item For other conditions, we refer to Lan's paper \cite{Lan18}.
	\end{enumerate}
\end{definition}

\begin{definition}[\cite{Lan18}*{Definition 2.3.3}; see also \cite{PappasRapoport2005} Definition 14.1]\label{def gen splitting str}
	Suppose that $S$ is a scheme over $\calO_F$, and that $(\underline{\mathcal{H}},\underline{\mathscr{F}},\underline{\iota})$ is
	a $\mathscr{L}$-set of polarized $\calO_B \otimes \mathcal{O}_S$-modules. 
	A \emph{splitting structure} for
	$(\underline{\mathcal{H}},\underline{\mathscr{F}},\underline{\iota})$ is a collection
	\[\ul{\scrF_\bullet}=(\mathscr{F}_{i,j}^l, \iota_{i,j}^l)_{1\leq i\leq r, 1\leq j \leq f_i, 0 \leq l \leq e_i},\]
	where each $\mathscr{F}_{i,j}^l: \Lambda \to \mathscr{F}_{\Lambda,i,j}^l$ is a functor from the category $\mathscr{L}$
	to the category of $\calO_B \otimes \mathcal{O}_S$-modules, and each $\iota_{i,j}^l: \mathscr{F}_{\Lambda, i,j}^l \to \calH_{\Lambda,i,j}$
	is an injective morphism satisfying the following conditions (identify $\mathscr{F}_{i,j}^l$ with its image under $\iota$):
	\begin{enumerate}
		\item For each $\Lambda \in \mathscr{L}$, we require both $\mathscr{F}_{\Lambda,i,j}^l$ and 
			$\calH_{\Lambda,i,j} / \mathscr{F}_{\Lambda,i,j}^l$ to be finite locally free $\mathcal{O}_S$-modules.
		\item For each $\Lambda \in \mathscr{L}$ and $1 \leq i \leq r, 1\leq j \leq f_i$, we have a filtration
			\[0 = \mathscr{F}_{\Lambda,i,j}^0 \subset \mathscr{F}_{\Lambda,i,j}^1 \subset \cdots \subset \mathscr{F}_{\Lambda,i,j}^{e_i} = \mathscr{F}_{\Lambda,i,j}\]
			as $\calO_B \otimes \mathcal{O}_S$-submodule of $\calH_{\Lambda,i,j}$. 
			For each integer $0 < l \leq e_i$, the quotient $\mathscr{F}_{\Lambda,i,j}^l / \mathscr{F}_{\Lambda,i,j}^{l-1}$ is a
			locally free $\mathcal{O}_S$-module of rank $d_{i,j}^l$, annihilated by $b \otimes 1 - 1 \otimes \sigma_{i,j}^l(b)$ for all $b \in \calO_{F_i}$.
		\item For each $\Lambda \in \mathscr{L}$ and $i \in \mathscr{I}$, there are \emph{periodicity isomorphisms}, cf. \cite{Lan18} p. 2475 for more details.
		\item For each $\Lambda\in\mathscr{L}$ and tuples $(i,j,l)$, let $(\mathscr{F}_{\Lambda,i,j}^l)^\perp$
			denote the orthogonal complement of $\mathscr{F}_{\Lambda,i,j}^l$ in $\calH_{\Lambda^\vee,i,j}$
			with respect to the perfect pairing 
			$\calH_{\Lambda,i,j} \times \calH_{\Lambda^\vee,i,j} \to \calO_S$. Then
			\[\prod_{0\leq k < l} (b \otimes 1 - 1 \otimes \sigma_{i,j}^{k}(b)) ((\mathscr{F}_{\Lambda,i,j}^{l})^\perp) \subset \mathscr{F}_{\Lambda^\vee,i,j}^l.\]
	\end{enumerate}
\end{definition}

Let $\mathscr{A}^{\spl}_\mathscr{L}$ be the moduli scheme over $\calO_F$ classifying the objects 
$(A_\mathscr{L},\overline{\lambda},\overline{\eta}, \underline{\mathscr{F}_\bullet})$ for each scheme $S/\calO_F$, 
where $(A_\mathscr{L},\overline{\lambda},\overline{\eta})$ is an object in $\mathscr{A}^{\naive}_\mathscr{L}(S)$ and
$\underline{\mathscr{F}_\bullet}$ is a splitting structure for the $\mathscr{L}$-set of polarized $\calO_B \otimes \calO_S$-modules
associated with $(A_\mathscr{L},\overline{\lambda},\overline{\eta})$. 
This is the \emph{splitting model} associated to the PEL datum. With the notation of \cite{Lan18}, we have
\[ \mathscr{A}^{\spl}_\mathscr{L}=\mathrm{Spl}^+_{(\ul{\H},\ul{\scrF},\ul{\iota})/\mathscr{A}^{\naive}_\mathscr{L}\otimes \calO_F}. \]

In the following, we will simply write $\scrA^\spl$ and $\scrA^{\naive}$ for the schemes $\mathscr{A}^{\spl}_\mathscr{L}$  and $\mathscr{A}^{\naive}_\mathscr{L}$  respectively.
Let $\mathscr{A}$ be the scheme-theoretic image of the natural morphism 
\[\mathscr{A}^{\spl} \to \scrA^{\naive} \otimes_{\calO_E} \calO_F \to \mathscr{A}^{\naive}.\]
The natural morphism \[\scrA^\spl \ra \scrA\] is projective by \cite{Lan18} Proposition 2.3.7.
Both $\scrA^\spl$ and $\mathscr{A}$ admit better geometric properties than the naive integral model $\scrA^{\naive}$.

\subsection{Local models}
The local structure of integral models of Shimura varieties is controlled by the associated local models.
There is a naive local model $\mathbb{M}^{\naive}$ associated to the moduli scheme $\mathscr{A}^{\naive}$.
Recall that $\bbM^{\naive} := \bbM^{\naive}(\G,\mu)$ is the moduli scheme over $\calO_E$ given by the following definition.
\begin{definition}[\cite{RapoportZink1996}]
	A point of $\bbM^{\naive}$ with values in an $\calO_E$-scheme $S$ is given by the following data.
	\begin{enumerate}
		\item A functor from the category $\scrL$ to the category of $\calO_B \otimes \calO_S$-modules on $S$:
			\[\Lambda \to t_\Lambda, \quad\Lambda \in \scrL.\]
		\item A morphism of functors
			\[\varphi_\Lambda: \Lambda \otimes_{\bbZ_p} \calO_S \to t_\Lambda.\]
	\end{enumerate}
	We require the following conditions to be satisfied:
	\begin{enumerate}
		\item $t_\Lambda$ is a finite locally free $\calO_S$-module.
			The $\calO_B$-action on $t_\Lambda$ satisfies the determinant condition
			\[\det_{\calO_S} (a; t_\Lambda) = \det_F (a; W), \quad a \in \calO_B.\]
		\item The morphisms $\varphi_\Lambda$ are surjective.
		\item The composition of the following maps is zero for each $\Lambda$:
			\[t_\Lambda^* \to (\Lambda \otimes \calO_S)^* \cong \hat{\Lambda} \otimes \calO_S \to t_{\hat{\Lambda}}.\]
	\end{enumerate}
\end{definition}
The naive local model $\mathbb{M}^{\naive}$ is usually not flat in the ramified case, cf. \cite{Pappas1}.
We define the \emph{splitting local model} \[\mathbb{M}^{\spl}=\mathbb{M}^{\spl}(\G,\mu)\] as the moduli scheme over $\calO_F$
classifying the splitting structures over $\mathbb{M}^{\naive} \otimes_{\calO_E} \calO_F$.
The splitting model $\mathbb{M}^{\spl}$ admits good properties. Recall we assume that there is no type (AR) local factors.

\begin{proposition}\label{prop:flatness}
	The splitting local model $\mathbb{M}^{\spl}$	is flat over $\calO_F$.
\end{proposition}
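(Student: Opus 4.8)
The plan is to reduce the flatness of $\mathbb{M}^{\spl}$ over $\calO_F$ to the already-known flatness (indeed smoothness) of unramified local models with parahoric level, exactly as in the proof of Proposition~\ref{prop smooth spl}, but now keeping a general multichain $\scrL$ rather than a single self-dual lattice. The key structural input from \cite{PappasRapoport2005} is that $\mathbb{M}^{\spl}(\G,\mu)$ is constructed as a \emph{twisted product} of unramified local models $\mathbb{M}^{\loc}(\G_{i,j}^l,\mu_{i,j}^l)$: at each step of the splitting filtration one adds a locally free quotient of prescribed rank $d_{i,j}^l$, and the fibre of the forgetful map that drops the top filtration step is (locally on the base) a Grassmannian-type scheme, namely the unramified local model for the group $G_{i,j}^l$ with cocharacter $\mu_{i,j}^l$.

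First I would recall the factorization over the places: the decomposition $B_{\Q_p}\simeq\prod_i\Mat_{m_i}(R_i)$ and Morita equivalence give $\mathbb{M}^{\spl}=\prod_{i\in I}\mathbb{M}^{\spl}_i$, where each $\mathbb{M}^{\spl}_i$ is the splitting local model attached to the factor $(F_i, \scrL_i, \mu_i)$ of the datum; flatness over $\calO_F$ can be checked factor by factor. Fixing $i$, I would then exhibit, following \cite{PappasRapoport2005} sections~5, 8, 9 (and \cite{Lan18} section~2.3), a tower
\[
\mathbb{M}^{\spl}_i = \mathbb{M}^{(e_i)}_i \longrightarrow \mathbb{M}^{(e_i-1)}_i \longrightarrow \cdots \longrightarrow \mathbb{M}^{(1)}_i \longrightarrow \mathbb{M}^{(0)}_i = \Spec\,\calO_F,
\]
where $\mathbb{M}^{(l)}_i$ parametrizes the partial splitting data $(\scrF^1_{i,j}\subset\cdots\subset\scrF^l_{i,j})$ together with the auxiliary trivializations $\tau^k_{i,j}$ for $k\le l$, and where each arrow $\mathbb{M}^{(l)}_i\to\mathbb{M}^{(l-1)}_i$ is, Zariski-locally on the target, isomorphic to the base change of the unramified local model $\prod_j\mathbb{M}^{\loc}(G_{i,j}^l,\mu_{i,j}^l)$ over $\calO_F$. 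The local triviality here is exactly the statement that $\Ker\big(([\pi_i]-\pi_{i,j}^l)\mid \calH_{\Lambda,i,j}/\scrF^{l-1}_{\Lambda,i,j}\big)$ is locally free of the expected rank $2d_i$ with the correct multichain structure, which is Propositions~5.2, 8.x and 9.2 of \cite{PappasRapoport2005} — the case of type (AU) being handled by the same argument as the one already invoked for type (AL) in the proof of Proposition~\ref{prop smooth spl}. Since each unramified local model $\mathbb{M}^{\loc}(G_{i,j}^l,\mu_{i,j}^l)$ is flat over $\calO_F$ (it is even smooth here because $\mu_{i,j}^l$ is minuscule and the level is hyperspecial, but flatness for general parahoric level is the theorem of Pappas–Zhu / Görtz that we may cite), and since flatness is preserved under composition and Zariski-local on the target, an induction on $l$ from $0$ to $e_i$ gives flatness of $\mathbb{M}^{\spl}_i$ over $\calO_F$, hence of $\mathbb{M}^{\spl}$.

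I expect the main obstacle to be the bookkeeping in the inductive step for a \emph{general} multichain $\scrL$ rather than a single lattice: one must check that the auxiliary module $\Ker([\pi_i]-\pi_{i,j}^l)$ on $\calH_{\Lambda,i,j}/\scrF^{l-1}_{\Lambda,i,j}$, as $\Lambda$ runs over $\scrL$, again forms a multichain of $\calO_{F_i}$-lattices of the right type (with the transition maps and the perfect duality pairing of condition~(4) of Definition~\ref{def gen splitting str} correctly inherited), so that the fibre of $\mathbb{M}^{(l)}_i\to\mathbb{M}^{(l-1)}_i$ really is an unramified \emph{parahoric} local model and not merely a product of Grassmannians. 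This compatibility with the chain and the polarization is precisely what \cite{PappasRapoport2005} sections~5--9 establish (and what \cite{Lan18} sections~2.1--2.3 repackages functorially), so the proof amounts to assembling these references into the tower above; no genuinely new computation is needed. One minor point to be careful about: the duality condition relates data at $\Lambda$ and $\Lambda^\vee$, so the inductive step must be carried out simultaneously for dual pairs, exactly as in the self-dual multichain case of the unramified theory.
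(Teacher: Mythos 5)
Your proposal is correct and rests on exactly the same inputs as the paper's proof: the Zariski-local trivializability of the kernels $\Upsilon_{\Lambda,i,j}^l$ (Propositions 5.2 and 9.2 of \cite{PappasRapoport2005}, extended verbatim to type (AU)) together with the flatness of unramified parahoric local models. The paper merely packages the twisted-product structure as a single correspondence $\bbM^{\spl}\leftarrow\widetilde{\bbM}^{\spl}\rightarrow\prod_{\tau}\bbM^{\loc}_\tau$ in which both legs are torsors under the smooth group $\prod_{j,l\geq 2}\calG_j^l$, rather than as your inductive tower; the only slip in your write-up is that if the trivializations $\tau_{i,j}^k$ are built into the moduli problem then the top of your tower is the torsor $\widetilde{\bbM}^{\spl}$ rather than $\bbM^{\spl}$ itself, so one final descent of flatness along that smooth surjective torsor is needed (or, equivalently, drop the trivializations and use only the Zariski-local product description of each step, as you in fact indicate).
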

\begin{proof}
	This is a direct generalization of \cite{PappasRapoport2005}*{Theorems 5.3 and 9.4}. For the reader's convenience, we briefly recall their proofs.
Without loss of generality, we may assume $r=1$ and we slightly change the notation: let $G=G_1$ be the associated reductive group over $\Q_p$.
So we have
\[G = \Res_{{F_1}|\bbQ_p} G'\]
for an unramified group $G'$ over a local field ${F_1}|\bbQ_p$.
Since $F|\bbQ_p$ contains the Galois closure of ${F_1}$, we have
\[G_F = \prod_{\tau: {F_1} \to F} G'_\tau, \text{ where } G'_\tau := G' \otimes_{{F_1},\tau} F.\]
The projection of
\[\mu: \bbG_{m,F} \to G_F = \prod_{\tau: {F_1} \to F} G'_\tau\]
to each factor $G'_\tau$ gives a cocharacter $\mu_\tau: \bbG_{m,F} \to G'_\tau$.
Moreover, for each $\tau: {F_1}\to F$, one can associate an $\calO_F$-multichain by
\[\scrL_\tau := \{\Lambda_\tau = \Lambda \otimes_{\calO_{F_1}} \calO_F \mid \Lambda \in \scrL\}.\]
Such a multichain determines a parahoric subgroup of $G'_\tau$.
This gives a local model (over $\calO_F$) 
\[\bbM^{\loc}_\tau := \bbM^{\loc}(G'_\tau, \mu_\tau)_{\scrL_\tau}.\]
By our assumption, $G'$ is unramified over $F$,
so $\bbM^{\loc}$ is the same as the naive local model.
This means that for each $\calO_F$-scheme $S$,
$\bbM^{\loc}(S)$ classifies the set of multichains $\{\scrF_{\Lambda,\tau}\}_{\Lambda \in \scrL} \subset \scrL_\tau$ which is compatible with transition maps,
and each $\scrF_{\Lambda,\tau}$ is Zariski locally on $S$ an $\calO_S$-direct summand of $\Lambda_\tau$ of rank $d_{\tau}$.

Given the splitting local model $\bbM^{\spl}=\bbM^{\spl}(\G,\mu)$ over $\calO_F$,
one has the following  diagram (which is a modified version of the diagrams (5.10) and (9.13)  in \cite{PappasRapoport2005})
\[\xymatrix{
	& {\widetilde{\bbM}^{\spl}} \ar[ld]_{\pi_1} \ar[rd]^{\pi_2}\\
	{\bbM^{\spl}} && \prod_{\tau:F_1 \to F}{\bbM^{\loc}_\tau}.
}\]\label{spl to loc}
For each $\calO_F$-scheme $S$,
$\widetilde{\bbM}^{\spl}(S)$ classifies 
\[(\scrF_{\Lambda,j}^l, {\varphi_{\Lambda,j}^l: \Upsilon_{\Lambda,j}^{l} \simeq \Lambda_{j}^l}\otimes_{\calO_F} \calO_S)_{\Lambda \in \scrL},\]
where $(\scrF_{\Lambda,j}^l)_{\Lambda\in \scrL,j}^l \in \bbM^{\spl}(S)$,
$\varphi_j^l$ is an isomorphism from the $\calO_S$-module
\[\Upsilon_{\Lambda,j}^l := \Ker((\pi \otimes 1 - 1 \otimes \sigma_j^l(\pi)) |_{ \Lambda_{j,S}^l / \scrF_{\Lambda,j}^{l-1}})\]
to the $\calO_S$-module $\Lambda_{j}^l\otimes_{\calO_F} \calO_S$ with
\[\Lambda_j^l := \Lambda \otimes_{\calO_F,\sigma_j^l} \calO_F.\]
Such a trivialization exists Zariski locally on $S$ by \cite{PappasRapoport2005}*{Propositions 5.2, 9.2} (this reference only covers the case of type (C) and (AL); the case of type (AU) can be proved by the same argument).
The map $\pi_1$ is the natural forgetful morphism,
and $\pi_2$ sends $(\scrF_{\Lambda,j}^l, {\varphi_{\Lambda,j}^l})$ to $\varphi_{\Lambda,j}^l(\scrF_{\Lambda,j}^{l} / \scrF_{\Lambda,j}^{l-1}) \in \bbM^{\loc}_{\sigma_{j}^l}$.

Let $\calG_j^l$ be the subgroup of $\prod_{\Lambda \in \scrL} \Aut(\Lambda_{j}^l)$ compatible with the transition maps of $\scrL$,
and $\calG^{\spl} := \prod_{j,l}\calG_{j}^l$.
Then the action
\[g_j^l \cdot (\scrF_{\Lambda,j}^l, {\varphi_{\Lambda,j}^l}) = (\scrF_{\Lambda,j}^l, {g_j^l \varphi_{\Lambda,j}^l})\]
makes $\pi_1$ a $\prod_{j,l\geq 2}\calG_{j}^l$-torsor.
The other action
\[g_j^l \cdot (\scrF_{\Lambda,j}^l, {\varphi_{\Lambda,j}^l}) = ((\varphi_{\Lambda,j}^l)^{-1} g_j^l \varphi_{\Lambda,j}^l(\scrF_{\Lambda,j}^l), {g_j^l \varphi_{\Lambda,j}^l})\]
makes $\pi_2$ a $\prod_{j,l\geq 2}\calG_{j}^l$-torsor. 
Now the existence of such a diagram of torsors for a smooth group scheme and flatness of unramified local models implies the flatness of $\bbM^{\spl}$.
\end{proof}

\begin{proposition}\label{Prop loc mod diag}
	\begin{enumerate}
		\item Let $\mathbb{M}^\loc=\mathbb{M}^\loc(\G,\mu)$ be the scheme-theoretic image of the natural forgetful morphism 
			$\mathbb{M}^{\spl} \to \mathbb{M}^{\naive}$.
			Then $\mathbb{M}^\loc$ coincides with the local model $\bbM$ defined in \cite{Levin2016} with respect to the triple 
			$(G,\mu,K_\mathscr{L})$.
		\item We have the following naive local model diagram:
			\[\xymatrix{
			& \widetilde{\mathscr{A}^{\naive}} \ar[ld]_{\pi} \ar[rd]^{q} \\
			{\mathscr{A}^{\naive}} && {\bbM^{\naive}}
			}\]
		\item 
			We have a local model diagram:
			\[\xymatrix{
			& {\widetilde{\mathscr{A}}} \ar[ld]_{\pi} \ar[rd]^{q} \\
			{\mathscr{A}} && {\bbM^\loc}
			}\]
		\item The pullback of the natural morphism $\bbM^{\spl} \to \bbM^{\naive}$ gives the splitting local model diagram with respect to the group $\calG$ over $\calO_F$ (\cite{PappasRapoport2005}*{\S 15})
			\[\xymatrix{
				& {\widetilde{\mathscr{A}^{\spl}}} \ar[ld]_{\pi} \ar[rd]^{q} \\
				{\mathscr{A}}^{\spl} && {\bbM^{\spl}}
			}\]
	\end{enumerate}
\end{proposition}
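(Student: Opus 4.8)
\textbf{Proof proposal for Proposition \ref{Prop loc mod diag}.}

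The plan is to treat the four assertions in sequence, reducing each to either a known result from \cite{PappasRapoport2005}, \cite{Levin2016}, \cite{RapoportZink1996}, or to the torsor machinery already set up in the proof of Proposition \ref{prop:flatness}. For part (1), the statement is an identification of two scheme-theoretic images. On one hand $\bbM^\loc(\G,\mu)$ is by definition the scheme-theoretic image of $\bbM^{\spl}\to\bbM^{\naive}$; on the other, Levin's local model $\bbM$ associated to $(G,\mu,K_\mathscr{L})$ is characterized (loc.\ cit.) as the flat closure of the generic fiber inside $\bbM^{\naive}\otimes_{\calO_E}\calO_F$, or equivalently via a Weil-restriction presentation. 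First I would recall that $\bbM^{\spl}$ is flat over $\calO_F$ by Proposition \ref{prop:flatness}, hence its scheme-theoretic image in $\bbM^{\naive}$ is flat and agrees with $\bbM^{\naive}$ on the generic fiber (using the canonical isomorphism $\bbM^{\spl}\otimes F\simeq \bbM^{\naive}\otimes_{\calO_E}F$ from the splitting construction, cf.\ \cite{PappasRapoport2005} \S 15). Both $\bbM^\loc$ and Levin's $\bbM$ are then flat closed subschemes of $\bbM^{\naive}\otimes_{\calO_E}\calO_F$ with the same generic fiber, so they coincide; I would then cross-check that the Weil-restriction description of $\bbM^\loc$ extracted from the diagram \eqref{spl to loc} matches Levin's presentation $\bbM(G,\mu,K_\mathscr{L})$, which is where the bulk of the bookkeeping lies.

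For part (2), the naive local model diagram is essentially the content of \cite{RapoportZink1996} Chapter 6: one sets $\widetilde{\mathscr{A}^{\naive}}$ to be the moduli space of tuples $(\underline A,\underline\gamma)$ where $\underline\gamma$ is a trivialization of the chain $(\H_\Lambda(\underline A))_\Lambda$ compatible with the $\calO_B$-action and pairings, identifying it with $(\Lambda\otimes\calO_S)_\Lambda$; then $\pi$ is the forgetful $\G$-torsor and $q$ sends $(\underline A,\underline\gamma)$ to the Hodge filtration read off through $\underline\gamma$. The smoothness of $q$ and the torsor property of $\pi$ are standard, so this part is a citation. Part (3) follows by taking scheme-theoretic images/flat closures through the diagram of part (2): restrict $\widetilde{\mathscr{A}^{\naive}}$ over $\mathscr{A}\subset\mathscr{A}^{\naive}$, obtaining $\widetilde{\mathscr{A}}$, and observe that $q$ then factors through $\bbM^\loc$ because $\bbM^\loc$ is the scheme-theoretic image and $\widetilde{\mathscr{A}}\to\widetilde{\mathscr{A}^{\naive}}$ is a $\G$-torsor map with $\mathscr{A}$ defined as a scheme-theoretic image; the smoothness of the resulting $q:\widetilde{\mathscr{A}}\to\bbM^\loc$ is inherited since $q$ was smooth before restriction and both sides are cut out compatibly. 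This is exactly the argument of \cite{Levin2016}, and I would simply indicate the compatibility rather than reprove it.

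For part (4), the splitting local model diagram over $\calO_F$ is obtained by pulling back $\bbM^{\spl}\to\bbM^{\naive}$ along $q:\widetilde{\mathscr{A}^{\naive}}\otimes\calO_F\to\bbM^{\naive}\otimes\calO_F$: one defines $\widetilde{\mathscr{A}^{\spl}}$ as the fiber product, which by construction classifies $(\underline A,\underline{\mathscr{F}_\bullet},\underline\gamma)$ with $\underline\gamma$ trivializing the chain and $\underline{\mathscr{F}_\bullet}$ a splitting structure; then $\pi$ remains a $\G_{\calO_F}$-torsor and $q$ records the images of the graded pieces $\mathscr{F}^l_{\Lambda,i,j}/\mathscr{F}^{l-1}_{\Lambda,i,j}$ inside $\bbM^{\spl}$. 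This is precisely \cite{PappasRapoport2005} \S 15, so again it is a matter of unwinding definitions. \textbf{The main obstacle} I anticipate is part (1): matching the intrinsic definition of $\bbM^\loc$ as a scheme-theoretic image against Levin's local model requires carefully comparing the Weil-restriction decomposition $\bbM^{\spl}\sim\prod_{\tau}\bbM^\loc_\tau$ (twisted product) used in Proposition \ref{prop:flatness} with the $(G,\mu,K_\mathscr{L})$ presentation, and in particular verifying that taking scheme-theoretic image commutes with the relevant base changes and with passing between $\calO_E$ and $\calO_F$ — the type (AU) case, not covered verbatim in \cite{PappasRapoport2005}, needs the observation (already made in the proof of Proposition \ref{prop smooth spl}) that the same trivialization argument applies. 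Everything else is essentially assembling citations.
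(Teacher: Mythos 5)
Your proposal is correct and follows essentially the same route as the paper: parts (2)--(4) are treated as citations to \cite{RapoportZink1996} and \cite{PappasRapoport2005}, and part (1) is settled by observing that flatness of $\bbM^{\spl}$ (Proposition \ref{prop:flatness}) forces its scheme-theoretic image to be flat with the correct generic fiber, hence equal to the flat closure, which is Levin's $\bbM$. The extra "bookkeeping" you anticipate for matching Weil-restriction presentations is not needed, since the characterization of $\bbM$ as the flat closure of the generic fiber is all the paper uses.
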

\begin{proof}
	We only need to show that $\bbM$ coincides with $\bbM^{\rm loc}$, as the other statements are contained in \cite{PappasRapoport2005}.
	The image $\bbM^{\rm loc}$ is flat by the flatness of $\bbM^{\spl}$.
	As the generic fiber of $\bbM$ agrees with $\bbM^{\naive}$, 
	$\bbM$ is the flat closure of the generic fiber of $\bbM^{\naive}$,
	which agrees with $\bbM^{\rm \loc}$ by definition.
\end{proof}

In order to study the geometry of $\scrA_0^{\spl}$,
we would like to show that there is a local model diagram of splitting models with respect to the splitting group $\calG^{\spl}$ in the next paragraph.
This means that there is a $\calG^{\spl}$-torsor $\widetilde{\scrA^{\spl}}$ over $\scrA^{\spl}$,
and a $\calG^{\spl}$-equivariant morphism $\widetilde{\scrA^{\spl}} \to \bbM^{\loc}(\calG^{\spl}, \mu)$.

For every scheme $S$ over $\calO_F$ and each $\Lambda \in \scrL$,
we define the $\calO_S$-module $\Upsilon_{\Lambda,i,j}^l$ as
\[\Upsilon_{\Lambda,i,j}^l := \Ker( (\pi_i \otimes 1 - 1 \otimes \sigma_{i,j}^l(\pi_i)) |_{\H_{i,j}/\scrF_{i,j}^l}).\]
Consider also the $\calO_F$-lattice $\Lambda_{i,j}^l := \Lambda_i \otimes_{\calO_{F_i}, \sigma_{i,j}^l} \calO_F$.
Then
Zariski locally there exists an isomorphism
\[\Upsilon_{\Lambda,i,j}^l  \simeq \Lambda_{i,j}^l\otimes_{\calO_F}\calO_S,\]
see the proof of Proposition \ref{prop:flatness}.
For every $\Lambda \in \scrL$,
we can define an $\calO_S$-module $\calM_\Lambda$ as
\[\calM_\Lambda :=\bigoplus_i\calM_{\Lambda,i}^{m_i},\quad \calM_{\Lambda ,i}:= \bigoplus_{j,l} \Upsilon_{\Lambda,i,j}^l,\]
so it is locally isomorphic to the $\calO_F$-lattice
\[\Lambda^{\spl} :=\bigoplus_i \Lambda^{\spl, m_i}_i,\quad \Lambda^{\spl}_i:= \bigoplus_{j,l} \Lambda_{i,j}^l.\]

Consider the group
\[\calG^{\spl}_\Lambda := \Aut(\Lambda^{\spl}) \subset \prod_{i,j,l} \calG_{\Lambda,i,j}^l.\]
We simply define the splitting group $\calG^{\spl}$ as
\[\calG^{\spl} =\bigcap_{\Lambda \in \scrL} \calG_{\Lambda}^{\spl}.\]
For each $i,j,l$, let $\calG_{i,j}^l$ be the group scheme over $\calO_F$ defined by the automorphism of the multichain $(\Lambda_{i,j}^l)_{\Lambda \in \scrL}$.
Then it admits a decomposition as $\calG^{\spl}=\prod_{i,j,l}\G^l_{i,j}$.
Now we define \[\bbM^{\loc}(\calG^{\spl},\mu):=\prod_{i,j,l}\bbM^\loc(\G_{i,j}^l,\mu_{i,j}^l),\] where on the right hand side each $\bbM^\loc(\G_{i,j}^l,\mu_{i,j}^l)$ is the local model attached to the pair $(\G_{i,j}^l,\mu_{i,j}^l)$.
Note that if we consider the natural $\calG^{\spl}$-action on $\bbM^{\loc}(\calG^{\spl},\mu)$,  as in the proof of Proposition~\ref{prop:flatness} we get 
the following diagram of schemes over $\calO_F$ for splitting local models (modified version of the diagrams (5.10) and (9.13) of \cite{PappasRapoport2005}):
\[\xymatrix{
	& {\widetilde{\bbM}^{\spl}(\calG,\mu)} \ar[ld]_{\pi_1} \ar[rd]^{q} \\
	\bbM^{\spl}(\calG,\mu) && \bbM^{\loc}(\calG^{\spl}, \mu)
}\]
where $\pi_1$ is the $\prod_{i,j,l\geq 2}\G^l_{i,j}$-torsor, 
and $q$ is a $\calG^{\spl}$-equivariant morphism.
Note that $q = \pi_2$,
but here we only consider the group action on $\widetilde{\bbM}^{\spl}(\calG,\mu)$ given by $\pi_1$.
One can see that such action is compatible with the natural $\prod_{i,j,l\geq 2}\G^l_{i,j}$-action on $\bbM^{\loc}(\calG^{\spl}, \mu)$.
This motivates the following local model diagram for splitting integral models.

\begin{proposition}\label{prop loc mod Gspl}
	We have the following local model diagram for splitting models:
	\[\xymatrix{
	& {\widetilde{\mathscr{A}^{\spl}}} \ar[ld]_{\pi} \ar[rd]^{q} \\
	{\mathscr{A}^{\spl}} && \bbM^{\loc}(\calG^{\spl},\mu) \subset \prod_{i, j, l} \bbM_{\Lambda,i,j}^l, & {}
	}\]
	where for every $\calO_F$-scheme $S$,
	${\widetilde{\mathscr{A}^{\spl}}}(S)$ classifies isomorphism classes of
	\[(\underline{A}_{\scrL}, \underline{\scrF}_{\scrL}, (\tau_{\Lambda, i, j}^l:\Upsilon_{\Lambda,i,j}^l \simeq \Lambda_{i, j}^l\otimes_{\calO_F} \calO_S)_{\Lambda \in \scrL} ),\]
	with $(\underline{A}_{\scrL}, \underline{\scrF}_{\scrL}) \in \scrA^{\spl}(S)$, and
	 $\tau_{\scrL} := \{\tau_{\Lambda,i,j}^l\}$ can be viewed as a trivialization of the multi-chain $(\calM_\Lambda)_{\Lambda \in \scrL}$.
	The morphism $\pi$ is the natural forgetful morphism, 
	which is a $\calG^{\spl}$-torsor.
	The morphism $q$ is the natural $\calG^{\spl}$-equivariant smooth morphism given by
	\[(\underline{A}_{\scrL}, \underline{\scrF}_{\scrL}, \{\tau_{\Lambda, i, j}^l\}) \mapsto \{\tau_{\Lambda,i,j}^l(\scrF_{\Lambda,i,j}^l / \scrF_{\Lambda,i,j}^{l-1})\}_{\Lambda \in \scrL}.\]
\end{proposition}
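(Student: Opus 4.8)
The plan is to construct the scheme $\widetilde{\mathscr{A}^{\spl}}$ directly as a moduli functor and then verify each claimed structure. First I would define, for an $\calO_F$-scheme $S$, the set $\widetilde{\mathscr{A}^{\spl}}(S)$ of tuples $(\underline{A}_{\scrL}, \underline{\scrF}_{\scrL}, \tau_{\scrL})$ where $(\underline{A}_{\scrL}, \underline{\scrF}_{\scrL}) \in \scrA^{\spl}(S)$ and $\tau_{\scrL} = \{\tau_{\Lambda,i,j}^l\}$ is a collection of $\calO_S$-module isomorphisms $\Upsilon_{\Lambda,i,j}^l \simeq \Lambda_{i,j}^l \otimes_{\calO_F}\calO_S$, compatible with the transition maps in the multichain $\scrL$ (so that $\tau_{\scrL}$ is genuinely a trivialization of the multichain $(\calM_\Lambda)_{\Lambda\in\scrL}$ as an $\calG^{\spl}$-torsor datum). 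Representability of $\widetilde{\mathscr{A}^{\spl}}$ over $\scrA^{\spl}$ follows because, Zariski-locally on $S$, such a $\tau$ exists by \cite{PappasRapoport2005}*{Propositions~5.2, 9.2} (plus the type (AU) analogue, proved by the same argument, as already invoked in the proof of Proposition~\ref{prop smooth spl} and Proposition~\ref{prop:flatness}); so $\widetilde{\mathscr{A}^{\spl}} \to \scrA^{\spl}$ is the $\Isom$-scheme of the locally free multichain $(\calM_\Lambda)_{\Lambda\in\scrL}$ against the constant multichain $(\Lambda^{\spl}_i)$, hence a torsor under its automorphism group $\calG^{\spl} = \bigcap_{\Lambda\in\scrL}\Aut(\Lambda^{\spl})$.

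Next I would define the morphism $q$ by the stated formula $(\underline{A}_{\scrL}, \underline{\scrF}_{\scrL}, \tau_{\scrL}) \mapsto \{\tau_{\Lambda,i,j}^l(\scrF_{\Lambda,i,j}^l/\scrF_{\Lambda,i,j}^{l-1})\}$. One must check this lands in $\bbM^{\loc}(\calG^{\spl},\mu) = \prod_{i,j,l}\bbM^\loc(\G_{i,j}^l,\mu_{i,j}^l)$: each $\scrF_{\Lambda,i,j}^l/\scrF_{\Lambda,i,j}^{l-1}$ is a locally free $\calO_S$-module of rank $d_{i,j}^l$ killed by $\pi_i\otimes 1 - 1\otimes\sigma_{i,j}^l(\pi_i)$, so it is a submodule of $\Upsilon_{\Lambda,i,j}^{l-1}$ (note the index shift — this is where condition (2) of Definition~\ref{def gen splitting str} and the definition of $\Upsilon$ must be matched carefully); transported by $\tau$ it becomes a rank-$d_{i,j}^l$ locally direct summand of $\Lambda_{i,j}^l\otimes\calO_S$, i.e. a point of the corresponding naive local model, and the periodicity/duality conditions (3), (4) of Definition~\ref{def gen splitting str} translate exactly into the multichain and duality conditions cutting out $\bbM^\loc(\G_{i,j}^l,\mu_{i,j}^l)$ inside the naive one. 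The $\calG^{\spl}$-equivariance of $q$ is then immediate from the formula, once the $\calG^{\spl}$-action on $\bbM^{\loc}(\calG^{\spl},\mu)$ is the obvious one factor-by-factor.

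The smoothness of $q$ I would deduce exactly as in Propositions~\ref{prop:flatness} and \ref{prop smooth spl}: by the splitting-local-model diagram $\widetilde{\bbM}^{\spl}(\calG,\mu) \to \bbM^{\loc}(\calG^{\spl},\mu)$ recalled just before the statement (the modified (5.10)/(9.13) of \cite{PappasRapoport2005}), $q$ is, after the faithfully flat base change furnished by the torsor $\pi$, obtained from the naive local model diagram of Proposition~\ref{Prop loc mod diag}(2) by passing to splitting structures and then trivializing; the key point is that formation of $\widetilde{\mathscr{A}^{\spl}}$ is compatible with $\widetilde{\bbM}^{\spl}$, so $q$ is, étale-locally on source and target, a smooth morphism because forming the successive kernels $\Upsilon_{\Lambda,i,j}^l$ and choosing the filtration steps is a smooth operation (each individual step is a choice of a direct summand in a Grassmannian-type moduli, and the compatibility constraints are cut out by the flat unramified local models $\bbM^\loc(\G_{i,j}^l,\mu_{i,j}^l)$).

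The main obstacle is the bookkeeping in the second step: verifying precisely that the splitting-structure axioms of Definition~\ref{def gen splitting str} — in the full multichain (parahoric) generality, including the periodicity isomorphisms of \cite{Lan18} and the orthogonality condition (4) — translate into membership in $\prod_{i,j,l}\bbM^\loc(\G_{i,j}^l,\mu_{i,j}^l)$ and are respected by $q$, and that the $\calG^{\spl}$-actions on both sides are compatible under $q$. This is exactly the point where \cite{PappasRapoport2005} sections 5, 9, 15 and \cite{Lan18} section 2.3 do the work for the splitting group $\calG$; the only genuinely new input here is to run the same argument with $\calG$ replaced by $\calG^{\spl} = \prod_{i,j,l}\G_{i,j}^l$ — which is legitimate because, as observed in \ref{subsubsec calG}, over $\calO_F$ the ambient object $\calM_\Lambda$ already splits as $\bigoplus_{i,j,l}\Upsilon_{\Lambda,i,j}^l$, so the two groups act on the same multichain and the diagram for $\calG$ factors through the one for $\calG^{\spl}$. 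Everything else (torsor property of $\pi$, well-definedness of $q$, smoothness) is then formal.
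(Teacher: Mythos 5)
Your construction is essentially correct but follows a genuinely different route from the paper's. You build $\widetilde{\mathscr{A}^{\spl}}$ directly as the moduli functor of multichain trivializations $\tau_{\scrL}$ of $(\calM_\Lambda)_{\Lambda\in\scrL}$, i.e. as the $\Isom$-scheme against the constant multichain $(\Lambda^{\spl})$, and then verify the torsor property, the target of $q$, and smoothness by hand. The paper never verifies local triviality directly: it forms the fibre product $\widetilde{\widetilde{\scrA^{\spl}}}$ of two already-established diagrams --- the splitting local model diagram of Proposition \ref{Prop loc mod diag}(4) (a $\calG_{\calO_F}$-torsor over $\scrA^{\spl}$ mapping to $\bbM^{\spl}(\calG,\mu)$) and the $\prod_{i,j,l\geq 2}\calG_{i,j}^l$-torsor $\widetilde{\bbM}^{\spl}(\calG,\mu)\to\bbM^{\spl}(\calG,\mu)$ from the proof of Proposition \ref{prop:flatness} --- obtaining a $\calG_{\calO_F}\times\prod_{i,j,l\geq 2}\calG_{i,j}^l$-torsor, and then \emph{pushes this torsor out} along the homomorphism $\calG_{\calO_F}\to\calG^1$ coming from $\Lambda_i^1\subset\Lambda$. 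That pushout is the one non-formal step, and it is precisely what your approach must replace: the trivializations of the $l=1$ pieces $\Upsilon^1_{\Lambda,i,j}$ are induced from a trivialization of the whole multichain $(\H_\Lambda)$ rather than chosen independently, and the paper's route makes compatibility of $\tau_{\scrL}$ with all transition maps of $\scrL$ (including dualities and periodicity) automatic. In your direct construction, the Zariski-local existence of a \emph{multichain-compatible} system $\tau_{\scrL}$ does not follow from the single-lattice statements \cite{PappasRapoport2005}*{Propositions 5.2, 9.2} alone; you additionally need the local triviality of polarized multichains in the sense of \cite{RapoportZink1996} (the smoothness and local constancy of the $\Isom$-scheme of multichains), so that citation should be added. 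With that supplement both routes deliver the same torsor and the same $q$, and smoothness of $q$ is obtained the same way. One small slip in your second step: with the intended definition $\Upsilon^l_{\Lambda,i,j}=\Ker\big((\pi_i\otimes 1-1\otimes\sigma_{i,j}^l(\pi_i))|_{\H_{i,j}/\scrF_{i,j}^{l-1}}\big)$, condition (2) of Definition \ref{def gen splitting str} gives $\scrF^l_{\Lambda,i,j}/\scrF^{l-1}_{\Lambda,i,j}\subset\Upsilon^{l}_{\Lambda,i,j}$, not $\Upsilon^{l-1}_{\Lambda,i,j}$ as you wrote; with that correction the formula for $q$ makes sense exactly as stated.
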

\begin{proof}
		Combining the diagram in Proposition~\ref{Prop loc mod diag}(4) and the diagram in the proof of Proposition~\ref{prop:flatness}, one has the following diagram
	\[\xymatrix{
		& {{\mathscr{A}^{\spl, \#}}} \ar[ld]_{\pi'} \ar[rd]^{q'} 	&& {\widetilde{\bbM}^{\spl}(\calG,\mu)} \ar[ld]_{\pi_1} \ar[rd]^{q''}	\\
		{\mathscr{A}}^{\spl} && {\bbM^{\spl}(\G,\mu)} && \bbM^{\loc}(\calG^{\spl}, \mu).
	}\]
	Let $\widetilde{\widetilde{\scrA^{\spl}}}$ be the product of $q'$ and $\pi_1$, then we have a new diagram
	\[\xymatrix{
		& \widetilde{\widetilde{\scrA^{\spl}}} \ar[ld]_{\pi''} \ar[rd]^{q'''} \\
		\scrA^{\spl} && \bbM^{\loc}(\calG^{\spl}, \mu),
	}\]
	where $\pi''$ is a $\calG_{\calO_F} \times \prod_{i,j, l \geq 2} \calG_{i,j}^l$-torsor,
	and we can check that the same action of $\pi''$ makes $q'''$ a $\calG_{\calO_F} \times \prod_{i,j, l \geq 2} \calG_{i,j}^l$-equivariant smooth morphism. 
	Note that we always have $\Lambda_i^1 \subset \Lambda$, so the restriction of the $\calG$-action on $\Lambda$ induces a natural morphism
	\[ \calG_{\calO_F} = \Aut(\Lambda \otimes \calO_F) \to \Aut(\bigoplus_{i}\Lambda_i^1) = \calG^1. \]
	So we can push the $\calG$-torsor $\pi''$ along the natural morphism $\calG_{\calO_F}  \to \calG^1$ to get $\widetilde{\mathscr{A}^{\spl}}$, a $\G^\spl=\prod_{i,j,l}\G_{i,j}^l$-torsor. Moreover, the $\calG_{\calO_F} \times \prod_{i,j, l \geq 2} \calG_{i,j}^l$-action on $\bbM^{\loc}(\calG^{\spl}, \mu)$ factors through $\G^\spl$, therefore the morphism $q'''$  factors through $\widetilde{\mathscr{A}^{\spl}}$ and induces a $\G^\spl$-equivariant morphism $q: \widetilde{\mathscr{A}^{\spl}}\ra \bbM^{\loc}(\calG^{\spl}, \mu)$.
	This gives the diagram 
		\[\xymatrix{
		& {\widetilde{\mathscr{A}^{\spl}}} \ar[ld]_{\pi} \ar[rd]^{q} \\
		{\mathscr{A}^{\spl}} && \bbM^{\loc}(\calG^{\spl},\mu)  & {}
	}\]
	claimed in the proposition.
\end{proof}

\subsection{EKOR stratification}\label{subsec EKOR loc}

Recall that we write $\kappa$ for the residue field of $\calO_E$,
and $k = \overline{\kappa}$ for the algebraic closure of $\kappa$.
We will construct the EKOR stratification of \[\mathscr{A}_0 := \mathscr{A} \otimes k\] from its local model diagram,
following the idea of \cite{ShenYuZhang2021}.

Fix the triple $(G,\mu, K)$ with associated parahoric group scheme $\calG $.
We have the attached local model $\mathbb{M}^\loc=\mathbb{M}^{\loc}(\G,\mu)$ equipped with a left action of $\calG$.
Let $\calG_0 = \calG \otimes k$ and $M^\loc = \mathbb{M}^\loc \otimes k$. For $K=\G(\Z_p)$, let
$\Adm(\mu)_K$ be the $\mu$-admissible set as in \cite{ShenYuZhang2021} 1.2. By \cite{PappasRapoport2005, Levin2016, HainesRicharz2020}, we have
\begin{corollary}\label{KR}
	There is a set-theoretically disjoint union of locally closed subsets
	\[M^\loc = \coprod_{w \in \Adm(\mu)_K} M^w.\]
Moreover, we have
	\begin{enumerate}
		\item The closure $\ov{M^{w}} = \coprod_{v \leq w} M^v$;
		\item Each $M^w$ consists of a single $\calG_0$-orbit,
			and the stabilizer of each closed point is smooth.
	\end{enumerate}
\end{corollary}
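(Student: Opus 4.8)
\textbf{Proof plan for Corollary \ref{KR}.} The statement asserts that the special fiber $M^\loc$ of the local model $\mathbb{M}^\loc(\G,\mu)$ admits the Kottwitz--Rapoport stratification indexed by the admissible set $\Adm(\mu)_K$, with the expected closure relations and with each stratum a single smooth $\calG_0$-orbit. The plan is to reduce everything to the statements already established in the literature for local models with parahoric level structure. By Proposition \ref{Prop loc mod diag}(1), the scheme $\mathbb{M}^\loc(\G,\mu)$ coincides with the local model $\bbM$ of \cite{Levin2016} attached to the triple $(G,\mu,K_\mathscr{L})$; and by Proposition \ref{prop:flatness} together with Proposition \ref{Prop loc mod diag}, this local model is flat over $\calO_E$ and is the flat closure of the generic fiber inside $\bbM^{\naive}$. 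Hence it is the ``honest'' local model in the sense of \cite{PappasRapoport2005, Levin2016, HainesRicharz2020}, for which the $\calG_0$-orbit structure on the special fiber is known.

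The first step is to invoke the identification of the geometric special fiber: by \cite{HainesRicharz2020} (building on \cite{PappasRapoport2005, Levin2016}), the $k$-points of $M^\loc$ decompose into $\calG_0$-orbits naturally indexed by the $\mu$-admissible set $\Adm(\mu)_K \subset W_K\backslash\widetilde{W}/W_K$, via the embedding of the special fiber of the local model into the affine flag variety of $\G$ and the identification of $\calG_0$-orbits there with double cosets. Setting $M^w$ to be the locally closed reduced subscheme whose $k$-points form the orbit labeled by $w$, one gets the set-theoretic disjoint union $M^\loc = \coprod_{w\in\Adm(\mu)_K} M^w$. For the closure relation $\ov{M^w} = \coprod_{v\le w} M^v$, one uses that the Bruhat--Tits/affine-flag-variety closure order on Schubert cells restricts to the admissibility order $\le$ on $\Adm(\mu)_K$; this is exactly the content of the cited references, so step (1) is a direct citation once the orbit parametrization is in place. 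For part (2), that each $M^w$ is a single $\calG_0$-orbit is built into the definition of $M^w$, and smoothness of the stabilizer of a closed point follows because $M^w$, being a $\calG_0$-orbit in a finite-type $k$-scheme with $\calG_0$ smooth, is itself smooth, so the orbit map $\calG_0 \to M^w$ is smooth and its fibers (the stabilizers) are smooth; alternatively one cites \cite{HainesRicharz2020} directly for the smoothness of stabilizers.

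The one point that needs genuine care — and which I expect to be the main obstacle — is verifying that the group scheme $\calG$ occurring here is indeed a \emph{parahoric} group scheme in the technical sense, so that the cited results apply verbatim. Our $\calG$ is the Rapoport--Zink stabilizer scheme $\calG_\mathscr{L}$ of a self-dual multichain, and a priori this is only a smooth stabilizer group scheme, not obviously connected/parahoric. Here one uses the structure of the group: $G = \prod_i \Res_{F_i|\Q_p} H_i$ with each $H_i$ unramified, so by \cite{HainesRicharz2020}*{Corollary 4.8} every parahoric of such a Weil-restricted unramified group is again a Weil restriction of a parahoric, and one checks that the stabilizer scheme $\calG_\mathscr{L}$ is connected (hence equals the parahoric), as recorded in subsection \ref{subsubsec int group} for the maximal level case and in the discussion of parahoric data above in the general case. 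Once $\calG$ is confirmed parahoric and $\mathbb{M}^\loc(\G,\mu)$ is confirmed to be the Pappas--Zhu/Levin local model, the corollary is immediate from \cite{PappasRapoport2005, Levin2016, HainesRicharz2020}; no further computation is required.
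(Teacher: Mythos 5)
Your proposal matches the paper's treatment: the paper offers no independent proof of Corollary \ref{KR} and simply cites \cite{PappasRapoport2005, Levin2016, HainesRicharz2020}, relying on the identification of $\mathbb{M}^\loc$ with the flat local model of Levin (Proposition \ref{Prop loc mod diag}) and on the fact, recorded in the parahoric-data discussion, that $\calG=\calG_{\mathscr{L}}$ is connected (hence parahoric) because each $H_i$ is unramified — exactly the point you correctly flag as the one requiring care. The only slip is your intermediate argument that smoothness of the orbit $M^w$ forces smoothness of the stabilizer: over a field of characteristic $p$ an orbit of a smooth group is always smooth while the stabilizer may fail to be (e.g.\ a $\mu_p$-stabilizer), so that inference is invalid; however, your fallback of citing \cite{HainesRicharz2020} directly for the stabilizer statement is what the paper does, so the proof stands.
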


The decomposition of $M^\loc$ induces the KR (Kottwitz-Rapoport) stratification \[\scrA_0=\coprod_{w\in \Adm(\mu)_K}\scrA_0^w,\] where
for each $w \in \Adm(\mu)_K$,
$\scrA_0^w$ is the fiber of the morphism of algebraic stacks over $k$ induced by the local model diagram (cf. Proposition \ref{Prop loc mod diag} (3))
\[\scrA_0 \to [\calG_0 \backslash M^\loc].\]
Each $\scrA_0^w$ is a locally closed smooth subvariety of $\scrA_0$, and we have
\[\overline{\scrA_0^w} = \coprod_{w' \leq w} \scrA_0^{w'}.\]

Consider the local model diagram
\[\xymatrix{
	& {\widetilde{\scrA_0}} \ar[ld]_{\pi} \ar[rd]^{q} \\
	{\scrA_0} && M^\loc
}\]
where $\pi$ is a $\calG_0$-torsor and $q$ is $\calG_0$-equivariant.
For each $w \in \Adm(\mu)_K$,
let $J_w$ be the set defined in 1.3.6 of \cite{ShenYuZhang2021}, and $\calG_0^{\rdt}$ the reductive quotient of $\calG_0$.
By the same method of \cite{ShenYuZhang2021}*{\S 3},
there is a $\calG_0^{\rdt}$-zip of type $J_w$ over $\scrA_0^w$, written as \[(\bbI^w, \bbI^w_+, \bbI^w_-, \iota).\]
The tuple $(\bbI^w, \bbI^w_+, \bbI^w_-, \iota)$ then induces a morphism of stacks
\[\zeta_w: \scrA_0^w \to \Zip{\calG_0^{\rdt}}{J_w}.\]
The proof of \cite{ShenYuZhang2021}*{Theorem 3.4.11} gives
\begin{corollary}
	The morphism $\zeta_w$ is smooth.
\end{corollary}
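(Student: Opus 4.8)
The plan is to reduce the statement to the corresponding smoothness result for unramified (more precisely, parahoric PEL-type) integral models, namely \cite{ShenYuZhang2021}*{Theorem 3.4.11}, by running the same argument with the local model diagram of $\scrA$ in place of the one for the unramified Shimura variety. First I would fix $w \in \Adm(\mu)_K$ and recall the construction just sketched: the KR stratum $\scrA_0^w$ is the fiber of $\scrA_0 \to [\calG_0 \backslash M^\loc]$ over the $\calG_0$-orbit $M^w$, and on it one has the $\calG_0^{\rdt}$-zip $(\bbI^w, \bbI^w_+, \bbI^w_-, \iota)$ of type $J_w$ obtained by restricting the local model diagram $\widetilde{\scrA_0} \to M^\loc$ to the preimage of $M^w$, trivializing along the smooth locus of the $\calG_0$-action (using Corollary~\ref{KR}(2), which guarantees that the stabilizer of each closed point of $M^w$ is smooth), and then passing to the reductive quotient $\calG_0^{\rdt}$. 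The point is that the morphism $\zeta_w$ is built entirely out of the local model diagram and the orbit structure of $M^w$, both of which are formally identical to the unramified case thanks to Proposition~\ref{Prop loc mod diag}(3) and Corollary~\ref{KR}.

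The key steps, in order, would be: (1) Observe that smoothness of $\zeta_w$ is local on $\scrA_0^w$ and may be checked on completed local rings, or equivalently after base change to $k$ and passing to the torsor $\widetilde{\scrA_0}$, as in the proof of Proposition~\ref{prop sm zeta}; thus it suffices to show that the induced map on a suitable trivialized chart is smooth. (2) Using the local model diagram $\widetilde{\scrA_0} \to M^\loc$, identify the complete local ring of $\scrA_0$ at a point of $\scrA_0^w$ with the complete local ring of $M^\loc$ at the corresponding point of $M^w$; since $M^w$ is a single $\calG_0$-orbit, this complete local ring is isomorphic to the complete local ring at the identity of the opposite unipotent (with respect to the relevant cocharacter/filtration datum) inside $M^\loc$. (3) Translate the $\calG_0^{\rdt}$-zip $\zeta_w$ into the explicit form of a map to $[E_{\calG_0^{\rdt}, J_w} \backslash \calG_0^{\rdt}]$ via Theorem~\ref{thm quotient zip stack}, and compute the induced map on tangent spaces exactly as in \cite{ShenYuZhang2021}*{\S 3.4} (which itself adapts \cite{Zhang2018EO}*{Theorem 4.1.2}): the surjectivity on tangent spaces follows from the structure of the zip group action combined with the identification of the complete local ring with an opposite unipotent, because the relevant differential is an isomorphism onto the tangent space of the Levi factor. (4) Conclude smoothness of $\zeta_w$ from surjectivity on tangent spaces plus the fact that both source and target are of the expected dimensions ($\dim \scrA_0^w$ and $0$ respectively).

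I expect the main obstacle to be step (2)–(3): namely, verifying carefully that in the \emph{ramified} PEL setting the complete local ring of $\scrA_0$ along a KR stratum $\scrA_0^w$ still has the precise shape needed (an opposite-unipotent-type formal neighborhood adapted to the cocharacter defining $J_w$), so that the tangent-space computation of \cite{ShenYuZhang2021} goes through verbatim. The subtlety is that $M^\loc = \mathbb{M}^\loc(\calG,\mu)$ is not smooth in general — only its splitting resolution $\mathbb{M}^\spl$ is — so one cannot simply quote smoothness of the local model; instead one must use that \emph{each stratum} $M^w$ is a single smooth $\calG_0$-orbit with smooth stabilizers (Corollary~\ref{KR}(2)), which is exactly what makes the restricted local model diagram over $\scrA_0^w$ behave like the unramified one. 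Once this local-structural input is in place, the rest is a routine transcription of the arguments already used in the proof of Proposition~\ref{prop sm zeta} and in \cite{ShenYuZhang2021}*{\S 3.4, Theorem 3.4.11}, so I would simply cite those sources for the tangent-space calculation rather than reproduce it.
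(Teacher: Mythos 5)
Your proposal is correct and takes essentially the same route as the paper: the paper's entire proof of this corollary is the single remark that ``the proof of \cite{ShenYuZhang2021}*{Theorem 3.4.11} gives'' the statement, i.e.\ one transcribes the tangent-space argument of that theorem using the local model diagram of Proposition~\ref{Prop loc mod diag}(3) and the orbit description of Corollary~\ref{KR}, exactly as you outline. Your closing observation --- that one works with the smooth $\calG_0$-orbit $M^w$ and its smooth stabilizers rather than with smoothness of $M^\loc$ itself --- is precisely the point that makes the unramified argument carry over.
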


Let $W=W_{\calG_0^{\rdt}}$ be the Weyl group of $\calG_0^{\rdt}$. Then
the underlying topological space of $\Zip{\calG_0^{\rdt}}{J_w}$ is given by the partially ordered set ${}^{J_w} W$.
For each $x \in {}^{J_w} W$, define $\scrA_0^x := \zeta_w^{-1}(x)$. 
Then $\scrA_0^x$ is a locally closed subvariety of $\scrA_0^w$. Letting $w\in  \Adm(\mu)_K$ vary, we get
\[\scrA_0 = \coprod_{w \in \Adm(\mu)_K, x \in {}^{J_w} W} \scrA_0^x.\]
We will call such decomposition the EKOR stratification of $\scrA_0$.

The index set of EKOR strata is in fact given by the partially ordered set $({}^K \Adm(\mu), \leq_{K,\sigma})$ of \cite{ShenYuZhang2021} 1.2.
There is a natural surjection
\[\pi: {}^K\Adm(\mu) \to \Adm(\mu)_K,\]
such that for every $w \in \Adm(\mu)_K$,
the fiber $\pi^{-1}(w)$ is bijective to the partially ordered set ${}^{J_w} W$ by \cite{ShenYuZhang2021} 1.3.6.

\begin{example}[EKOR strata of Hilbert modular varieties]\label{example EKOR}

Consider $B = L$ a totally real field over $\Q$ and $V = L^2$, then $G$ is a subgroup of $ \Res_{L|\Q} \GL_2$. In this case, the canonical model $\scrA$ (with maximal parahoric level at $p$) was constructed in \cite{DP} and coincides with $\scrA^{\naive}$. 
Let $x = (A, \lambda, \iota, \alpha) \in \scrA_0(k)$ and $\calH$, $\omega$ the corresponding $k$-vector spaces as above. 
For each $i,j$, $\calH_{i,j} \simeq k[\varepsilon_i]^2$ and $\omega_{i,j} \subset \calH_{i,j}$ is a $k[\varepsilon_i]$-submodule with $k$-dimension $e_i$, so there is an integer $a_{i,j}$ such that
\[\omega_{i,j} \simeq (\varepsilon_i^{a_{i,j}}) \oplus (\varepsilon_i^{e_i - a_{i,j}}), \text{   for a unique integer } 0 \leq a_{i,j} \leq \lfloor\frac{e_i}{2}\rfloor.\]
For each tuple $a = (a_{i,j})$ such that $0 \leq a_{i,j} \leq \lfloor\frac{e_i}{2}\rfloor$, let
\[\scrA_0^a := \{x \in \scrA_0 \mid \omega_{x,i,j} \simeq (\varepsilon_i^{a_{i,j}}) \oplus (\varepsilon_i^{e_i - a_{i,j}}) \text{  for all } i, j\}.\]
Then
\[\scrA_0 = \coprod_a \scrA_0^a\] 
is the KR stratification of $\scrA_0$.
The stratum $\scrA_0^a$ is locally closed subvariety of $\scrA_0$ of dimension $\sum_{i,j} (e_i - 2 a_{i,j})$ with closure relation given by
\[a \leq a' \text{ if and only if } a_{i,j} \geq a'_{i,j} \text{ for all } i,j.\]

Now consider the tuple $a=(a_{i,j})$ such that $x \in \scrA_0^a(k)$, then $a_{i,j}$ is the maximal integer such that
\[\omega_{i,j} \subset \varepsilon_i^{a_{i,j}} \calH_{i,j}.\]
If we write $M_{i,j} = \calH_{i,j} / \varepsilon_i \calH_{i,j}$, the Verschiebung map $\calH_{i,j+1} \to \calH_{i,j}$ induces a morphism 
\[V_{i,j}: M_{i,j+1} \to  \varepsilon_i^{a_{i,j}} \calH_{i,j} /  \varepsilon_i^{a_{i,j}+1} \calH_{i,j} \simeq M_{i,j}.\]
Conversely, we first assume $a_{i,j} < e_i/2$, the Frobenius map $\calH_{i,j} \to \calH_{i,j+1}$ induces a morphism
\[F_{i,j}: M_{i,j} \simeq \varepsilon_i^{a_{i,j}} \calH_{i,j} /  \varepsilon_i^{a_{i,j}+1} \calH_{i,j} \to \varepsilon_i^{2a_{i,j}} \calH_{i,j+1} /  \varepsilon_i^{2a_{i,j}+1} \calH_{i,j+1} \simeq M_{i,j+1}.\]
For $a_{i,j} = e_i/2$, let $F_{i,j} = 0$, then we have the tuple 
\[(M := \bigoplus_{i,j} M_{i,j}, F:= \bigoplus_{i,j} F_{i,j}, V:= \bigoplus_{i,j} V_{i,j})\]
such that $M$ is a $k$-vector space with semi-linear morphisms $F,V$ satisfying the equation
\[\Image(F) = \Ker(V),\quad \Ker(F) = \Image(V).\]
The type of the $F$-zip $(M,\Ker(V), \Ker(F),\varphi_\bullet)$ is determined by the dimension of $\Ker(V)$, which is equal to the number of the indices $(i,j)$ such that $a_{i,j} \neq e_i/2$. 
More explicitly, consider the function
\[ \delta(u) = \begin{cases} 0, & u = 0; \\ 1, & u \neq 0. \end{cases}\]
For each tuple $a=(a_{i,j})$ corresponding to a KR stratum $\scrA_0^a$,
 set \[t_a := \sum_{i,j} \delta(e_i - 2 a_{i,j}).\] 
 There are $t_a+1$ EKOR strata contained in $\scrA_0^a$.
The EKOR type of a point $x \in \scrA_0^a(k)$ is given by an integer $0 \leq t_x \leq t_a$, 
and the EKOR stratum containing $x$ has dimension $\dim(\scrA_0^a) - t_x$.

\end{example}

\subsection{Global construction of EKOR stratification}\label{subsec EKOR gl}

We will show that the closure relation of EKOR strata is given by the partially ordered set $({}^K\Adm(\mu), \leq_{K,\sigma})$.
All results and detailed definitions come from \cite{ShenYuZhang2021}*{\S 4} with exactly the same proofs (note that thanks to the recent works \cite{AGLR, GL} the perfection of the geometric special fiber $M^{\loc}$ of the local model here can be embedded into the associated Witt vector affine flag variety, similar to the tamely ramified case used in \cite{ShenYuZhang2021}). 
So we omit all proofs in this subsection and refer to  loc. cit. for detailed arguments.

Keep the notations as in the last subsection.
Let $C(\calG,\mu)$ be the index set of central leaves in \cite{ShenYuZhang2021}*{\S 1}. 
There is a prestack $\Sht_{\mu,K}^{\loc}$ over $k = \overline{\bbF}_p$ classifying $\calG$-Shtukas of type $\mu$,
whose $k$-points are given by
\[\Sht_{\mu,K}^{\loc}(k) = C(\calG, \mu).\]	
There is a prestack $\Sht_{\mu,K}^{\loc(\infty,1)}$ over $k$, parameterizing the so-called $(\infty,1)$-restricted local Shtukas.
For sufficiently large integer $m$, 
there is an algebraic stack $\Sht_{\mu,K}^{\loc(m,1)}$ over $k$, parameterizing the so-called $(m,1)$-restricted local Shtukas,
such that (see \cite{ShenYuZhang2021}*{Lemma 4.2.4})
\[|\Sht_{\mu,K}^{\loc(\infty,1)}| \simeq |\Sht_{\mu,K}^{\loc(m,1)}| \simeq {}^K\Adm(\mu).\]
We also have natural maps (which are perfectly smooth by \cite{ShenYuZhang2021}*{Proposition 4.2.5})
\[\Sht_{\mu,K}^{\loc} \to \Sht_{\mu,K}^{\loc(\infty,1)} \to \Sht_{\mu,K}^{\loc(m,1)} \to [\calG_0 \backslash M^{\loc}].\]

Consider the perfection $\scrA_0^{\pf} = \varprojlim_\sigma \scrA_0$ of $\scrA_0$. Then the
same proof of \cite{ShenYuZhang2021}*{Proposition 4.4.1} shows that there exists a morphism of prestacks
\[\scrA_0^{\pf} \to \Sht_{\mu,K}^{\loc}.\]
Composing this morphism with the natural morphism $\Sht_{\mu,K}^{\loc} \to \Sht_{\mu,K}^{\loc(m,1)}$, 
we get a morphism of stacks
\[v_K: \scrA_0^{\pf} \to \Sht_{\mu,K}^{\loc(m,1)}.\]
Recall that the local model diagram gives the morphism of stacks
\[\lambda_K: \scrA_0^{\pf} \to [\calG_0 \backslash M^{\loc}],\]
which is perfectly smooth.
The same proof as \cite{ShenYuZhang2021}*{Theorem 4.4.3} (there is a small gap in the last step of the proof concerning the involved diagram, which is inherited from the corresponding place of the work of Xiao-Zhu; but a small modification without the commutativity of that diagram will make the argument still work, cf. \cite{SYZ}.) shows
\begin{theorem}
	The following diagram commutes:
	\[\xymatrix{
	\scrA_0^{\pf} \ar[r]^{v_K} \ar[rd]_{\lambda_K} & \Sht_{\mu,K}^{\loc(m,1)} \ar[d]^{\pi_K} \\
	& {[\calG_0 \backslash M^{\loc}]}
	}\]
	Moreover, $v_K$ is perfectly smooth.
\end{theorem}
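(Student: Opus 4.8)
The strategy is to follow the argument of \cite{ShenYuZhang2021}*{Theorem~4.4.3}, which in turn adapts Xiao--Zhu's treatment of the parahoric case, now in the ramified PEL setting where $\bbM^\loc=\bbM^\loc(\calG,\mu)$ is the Pappas--Rapoport local model and the perfection of $M^\loc$ embeds $\calG_0$-equivariantly into the Witt vector affine flag variety by \cite{AGLR, GL}. First I would spell out the two maps on points. Over a perfect $\bbF_p$-algebra $R$, a point of $\scrA_0^{\pf}$ is the perfection of a PEL abelian scheme $\ul A_\scrL$ with its lattice-chain structure; the canonical morphism $\scrA_0^{\pf}\to\Sht_{\mu,K}^{\loc}$ sends it to the $\calG$-Shtuka attached to the (contravariant) Dieudonn\'e/crystalline Dieudonn\'e module of $\ul A_\scrL[p^\infty]$ together with its Frobenius and additional structure, and $v_K$ is the composite with the truncation $\Sht_{\mu,K}^{\loc}\to\Sht_{\mu,K}^{\loc(m,1)}$; while $\lambda_K$ sends $\ul A_\scrL$ to the relative position of the Hodge filtration $\omega_{\ul A_\scrL}\subset H^1_{\dR}(\ul A_\scrL)$ inside $M^\loc$ modulo $\calG_0$, via the local model diagram of Proposition~\ref{Prop loc mod diag}(3) (equivalently Proposition~\ref{prop loc mod Gspl} after remembering only the $\calG_0$-action).

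Second, for the commutativity $\pi_K\circ v_K=\lambda_K$ I would use that $\pi_K$ forgets the Frobenius structure of an $(m,1)$-restricted local Shtuka and retains only the underlying filtered $\calG_0$-module, i.e.\ the relative position of its ``leg''. Grothendieck--Messing theory, applied to $\ul A_\scrL$ over a point of $\scrA_0^{\pf}$, identifies the reduction of the crystal of $\ul A_\scrL[p^\infty]$ along the structure map with $H^1_{\dR}(\ul A_\scrL)$, and identifies its leg with $\omega_{\ul A_\scrL}$, compatibly with the $\calO_B$-action, the polarization pairing and the lattice chain $\scrL$; transporting this through the identification of $M^\loc$ with the relevant Schubert variety in the Witt vector affine flag variety yields the asserted equality. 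I would establish this first on $k$-points, where both sides land in ${}^K\Adm(\mu)$ (using Corollary~\ref{KR} and $|\Sht_{\mu,K}^{\loc(m,1)}|\simeq{}^K\Adm(\mu)$) and are visibly computed from the isomorphism class of $H^1_{\dR}(\ul A_\scrL)$ with its additional structure, and then promote it to an equality of morphisms using the functoriality of the two constructions together with the perfect smoothness of $\lambda_K$.

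Third, for perfect smoothness of $v_K$ I would reduce, via the $\calG_0$-torsor $\pi:\widetilde{\scrA_0}^{\pf}\to\scrA_0^{\pf}$ carrying the perfectly smooth $\calG_0$-equivariant chart $q:\widetilde{\scrA_0}^{\pf}\to M^\loc$, to showing that the induced morphism $\scrA_0^{\pf}\to\Sht_{\mu,K}^{\loc(\infty,1)}$ is perfectly smooth and then compose with the perfectly smooth truncation $\Sht_{\mu,K}^{\loc(\infty,1)}\to\Sht_{\mu,K}^{\loc(m,1)}$. The point is a lifting criterion: deformations of a point of $\scrA_0^{\pf}$ along a square-zero thickening are governed, by Serre--Tate together with Grothendieck--Messing and crystalline Dieudonn\'e theory, by lifts of the Hodge filtration inside the crystal (compatibly with the PEL data), which is precisely the data recorded by the $(\infty,1)$-restricted local Shtuka; comparing this deformation theory with that of $q$, which is perfectly smooth, gives the infinitesimal surjectivity needed. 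Note bijectivity on tangent spaces is not required here.

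I expect the commutativity step to be the main obstacle, for exactly the reason flagged above: it forces one to reconcile the crystalline/Shtuka-theoretic description of the local data with the de Rham/local-model description in the presence of ramification and a nontrivial parahoric level, and the ``small gap'' in \cite{ShenYuZhang2021} (inherited from Xiao--Zhu) lies in the commutativity of an auxiliary diagram entering this reconciliation. The remedy is to avoid that diagram: one verifies $\pi_K\circ v_K=\lambda_K$ directly on geometric points, where both morphisms factor through ${}^K\Adm(\mu)$ and are computed from the same filtered module, and then upgrades to an identity of morphisms of (pre)stacks using the perfect smoothness of $\lambda_K$ and a reducedness argument for the source, which suffices for all subsequent applications (the EKOR stratification and its closure relation).
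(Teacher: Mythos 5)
Your proposal takes essentially the same route as the paper, which offers no independent argument here: it defers entirely to the proof of \cite{ShenYuZhang2021}*{Theorem 4.4.3} (made applicable by the embedding of the perfection of $M^{\loc}$ into the Witt vector affine flag variety via \cite{AGLR} and \cite{GL}), together with exactly the workaround you describe, namely sidestepping the commutativity of the problematic auxiliary diagram and verifying the required compatibility directly, which suffices for the EKOR closure relations. Your expansion of the Dieudonn\'e-theoretic construction of $v_K$, the Grothendieck--Messing comparison underlying the commutativity, and the deformation-theoretic argument for perfect smoothness is a faithful rendering of that cited proof.
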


Consider the morphism of stacks
\[v_K: \scrA_0^{\pf} \to \Sht_{\mu,K}^{\loc(m,1)}.\]
We know that $|\Sht_{\mu,K}^{\loc(m,1)}| \simeq {}^K\Adm(\mu)$,
the fibers of $v_K$ are then the EKOR strata of $\scrA_0^{\pf}$.
As we have the identification of underlying topological spaces
\[|\scrA_0^{\pf}| = |\scrA_{0}|.\]
The perfect smoothness of $v_K$ shows that
\begin{corollary}
	For any $x \in {}^K\Adm(\mu)$, 
	the Zariski closure of the EKOR stratum $\scrA_{0}^x$ is given by
	\[\overline{\scrA_{0}^x} = \coprod_{x' \leq_{K,\sigma} x} \scrA_{0}^{x'}.\]
\end{corollary}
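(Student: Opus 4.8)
The plan is to deduce the closure relation purely formally from the perfect smoothness of $v_K$ established in the preceding theorem, together with the description of the topological space $|\Sht_{\mu,K}^{\loc(m,1)}|$ and its specialization order. First I would record the topological input on the target: by \cite{ShenYuZhang2021}*{Lemma~4.2.4} (whose proof carries over here thanks to \cite{AGLR, GL}, which embed the perfection of $M^{\loc}$ into the Witt vector affine flag variety), one has a homeomorphism $|\Sht_{\mu,K}^{\loc(m,1)}| \simeq {}^K\Adm(\mu)$, and moreover the specialization order on the left-hand side is exactly the partial order $\leq_{K,\sigma}$; concretely, the Zariski closure of the point $x \in {}^K\Adm(\mu)$ inside $|\Sht_{\mu,K}^{\loc(m,1)}|$ equals $\{x' \mid x' \leq_{K,\sigma} x\}$. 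This is the ramified analogue of the corresponding statement in loc. cit., reflecting the fact that the strata of $\Sht_{\mu,K}^{\loc(m,1)}$ are images of the EKOR strata of the admissible locus in the affine flag variety, whose closure relations are governed by $\leq_{K,\sigma}$.

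Next I would use that the map $|v_K| \colon |\scrA_0^{\pf}| \to |\Sht_{\mu,K}^{\loc(m,1)}|$ induced by $v_K$ on underlying topological spaces is continuous and open: a perfectly smooth morphism of prestacks is, after passing to a presentation, perfectly faithfully flat of perfectly finite presentation, hence induces an open map on underlying spaces. By construction the EKOR stratum $\scrA_0^x$, viewed inside $\scrA_0^{\pf}$ via the identification $|\scrA_0^{\pf}| = |\scrA_0|$, is precisely the fiber $|v_K|^{-1}(x)$. For a continuous open map $f\colon X\to Y$ and any subset $Z\subseteq Y$ one has $f^{-1}(\overline{Z}) = \overline{f^{-1}(Z)}$: the inclusion $\overline{f^{-1}(Z)}\subseteq f^{-1}(\overline{Z})$ is automatic by continuity, and for the reverse, if $f(y)\in\overline{Z}$ then every open neighbourhood $U$ of $y$ has open image $f(U)\ni f(y)$ meeting $Z$, so $U$ meets $f^{-1}(Z)$, whence $y\in\overline{f^{-1}(Z)}$.

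Finally, applying this with $f = |v_K|$ and $Z = \{x\}$, and combining with the first step,
\[ \overline{\scrA_0^x} = \overline{|v_K|^{-1}(\{x\})} = |v_K|^{-1}\big(\overline{\{x\}}\big) = |v_K|^{-1}\big(\{x' \mid x' \leq_{K,\sigma} x\}\big) = \coprod_{x' \leq_{K,\sigma} x} \scrA_0^{x'}, \]
which is the asserted closure relation on $\scrA_0$ after transporting along the homeomorphism $|\scrA_0^{\pf}| = |\scrA_0|$. The main obstacle is not this formal argument but the two cited inputs that feed into it: the identification of $|\Sht_{\mu,K}^{\loc(m,1)}|$ with $({}^K\Adm(\mu), \leq_{K,\sigma})$ \emph{as a topological space} (i.e.\ that the specialization order is exactly $\leq_{K,\sigma}$, not merely a set-theoretic bijection), and the openness of $|v_K|$; both rest on the Witt vector affine flag variety techniques being available in the present ramified PEL setting, which is exactly where \cite{AGLR, GL} replace the tameness hypothesis of \cite{ShenYuZhang2021}.
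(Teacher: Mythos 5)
Your proposal is correct and follows essentially the same route as the paper: the paper likewise deduces the closure relation directly from the perfect smoothness of $v_K$, the homeomorphism $|\Sht_{\mu,K}^{\loc(m,1)}| \simeq {}^K\Adm(\mu)$ (with specialization order $\leq_{K,\sigma}$), and the identification of EKOR strata with fibers of $v_K$ under $|\scrA_0^{\pf}| = |\scrA_0|$. You have merely made explicit the formal topological step (openness of $|v_K|$ and the identity $f^{-1}(\overline{Z}) = \overline{f^{-1}(Z)}$ for continuous open $f$) that the paper leaves implicit.
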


\subsection{Non-emptiness of EKOR strata}\label{subsec nonempty}

We recall the proof of non-emptiness of EKOR strata following \cite{HeZhou2020}.
Our situation is slightly different from loc. cit. as the groups there are required to be tamely ramified at $p$. Nevertheless their method equally applies to our case.

Let $\tau_\mu$ be the minimal element in $\Adm(\mu)$ with respect to the Bruhat order.
Let $\breve{I}$ be a fixed Iwahori subgroup of $\breve{G}:=G(\breve{\bbQ}_p)$.
For simplicity,
we may assume that the lattice $\scrL$ is determined by a finite index set $J = \{0,\dots,m\}$.

Let $B(G)$ be the set of $\sigma$-conjugacy classes in $\breve{G}$.
Recall that the Kottwitz set (the index set of Newton stratification of $\scrA_0$) is
\[B(G,\mu) = \{[b] \in B(G) \mid \kappa([b]) = \mu^\#, \nu([b]) \leq \overline{\mu}\}.\]
There is a unique basic element $[b_0]\in B(G,\mu)$.
By \cite{KisinPeraShin2022}*{Theorem 1}, the basic locus of $\scrA_0$  is non-empty.
Let $x = (A_j, \lambda_j, \alpha)_{j\in J} \in \scrA(k)$ be a point in the basic locus.
Let $D_j$ be the Dieudonn\'e module of $A_j[p^\infty]$ and  $N$ the common rational Dieudonn\'e module.
Then $D_j$ form a lattice chain inside $N$.
The Frobenius gives $\delta \in \breve{G} $ such that $[\delta] = [b_0] \in B(G,\mu)$.

For any $w \in W_K \backslash \wt{W} / W_K$ and $b \in \breve{G}$,
The affine Deligne-Lusztig variety is defined as
\[X_{K,w}(b) = \{g \in \breve{G} / \breve{K} \mid g^{-1} b \sigma(g) \in \breve{K} w \breve{K}\}.\]
If $\breve{K} = \breve{I}$,
we simply write the corresponding affine Deligne-Lusztig variety as $X_w(b)$. For an element $w\in \wt{W}$, let $\dot{w}$ be a representative of it in $\breve{G}$.
The following lemma will be used in the proof of non-emptiness of EKOR strata.

\begin{lemma}\label{lemma non empty}
	\begin{enumerate}
		\item $\dot{\tau}_\mu$ is central.
		\item $X_{\tau_\mu}(\delta)$ is non-empty.
	\end{enumerate}
\end{lemma}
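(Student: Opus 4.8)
The plan is to prove Lemma~\ref{lemma non empty} by reducing both statements to standard facts about the $\mu$-admissible set and affine Deligne--Lusztig varieties, exactly as in \cite{HeZhou2020}, with only cosmetic changes to accommodate the present (possibly wildly ramified, Weil-restriction) setting.

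\emph{Part (1).} First I would recall that $\tau_\mu$ is by definition the unique minimal element of $\Adm(\mu)$ for the Bruhat order; it is a \emph{length-zero} element of the Iwahori--Weyl group $\wt W$, hence it normalizes $\breve I$, i.e. $\tau_\mu \in \Omega$, the stabilizer of the base alcove. The key point is that $\tau_\mu$ lies in the image of $X_*(T)_{\Gamma_0}$ (it is a translation element times a length-zero Weyl part), and its image $\kappa(\tau_\mu) = \mu^\#$ in $\pi_1(G)_{\Gamma_0}$ coincides with that of $t^\mu$. In the quasi-split case, a length-zero element whose Kottwitz point is $\mu^\#$ and which is $\mu$-admissible is forced to act trivially on the apartment (its Newton point is central), so $\dot\tau_\mu$ represents a central element of $\breve G$; equivalently $\tau_\mu \dot w \tau_\mu^{-1} = \sigma$-twist acts as identity. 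Concretely, using the product decomposition $\breve G = \prod_\tau H(\breve F_1)$ coming from the Weil restriction and the fact that each factor $H$ is unramified, one checks $\dot\tau_{\mu_i}$ is central factor by factor, and then the similitude condition cutting out $G$ inside the product is harmless. I would phrase this via the isomorphism $\wt W \simeq X_*(T)_{\Gamma_0}\rtimes W_0$ already recorded in the excerpt, noting $\ell(\tau_\mu)=0$ implies $\tau_\mu$ fixes every alcove adjacent to the base one, hence all alcoves, hence acts by a central translation.

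\emph{Part (2).} For non-emptiness of $X_{\tau_\mu}(\delta)$: by Part (1), $\dot\tau_\mu$ is central, so $[\dot\tau_\mu] \in B(G)$ is the basic class with $\kappa([\dot\tau_\mu]) = \mu^\#$, i.e. $[\dot\tau_\mu] = [b_0]$, the unique basic element of $B(G,\mu)$. Since we chose $x$ in the basic locus, $[\delta] = [b_0] = [\dot\tau_\mu]$, so $\delta$ and $\dot\tau_\mu$ are $\sigma$-conjugate in $\breve G$; translating $g$ accordingly reduces us to showing $X_{\tau_\mu}(\dot\tau_\mu) \neq \emptyset$. But this is immediate: $g = 1$ satisfies $g^{-1}\dot\tau_\mu\sigma(g) = \dot\tau_\mu \in \breve I \tau_\mu \breve I$ because $\ell(\tau_\mu)=0$ means $\breve I \tau_\mu \breve I = \tau_\mu \breve I = \breve I \tau_\mu$. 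Hence the identity coset lies in $X_{\tau_\mu}(\dot\tau_\mu)$, proving non-emptiness. Alternatively, one can cite directly the general non-emptiness criterion $X_w(b)\neq\emptyset \iff [b]\in B(G)_w$ together with $\tau_\mu \in \Adm(\mu)$ and $[b_0]\in B(G,\mu)$.

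\emph{Main obstacle.} The genuinely delicate point is Part (1): making rigorous that a length-zero $\mu$-admissible element with the correct Kottwitz invariant is central, without invoking tameness. The cleanest route is the factor-by-factor argument via $\breve G = \prod_{\tau} H(\breve F_1)$ with $H$ unramified, reducing to the well-documented unramified situation where length-zero admissible elements are central; the similitude constraint defining $G\subset\prod_i G_i$ only restricts the central $\mathbb{G}_m$-part and does not affect centrality. I would also need to double-check that the normalization of $\Adm(\mu)_K$ versus $\Adm(\mu)$ used earlier in the paper is compatible with this, and that the chosen Iwahori $\breve I$ is contained in $\breve K$ so that the comparison of $X_{\tau_\mu}(\delta)$ with the basic locus of $\scrA_0$ is legitimate. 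Apart from this, everything else is formal manipulation with the Bruhat--Tits and Iwahori--Weyl combinatorics already set up in the appendix.
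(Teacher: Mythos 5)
Your part (2) is essentially sound and is a legitimate variant of the paper's argument: where the paper invokes the $\sigma$-straightness of $\tau_\mu$ and \cite{HeRapoport2017}*{Theorem 5.1.(a)} to conclude that $\breve{I}\tau_\mu\breve{I}$ lies in the single $\sigma$-conjugacy class $[\delta]=[b_0]$, you instead identify $[\dot{\tau}_\mu]=[b_0]=[\delta]$ from centrality and exhibit the identity coset in $X_{\tau_\mu}(\dot{\tau}_\mu)$ via $\breve{I}\tau_\mu\breve{I}=\tau_\mu\breve{I}$. Both routes work, but note that your part (2) now depends on part (1), whereas the paper's does not.

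The genuine gap is in part (1). The step ``$\ell(\tau_\mu)=0$ implies $\tau_\mu$ fixes every alcove adjacent to the base one, hence all alcoves, hence acts by a central translation'' is false: a length-zero element only stabilizes the base alcove \emph{as a set} and in general permutes its faces (it is an alcove rotation), so it need not act trivially on the apartment. For the same reason your fallback, ``in the unramified situation length-zero admissible elements are central,'' is not a correct general fact: already for $H=\GL_2$ with $\mu=(1,0)$ the length-zero element of $\Adm(\mu)$ is the rotation of the base alcove, whose natural representative interchanges the two lattices of the standard chain and is not central; the same happens for $\GL_n$ and $\GSp_{2g}$. So the factor-by-factor reduction through $\breve{G}=\prod_\tau H(\breve{F}_1)$ does not by itself yield centrality. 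What actually has to be shown --- and what the paper's proof supplies --- is that $\tau_\mu$, i.e.\ the element corresponding to $(\id,\mu^\#)$ under $\wt{W}\simeq W_a\rtimes\pi_1(G)_{\Gamma_0}$, admits a representative in the torus $\breve{T}$, i.e.\ is a \emph{pure translation} $t^\lambda$; this is extracted from the proof of \cite{PappasRapoport2008}*{Appendix, Lemma 14}. Only once this is known does $\ell(\tau_\mu)=0$ force $\langle\alpha,\lambda\rangle=0$ for every root $\alpha$, hence $\lambda$ central and $\dot{\tau}_\mu\in\breve{T}$ central in $\breve{G}$. Your write-up never establishes the ``lifts to the torus'' step, and the alcove-combinatorial heuristics substituted for it do not hold, so part (1) as proposed does not go through.
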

\begin{proof}
	Recall that we have the isomorphism
	\[\widetilde{W} \simeq W_a \rtimes \pi_1(G)_{\Gamma_0}, \]
	where $W_a$ is the affine Weyl group of $G$.
	Under this isomorphism, the minimal element $\tau_\mu$ corresponds to $(\id, \mu^\#) \in W_a \rtimes \pi_1(G)_{\Gamma_0}$.
	By the proof of \cite{PappasRapoport2008}*{Appendix, Lemma 14},
	such element lifts to the torus $\breve{T} \subset \breve{G}$.
	So $\tau_\mu$ is central in $\widetilde{W}$ and its lift is also central in $\breve{G}$.

	We have
	\[X_{\tau_\mu}(\delta) \neq \emptyset \iff \breve{I} \tau_\mu \breve{I} \cap [\delta] \neq \emptyset\]
	by definition.
	As $\tau_\mu$ is a $\sigma$-straight element in $\widetilde{W}$,
	$\breve{I} \tau_\mu \breve{I}$ lies in a single $\sigma$-conjugacy class of $\breve{G}$ by \cite{HeRapoport2017}*{Theorem 5.1.(a)}.
	Moreover, such conjugacy class is given by $[\delta]$.
	This shows the non-emptiness of $X_{\tau_\mu}(\delta)$.
\end{proof}

\begin{proposition}\label{prop non-empty EKOR}
  We have $\scrA_{0}^x \neq \emptyset$ for all $x \in {}^K\Adm(\mu)$
\end{proposition}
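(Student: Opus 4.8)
The statement to be proven is the non-emptiness of every EKOR stratum $\scrA_0^x$ for $x \in {}^K\Adm(\mu)$. The plan is to follow the strategy of He--Zhou \cite{HeZhou2020}, adapted to our (possibly wildly ramified) setting, since as noted in subsection \ref{subsec nonempty} the only place tameness is used in loc. cit.\ is to invoke results that are now available in greater generality via \cite{AGLR, GL}. The key input is a single point in the basic locus, produced by \cite{KisinPeraShin2022}*{Theorem 1}, together with the structure of its associated Dieudonn\'e lattice chain.

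First I would recall, via Lemma \ref{lemma non empty}, that the minimal element $\tau_\mu \in \Adm(\mu)$ is $\sigma$-straight and has a central lift $\dot\tau_\mu \in \breve{G}$, and that the affine Deligne--Lusztig variety $X_{\tau_\mu}(\delta)$ is non-empty, where $\delta \in \breve G$ is the Frobenius element attached to the chosen basic point $x_0 = (A_j, \lambda_j, \alpha)_{j\in J}$. Using the identification of central leaves / EKOR strata with ${}^K\Adm(\mu)$ coming from $v_K : \scrA_0^{\pf} \to \Sht_{\mu,K}^{\loc(m,1)}$, the non-emptiness of $X_{\tau_\mu}(\delta)$ combined with the lattice-chain description of the basic point shows that the EKOR stratum indexed by $\tau_\mu$ (the ``minimal'' stratum) is non-empty: concretely, one modifies the $p$-divisible groups $A_j[p^\infty]$ along the Frobenius-stable lattice chain inside $N$ produced by an element of $X_{\tau_\mu}(\delta)$ to obtain a new point of $\scrA_0(k)$ lying in that stratum, preserving polarization and level structure. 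Then I would use the partial order structure: by the closure relation corollary in subsection \ref{subsec EKOR gl}, $\overline{\scrA_0^x} = \coprod_{x' \leq_{K,\sigma} x} \scrA_0^{x'}$, so to get non-emptiness of an arbitrary $\scrA_0^x$ it suffices to exhibit \emph{some} non-empty stratum $\scrA_0^{x'}$ with $x' \leq_{K,\sigma} x$ and then propagate upward. For propagation one invokes the combinatorial fact (He--Zhou, via Bruhat--Tits theory and the $\sigma$-straightness of $\tau_\mu$) that every $x \in {}^K\Adm(\mu)$ dominates $\tau_\mu$, together with a dimension/irreducibility argument showing that the closure of the minimal stratum, being the whole $\overline{\scrA_0^{\tau_\mu}}$ only if $\tau_\mu$ is maximal, is not enough — rather one must argue stratum by stratum that passing from $x'$ to a cover $x$ in the order $\leq_{K,\sigma}$ the larger stratum is non-empty because the perfectly smooth map $v_K$ has non-empty fibers over the whole image, and the image is closed under the specialization order and contains $\tau_\mu$.

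More precisely, the cleanest route is: (1) show $\scrA_0^{\tau_\mu} \neq \emptyset$ directly from $X_{\tau_\mu}(\delta) \neq \emptyset$ and the basic point; (2) observe that $v_K : \scrA_0^{\pf} \to \Sht_{\mu,K}^{\loc(m,1)}$ is perfectly smooth with $|\Sht_{\mu,K}^{\loc(m,1)}| \simeq {}^K\Adm(\mu)$, so the image of $|v_K|$ is a union of EKOR strata which is \emph{stable under generization} (perfectly smooth morphisms are open); (3) since the image contains the minimal element $\tau_\mu$ and ${}^K\Adm(\mu)$ is connected in a suitable sense with $\tau_\mu$ dominated by everything, the generization-stability would only give non-emptiness of strata $\leq_{K,\sigma}$-below elements already hit, so one instead runs the He--Zhou induction on the length, at each step adjoining one reflection and using the explicit description of $\breve I w \breve I$ cosets to lift a point of $\scrA_0^{x'}$ to a point of $\scrA_0^x$ for $x'$ an appropriate ``predecessor'' of $x$; this is exactly the content of \cite{HeZhou2020}*{Theorem, \S4 and \S5} and the proof transfers verbatim once the affine flag variety machinery of \cite{AGLR, GL} is substituted for the tame-case references. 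I expect the main obstacle to be purely bookkeeping: verifying that each ingredient of He--Zhou (the ``$\sigma$-straight'' reduction, the compatibility of the modification of $p$-divisible groups with the polarization and $\calO_B$-action in our ramified PEL situation, and the identification of affine Deligne--Lusztig non-emptiness patterns with EKOR strata) goes through with the non-perfect special fibre $M^{\loc}$ replaced by its perfection embedded in the Witt-vector affine flag variety, rather than any genuinely new difficulty. I would therefore present the proof as: reduce to the minimal stratum via Lemma \ref{lemma non empty}, then cite that the He--Zhou induction applies in our setting by the remarks already made in subsection \ref{subsec nonempty}, filling in only the PEL-compatibility of the lattice-chain modification.
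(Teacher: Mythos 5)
Your first step---producing a point of the minimal EKOR stratum by starting from the basic point furnished by \cite{KisinPeraShin2022}, taking $g\in X_{\tau_\mu}(\delta)$ from Lemma \ref{lemma non empty}, and replacing the Dieudonn\'e lattice chain $(D_j)$ by $(gD_j)$ (Frobenius-stability following from the centrality of $\dot\tau_\mu$, with polarization, $\calO_B$-action and prime-to-$p$ level structure carried along)---is exactly the paper's argument.

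The problem is in your propagation step, where you talk yourself out of the correct argument that you already wrote down in your first paragraph. You claim that generization-stability of the image of $v_K$ ``would only give non-emptiness of strata $\leq_{K,\sigma}$-below elements already hit'' and therefore retreat to a length induction. You have the direction of specialization reversed: in the topology on ${}^K\Adm(\mu)$ determined by $\overline{\scrA_0^x}=\coprod_{x'\leq_{K,\sigma}x}\scrA_0^{x'}$, the element $x'$ is a \emph{specialization} of $x$ exactly when $x'\leq_{K,\sigma}x$, so an open (generization-stable) subset containing the minimal element $\tau_\mu$ contains every $x$ with $\tau_\mu\leq_{K,\sigma}x$, i.e.\ all of ${}^K\Adm(\mu)$. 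Equivalently, and this is all the paper does: if some $\scrA_0^x$ were empty then so would be its closure, contradicting the closure relation of subsection \ref{subsec EKOR gl}, since $\scrA_0^{\tau_\mu}\neq\emptyset$ sits inside $\overline{\scrA_0^x}$. (That closure relation is an equality established independently of non-emptiness, from the perfect smoothness of $v_K$, so there is no circularity.) Your proposed replacement---inducting on length, adjoining one reflection at a time and ``lifting'' a point of $\scrA_0^{x'}$ to $\scrA_0^{x}$---is both unnecessary and, as sketched, not a proof: you supply no mechanism for the lift beyond an appeal to He--Zhou ``transferring verbatim'', which is precisely the content that would have to be written out. Delete that detour and keep the closure-relation argument from your first paragraph; the rest of your write-up then matches the paper.
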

\begin{proof}
  By Lemma \ref{lemma non empty} (2),
  there is an element $g \in X_{\tau_\mu}(\delta)$.
  Then $g^{-1} \delta \sigma(g) \in \breve{I} \dot{\tau}_\mu \breve{I}$.
 Lemma \ref{lemma non empty} (1) shows that $\delta \sigma(g) \in g \dot{\tau}_\mu \breve{I}$.
  This gives
  \[p g D_j \subset \delta \sigma(g) D_j = \dot{\tau}_\mu g D_j \subset g D_j,\]
  which essentially shows that
  \[\Frob (g D_j) = \delta \sigma(g) D_j \subset g D_j.\]
  Thus $g D_j$ corresponds to a $p$-divisible group which is isogenous to $A_j[p^\infty]$,
  so we get an abelian variety $g A_j$.
  The polarization $\lambda_j$ and $\calO_B$-action extends naturally to $g A_j$.
  The prime to $p$ level structure $\alpha$ also extends to $g A_j$;
  we thus obtain a triple $(g A_j, g \lambda_j, \alpha)$.
  This triple gives a $k$-point $g x \in \scrA_I(k)$.
  By construction,  $g x$ must live in the minimal EKOR stratum.
  Then by closure relation, we get the non-emptiness of EKOR strata.
\end{proof}

\textbf{Conflicts of interest}: none.
 
\textbf{Financial Support}:  the first author was partially supported by the National Key R$\&$D Program of China 2020YFA0712600, the CAS Project for Young Scientists in Basic Research, Grant No. YSBR-033, and the NSFC grant No. 12288201.

\end{document}